\nonstopmode \numberwithin{equation}{section}
\newtheorem{thm}{Theorem}[section]
\newtheorem{cor}[equation]{Corollary}
\newtheorem{lem}[equation]{Lemma}
\newtheorem{prop}[equation]{Proposition}
\newtheorem{claim}{Claim}
\theoremstyle{definition}
\newtheorem{defn}{Definition}[section]
\newtheorem{example}{Example}[section]
\newtheorem{prob}[equation]{Problem}
\newtheorem{rem}{Remark}[section]
\newtheorem{observation}{Observation}
\newcounter{minutes}\setcounter{minutes}{\time}
\newcounter{hours}\setcounter{hours}{\time}
\newcounter {own}
\def\theown {\thesection       .\arabic{own}}
\newenvironment{pf}[1][]{%
	\vskip 3mm
	\noindent
	\ifthenelse{\equal{#1}{}}%
	{{\slshape Proof. }}%
	{{\slshape #1.} }%
}%
{\qed\bigskip}
\newcounter{alphabet}
\newenvironment{Thm}[1][]{\refstepcounter{alphabet}%
	\bigskip%
	\noindent%
	{\bf Theorem \Alph{alphabet}}%
	\ifthenelse{\equal{#1}{}}{}{ (#1)}%
	{\bf .} \itshape}{\vskip 8pt}
\def\be{\begin{equation}}
	\def\ee{\end{equation}}
\newcommand{\bee}{\begin{enumerate}}
	\newcommand{\eee}{\end{enumerate}}
\newcommand{\blem}{\begin{lem}}
	\newcommand{\elem}{\end{lem}}
\newcommand{\bthm}{\begin{thm}}
	\newcommand{\ethm}{\end{thm}}
\newcommand{\bcor}{\begin{cor}}
	\newcommand{\ecor}{\end{cor}}
\newcommand{\beg}{\begin{examp}}
	\newcommand{\eeg}{\end{examp}}
\newcommand{\begs}{\begin{examples}}
	\newcommand{\eegs}{\end{examples}}
\newcommand{\bdefe}{\begin{defin}}
	\newcommand{\edefe}{\end{defin}}
\newcommand{\bprob}{\begin{prob}}
	\newcommand{\eprob}{\end{prob}}
\newcommand{\bei}{\begin{itemize}}
	\newcommand{\eei}{\end{itemize}}
\begin{document}
	
	\title{Visible quasihyperbolic geodesics}

	\author{Vasudevarao Allu}
	\address{Vasudevarao Allu,
		Discipline of Mathematics,
		School of Basic Sciences,
		Indian Institute of Technology  Bhubaneswar,
		Argul, Bhubaneswar, PIN-752050, Odisha (State),  India.}
	\email{avrao@iitbbs.ac.in}

	\author{Abhishek Pandey}
	\address{Abhishek Pandey,
		Discipline of Mathematics,
		School of Basic Sciences,
		Indian Institute of Technology  Bhubaneswar,
		Argul, Bhubaneswar, PIN-752050, Odisha (State),  India.}
	\email{ap57@iitbbs.ac.in}
	
	\makeatletter
	\@namedef{subjclassname@2020}{\textup{2020} Mathematics Subject Classification}
	\makeatother

	\subjclass[2020]{Primary 30F45, 30L10, 30L99, 30C65. Secondary 51F99, 53C22.}
	\keywords{Gromov hyperbolic spaces, John domains, quasihyperbolic metric, quasihyperbolic geodesics, quasiconformal mappings, visibility}

\def\thefootnote{}
\footnotetext{ {\tiny File:~\jobname.tex,
		printed: \number\year-\number\month-\number\day,
		\thehours.\ifnum\theminutes<10{0}\fi\theminutes }
} \makeatletter\def\thefootnote{\@arabic\c@footnote}\makeatother

\thanks{}

\begin{abstract}
In this paper, motivated by the work of Bonk, Heinonen, and Koskela (Asterisque, 2001), we consider the problem of the equivalence of the Gromov boundary and Euclidean boundary. Our strategy to study this problem comes from the recent work of Bharali and Zimmer (Adv. Math., 2017) and Bracci, Nikolov, and Thomas (Math. Z., 2021). We present the concept of a quaihyperbolic visibility domain (QH-visibility domain) for domains that meet the visibility property in relation to the quasihyperbolic metric. By utilizing this visibility property, we offer a comprehensive solution to this problem. Indeed, we prove that such domains are precisely the QH-visibility domains that have no geodesic loops in the Euclidean closure. Furthermore, we establish a general criterion for a domain to be the QH-visibility domain. Using this criterion, one can determine that uniform domains, John domains, and domains that satisfy quasihyperbolic boundary conditions are QH-visibility domains. We also compare the visibility of hyperbolic and quasihyperbolic metrics for planar hyperbolic domains. As an application of the visibility property, we study the homeomorphic extension of quasiconformal maps. Moreover, we also study the QH-visibility of unbounded domains in $\mathbb{R}^n$. Finally, we present a few examples of QH-visibility domains that are not John domains or QHBC domains.
\end{abstract}

		\maketitle
	\pagestyle{myheadings}
	\markboth{Vasudevarao Allu and Abhishek Pandey}{Visible quasihyperbolic geodesics}
\tableofcontents

\section{Introduction}

For a bounded domain $\Omega$ in $\mathbb{R}^n$, we consider the Euclidean metric $d_{Euc}$ and the quasihyperbolic metric $k_{\Omega}$, and let $\partial_{Euc}\Omega$ and $\partial_{G}\Omega$ denote the Euclidean boundary and Gromov boundary, respectively. Bonk, Heinonen, and Koskela, in their seminal work \cite{Koskela-2001}, proved the following very important and useful result: uniform domains can be characterized in terms of Gromov hyperbolicity.

\begin{Thm}\cite[Theorem 1.11]{Koskela-2001}
	A bounded domain $\Omega$ in $\mathbb{R}^n$ is uniform if, and only if, $(\Omega,k_{\Omega})$ is Gromov hyperbolic and the identity map $id:(\Omega,k_{\Omega})\to(\Omega,d_{Euc})$ extends as a homeomorphism $\Phi:\overline{\Omega}^G\to \overline{\Omega}^{Euc}$ such that $\Phi$ is quasisymmetrric homeomorphism from $\partial_{G}\Omega$ onto $\partial_{Euc}\Omega$.
\end{Thm}
This motivates us to consider the following problem:

\begin{prob}\label{GB-EB}
	Let $\Omega$ be a bounded domain in $\mathbb{R}^n$ and $(\Omega,k_{\Omega})$ be Gromov hyperbolic. For which such domain the following is true:\\
	The identity map $\mbox{id}:(\Omega,k_D)\to(\Omega,d_{Euc})$ extends to a continuous surjective map or homeomorphism from $\partial_{G}\Omega$ onto $\partial_{Euc}\Omega$?
\end{prob}
V\"ais\"al\"a has established results similar to Theorem A in the Banach space setting \cite{Vaisala-GH-1}. If we follow V\"ais\"al\"a \cite[2.21]{Vaisala-GH-1} terminology, Problem \ref{GB-EB} inquires about the conditions under which the natural map exists. Problem \ref{GB-EB} is an interesting problems in the theory of Gromov hyperbolicity of quasihyperbolic metric. In general, the identity map may not even extend to a continuous map from $\overline{\Omega}^G$ to $\overline{\Omega}^{Euc}$, for instance, take 
\begin{equation}\label{not compact}
	\Omega=(0,1)\times (0,1)\setminus \bigcup_{j=1}^\infty \left(\left\{\frac{1}{2^j}\times \left[0,\frac{1}{2}\right]\right\}\right).
\end{equation}
Even if the identity map extends to a continuous map, the extension may not be injective. For instance, if we take $\Omega=\mathbb{D}\setminus\{(x,y): 0\le x<1, y=0\}$, then then identity map extends to a continuous map from Gromov closure onto the Euclidean closure but the extension is not homeomorphism since one Euclidean boundary point on the radius may define two distinct Gromov boundary points.\\

As per the best of our knowledge, the most general result in this line is due to Lammi \cite{Lammi-2011,Lammi-Thesis}. Lammi \cite{Lammi-2011} has considered Problem \ref{GB-EB} in the metric space that is locally compact, non-complete, and quasi-convex. If we look at Problem \ref{GB-EB} for bounded domains in $\mathbb{R}^n$ with the inner metric, Lammi's work is very helpful. This is because the Gehring-Hayman theorem makes it clear that a domain $\Omega$ in $\mathbb{R}^n$ with the inner metric is always quasi-convex. In this case, it is uncertain whether the inner boundary is compact. For example, if we consider the domain $\Omega$ mentioned in \eqref{not compact} and equip $\Omega$ with the inner metric, then the inner boundary is closed and bounded, but not compact. In this line, we have a topological result by Lammi \cite[Theorem 3.4]{Lammi-Thesis}, which says that the compactness of the inner boundary is equivalent to the homeomorphic extension of the identity map from the Gromov boundary onto the inner boundary. Further, \cite[Theorem 1.1]{Lammi-2011} says that if we have a locally compact, non-complete, quasi-convex space that is Gromov hyperbolic and satisfies the Gehring-Hayman theorem, then all we need is a suitable growth condition of a quasihyperbolic metric for the identity map to extend homeomorphically from the Gromov boundary onto the metric boundary. The proof of Theorem 1.1 in Lammi's paper \cite{Lammi-2011} does not rely on the compactness of the metric boundary, and hence the compactness of the inner boundary is the consequence of this result. For bounded domains in $\mathbb{R}^n$, the Euclidean boundary $\partial_{Euc}\Omega$ is always compact. This means that \cite[Theorem 3.4]{Lammi-Thesis} is relevant for Problem \ref{GB-EB}, and we will mention it below in the Euclidean setting.

\begin{Thm}\cite[Theorem 3.4]{Lammi-Thesis}
	Let $\Omega$ be a bounded domain in $\mathbb{R}^n$. Suppose that $(\Omega,k_{\Omega})$ is Gromov hyperbolic. If $(\Omega,d_{Euc})$ is quasi-convex (see Definition \ref{quasi-convex}), then the identity map $id:(\Omega,k_\Omega)\to(\Omega,d_{Euc})$ extends as a homoemorphism $\Phi:\partial_{G}\Omega\to \partial_{Euc}\Omega$.
\end{Thm}

The goal of this paper is to provide a complete solution to Problem \ref{GB-EB}. Our strategy to study this problem arises from a recent work by Bharali and Zimmer \cite{Bharali-2017}, and Bracci, Nikolov, and Thomas \cite{Bracci-2022}, where they have studied a "suitable form" of visibility in the context of Kobayashi distance. The concept of visibility was first introduced by Eberlein and O'Neill in \cite{O'Neill-1973}, where they basically introduced a general construction of compactification of Hadamrd manifolds by attaching an abstract boundary or boundary at infinity, which is called the ideal boundary. Using the notion of an ideal boundary, Eberlein and O'Neill defined the term visibility manifold for a complete Riemannian manifold with nonpositive sectional curvature. For more details on visibility refer to Section \ref{Appendix}. Bharali and Zimmer \cite{Bharali-2017} have introduced a new class of domains in complex Euclidean space called Goldilocks domains, which satisfy a form of visibility with respect to Kobayashi distance. Using this property, Bharali and Maitra \cite{Bharali-2021} coined the term visibility domain in the context of the Kobayashi metric, and since then it has been systematically studied (see \cite{Bharali-2021, Bharali-2022, Bracci-2022, Bracci-2024, Sarkar-2021}).\\

In addition, visibility domains with respect to Kobayashi distance are accompanied by the property that the identity map  can be extended to a continuous surjective map from the Gromov closure onto the Euclidean closure of a complete hyperbolic bounded domain in $\mathbb{C}^n$. Nikolov, Bracci, and Thomas \cite{Bracci-2022} proved this result in the context of Kobayashi distance. Hence, if we can establish a similar outcome within the framework of the quasihyperbolic metric, it will provide the solution for Problem \ref{GB-EB}. This motivates us to introduce the concept of {\it QH-visibility domain}. Consequently, we provide a complete solution to the Problem \ref{GB-EB} by proving that {\it such domains are precisely QH-visibility domains that have no geodesic loops in $\overline{\Omega}^{Euc}$} (see Theorem \ref{Main-thm-GB-EB} and Theorem \ref{no loop}).\\

Another reason to investigate the visibility property in the context of the quasihyperbolic metric is that visibility, as defined by Eberlein and O'Neill, is a natural replacement of the strict negative sectional curvature of a complete Riemannian manifold (refer to Lemma 9.10 in \cite{O'Neill-1969} and \cite{O'Neill-1973}). Therefore, {\it study of visibility property in the framework of the quasihyperbolic metric may be viewed as a weak notion of negative curvature for domains $\Omega\subset \mathbb{R}^n$ viewed as metric spaces equipped with the quasihyperbolic metric.}\\

The above discussion tells us that  investigating visibility within the framework of quasihyperbolic metric is worthwhile. In light of this, we begin the study of visibility property in the context of the quasihyperbolic metric with the following definition.

\begin{defn}\label{main-defn}
	Let $\Omega$ be a bounded domain in $\mathbb{R}^n$ and $\partial_{Euc}\Omega$ be its Euclidean boundary. Let $p,q\in\partial_{Euc}\Omega$, $p\ne q$. We say that the pair $\{p,q\}$ has {\it visible quasihyperbolic geodesics} if there exist neighborhoods $U,V$ of $p,q$ respectively such that $\overline{U}\cap\overline{V}=\emptyset$ and a compact set $K_{\{p,q\}}\subset \Omega$ such that for any quasihyperbolic geodesic $\gamma:[0,T]\to \Omega$ with $\gamma(0)\in U$ and $\gamma(T)\in V$, we have 
	$$\gamma([0,T])\cap K\ne \emptyset.$$
\end{defn}
\begin{defn}
	We say that a bounded domain $\Omega$ in $\mathbb{R}^n$ has visibility property if any pair of distinct points $\{p,q\}\subset \partial_{Euc}\Omega$ has visible quasihyperbolic geodesics.
\end{defn}
\begin{defn}
	A bounded domain $\Omega$ in $\mathbb{R}^n$ is said to be quasihyperbolic visibility domain, in short, {\it QH-visibility domain} if it has visibility property.
\end{defn}

\begin{rem}
	Note that the definition of QH-visibility domain is equivalent to the following.
	Let $\Omega$ be a bounded domain in $\mathbb{R}^n$.  Then $\Omega$ is said to be QH-visibility domain if, and only if, for any $p,q \in \partial_{Euc}\Omega$, $p\ne q$ there exists a compact set $K\subset \Omega$ such that for any sequence $p_k, q_k \in \Omega$, with $p_k\to p$ and $q_k\to q$ there exists positive integer $k_0$ such that any quasihyperbolic geodesic $\gamma_k:[0,T]\to \Omega$ joining $p_k$ and $q_k$ ({\it i.e.,} with $\gamma_k(0)=p_k$ and $\gamma_k(T)=q_k$), we have
	$$\gamma_k([0,T])\cap K\ne\emptyset, \,\, \mbox{ for all } k\ge k_0.$$ 
	Geometrically, a bounded domain in $\mathbb{R}^n$ is a QH-visibility domain if the quasihyperbolic geodesics bend inside the domain when connecting points close to the Euclidean boundary $\partial_{Euc}\Omega$.
\end{rem}

\section{Results}
We are now going to introduce our results. 
\subsection*{Gromov hyperbolicity and visibility}
\addtocontents{toc}{\protect\setcounter{tocdepth}{1}}
We first see the behavior of Gromov product in QH-visibility domains.
\begin{thm}\label{gromov product visibility}
	Let $\Omega\subset \mathbb{R}^n$ be a bounded domain. Then the following are equivalent:
	\begin{itemize}
		\item[(1)] $\Omega$ is a QH-visibility domain.
		\item[(2)] For every pair of points $p,q\in \partial_{Euc}\Omega$, $p\ne q$, and sequences $z_k\to p$, $w_k\to q$, we have
		$$\limsup_{k\to \infty}(z_k|w_k)_o< \infty,$$
		for a fixed point (hence any) $o\in \Omega$.
	\end{itemize}
\end{thm}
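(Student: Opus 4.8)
The plan is to prove the two implications separately, the forward one being elementary and the reverse one carrying all the content. Throughout write $\delta(x)=\mathrm{dist}(x,\partial_{Euc}\Omega)$ and recall that $(\Omega,k_\Omega)$ is a geodesic space, so any two points are joined by a quasihyperbolic geodesic.

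For \textbf{(1) $\Rightarrow$ (2)} I would argue directly. Fix $p\ne q$ in $\partial_{Euc}\Omega$ and sequences $z_k\to p$, $w_k\to q$, and let $U,V,K$ be the data provided by visibility of the pair $\{p,q\}$. For $k$ large we have $z_k\in U$ and $w_k\in V$, so any geodesic $\gamma_k$ joining them meets $K$ at some point $x_k$. The one inequality true in \emph{every} geodesic space is that, for any point $x$ on a geodesic joining $z_k$ and $w_k$,
\[
(z_k|w_k)_o\ \le\ k_\Omega(o,x),
\]
which follows from the triangle inequality together with $k_\Omega(z_k,w_k)=k_\Omega(z_k,x)+k_\Omega(x,w_k)$. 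Taking $x=x_k\in K$ and using that $k_\Omega(o,\cdot)$ is continuous, hence bounded, on the compact set $K$ gives $(z_k|w_k)_o\le \max_{x\in K}k_\Omega(o,x)<\infty$, uniformly in $k$. Independence of the base point is the standard estimate $|(z|w)_o-(z|w)_{o'}|\le k_\Omega(o,o')$.

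For \textbf{(2) $\Rightarrow$ (1)} I would argue by contraposition. If $\Omega$ is not a QH-visibility domain, some pair $\{p,q\}$, $p\ne q$, fails to have visible geodesics; feeding the compact exhaustion $K_m=\{x\in\Omega:\ \delta(x)\ge 1/m\}$ into the (negated) formulation in the Remark, and choosing indices far enough along the witnessing sequences, I obtain geodesics $\gamma_m:[0,T_m]\to\Omega$ with $\gamma_m(0)=z_m\to p$, $\gamma_m(T_m)=w_m\to q$ and $\gamma_m([0,T_m])\cap K_m=\emptyset$, so that $\delta(\gamma_m(t))<1/m$ along the whole geodesic. The goal is then to show $(z_m|w_m)_o\to\infty$, contradicting (2). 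The crucial tool is the exact identity, valid with no curvature hypothesis, that for any point $\xi$ lying on a geodesic joining $z$ and $w$,
\[
(z|w)_o=(z|\xi)_o+(\xi|w)_o-k_\Omega(o,\xi),
\]
obtained by expanding the three products and using $k_\Omega(z,\xi)+k_\Omega(\xi,w)=k_\Omega(z,w)$. To exploit it I would apply the intermediate value theorem to $t\mapsto|\gamma_m(t)-p|$ to pick $\xi_m=\gamma_m(t_m)$ with $|\xi_m-p|=\tfrac12|p-q|$; then $|\xi_m-q|\ge\tfrac12|p-q|$, and since $\delta(\xi_m)<1/m$ a subsequence of $\xi_m$ converges to a boundary point $\zeta$ with $\zeta\ne p$ and $\zeta\ne q$. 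Applying (2) to the \emph{distinct} pairs $\{p,\zeta\}$ and $\{\zeta,q\}$ bounds $(z_m|\xi_m)_o$ and $(\xi_m|w_m)_o$ along this subsequence, while $k_\Omega(o,\xi_m)\ge\log\frac{\delta(o)}{\delta(\xi_m)}\to\infty$ because $\delta(\xi_m)\to 0$. The identity then forces $(z_m|w_m)_o\to-\infty$, which is absurd since the Gromov product is nonnegative; this contradiction proves visibility.

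The main obstacle is precisely that the only elementary relation between the Gromov product and the geodesic, $(z|w)_o\le d(o,\gamma)$, points the wrong way: a bound on $(z_m|w_m)_o$ does not by itself keep $\gamma_m$ in a compact set, and without Gromov hyperbolicity there is no reverse inequality $d(o,\gamma)\lesssim(z|w)_o$. The device that circumvents this is the three-point identity combined with the insertion of a third boundary point $\zeta$ manufactured by the escaping geodesic itself; this is what converts the qualitative ``escape'' into the quantitative divergence $k_\Omega(o,\xi_m)\to\infty$ and thereby contradicts (2). I expect the only steps needing care to be the extraction of the escaping geodesics from the negated visibility property (using the Remark together with compactness of $\partial_{Euc}\Omega$) and the verification that the limit $\zeta$ is genuinely distinct from both $p$ and $q$, which is exactly why $\xi_m$ is chosen at a fixed Euclidean distance from $p$.
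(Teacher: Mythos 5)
Your proposal is correct and takes essentially the same route as the paper: the forward implication is the same bound $(z_k|w_k)_o\le k_\Omega(o,\theta_k)$ at a point $\theta_k$ where the geodesic meets the compact set, and the reverse implication is the same contradiction argument --- extract geodesics escaping to the boundary, insert a third boundary point $\zeta\notin\{p,q\}$ arising as a limit of points on those geodesics, apply (2) to the pairs $\{p,\zeta\}$ and $\{\zeta,q\}$, and play the resulting bounds against $k_\Omega(o,\xi_m)\to\infty$ via the three-point identity relating $(z|w)_o$, $(z|\xi)_o$, $(\xi|w)_o$ and $k_\Omega(o,\xi)$. Your intermediate-value selection of $\xi_m$ at Euclidean distance $\tfrac12|p-q|$ from $p$ is in fact a welcome precision, since it makes rigorous the step the paper dispatches with the vague remark that one ``can always choose some $z_k$ at the place of $\theta_k$'' so that the limit point differs from $p$ and $q$.
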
 
The proof of Theorem \ref{gromov product visibility} is similar to that of \cite[Proposition 2.4 and Proposition 2.5]{Bracci-2022}. Note that every Gromov hyperbolic space has visibility property in the sense of Eberlein and O'Neill when considering the Gromov boundary in the Gromov topology (see \cite[Part III, H, Lemma 3.2]{Bridson-book}). Therefore, it is clear that if $(\Omega,k_{\Omega})$ is Gromov hyperbolic and Gromov boundary is homeomorphic to the Euclidean boundary of $\Omega$, then $\Omega$ is a QH-visibility domain. In general, we prove the following result.

\begin{thm}\label{Main-thm-GB-EB}
	Let $\Omega\subset \mathbb{R}^n$ be a bounded domain. Assume that $(\Omega, k_D)$ is Gromov hyperbolic. Then the following are equiavlent
	\begin{itemize}
		\item[(1)] $\Omega$ is a QH-visibility domain 
		\item[(2)] the identity map $id:(\Omega,k_\Omega)\to(\Omega,d_{Euc})$ extends as a continuous surjective map $\Phi:\overline{\Omega}^G\to \overline{\Omega}^{Euc}$.
	\end{itemize}
\end{thm}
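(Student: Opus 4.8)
The plan is to build the extension $\Phi$ by hand and check each of its properties against the Gromov-product criterion of Theorem \ref{gromov product visibility}. Throughout I would use two standard features of a bounded domain: first, $(\Omega,k_\Omega)$ is a proper geodesic space, so its Gromov bordification $\overline{\Omega}^G=\Omega\cup\partial_G\Omega$ is compact and sequentially metrizable, and the Gromov product extends to $\partial_G\Omega$ with $(\xi|\eta)_o=\infty$ if and only if $\xi=\eta$; second, $k_\Omega(o,x)\to\infty$ as $x\to\partial_{Euc}\Omega$, from the lower bound $k_\Omega(o,x)\ge\log\bigl(\mathrm{dist}(o,\partial_{Euc}\Omega)/\mathrm{dist}(x,\partial_{Euc}\Omega)\bigr)$. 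Since the target $\overline{\Omega}^{Euc}$ is compact metric and the source is sequentially metrizable, it suffices to argue everything with sequences.

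For (1)$\Rightarrow$(2) I would first show that every Gromov sequence $(x_k)$, that is one with $(x_i|x_j)_o\to\infty$, converges in the Euclidean topology to a point of $\partial_{Euc}\Omega$. Boundedness of $\Omega$ gives Euclidean cluster points, and none can lie in the interior: if $x_{k_j}\to x_0\in\Omega$ then $k_\Omega(o,x_{k_j})\to k_\Omega(o,x_0)<\infty$, forcing the products $(x_i|x_j)_o$ to stay bounded, a contradiction. If two subsequences converged to distinct $p\ne q\in\partial_{Euc}\Omega$, then Theorem \ref{gromov product visibility} would bound the corresponding Gromov products, again contradicting $(x_i|x_j)_o\to\infty$; hence $x_k\to p$ for a unique $p\in\partial_{Euc}\Omega$. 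The same criterion shows that two \emph{equivalent} Gromov sequences share their Euclidean limit, so $\Phi|_\Omega=\mathrm{id}$ together with $\Phi(\xi)=\lim_k x_k$ for any representative $(x_k)$ of $\xi\in\partial_G\Omega$ is well defined. Surjectivity would then follow once continuity is in hand: given $p\in\partial_{Euc}\Omega$, pick $x_k\to p$ Euclideanly, note $k_\Omega(o,x_k)\to\infty$, extract a subsequence $x_{k_j}\to\eta$ in the compact space $\overline{\Omega}^G$ with necessarily $\eta\in\partial_G\Omega$, and read off $\Phi(\eta)=\lim_j x_{k_j}=p$ from continuity.

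The main obstacle is continuity of $\Phi$ at a boundary point $\xi\in\partial_G\Omega$. Suppose $\xi_n\to\xi$ in $\overline{\Omega}^G$ but, after passing to a subsequence, $\Phi(\xi_n)\to q$ with $q\ne p:=\Phi(\xi)$. By a diagonal/approximation argument I would choose interior points $y_n\in\Omega$ that are $\rho$-close to $\xi_n$ in a metric $\rho$ on $\overline{\Omega}^G$ (so $y_n\to\xi$ in the Gromov topology) and Euclidean-close to $\Phi(\xi_n)$ (so $y_n\to q$); and I would fix a representative $(x_n)$ of $\xi$ with $x_n\to p$ Euclideanly. Because both $y_n\to\xi$ and $x_n\to\xi$ in the bordification, the extended hyperbolic inequality gives $(y_n|x_n)_o\ge\min\{(y_n|\xi)_o,(x_n|\xi)_o\}-2\delta\to\infty$. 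But $y_n\to q$ and $x_n\to p$ with $p\ne q$, so Theorem \ref{gromov product visibility} forces $\limsup_n(y_n|x_n)_o<\infty$, a contradiction. This pins $\Phi(\xi_n)\to p$; continuity at interior points is immediate since there the Gromov topology agrees with the $k_\Omega$-topology, hence with the Euclidean one on compacta.

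For (2)$\Rightarrow$(1) I would verify the criterion of Theorem \ref{gromov product visibility} directly. Let $p\ne q\in\partial_{Euc}\Omega$ and $z_k\to p$, $w_k\to q$, and suppose for contradiction that $\limsup_k(z_k|w_k)_o=\infty$; pass to a subsequence with $(z_k|w_k)_o\to\infty$. Using compactness of $\overline{\Omega}^G$ and $k_\Omega(o,z_k),k_\Omega(o,w_k)\to\infty$, I may assume $z_k\to\xi$ and $w_k\to\eta$ with $\xi,\eta\in\partial_G\Omega$. The boundary estimate $(\xi|\eta)_o\ge\limsup_k(z_k|w_k)_o-2\delta=\infty$ gives $\xi=\eta$, whence $\Phi(\xi)=\Phi(\eta)$. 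Finally, continuity of $\Phi$ yields $p=\lim_k z_k=\Phi(\xi)=\Phi(\eta)=\lim_k w_k=q$, contradicting $p\ne q$. Thus $\limsup_k(z_k|w_k)_o<\infty$ for all such pairs, and $\Omega$ is a QH-visibility domain by Theorem \ref{gromov product visibility}.
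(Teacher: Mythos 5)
Your proof is correct, but it takes a genuinely different route from the paper's. You make Theorem \ref{gromov product visibility} the pivot for both implications: you build $\Phi$ on $\partial_G\Omega$ out of Gromov sequences (showing every Gromov sequence Euclidean-converges and that equivalent ones share their limit), prove continuity via the extended Gromov product and the hyperbolic inequality at infinity, and obtain (2)$\Rightarrow$(1) by checking the bounded-Gromov-product criterion directly. The paper never invokes Theorem \ref{gromov product visibility} in its own proof: for (1)$\Rightarrow$(2) it defines $\Phi$ by landing points of quasihyperbolic geodesic rays, with well-definedness, continuity and surjectivity supplied by Lemmas \ref{lands}, \ref{well-defined}, \ref{continuous} and \ref{onto} (Arzel\'a--Ascoli plus uniform-on-compacta convergence of rays), while for (2)$\Rightarrow$(1) it produces the compact set demanded by Definition \ref{main-defn} directly, applying the geodesic visibility property of Gromov hyperbolic spaces (Theorem \ref{GV}) to the disjoint compact fibers $L_p=\Phi^{-1}(p)$ and $L_q=\Phi^{-1}(q)$. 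Your route is shorter and softer, at the cost of importing standard but nontrivial facts about the sequential model of the Gromov boundary (metrizability of the bordification, extended Gromov products and their $2\delta$-inequalities, and the equivalence between bordification convergence and divergence of Gromov products); the paper's ray-based construction is more self-contained and has downstream value — the landing and convergence lemmas are reused verbatim in the proofs of Theorem \ref{no loop} and Theorem \ref{unbounded QH-Gromov}, and its converse direction yields the explicit neighborhoods and compact set of the visibility definition rather than only the sequential criterion.
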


We also investigate conditions under which such an extension is a homeomorphism.

\begin{thm}\label{no loop}
	The extension $\Phi$ in Theorem \ref{Main-thm-GB-EB} is homeomorphism if, and only if, $\Omega$ has no geodesic loops in $\overline{\Omega}^{Euc}$.
\end{thm}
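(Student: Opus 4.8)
The plan is to exploit compactness to reduce the homeomorphism question to an injectivity question, and then to read injectivity directly off the boundary identifications forced by geodesic loops. First I would record that $(\Omega,k_\Omega)$ is a proper geodesic metric space: the quasihyperbolic metric is complete, induces the Euclidean topology on $\Omega$, and every closed quasihyperbolic ball lies in a compact subset of $\Omega$, so by a Hopf--Rinow-type argument closed balls are compact. Consequently the Gromov bordification $\overline{\Omega}^G=\Omega\cup\partial_G\Omega$ is compact. Since $\Phi:\overline{\Omega}^G\to\overline{\Omega}^{Euc}$ is, by Theorem \ref{Main-thm-GB-EB}, a continuous surjection from a compact space onto a Hausdorff space, it is automatically a closed map; hence $\Phi$ is a homeomorphism if and only if it is injective. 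This reduces the theorem to showing that $\Phi$ is injective precisely when $\Omega$ admits no geodesic loop in $\overline{\Omega}^{Euc}$.

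Next I would analyze where injectivity can fail. Because $\Phi$ restricts to the identity on $\Omega$, it is injective there. Moreover no ideal point can be sent into the interior: if $\xi\in\partial_G\Omega$ and $z_k\to\xi$ in the Gromov topology, then $k_\Omega(o,z_k)\to\infty$, so $(z_k)$ cannot converge in Euclidean norm to an interior point (along which $k_\Omega(o,\cdot)$ stays bounded), and by continuity of $\Phi$ this forces $\Phi(\xi)\in\partial_{Euc}\Omega$. Thus the only possible violation of injectivity is the existence of two distinct ideal points $\xi\ne\eta$ in $\partial_G\Omega$ with $\Phi(\xi)=\Phi(\eta)=:p\in\partial_{Euc}\Omega$.

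It remains to match such coincidences with geodesic loops. Using properness together with Gromov hyperbolicity, any two distinct ideal points $\xi,\eta$ are joined by a bi-infinite quasihyperbolic geodesic line $\gamma:\mathbb{R}\to\Omega$ with $\gamma(t)\to\xi$ and $\gamma(-t)\to\eta$ in the Gromov topology as $t\to+\infty$. For the forward direction, if $\Phi$ is not injective, pick such $\xi\ne\eta$ with common image $p$; continuity of $\Phi$ then gives $\gamma(t)=\Phi(\gamma(t))\to p$ and $\gamma(-t)\to p$ in the Euclidean topology, so $\gamma$ is a geodesic loop based at $p$, and $\Omega$ has a geodesic loop in $\overline{\Omega}^{Euc}$. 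Conversely, a geodesic loop is a geodesic line $\gamma$ whose two Euclidean ends land at a single point $p$; its two ends necessarily converge to distinct ideal points $\xi\ne\eta$ of $\partial_G\Omega$, and continuity of $\Phi$ yields $\Phi(\xi)=\Phi(\eta)=p$, so $\Phi$ fails to be injective. Combining the two directions with the compactness reduction completes the proof.

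The main obstacle I anticipate is not any single estimate but rather pinning down the boundary-to-boundary bookkeeping: verifying that convergence to an ideal point forces $k_\Omega(o,\cdot)\to\infty$ and hence that $\Phi(\partial_G\Omega)\subseteq\partial_{Euc}\Omega$, and invoking in the correct form the standard existence of geodesic lines between distinct ideal points of a proper geodesic Gromov hyperbolic space, together with the compatibility of Gromov-topology convergence of $\gamma(\pm t)$ with Euclidean convergence under the continuous map $\Phi$. Once these facts are in place, the equivalence ``$\Phi$ injective $\Leftrightarrow$ no geodesic loop'' is essentially a restatement, and the compactness argument upgrades injectivity to a homeomorphism.
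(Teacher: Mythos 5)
Your proposal is correct, and its skeleton matches the paper's: both arguments reduce the homeomorphism question, via compactness of $\overline{\Omega}^G$ and $\overline{\Omega}^{Euc}$ and the Hausdorff property, to injectivity of $\Phi$, and both identify failures of injectivity with geodesic loops. The genuine difference lies in how the direction ``$\Phi$ not injective $\Rightarrow$ a geodesic loop exists'' is handled. You invoke, as a black box, the standard visibility theorem for proper geodesic Gromov hyperbolic spaces (recorded in the paper as Theorem \ref{GV-I}, citing Bridson--Haefliger): two distinct points $\xi\ne\theta$ of $\partial_{G}\Omega$ with $\Phi(\xi)=\Phi(\theta)=p$ are joined by a geodesic line, whose two ends then land, in the Euclidean topology, at the common point $p$ by continuity of $\Phi$. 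The paper instead constructs this line by hand: it takes non-equivalent representative rays $\gamma\in\xi$ and $\sigma\in\theta$ landing at the same point $p$, joins $\gamma(t_k)$ to $\sigma(t_k)$ by geodesics $\eta_k$, uses $\delta$-thinness of geodesic triangles to force every $\eta_k$ through a fixed compact set, and extracts a limiting geodesic line joining $\xi$ and $\theta$ (citing Eberlein--O'Neill for the convergence step). The two routes establish the same fact; yours is shorter and cleaner given the standard lemma, while the paper's is self-contained at this point and exhibits the loop concretely as a limit of geodesic segments between the two rays. Your remaining direction (loop $\Rightarrow$ not injective, via the fact that a geodesic line in a Gromov hyperbolic space has two distinct Gromov endpoints, combined with continuity of $\Phi$) is exactly the paper's forward implication in contrapositive form, with the mild sharpening that it uses only continuity of $\Phi$ rather than the homeomorphism hypothesis; your preliminary reductions (properness of $(\Omega,k_{\Omega})$ and $\Phi(\partial_{G}\Omega)\subseteq\partial_{Euc}\Omega$) agree with what the paper records via Result 1 and Remark \ref{5.2}.
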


\begin{rem}\label{uniform domain visibility}
\begin{itemize}
	\item[1.] Theorem \ref{Main-thm-GB-EB} and Theorem \ref{no loop} together provide the complete solution to Problem \ref{GB-EB}. 
	\item[2.] In view of Theorem A and Theorem \ref{no loop}, we have that every bounded uniform domain is a QH-visibility domain with no geodesic loop in $\overline{\Omega}^{Euc}$. 
\end{itemize}
\end{rem}

Theorem \ref{no loop} and Theorem B together gives us the following important corollary.

\begin{cor}\label{Quasiconvex-no loop}
	Let $\Omega$ be a bounded domain in $\mathbb{R}^n$. Suppose $(\Omega,k_{\Omega})$ is Gromov hyperbolic. If $(\Omega, d_{Euc})$ is a  quasi-convex space, then $\Omega$ is a QH-visibility domain which has no geodesic loop in $\overline{\Omega}^{Euc}$.
\end{cor}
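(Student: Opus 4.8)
The plan is to read off the statement as a direct consequence of Theorem B together with Theorems \ref{Main-thm-GB-EB} and \ref{no loop}; essentially no new estimate is needed, and the real content lies in matching up the various extensions of the identity map. Throughout I keep both standing hypotheses: $(\Omega,k_\Omega)$ is Gromov hyperbolic and $(\Omega,d_{Euc})$ is quasi-convex.

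First I would invoke Theorem B directly. Since $\Omega$ is a bounded domain, $(\Omega,k_\Omega)$ is Gromov hyperbolic, and $(\Omega,d_{Euc})$ is quasi-convex, Theorem B yields that the identity map $id:(\Omega,k_\Omega)\to(\Omega,d_{Euc})$ extends as a homeomorphism $\Phi:\partial_{G}\Omega\to\partial_{Euc}\Omega$. Because the quasihyperbolic metric and the Euclidean metric induce the same topology on the open set $\Omega$, the identity is already a homeomorphism of the interiors; gluing this to the boundary homeomorphism produces a homeomorphism of the full closures $\Phi:\overline{\Omega}^G\to\overline{\Omega}^{Euc}$ extending $id$. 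The only point to check is continuity across the boundary, but this is exactly the content encoded in the phrase ``the identity extends as a homeomorphism $\Phi:\partial_{G}\Omega\to\partial_{Euc}\Omega$'', so it is routine.

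Next I would feed this into Theorem \ref{Main-thm-GB-EB}. A homeomorphism of closures is in particular a continuous surjective extension of the identity, so condition (2) of Theorem \ref{Main-thm-GB-EB} holds. Since $(\Omega,k_\Omega)$ is Gromov hyperbolic, the implication (2)$\Rightarrow$(1) of that theorem applies and gives that $\Omega$ is a QH-visibility domain. This already establishes the first assertion of the corollary.

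Finally, I would obtain the absence of geodesic loops from Theorem \ref{no loop}. The extension furnished by Theorem \ref{Main-thm-GB-EB} is unique, because $\Omega$ is dense in $\overline{\Omega}^G$ and $\overline{\Omega}^{Euc}$ is Hausdorff, so a continuous map on $\overline{\Omega}^G$ is determined by its values on $\Omega$; consequently it must coincide with the homeomorphism $\Phi$ constructed above from Theorem B. Thus the extension $\Phi$ appearing in Theorem \ref{Main-thm-GB-EB} is in fact a homeomorphism, and Theorem \ref{no loop} then forces $\Omega$ to have no geodesic loop in $\overline{\Omega}^{Euc}$, completing the proof. The only genuinely delicate step is this identification of the two extensions as the same map; once uniqueness of the continuous extension of $id$ is in place, the rest is a straight chain of citations.
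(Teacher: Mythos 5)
Your proposal is correct and is essentially the paper's own argument: the paper states this corollary as an immediate consequence of Theorem B combined with Theorems \ref{Main-thm-GB-EB} and \ref{no loop}, which is exactly the chain you follow. Your additional care in gluing the boundary homeomorphism to the identity on $\Omega$ and in using density of $\Omega$ in $\overline{\Omega}^G$ plus Hausdorffness of $\overline{\Omega}^{Euc}$ to identify the two extensions simply makes explicit what the paper leaves implicit.
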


Theorem \ref{Main-thm-GB-EB} and Theorem \ref{no loop} are reminiscent of the result of Bracci {\it et. al.} \cite[Theorem 3.3]{Bracci-2022} in the context of visibility domains with respect to the Kobayashi distance, and we follow the similar technique for the proof in the context of quasihyperbolic metric. This technique can also be found in the work of Bonk, Heinonnen and Koskela \cite{Koskela-2001} and V\"ais\"al\"a \cite{Vaisala-GH-1}. Further, we define the {\it quasihyperbolically well behaved domain} to characterize no geodesic loop property.

\begin{defn}
A bounded domain $\Omega\subset\mathbb{R}^n$ is said to be quasihyperbolically well behaved if, $\{x_k\}$, $\{y_k\}$ are sequences in $\Omega$ with $$\lim_{k\to \infty}x_k=\lim_{k\to \infty}y_k=p\in\partial_{Euc}\Omega,$$
and $\gamma_k:[a_k,b_k]\to \Omega$ be a sequence of qh-geodesic joining $x_k$ and $y_k$, then we have
$$\lim_{k\to \infty}k_{\Omega}(x_0,\gamma_k)=\infty.$$
for some (hence all) $x_0\in \Omega$.
\end{defn}

We show that a quasihyperbolically well behaved domain does not have any geodesic loop. 
\begin{prop}\label{Proposition-1}
If $\Omega\subset\mathbb{R}^n$ is quasihyperbolically well behaved then $\Omega$ has no geodesic loop in $\overline{\Omega}^{Euc}$.
\end{prop}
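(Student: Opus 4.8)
The plan is to argue by contradiction, exploiting the fact that a geodesic loop necessarily passes through a fixed interior point while its two ends both converge to a single Euclidean boundary point. Suppose $\Omega$ is quasihyperbolically well behaved and yet admits a geodesic loop in $\overline{\Omega}^{Euc}$. By definition this produces a bi-infinite quasihyperbolic geodesic $\sigma:\mathbb{R}\to\Omega$ (equivalently, two geodesic rays issuing from a common interior point) whose two ends satisfy $\lim_{t\to-\infty}\sigma(t)=\lim_{t\to+\infty}\sigma(t)=p$ for some $p\in\partial_{Euc}\Omega$, the limits being taken in the Euclidean metric.

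Next I would truncate this loop to manufacture the sequences demanded by the well-behaved hypothesis. For each $k\in\mathbb{N}$ set $x_k:=\sigma(-k)$, $y_k:=\sigma(k)$ and let $\gamma_k:=\sigma|_{[-k,k]}$. Since a restriction of a quasihyperbolic geodesic is again a quasihyperbolic geodesic, each $\gamma_k$ is a qh-geodesic joining $x_k$ and $y_k$. By the loop hypothesis $x_k\to p$ and $y_k\to p$ as $k\to\infty$, so the pair of sequences $\{x_k\},\{y_k\}$ together with the geodesics $\gamma_k$ satisfy exactly the hypotheses appearing in the definition of a quasihyperbolically well behaved domain.

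The contradiction then comes from a single elementary bound. Fix any $x_0\in\Omega$ and observe that the point $\sigma(0)$ lies in the image $\gamma_k([-k,k])$ for every $k$; hence
$$k_{\Omega}(x_0,\gamma_k)=\inf_{s\in[-k,k]}k_{\Omega}\bigl(x_0,\sigma(s)\bigr)\le k_{\Omega}\bigl(x_0,\sigma(0)\bigr)=:C<\infty$$
uniformly in $k$. This directly contradicts the defining property of a quasihyperbolically well behaved domain, which forces $k_{\Omega}(x_0,\gamma_k)\to\infty$. Therefore no geodesic loop can exist.

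The argument is essentially immediate once the correct sequences are extracted, so I do not anticipate a genuine obstacle; the only point requiring care is matching the paper's precise definition of a geodesic loop and confirming that it indeed furnishes a family of geodesic segments sharing a common interior point while both endpoints converge to the same Euclidean boundary point $p$. If the authors instead phrase a geodesic loop through the Gromov boundary, as two distinct Gromov points collapsing to one Euclidean point, I would first invoke that such points are joined by a bi-infinite geodesic and then run the same truncation argument.
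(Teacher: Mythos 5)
Your proposal is correct and follows essentially the same route as the paper's proof: assume a loop $\sigma$ with both ends tending to $p\in\partial_{Euc}\Omega$, truncate it to geodesic segments $\gamma_k$ joining sequences $x_k,y_k\to p$, and contradict the well-behaved condition because every $\gamma_k$ passes through the fixed point $\sigma(0)$. If anything, your closing bound $k_{\Omega}(x_0,\gamma_k)\le k_{\Omega}\bigl(x_0,\sigma(0)\bigr)$ is more direct than the paper's, which reaches the same uniform bound by first noting that the truncated segments converge uniformly on compacta to the loop itself.
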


Next we show that the converse of Proposition \ref{9.1}	holds under the visibility assumption.
\begin{prop}\label{Proposition-2}
	Let $\Omega\subset\mathbb{R}^n$ be a QH-visibility domain. Then $\Omega\subset\mathbb{R}^n$ is quasihyperbolically well behaved if, and only if, $\Omega$ has no geodesic loop in $\overline{\Omega}^{Euc}$.
\end{prop}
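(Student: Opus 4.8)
The plan is to prove the two directions of the equivalence separately, noting that one direction is already furnished by Proposition \ref{Proposition-1} and holds without the visibility hypothesis. Indeed, the forward implication---if $\Omega$ is quasihyperbolically well behaved then it has no geodesic loop---is exactly Proposition \ref{Proposition-1}, so no visibility assumption is needed there and nothing new must be argued. The entire content of this proposition therefore lies in the reverse implication: assuming $\Omega$ is a QH-visibility domain with no geodesic loop in $\overline{\Omega}^{Euc}$, I must show it is quasihyperbolically well behaved.

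For the reverse direction I would argue by contradiction. Suppose $\Omega$ is a QH-visibility domain with no geodesic loop, but fails to be quasihyperbolically well behaved. Then there exist sequences $\{x_k\}, \{y_k\}$ in $\Omega$ with $x_k \to p$ and $y_k \to p$ for a common point $p \in \partial_{Euc}\Omega$, and qh-geodesics $\gamma_k:[a_k,b_k]\to\Omega$ joining $x_k$ to $y_k$, such that $k_\Omega(x_0, \gamma_k)$ does \emph{not} tend to infinity. Passing to a subsequence, I may assume $k_\Omega(x_0,\gamma_k)$ stays bounded, say by some constant $M$. This means each geodesic $\gamma_k$ passes within a fixed quasihyperbolic distance $M$ of the basepoint $x_0$: pick $t_k \in [a_k,b_k]$ with $k_\Omega(x_0,\gamma_k(t_k)) \le M$. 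The points $\gamma_k(t_k)$ thus lie in the closed quasihyperbolic ball $\overline{B}_k(x_0,M)$, which is a compact subset of $\Omega$ (closed quasihyperbolic balls in a domain are compact). Hence a subsequence of $\{\gamma_k(t_k)\}$ converges to some interior point $z \in \Omega$.

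The next step is to extract a limiting geodesic that realizes a geodesic loop, contradicting the no-loop hypothesis. Since the points $\gamma_k(t_k)$ converge to an interior point $z$, I would reparametrize each $\gamma_k$ by arclength (or qh-arclength) so that $\gamma_k(t_k)$ is the basepoint of the parametrization, and then apply an Arzel\`a--Ascoli / local-uniform-convergence argument: qh-geodesics through a fixed compact set are locally equicontinuous (being $1$-Lipschitz in the qh-metric, and the qh-metric is locally comparable to the Euclidean metric away from the boundary), so a subsequence converges locally uniformly on each side to a pair of quasihyperbolic geodesic rays emanating from $z$, one with both endpoints limiting to $p$ in $\overline{\Omega}^{Euc}$. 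This produces a geodesic line (or a pair of rays) whose two ends both accumulate at the single boundary point $p$, which is precisely a geodesic loop in $\overline{\Omega}^{Euc}$ at $p$, contradicting the assumption that $\Omega$ has no geodesic loops.

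The main obstacle I anticipate is the limiting argument in the final step: making precise the passage from the sequence $\gamma_k$ to a genuine bi-infinite geodesic whose two ends both converge to $p$ in the Euclidean closure, and verifying that the resulting object meets the paper's formal definition of a ``geodesic loop.'' Two points need care. First, the geodesics $\gamma_k$ may be very long, so one must argue on each side of $t_k$ that the convergence extends to the full half-infinite rays and that the endpoints, which Euclideanly approach $p$, force the limiting rays' ends to $p$ as well; here the visibility hypothesis is essential, since without it a geodesic connecting two points both near $p$ could drift off to the boundary and fail to pass through a fixed compact set---visibility (via Theorem \ref{gromov product visibility} and the neighborhood/compact-set formulation in Definition \ref{main-defn}) is what guarantees the geodesics stay controlled and the limiting rays remain geodesics rather than degenerating. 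Second, one must confirm the limiting rays do not themselves escape to the boundary prematurely, again controlled by the uniform lower bound coming from visibility. Once the limiting geodesic loop is exhibited, the contradiction with the no-loop hypothesis closes the argument.
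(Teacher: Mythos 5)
Your proposal is correct and is essentially the paper's own argument: the forward direction is exactly Proposition \ref{Proposition-1}, and the reverse direction proceeds just as in the paper's (very terse) proof --- negate well-behavedness to obtain, after passing to a subsequence, geodesics $\gamma_k$ passing within bounded quasihyperbolic distance of $x_0$, reparametrize about those points, extract a limiting geodesic line by Arzel\`a--Ascoli using properness of $(\Omega,k_{\Omega})$, and conclude that both ends of the limit tend to $p$, producing a geodesic loop and hence a contradiction. The one imprecision is your commentary on where visibility enters: it is not what keeps the $\gamma_k$ near a fixed compact set (that is precisely the negation of well-behavedness, and visibility says nothing about geodesics joining points near the \emph{same} boundary point), nor is it needed for the limit to be a genuine geodesic line (automatic from properness); its only role is to guarantee that the two limit rays land at single boundary points and that these points equal $p$ (via Lemma \ref{lands} and Lemma \ref{continuous}), which is what makes the limit a geodesic loop in the paper's sense.
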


\subsection*{QH-Visibility domains} 
\addtocontents{toc}{\protect\setcounter{tocdepth}{1}}
Now, it is natural to ask which bounded domains in $\mathbb{R}^n$ are QH-visibility domains? Also we want to have a reasonably good collection of domains that are QH-visibility domains but not Gromov hyperbolic so that one can distinguish between these two notions of negative curvature. 
\par We first focus on the classical uniform domains and John domains. It is well-known in the Poincar\'e disk model of the hyperbolic space, hyperbolic geodesic between two points $x,y\in \mathbb{D}$, denote it by $\gamma_{hyp}[x,y]$, have the following properties:
\begin{itemize}
	\item[(i)] $l(\gamma_{hyp}[x,y])\le C|x-y|$, and
	\item[(ii)] $\min\{l(\gamma_{hyp}[x,z]),l(\gamma_{hyp}[z,y])\}\le C\,d(z,\partial \mathbb{D})$,
\end{itemize}
for all $z\in \gamma$, where $C=\pi/2$. The second condition is known as the cone arc condition by which we define the John domains (see Definition \ref{John}), which was first introduced by John \cite{John-1961} in the context of elasticity theory. If the curve satisfies both conditions mentioned above it is called double cone arc by which we define the uniform domains introduced by Martio and Sarvas \cite{Martio-1978} (see Definition \ref{uniform}). It is well-known that in uniform domains quasihyperbolic geodesics are cone arcs. In the John setting quasihyperbolic geodesics need not be double cone arcs. We observe that cone-arc condition on the quasihyperbolic geodesics in John domains gives us the visibility propert. Precisely, we prove the following result.

\begin{thm}\label{cone arc-visibility}
	Let $\Omega\subset \mathbb{R}^n$ be a bounded $c$-John domain. If every quasihyperbolic geodesic in $\Omega$ is a $b=b(c)$-cone arc then $\Omega$ is a visibility domain.
\end{thm}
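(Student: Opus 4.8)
The plan is to convert the cone-arc hypothesis into a uniform lower bound on how deeply any such geodesic must dip into $\Omega$, and then to take the set of sufficiently deep points as the required compact set. Fix distinct boundary points $p,q\in\partial_{Euc}\Omega$ and set $\delta=|p-q|>0$. I would take the neighborhoods $U=B(p,\delta/3)$ and $V=B(q,\delta/3)$; since $2(\delta/3)<\delta$, these have disjoint closures. Then define
$$K=K_{\{p,q\}}=\left\{\,z\in\Omega : d(z,\partial_{Euc}\Omega)\ge \frac{\delta}{6b}\,\right\}.$$
Because $\Omega$ is bounded, $K$ is closed and bounded in $\mathbb{R}^n$, and every point of $K$ has a fixed positive distance to the boundary, so $K$ is a genuine compact subset of $\Omega$; it depends only on the pair $\{p,q\}$ (through $\delta$) and on the cone-arc constant $b$, exactly as Definition \ref{main-defn} demands.

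Now let $\gamma:[0,T]\to\Omega$ be a quasihyperbolic geodesic with $x:=\gamma(0)\in U$ and $y:=\gamma(T)\in V$. Such a geodesic is rectifiable with finite Euclidean length: its quasihyperbolic length is finite, and since $d(\cdot,\partial_{Euc}\Omega)$ is bounded above on the bounded domain $\Omega$, the Euclidean length $L:=l(\gamma)$ is finite as well, so the arc-length midpoint $z^\ast$ — the point with $l(\gamma[x,z^\ast])=l(\gamma[z^\ast,y])=L/2$ — is well defined. By hypothesis $\gamma$ is a $b$-cone arc, and applying the cone-arc inequality at $z^\ast$ gives
$$\frac{L}{2}=\min\{\,l(\gamma[x,z^\ast]),\,l(\gamma[z^\ast,y])\,\}\le b\,d(z^\ast,\partial_{Euc}\Omega).$$

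The remaining step is the lower bound on $L$. Since $x\in U$ and $y\in V$, the triangle inequality yields $|x-y|\ge|p-q|-|x-p|-|y-q|>\delta-2(\delta/3)=\delta/3$, and the Euclidean length of any curve joining $x$ and $y$ is at least the Euclidean distance between them, so $L\ge|x-y|\ge\delta/3$. Combining this with the displayed inequality gives $d(z^\ast,\partial_{Euc}\Omega)\ge L/(2b)\ge\delta/(6b)$, hence $z^\ast\in K$ and in particular $\gamma([0,T])\cap K\ne\emptyset$. As the pair $\{p,q\}$ was arbitrary, every pair of distinct boundary points has visible quasihyperbolic geodesics, so $\Omega$ is a QH-visibility domain.

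I expect the argument to be conceptually short, with the only care-points being bookkeeping rather than a genuine obstacle: namely, confirming rectifiability of quasihyperbolic geodesics in a bounded domain so that the arc-length midpoint exists, and checking that $K$ is compact \emph{in} $\Omega$ rather than merely in $\overline{\Omega}$. The whole force of the proof is the cone-arc condition, which turns ``long in arc length'' into ``deep in the domain''; the $c$-John assumption itself is not invoked beyond providing the natural setting in which the cone-arc hypothesis on geodesics is expected to hold.
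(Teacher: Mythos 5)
Your proof is correct, and it is organized genuinely differently from the paper's. The paper argues by contradiction: it assumes no compact set works, extracts sequences $x_k\to p$, $y_k\to q$ and geodesics $\gamma_k$ whose maximal distance to $\partial_{Euc}\Omega$ tends to $0$, then locates a special point $z_k$ on each $\gamma_k$ at Euclidean distance $2R/3$ from $p$ (lying before the first crossing $\theta_k$ of the sphere $|z-p|=R$), applies the cone-arc inequality to the sub-geodesic $\gamma_k[x_k,\theta_k]$ (using that subarcs of quasihyperbolic geodesics are again geodesics), and derives $\delta_{\Omega}(z_k)\ge R/(3B)$, a contradiction. You instead give a direct proof: you exhibit the explicit compact set $K=\{z\in\Omega:\delta_{\Omega}(z)\ge \delta/(6b)\}$ and apply the cone-arc inequality at the arc-length midpoint $z^{\ast}$, where the minimum of the two subarc lengths is trivially $L/2$, so no auxiliary hitting point or sub-geodesic is needed. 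The two arguments share the same engine --- the min-form cone-arc inequality at a point in the middle of the geodesic, combined with the triangle-inequality bound $L\ge |x-y|\ge \delta/3$ --- and even yield comparable constants ($\delta/(6b)$ versus $R/(3B)$); what yours buys is an explicit $K$ and cleaner bookkeeping, while the paper's contradiction-with-sequences format is the template it reuses for the harder Theorem \ref{visibility criteria}, where no explicit compact set is available. Two small points: your reading of ``$b$-cone arc'' as the symmetric condition $\min\{l(\gamma[x,z]),l(\gamma[z,y])\}\le b\,\delta_{\Omega}(z)$ is the right one (it is exactly the inequality the paper's proof invokes, despite the fluctuating terminology between ``cone arc'' and ``double cone arc''), and your proof would work equally well under the stronger one-sided reading, so there is no ambiguity gap. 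Your closing observation is also accurate and matches the paper: the $c$-John hypothesis is never used beyond supplying the setting in which the cone-arc hypothesis is natural; only the cone-arc property of geodesics enters either proof.
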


\begin{rem}\label{inner uniform-visibility}
	As we know that every quasihyperbolic geodesic in Gromov hyperbolic John domain is cone arc \cite[Corollary 1.2]{Rasila-2022}, by Theorem \ref{cone arc-visibility}, we have that every Gromov hyperbolic bounded John domain is a QH-visibility domain. Also, in the Euclidean setting it is known that a John domain is Gromov hyperbolic if, and only if, it is inner uniform. In particular, every bounded inner-uniform domain is QH-visibility domain.
\end{rem}

We want to remove the condition of cone arc on quasihyperbolic geodesics from Theorem \ref{cone arc-visibility}. A careful observation shows that the quasihyperbolic metric in John domain satisfies a certain logarithmioc growth condition similar to Goldilocks domains. Thus, this motivates us to check the visibility of domains which have certain growth condition with respect to quasihyperbolic metric. Domain satisfing some growth conditions have great importance in geometric function theory. It is well-known that every $K$-quasiconformal map is locally H\"older continuous. To conclude the global H\"older continuity we need some extra geometric assumptions on the domians. Becker and Pommerenke \cite{BP-1988} have proved that If 
	$f:\mathbb{D} \to \Omega\subset \mathbb{C}$ is a conformal mapping, then $f$ is globally $\beta$ H\"older continuous, $0 <\beta\le1$, if and only if, the hyperbolic metric $h_{\Omega}$ in $\Omega$ satisfies a logarithmic growth condition 
	$$h_{\Omega}(f(0),z)\le \frac{1}{\beta}\,log \left(\frac{\delta_{\Omega}(f(0))}{\delta_{\Omega}(z)}\right)+C_0.$$
	Gehring and Martio \cite{Gehring-1985} have further extended this result for multiply connected domains and domains in higher dimensions. For a $K$-quasiconformal map $f:\Omega\to \Omega'$ Gehring and Martio have proved the global H\"older continuity of $f$ providing $\Omega$ is sufficiently nice (see \cite[p. 204]{Gehring-1985}) and $\Omega'$ satisfies the logarithmic growth condition with respect to quasihyperbolic metric.
	$$k_{\Omega'}(x_0,x)\le \frac{1}{\beta}\,log \left(\frac{\delta_{\Omega'}(x_0)}{\delta_{\Omega'}(x)}\right)+C_0.$$ 
	Thus, it is quite evident that studying visibility of such domains is of interest. Indeed, we prove the following result which we call as general visibility criteria. 
	
	\begin{thm}\label{visibility criteria}[General Visibility criteria]
		Let $\Omega\subset \mathbb{R}^n$ be a bounded domain. Fix a base point $x_0\in \Omega$ and $\phi:(0,\infty)\to(0,\infty)$ be an strictly increasing function with $\phi(t)\to \infty$ as $t\to \infty$ such that 
		\begin{equation}\label{int}
			\int_{0}^{\infty}\frac{dt}{\phi^{-1}(t)}<\infty
		\end{equation}
		If $\Omega$ satisfies the following growth condition 
		\begin{equation}\label{growth}
			k_{\Omega}(x_0,x)\le \phi\left(\frac{\delta_{\Omega}(x_0)}{\delta_{\Omega}(x)}\right),
		\end{equation}
		then $\Omega$ is a QH-visibility domain. 
	\end{thm}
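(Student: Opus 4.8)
The plan is to argue by contradiction, using the reformulation of non-visibility as the existence of a sequence of quasihyperbolic geodesics that escapes to the boundary, and then to show that \eqref{growth} together with \eqref{int} forces the Euclidean length of such geodesics to shrink to zero — which is incompatible with their endpoints converging to two distinct boundary points.

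First I would set up the contradiction. Suppose $\Omega$ is not a QH-visibility domain. Then there is a pair of distinct points $p,q\in\partial_{Euc}\Omega$ for which visibility fails. Taking the compact exhaustion $K_m=\{x\in\Omega:\delta_{\Omega}(x)\ge 1/m\}$ and running a diagonal argument against the sequential reformulation of QH-visibility (the Remark following Definition \ref{main-defn}), I would extract sequences $z_k\to p$, $w_k\to q$ and quasihyperbolic geodesics $\gamma_k:[0,T_k]\to\Omega$ joining them such that $\gamma_k$ eventually avoids every $K_m$; equivalently $\max_{s}\delta_{\Omega}(\gamma_k(s))\to 0$ as $k\to\infty$. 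Parametrizing each $\gamma_k$ by quasihyperbolic arclength gives $|\gamma_k'(s)|=\delta_{\Omega}(\gamma_k(s))$ almost everywhere, so the Euclidean length is $\ell_{Euc}(\gamma_k)=\int_0^{T_k}\delta_{\Omega}(\gamma_k(s))\,ds$. Since $|z_k-w_k|\le\ell_{Euc}(\gamma_k)$ while $|z_k-w_k|\to|p-q|>0$, it suffices to prove $\ell_{Euc}(\gamma_k)\to 0$.

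The heart of the argument is the pointwise bound coming from \eqref{growth}. Applying the increasing function $\phi^{-1}$ to $k_{\Omega}(x_0,\gamma_k(s))\le\phi\big(\delta_{\Omega}(x_0)/\delta_{\Omega}(\gamma_k(s))\big)$ gives
$$\delta_{\Omega}(\gamma_k(s))\le\frac{\delta_{\Omega}(x_0)}{\phi^{-1}\big(k_{\Omega}(x_0,\gamma_k(s))\big)}.$$
Let $s_*=s_*(k)$ minimize $s\mapsto k_{\Omega}(x_0,\gamma_k(s))$ and set $M_k=k_{\Omega}(x_0,\gamma_k(s_*))$. The standard estimate $k_{\Omega}(x_0,x)\ge\log\big(\delta_{\Omega}(x_0)/\delta_{\Omega}(x)\big)$ together with $\max_s\delta_{\Omega}(\gamma_k(s))\to 0$ shows $M_k\to\infty$. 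Because $\gamma_k$ is a unit-speed geodesic, the triangle inequality yields $k_{\Omega}(x_0,\gamma_k(s))\ge|s-s_*|-M_k$, while by definition $k_{\Omega}(x_0,\gamma_k(s))\ge M_k$. I would then split the integral at $|s-s_*|=2M_k$: on the near part (of length at most $4M_k$) the integrand is at most $\delta_{\Omega}(x_0)/\phi^{-1}(M_k)$, and on the far part $k_{\Omega}(x_0,\gamma_k(s))\ge|s-s_*|/2$, so the integrand is at most $\delta_{\Omega}(x_0)/\phi^{-1}(|s-s_*|/2)$.

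Finally I would invoke \eqref{int}. A short monotonicity computation — comparing $\int_t^{2t}dt'/\phi^{-1}(t')$ from below with $t/\phi^{-1}(2t)$ — shows that convergence of \eqref{int} forces $t/\phi^{-1}(t)\to 0$, so the near part contributes at most $4\,\delta_{\Omega}(x_0)\,M_k/\phi^{-1}(M_k)\to 0$. The far part is bounded by a constant multiple of the tail $\int_{M_k}^{\infty}dt/\phi^{-1}(t)$, which tends to $0$ since \eqref{int} converges and $M_k\to\infty$. Hence $\ell_{Euc}(\gamma_k)\to 0$, the desired contradiction. I expect the main obstacle to be the bookkeeping in this last integral estimate — in particular deriving the decay $t/\phi^{-1}(t)\to 0$ from \eqref{int} and handling the two regimes of the geodesic uniformly in $k$ — rather than the (routine) extraction of the escaping sequence. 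One could alternatively phrase the conclusion through the Gromov product criterion of Theorem \ref{gromov product visibility}, but the direct Euclidean-length estimate seems cleanest here.
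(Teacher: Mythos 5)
Your proof is correct, and while it opens exactly as the paper's does, it closes the argument by a genuinely different mechanism. Both proofs set up the same contradiction (escaping geodesics $\gamma_k$ joining $z_k\to p$, $w_k\to q$ with $\max_s\delta_\Omega(\gamma_k(s))\to 0$), use the quasihyperbolic arclength parametrization with $|\gamma_k'(s)|=\delta_\Omega(\gamma_k(s))$ a.e., and extract from \eqref{growth} the same kind of pointwise decay $\delta_\Omega(\gamma_k(s))\le\delta_\Omega(x_0)/\phi^{-1}(\text{linear in } s)$. The difference lies in how the contradiction is produced. The paper re-parametrizes so that $t=0$ sits at the point of maximal $\delta_\Omega$ along the geodesic (whence $|t|\le 2\phi(\delta_\Omega(x_0)/\delta_\Omega(g_k(t)))$), applies Arzel\'a--Ascoli to obtain a limit map $g$ on an interval $(a,b)$, and derives the contradiction from $g$ being simultaneously constant (because the Euclidean Lipschitz constants $\max_{\gamma_k}\delta_\Omega$ tend to $0$) and non-constant (tail-integral estimates keep $g(a^*)$ and $g(b^*)$ at distance at least $\|p-q\|$ minus small tails); the middle portion of each geodesic is never estimated through the growth condition at all. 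You instead anchor at the point $s_*$ minimizing $k_\Omega(x_0,\gamma_k(\cdot))$, split into the near regime $|s-s_*|\le 2M_k$ and the far regime $|s-s_*|>2M_k$, and bound the \emph{total} Euclidean length, concluding $\ell_{Euc}(\gamma_k)\to 0$, which contradicts $|z_k-w_k|\to|p-q|>0$; no compactness argument or limit curve appears. The cost of your route is the near-regime estimate, which needs the additional fact that \eqref{int} forces $t/\phi^{-1}(t)\to 0$; your derivation (bounding $\int_t^{2t}dt'/\phi^{-1}(t')$ below by $t/\phi^{-1}(2t)$ and invoking the Cauchy criterion for the convergent integral) is correct, and the paper's normalization at the max of $\delta_\Omega$ is precisely what lets it avoid this step. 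What you gain is a more quantitative and self-contained argument. One technicality you share with the paper and should spell out in a final write-up: $\phi^{-1}$ is defined only on the range of $\phi$; this is harmless in your argument, since every quasihyperbolic distance you feed into $\phi^{-1}$ is at least $M_k\to\infty$, so all arguments lie in the range of $\phi$ once $k$ is large.
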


\begin{rem}
	\begin{itemize}\label{John-visibility}
		\item[(i)] Theorem \ref{visibility criteria} gives us that domains satisfying quasihyperbolic boundary condition (see Definition \ref{QHBC-domains}) are QH-visibility domains. In particular, every bounded John domain is a QH-visibility domain (see Remark \ref{List}). This gives us a good collection of domains which are QH-visibility domains but not Gromov hyperbolic.
		\item[(ii)] Following Gotoh \cite{Gotoh-2000}, it is often convenient to write the  conditions in Theorem \ref{visibility criteria} in the following form 
		$$\delta_{\Omega}(x)\le \delta_{\Omega}(x_0)\psi(k_{\Omega}(x_0,x))$$
		where we assume $\psi:[0,\infty)\to (0,\infty)$ is non-increasing, and that
		$$\int_{0}^{\infty}\psi(t)dt<\infty.$$
	\end{itemize}
\end{rem}
 
	\subsection*{H-visibility domain and QH-visibility domain} 
	Note that in the case of planar hyperbolic domains the Kobayashi metric coincides with the hyperbolic metrc. It is natural to try to approach the visibility of planar domains with respect to hyperbolic metric from the corresponding results for the quasihyperbolic metric as these two some time similar but often quite different metric actually have quite similar geometry. We start with the following definition of hyperbolic visibility domain (H-visibility domain)
		\begin{defn}\label{H-visibility}
		Let $\Omega$ be a bounded hyperbolic domain in $\mathbb{C}$. Then $\Omega$ is said to be $H$-visibility domain if for every pair of points $p,q\in\partial_{Euc}\Omega$, $p\ne q$ there exists neighborhoods $U,V$ of $p,q$ respectively such that $\overline{U}\cap\overline{V}=\emptyset$ and a compact set $K_{\{p,q\}}\subset \Omega$ such that for any hyperbolic geodesic $\gamma:[0,T]\to \Omega$ with $\gamma(0)\in U$ and $\gamma(T)\in V$, we have 
		$$\gamma([0,T])\cap K\ne \phi.$$
	\end{defn}
	\begin{rem}\label{GH equivalence}
In 2020, Buckley and Herron \cite[Theorem B]{Buckley-2020} proved a marvolous result about Gromov hyperbolicity which says that a planar hyperbolic domain $\Omega$ is Gromov hyperbolic with respect to hyperbolic metric if, and only if, it is Gromov hyperbolic with respect to quasihyperbolic metric. Therefore, for planar hyperbolic domains we can simply say that $\Omega$ is Gromov hyperbolic without mentioning the metric.
	\end{rem} 
Motivated by this fact it is natural to consider the following problem
	\begin{prob}\label{H vs QH}
		Whether H-visibility and QH-visibility are equivalent for hyperbolic domains? In other words, does there exist a hyperbolic domain such that it is a visibility domain with respect to one metric but not the other? If such domain exists, the next problem is to describe this class of domains.
	\end{prob}
	
In this line we prove the following result which says that for Gromov hyperbolic domains H-visibility and QH-visibility are equivalent.

	\begin{thm}\label{GH-H-QH}
		Let $\Omega$ be a bounded hyperbolic domain. If $\Omega$ is Gromov hyperbolic, then the following are equivalent
		\begin{itemize}\label{H-QH}
			\item[(1)] $\Omega$ is H-visibility domain
			\item[(2)] $\Omega$ is a QH- visibility domain.
		\end{itemize}
	\end{thm}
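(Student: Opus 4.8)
The plan is to reduce the equivalence to a single comparison between the hyperbolic and quasihyperbolic \emph{Gromov products}, and then to an assertion about the trajectories of the two families of geodesics. Since $\Omega$ is Gromov hyperbolic, Remark~\ref{GH equivalence} (Buckley--Herron) guarantees that $(\Omega,h_\Omega)$ and $(\Omega,k_\Omega)$ are \emph{both} Gromov hyperbolic; moreover each is a complete, proper, geodesic metric space (completeness of $k_\Omega$ is classical, completeness of $h_\Omega$ is automatic for a hyperbolic domain, and properness then follows from the Hopf--Rinow theorem). First I would invoke Theorem~\ref{gromov product visibility} together with its verbatim hyperbolic analogue---its proof uses only that one works in a geodesic space with the stated visibility definition, so it applies equally to $h_\Omega$ and Definition~\ref{H-visibility}---to rephrase both hypotheses as Gromov-product conditions. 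Thus it suffices to prove: for all $p\neq q\in\partial_{Euc}\Omega$ and all $z_k\to p$, $w_k\to q$, one has $\limsup_k (z_k|w_k)^h_o<\infty$ if and only if $\limsup_k (z_k|w_k)^k_o<\infty$, where the superscript records the metric in which the Gromov product is taken.

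Next I would translate each side into a statement about geodesics meeting a fixed compact set. In a proper geodesic Gromov hyperbolic space the Gromov product satisfies $(z|w)_o = \operatorname{dist}(o,[z,w]) + O(\delta)$, where $[z,w]$ is any geodesic from $z$ to $w$ and $\delta$ is the hyperbolicity constant. Hence $\limsup_k(z_k|w_k)^h_o<\infty$ is equivalent to the $h_\Omega$-geodesics $\sigma_k$ joining $z_k,w_k$ remaining, for all large $k$, inside a fixed $h_\Omega$-ball about $o$; by properness such a ball is a Euclidean-compact subset of $\Omega$, so the condition says exactly that the $\sigma_k$ eventually meet a fixed compact $K\subset\Omega$. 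The same reformulation applies to the $k_\Omega$-geodesics $\gamma_k$ joining the same endpoints. Consequently the theorem reduces to comparing the trajectories of $\sigma_k$ and $\gamma_k$.

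The engine of the proof is then the following comparison, which I regard as the main obstacle. \textbf{Core claim:} hyperbolic geodesics are quasihyperbolic quasigeodesics, with constants depending only on $\Omega$; equivalently, every $h_\Omega$-geodesic $\sigma$ lies within a uniformly bounded $k_\Omega$-Hausdorff distance $D$ of the $k_\Omega$-geodesic $\gamma$ sharing its endpoints. Granting this, the stability (Morse) lemma in the Gromov hyperbolic space $(\Omega,k_\Omega)$ supplies the uniform bound $D$, and since the $k_\Omega$-neighbourhood $N^{k}_{D}(K)$ of a Euclidean-compact set is again Euclidean-compact (Hopf--Rinow once more), $\sigma_k$ meets a fixed compact set for all large $k$ if and only if $\gamma_k$ does. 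Combined with the previous paragraph this yields the simultaneous finiteness of the two Gromov products, and hence both implications of the theorem at once.

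It remains to establish the Core claim, and this is where the real work lies. I would build it on the density comparison $\lambda_\Omega\le 1/\delta_\Omega$---which already gives $h_\Omega\le k_\Omega$ and, along any curve, $h_\Omega$-length $\le k_\Omega$-length---together with the Beardon--Pommerenke comparison, by which $\lambda_\Omega$ agrees with $1/\delta_\Omega$ up to a factor measuring the local thinness of the boundary. Where the boundary is uniformly perfect the two densities are comparable and an $h_\Omega$-geodesic is automatically a $k_\Omega$-quasigeodesic; the factor degenerates only on thin cusp- or cylinder-type ends, where the $h_\Omega$-geodesic leaves essentially radially and makes no large $k_\Omega$-detour. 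The delicate points are to make the quasigeodesic constants \emph{uniform} and to exclude the \emph{wrap-around-a-cusp} phenomenon: a \emph{quasihyperbolic} geodesic joining two points deep in a single cusp can fail to be a hyperbolic quasigeodesic, but this occurs only when both endpoints approach the \emph{same} boundary point, which the hypothesis $p\neq q$ rules out. It is precisely because one needs only the single direction that an $h_\Omega$-geodesic is a $k_\Omega$-quasigeodesic---and never its (false) converse at cusps---that the argument goes through; quantifying this uniform comparison is the quantitative heart of the Buckley--Herron correspondence between the two metrics.
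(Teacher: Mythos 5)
Your overall architecture coincides with the paper's own proof: both arguments rest on (a) the Buckley--Herron comparison between the hyperbolic and quasihyperbolic metrics, and (b) the geodesic stability (Morse) lemma, Theorem \ref{morse}, in a Gromov hyperbolic space, followed by enlarging the compact set to a bounded neighbourhood, which is again compact by properness. The differences of detail are harmless: you pass through Gromov products (Theorem \ref{gromov product visibility} and its hyperbolic analogue) before returning to the ``geodesics meet a fixed compact set'' formulation, a detour the paper skips by working with the definition directly; and you use the direction ``hyperbolic geodesics are quasihyperbolic quasigeodesics'' with stability applied in $(\Omega,k_\Omega)$, whereas the paper uses ``quasihyperbolic geodesics are hyperbolic quasigeodesics'' with stability applied to the hyperbolic metric --- by symmetry of the Hausdorff distance either choice yields both implications at once.

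The gap is your treatment of the Core claim. What you call the ``quantitative heart'' is not something to re-derive inside this proof: it is precisely one half of the theorem of Buckley and Herron \cite[Theorem A]{Buckley-2020}, which the paper quotes immediately before this proof and then uses as a black box: for every $A\ge 1$ there are constants $H,K$ depending only on $A$ such that any hyperbolic $A$-quasigeodesic $\gamma^h$ and any quasihyperbolic $A$-quasigeodesic $\gamma^k$ with common endpoints $a,b$ satisfy $l_k(\gamma^h)\le K\,k(a,b)$ and $l_h(\gamma^k)\le H\,h(a,b)$; applied to subarcs this gives the chord-arc, hence quasigeodesic, property with uniform constants. Your sketch via Beardon--Pommerenke and a case analysis at cusps contains no actual estimates and does not produce uniform constants; this comparison is the content of a long and delicate paper and cannot be dispatched in a paragraph. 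Moreover, your justification for why only one direction is available rests on a false assertion: you claim that a quasihyperbolic geodesic joining points deep in a cusp ``can fail to be a hyperbolic quasigeodesic,'' but the inequality $l_h(\gamma^k)\le H\,h(a,b)$ (applied to subarcs) says exactly that this never happens, with constants uniform over all planar hyperbolic domains --- indeed this is the title assertion of \cite{Buckley-2020}, and a direct computation in the punctured disk confirms it. You are right that only one direction is needed, but the reason is the symmetry of the Hausdorff distance, not a failure of the converse. With the Core claim replaced by a citation of the Buckley--Herron theorem, your argument is correct and becomes essentially the paper's proof.
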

	
	Further, in this line we prove that if a bounded planar domain satisfies the growth condition \eqref{growth} with respect to quasihyperbolic metric then it is a H-visibility domain. Observe that if a domain satisfies a growth condition with respect to quasihyperbolic metric, then it satisfies the same growth condition with respect to hyperbolic metric.
	
	\begin{thm}\label{growth-H-visibility}
		Let $\Omega\subset \mathbb{C}$ be a bounded domain. Fix a base point $x_0\in \Omega$ and $\phi:(0,\infty)\to(0,\infty)$ be an strictly increasing function with $\phi(t)\to \infty$ as $t\to \infty$ such that 
		\begin{equation}\label{int}
			\int_{0}^{\infty}\frac{dt}{\phi^{-1}(t)}<\infty
		\end{equation}
		If quasihyperbolic metric $k_\Omega$ satisfies the following growth condition 
		\begin{equation}
			k_{\Omega}(x_0,x)\le \phi\left(\frac{\delta_{\Omega}(x_0)}{\delta_{\Omega}(x)}\right),
		\end{equation}
		then $\Omega$ is a H-visibility domain. 
	\end{thm}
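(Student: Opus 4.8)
The plan is to reduce the statement to the quasihyperbolic criterion already proved by transferring the growth hypothesis to the hyperbolic metric and then rerunning the argument of Theorem \ref{visibility criteria} with $h_\Omega$ in place of $k_\Omega$. First I would record the pointwise comparison of the two conformal densities. Writing $\lambda_\Omega$ for the density of the hyperbolic metric (normalized to curvature $-4$, so that $\mathbb{D}$ carries $1/(1-|z|^2)$), monotonicity of the hyperbolic metric applied to the maximal inscribed disc $D(z,\delta_\Omega(z))\subset\Omega$ gives $\lambda_\Omega(z)\le 1/\delta_\Omega(z)$, hence $h_\Omega\le k_\Omega$. Consequently the hypothesis \eqref{growth} passes verbatim to the hyperbolic metric, $h_\Omega(x_0,x)\le\phi(\delta_\Omega(x_0)/\delta_\Omega(x))$, which after inverting $\phi$ becomes the integrable decay estimate $\delta_\Omega(x)\le\delta_\Omega(x_0)\,\psi(h_\Omega(x_0,x))$ with $\psi=1/\phi^{-1}$ nonincreasing and $\int_0^\infty\psi<\infty$.

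Next I would set up the contradiction exactly as in the quasihyperbolic case. Assuming some pair $p\ne q$ fails to have visible hyperbolic geodesics, a diagonal argument produces hyperbolic geodesics $\gamma_k:[0,L_k]\to\Omega$ (parametrized by hyperbolic arclength) with $\gamma_k(0)\to p$, $\gamma_k(L_k)\to q$, lying outside an exhaustion of $\Omega$ by compacta, so that $\sup_s\delta_\Omega(\gamma_k(s))\to0$. Since the hyperbolic metric of a bounded domain is complete, closed hyperbolic balls about $x_0$ are compact in $\Omega$; therefore the nearest-point distances $d_k:=\min_s h_\Omega(x_0,\gamma_k(s))$ must tend to $\infty$, for otherwise a subsequence of nearest points would remain in a fixed compact set on which $\delta_\Omega$ is bounded below, contradicting the boundary-hugging property.

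The decisive estimate is to bound the Euclidean length of $\gamma_k$. Here I would use the Koebe-type lower bound $\lambda_\Omega(z)\ge 1/(4\delta_\Omega(z))$, so that the Euclidean speed of the unit-speed hyperbolic geodesic satisfies $|\gamma_k'(s)|=1/\lambda_\Omega(\gamma_k(s))\le 4\delta_\Omega(\gamma_k(s))$. Combining this with the decay estimate above and the triangle-inequality bound $h_\Omega(x_0,\gamma_k(s))\ge\max\{d_k,\,|s-s_0|-d_k\}$ along a geodesic, the integrability of $\psi$ forces the total Euclidean length of $\gamma_k$ to tend to $0$: one splits the integral over $\{|s-s_0|\le 2d_k\}$, where $t\psi(t)\to0$ makes the contribution $O(d_k\psi(d_k))$, and over its complement, where $\psi(\max)\le\psi(|s-s_0|/2)$ reduces it to the convergent tail $\int_{d_k}^\infty\psi$. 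This is the same computation that drives Theorem \ref{visibility criteria}, and since $|\gamma_k(0)-\gamma_k(L_k)|\to|p-q|>0$ it yields the contradiction.

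I expect the main obstacle to be precisely the lower density bound in the last step. The inequality $\lambda_\Omega\ge 1/(4\delta_\Omega)$ is the Koebe one-quarter estimate and is valid for simply connected domains, but for general multiply connected planar domains the sharp lower bound for $\lambda_\Omega$ carries a logarithmic correction, and $1/\lambda_\Omega$ may exceed any fixed multiple of $\delta_\Omega$ near thin parts of the boundary or near isolated boundary points. To cover the general case I would replace the naive speed bound by the Gehring-Hayman theorem, controlling the Euclidean length of a hyperbolic geodesic by that of competing curves and thereby importing the Euclidean length estimate established for quasihyperbolic geodesics; making this comparison uniform along the escaping sequence $\gamma_k$ is where the real work lies.
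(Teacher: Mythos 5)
Your argument is correct through the tail estimate, and it does prove the theorem whenever the Koebe-type bound $\lambda_\Omega \ge 1/(4\delta_\Omega)$ is available, i.e.\ for simply connected $\Omega$ (and, with a domain-dependent constant, whenever $\partial_{Euc}\Omega$ is uniformly perfect). But the theorem is asserted for an \emph{arbitrary} bounded domain in $\mathbb{C}$, and there the gap you flag is fatal to your route as written: by Beardon--Pommerenke (\cite{BP-1978,P-1979}; see item (iv) of the paper's comparison of $h_\Omega$ and $k_\Omega$), the bound $\inf_\Omega \lambda_\Omega\delta_\Omega>0$ holds precisely when the boundary is uniformly perfect, and near an isolated boundary point $1/(\lambda_\Omega\delta_\Omega)$ blows up logarithmically, so your Euclidean speed bound --- and with it both halves of your length estimate --- fails. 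More importantly, the repair you propose cannot work. A Gehring--Hayman comparison bounds $l_{Euc}(\gamma_k)$ by a constant times the Euclidean length of competing curves with the same endpoints; but \emph{every} curve joining $x_k$ to $y_k$ has Euclidean length at least $|x_k-y_k|\to|p-q|>0$ (in particular the quasihyperbolic geodesics, which under the growth hypothesis pass through a fixed compact set by Theorem \ref{visibility criteria}, are not Euclidean-short). So no comparison with competing curves can ever yield the conclusion you need, namely $l_{Euc}(\gamma_k)\to 0$; that must come from a pointwise speed bound along $\gamma_k$ itself, which is exactly what is missing. (Besides, Gehring--Hayman for hyperbolic geodesics is itself a simply-connected/Gromov-hyperbolic phenomenon, not a fact about all bounded plane domains.)

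The ingredient that closes the gap --- and the one the paper uses --- is Buckley--Herron's theorem (Theorem \ref{BH}), valid for \emph{every} hyperbolic plane domain: $l_k(\gamma^h)\le K\,k_\Omega(a,b)$ for any hyperbolic geodesic $\gamma^h$ with endpoints $a,b$. Consequently, if you reparametrize your hyperbolic geodesics by \emph{quasihyperbolic} arclength, $g_k:[a_k,b_k]\to\Omega$, then each $g_k$ is a $(K,0)$-quasi-isometric embedding into $(\Omega,k_\Omega)$, while its Euclidean speed is exactly $\delta_\Omega(g_k(t))$ a.e. --- no lower bound on $\lambda_\Omega$ is needed anywhere, and your transfer of the growth condition to $h_\Omega$ becomes unnecessary. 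Normalizing $\delta_\Omega(g_k(0))=\max_t\delta_\Omega(g_k(t))$, the growth hypothesis gives $|t|/K\le 2\phi\bigl(\delta_\Omega(x_0)/\delta_\Omega(g_k(t))\bigr)$, hence $\delta_\Omega(g_k(t))\le \delta_\Omega(x_0)/\phi^{-1}\bigl(|t|/(2K)\bigr)$, and your splitting/tail computation (equivalently, the Claim 1/Claim 2 argument of Theorem \ref{visibility criteria}) runs verbatim with $\phi^{-1}(|t|/2)$ replaced by $\phi^{-1}(|t|/(2K))$. In short: keep your outline, but replace ``hyperbolic arclength plus Koebe'' with ``quasihyperbolic arclength plus Buckley--Herron.''
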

	
\begin{rem}
	Theorem \ref{growth-H-visibility} together with Theorem \ref{visibility criteria} gives us a large class of planar domains which are both H-visibility domain and QH-visibility domain. We list some examples of planar domains here
	\begin{itemize}
		\item[(a)] Bounded uniform domains.
		\item[(b)] Bounded John domains.
		\item[(c)] QHBC domains.
	\end{itemize}
\end{rem}

\subsection*{QH-visibility of domains quasiconformally equivalent to the unit ball} Further, we study visibility of domains which are quasiconformally equivalent to the unit ball. Observe that such domains are Gromov hyperbolic with respect to quasihyperbolic metric. Thus, to study visibility of such domains we use  Theorem \ref{Main-thm-GB-EB} and Theorem \ref{no loop}. As an application of Theorem \ref{Main-thm-GB-EB} and Theorem \ref{no loop} we prove following results
	
	\begin{thm}\label{LC along boundry}
		Let $\Omega\in \mathbb{R}^n$, $n\ge 2$ be a bounded domain which is quasiconformally equivalent to unit ball $\mathbb{B}^n$. Then following are equivalent
		\begin{itemize}
			\item[(i)] $\Omega$ is a QH-visibility domain
			\item[(ii)] $\Omega$ is finitely connected along the boundary
		\end{itemize}
		Moreover, $\Omega$ is QH-visibility domain with no geodesic loop in $\overline{\Omega}^{Euc}$ if, and only if, $\Omega$ is locally connected along the boundary.  
	\end{thm}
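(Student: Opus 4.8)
The plan is to establish the theorem in two parts, exploiting the fact that domains quasiconformally equivalent to $\mathbb{B}^n$ are Gromov hyperbolic with respect to the quasihyperbolic metric (this follows from the quasiconformal invariance of Gromov hyperbolicity, since $\mathbb{B}^n$ is Gromov hyperbolic), so that Theorem \ref{Main-thm-GB-EB} and Theorem \ref{no loop} both apply. For the equivalence of (i) and (ii), I would connect QH-visibility to the property that the identity map extends continuously and surjectively onto the Euclidean closure. By Theorem \ref{Main-thm-GB-EB}, $\Omega$ being a QH-visibility domain is equivalent to the existence of a continuous surjective extension $\Phi:\overline{\Omega}^G\to\overline{\Omega}^{Euc}$. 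So the real content is to identify ``$\Phi$ extends continuously and surjectively'' with ``$\Omega$ is finitely connected along the boundary.'' The Gromov boundary of $\Omega$ is homeomorphic to $\mathbb{S}^{n-1}$ (being quasiconformally equivalent to the ball, whose Gromov boundary is the sphere), so $\overline{\Omega}^G$ is homeomorphic to $\overline{\mathbb{B}^n}$, a compact space with locally connected boundary.

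First I would recall the standard characterization: a domain is finitely connected at a boundary point $p$ if $p$ has arbitrarily small neighborhoods $U$ such that $U\cap\Omega$ has finitely many components whose closures contain $p$, and the continuous image of a compact, locally connected space is governed by how the preimage fibers of the extension behave. The key mechanism is that a continuous surjective extension from the (sphere-like) Gromov boundary exists precisely when each Euclidean prime-end-type accumulation is reached by finitely many Gromov boundary points; this finiteness of the fibers $\Phi^{-1}(p)$ for $p\in\partial_{Euc}\Omega$ is exactly the finite connectivity condition. I would therefore argue that the failure of continuous extension corresponds to a boundary point reached along infinitely many ``channels,'' which is precisely infinite connectivity along the boundary, and conversely that finite connectivity lets one define $\Phi$ consistently and verify continuity via the visibility/Gromov-product estimate of Theorem \ref{gromov product visibility}.

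For the ``moreover'' statement, I would invoke Theorem \ref{no loop}: the extension $\Phi$ is a homeomorphism if and only if $\Omega$ has no geodesic loops in $\overline{\Omega}^{Euc}$. Under the quasiconformal equivalence to $\mathbb{B}^n$, a homeomorphic extension $\Phi:\overline{\Omega}^G\cong\overline{\mathbb{B}^n}\to\overline{\Omega}^{Euc}$ means the Euclidean closure is itself a topological closed ball, i.e.\ $\partial_{Euc}\Omega$ is homeomorphic to $\mathbb{S}^{n-1}$ via a map compatible with $\overline{\Omega}$; this forces $\overline{\Omega}^{Euc}$ to be locally connected along the boundary (since the homeomorphic image of the locally connected sphere is locally connected), and conversely local connectivity along the boundary rules out the geodesic loops. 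I would tie the absence of geodesic loops to injectivity of $\Phi$ on the boundary sphere, where a geodesic loop would produce two distinct boundary points collapsing to a single Euclidean point, contradicting local connectivity.

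I expect the main obstacle to be the precise dictionary between the topological notions---finite connectivity along the boundary and local connectivity along the boundary---and the metric/Gromov-theoretic conditions, namely the fiber structure of $\Phi$ and the existence of geodesic loops. The delicate step is showing that finite connectivity at every boundary point is both necessary and sufficient for $\Phi$ to be well-defined and continuous onto all of $\partial_{Euc}\Omega$; necessity requires ruling out the ``comb-like'' infinite-channel examples such as \eqref{not compact}, and sufficiency requires a careful neighborhood argument using the visibility property to control how Gromov sequences converging to a single Euclidean boundary point must cluster into finitely many Gromov limits. Making this correspondence rigorous, rather than the invocation of Theorems \ref{Main-thm-GB-EB} and \ref{no loop}, will be where the real work lies.
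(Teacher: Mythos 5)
Your top-level reduction is the same as the paper's: quasiconformal invariance of Gromov hyperbolicity makes $(\Omega,k_\Omega)$ Gromov hyperbolic, and Theorems \ref{Main-thm-GB-EB} and \ref{no loop} convert statement (i) and the ``moreover'' clause into statements about the extension $\Phi$ of the identity map. But at that point the paper does something you do not: it \emph{transfers the extension problem to the ball}. Since $\mathbb{B}^n$ is a QH-visibility domain with no geodesic loops, $\widehat{id_{\mathbb{B}^n}}:\overline{\mathbb{B}^n}^G\to\overline{\mathbb{B}^n}^{Euc}$ is a homeomorphism; since the quasiconformal map $f:\mathbb{B}^n\to\Omega$ is a rough quasi-isometry, Theorem \ref{QI extension} gives a homeomorphism $\widetilde f:\overline{\mathbb{B}^n}^G\to\overline{\Omega}^G$; composing, the extension of $id_\Omega$ exists (resp.\ is a homeomorphism) if and only if $f$ itself extends continuously (resp.\ homeomorphically) to the Euclidean closures. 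The classical V\"ais\"al\"a--N\"akki theorems (Theorems F and H in the paper) then translate that, in both directions, into finite (resp.\ local) connectivity along the boundary. Those classical theorems are the linchpin, and they are used as black boxes.

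Your proposal replaces this transfer-plus-citation by a direct ``dictionary,'' and that is where there are genuine gaps. First, the claimed mechanism --- that the continuous surjective extension exists precisely when each fiber $\Phi^{-1}(p)$ is finite, and that this finiteness ``is exactly'' finite connectivity along the boundary --- is not correct as stated and is never proved; the hard direction (finite connectivity along the boundary implies the extension/visibility) cannot be obtained by topology and fiber-counting alone: it is exactly the content of the quasiconformal boundary-extension theory, whose proof requires modulus/capacity estimates. Second, your plan for that direction is circular: you propose to ``verify continuity via the visibility/Gromov-product estimate of Theorem \ref{gromov product visibility},'' but in the implication (ii)\,$\implies$\,(i) visibility is the conclusion, not a usable hypothesis. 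Third, in the ``moreover'' part you infer local connectivity along the boundary from the fact that a homeomorphic extension makes $\partial_{Euc}\Omega$ a homeomorphic image of $\mathbb{S}^{n-1}$; this conflates local connectedness of $\partial_{Euc}\Omega$ \emph{as a set} with the domain being locally connected \emph{along} its boundary (a condition on $U\cap\Omega$). The slit disk shows these are different: its boundary is locally connected as a set, yet the domain fails to be locally connected along the boundary at slit points. So the step you yourself flag as ``where the real work lies'' is precisely the classical theorem the paper invokes, and the mechanism you sketch for re-deriving it would not go through as described.
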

	
	In dimension $n=2$, since a domain is quasiconformally equivalent to unit ball ({\it i.e.}, unit disk) if, and only if, it is simply connected, therefore, Theorem \ref{LC along boundry} reduces to the following corollary.
	
	
	\begin{cor}\label{LC boundary}
		Let $\Omega\subsetneq \mathbb{C}$ be simply connected. Then the following are equivalent
		\begin{itemize}
			\item[(i)] $\Omega$ is a QH-visibility domain
			\item[(ii)] $\partial_{Euc}\Omega$ is locally connected.
		\end{itemize}
		Moreover, $\Omega$ is a QH-visibility domain with no geodesic loop in $\overline{\Omega}^{Euc}$ if, and only if, $\Omega$ is locally connected along the boundary, or simply, $\Omega$ is a Jordan domain.
	\end{cor}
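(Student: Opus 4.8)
The plan is to run the proof through the Riemann map and transfer the two extension dichotomies of Theorem~\ref{Main-thm-GB-EB} and Theorem~\ref{no loop} to the classical boundary behaviour of conformal maps. Since $\Omega\subsetneq\mathbb{C}$ is simply connected (and, as throughout, bounded), the Riemann Mapping Theorem furnishes a conformal homeomorphism $f\colon\mathbb{D}\to\Omega$. By the Koebe distortion theorem the quasihyperbolic density $1/\delta_{\Omega}$ and the hyperbolic density are comparable, so $k_{\Omega}$ and the hyperbolic metric $h_{\Omega}$ are bi-Lipschitz equivalent; as $f$ is a hyperbolic isometry, $f\colon(\mathbb{D},k_{\mathbb{D}})\to(\Omega,k_{\Omega})$ is a quasi-isometry. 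In particular $(\Omega,k_{\Omega})$ is Gromov hyperbolic, so the hypotheses of Theorem~\ref{Main-thm-GB-EB} and Theorem~\ref{no loop} are met, and $f$ extends to a homeomorphism $\hat f\colon\overline{\mathbb{D}}^{G}\to\overline{\Omega}^{G}$ of Gromov compactifications. Here $\overline{\mathbb{D}}^{G}$ is the closed unit disk, since for $\mathbb{D}$ the Gromov boundary is $\partial\mathbb{D}$ with its Euclidean topology.

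The next step is to match the two extension problems. The identity $id\colon(\Omega,k_{\Omega})\to(\Omega,d_{Euc})$ extends to a continuous (resp.\ homeomorphic) map $\Phi\colon\overline{\Omega}^{G}\to\overline{\Omega}^{Euc}$ if and only if $\Phi\circ\hat f\colon\overline{\mathbb{D}}^{G}\to\overline{\Omega}^{Euc}$ is continuous (resp.\ a homeomorphism); but $(\Phi\circ\hat f)|_{\mathbb{D}}=id\circ f=f$, so $\Phi\circ\hat f$ is precisely the continuous (resp.\ homeomorphic) extension of the conformal map $f$ to $\overline{\mathbb{D}}$. Thus the existence of a continuous extension of $id$ is equivalent to the existence of a continuous extension of $f$, and $\Phi$ is a homeomorphism exactly when $f$ extends to a homeomorphism $\overline{\mathbb{D}}\to\overline{\Omega}^{Euc}$.

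Now I invoke Carath\'eodory's boundary theory. A conformal map $f$ of $\mathbb{D}$ onto a bounded simply connected domain extends continuously to $\overline{\mathbb{D}}$ if and only if $\partial_{Euc}\Omega$ is locally connected, and extends to a homeomorphism of $\overline{\mathbb{D}}$ onto $\overline{\Omega}^{Euc}$ if and only if $\partial_{Euc}\Omega$ is a Jordan curve, i.e.\ $\Omega$ is a Jordan domain. Combining with Theorem~\ref{Main-thm-GB-EB}: $\Omega$ is a QH-visibility domain iff $\Phi$ exists as a continuous surjection iff $f$ extends continuously iff $\partial_{Euc}\Omega$ is locally connected, which is the equivalence of (i) and (ii). For the final assertion, Theorem~\ref{no loop} gives that $\Omega$ is a QH-visibility domain with no geodesic loop in $\overline{\Omega}^{Euc}$ iff $\Phi$ is a homeomorphism, which by the previous paragraph and Carath\'eodory happens iff $f$ extends homeomorphically iff $\Omega$ is a Jordan domain (equivalently, $\Omega$ is locally connected along the boundary).

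The only genuinely delicate point is the bookkeeping in the middle step: one must be sure that the quasi-isometry $f$ really does induce a homeomorphism of Gromov compactifications carrying $\overline{\mathbb{D}}$ onto $\overline{\Omega}^{G}$, and that this identification is compatible with the Euclidean-side extension so that the two extension problems coincide. This rests on the standard functoriality of the Gromov boundary under quasi-isometries (see \cite{Bridson-book}) together with the identification of the Gromov compactification of $\mathbb{D}$ with the closed disk; once these are in place the rest is a direct translation through $f$. If one prefers to avoid the Riemann map, the same two equivalences follow by specializing Theorem~\ref{LC along boundry}, using that a simply connected planar domain is finitely connected along the boundary precisely when $\partial_{Euc}\Omega$ is locally connected, and locally connected along the boundary precisely when it is a Jordan domain.
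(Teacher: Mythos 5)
Your proposal is correct. It is worth recording how it sits relative to the paper: the paper's own proof of Corollary \ref{LC boundary} is a one-line specialization of Theorem \ref{LC along boundry} to the planar case, using Remark \ref{Rem 1} (for simply connected planar domains, finite connectivity along the boundary is equivalent to local connectivity of $\partial_{Euc}\Omega$) and Remark \ref{Rem 2} (local connectivity along the boundary is equivalent to being a Jordan domain) --- which is exactly the alternative you sketch in your closing sentence. Your main route instead re-derives the planar case of Theorem \ref{LC along boundry} from scratch: the Riemann map is a quasihyperbolic quasi-isometry (via Koebe, i.e.\ the comparability $k_{\Omega}/2\le h_{\Omega}\le 2k_{\Omega}$), hence induces a homeomorphism of Gromov compactifications by Theorem \ref{QI extension}, and the extension problem for the identity map appearing in Theorems \ref{Main-thm-GB-EB} and \ref{no loop} is thereby transported to the classical Carath\'eodory extension problem for the conformal map. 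This is the same mechanism the paper uses to prove Theorem \ref{LC along boundry} in all dimensions, with your conformal map and Carath\'eodory's theorems playing the roles of the quasiconformal map and the V\"ais\"al\"a--N\"akki extension theorem. What your route buys is a self-contained planar proof resting only on classical function theory; what the paper's route buys is brevity, since the general quasiconformal theorem has already been established and the corollary then follows by citation.
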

	
	
	\subsection{Continuous extension of quasihyperbolic quasi-isometries and isometries} The main motivation to study continuous extension of quasi-isometry is the fact that every quasiconformal map is a rough quasi-isometry with respect to quasihyperbolic metric \cite[Theorem 3]{Gehring-1979}, and continuous extension of quasiconformal map is always of some interest. For more details on the continuous extension of quasiconformal mappings we refer to \cite{Vaisala-Book}. We first define a domain which we call as quasihyperbolically well behaved which is motivated by the no geodesic loop property (see Section \ref{Isometry and quasiisometry}). We prove the following two results related to isometries and $(1,\mu)$-quasihyperbolic quasi-isometry.

	\begin{thm}\label{Isometry}
		Suppose $\Omega\subset\mathbb{R}^n$ is quasihyperbolically well behaved domain and $\Omega'\subset\mathbb{R}^n$ is a visibility domain. Then every quasihyperbolic isometry $f:(\Omega,k_\Omega)\to (\Omega,k_{\Omega})$ extends to a continuous map $F:\overline{\Omega}^{Euc}\to \overline{\Omega'}^{Euc}$.
	\end{thm}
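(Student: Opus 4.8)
We read the codomain as $\Omega'$, so that $f:(\Omega,k_\Omega)\to(\Omega',k_{\Omega'})$ is a quasihyperbolic isometric embedding, $k_{\Omega'}(f(x),f(y))=k_\Omega(x,y)$ for all $x,y\in\Omega$. The plan is to define $F$ on $\partial_{Euc}\Omega$ as a Euclidean limit of $f$ along interior sequences approaching each boundary point, to prove that this limit exists and is independent of the chosen sequence (this is where the two hypotheses combine), and finally to verify continuity of $F$ on all of $\overline{\Omega}^{Euc}$ by an approximation argument.

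First I would record three preliminary facts. Since $\Omega,\Omega'$ are bounded domains in $\mathbb{R}^n$, the spaces $(\Omega,k_\Omega)$ and $(\Omega',k_{\Omega'})$ are proper geodesic metric spaces whose metric topology coincides with the Euclidean topology; consequently $f$ is continuous for the Euclidean topologies, closed quasihyperbolic balls are Euclidean-compact subsets of the respective domains, and $w\mapsto k_{\Omega'}(f(x_0),w)$ is bounded on every compact subset of $\Omega'$. Next, an isometric embedding carries geodesics to geodesics: if $\gamma:[0,T]\to\Omega$ is a unit-speed $k_\Omega$-geodesic, then $k_{\Omega'}(f\gamma(s),f\gamma(t))=|s-t|$, so $f\circ\gamma$ is a $k_{\Omega'}$-geodesic joining $f(\gamma(0))$ and $f(\gamma(T))$. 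Finally, if $x_k\to p\in\partial_{Euc}\Omega$ then $k_\Omega(x_0,x_k)\to\infty$, hence $k_{\Omega'}(f(x_0),f(x_k))\to\infty$, so every Euclidean subsequential limit of $f(x_k)$ lies in $\partial_{Euc}\Omega'$.

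The heart of the argument, and the step I expect to be the main obstacle, is well-definedness of the boundary value. Fix $p\in\partial_{Euc}\Omega$ and a sequence $x_k\to p$. Suppose for contradiction that $f(x_k)$ has two subsequential limits $p'\ne q'$ in $\partial_{Euc}\Omega'$; after passing to subsequences we obtain $x_k\to p$ and $y_k\to p$ with $f(x_k)\to p'$ and $f(y_k)\to q'$. Let $\gamma_k$ be a $k_\Omega$-geodesic in $\Omega$ joining $x_k$ and $y_k$, so that $\sigma_k:=f\circ\gamma_k$ is a $k_{\Omega'}$-geodesic joining $f(x_k)$ and $f(y_k)$. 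Because $\Omega'$ is a QH-visibility domain, the visibility property applied to the pair $\{p',q'\}$ furnishes a compact set $K'\subset\Omega'$ and an index $k_0$ so that $\sigma_k\cap K'\ne\emptyset$ for all $k\ge k_0$; pick $z_k\in\gamma_k$ with $f(z_k)\in K'$. Then $k_\Omega(x_0,z_k)=k_{\Omega'}(f(x_0),f(z_k))$ stays bounded, by boundedness of $k_{\Omega'}(f(x_0),\cdot)$ on $K'$. On the other hand, $\Omega$ is quasihyperbolically well behaved and $x_k,y_k\to p$, so $k_\Omega(x_0,\gamma_k)\to\infty$, which forces $k_\Omega(x_0,z_k)\to\infty$ — a contradiction. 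Hence every subsequence of $f(x_k)$ has a convergent further subsequence, and all such limits coincide, so $f(x_k)$ converges in $\overline{\Omega'}^{Euc}$; interlacing two sequences tending to $p$ shows the limit is independent of the sequence. This lets me set $F(p):=\lim_{\Omega\ni x\to p}f(x)$ and $F|_\Omega:=f$.

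It remains to check continuity of $F$ on $\overline{\Omega}^{Euc}$. On $\Omega$ this is immediate from continuity of $f$. At a point $p\in\partial_{Euc}\Omega$, given any sequence $\xi_k\to p$ in $\overline{\Omega}^{Euc}$, for each $k$ I would choose $z_k\in\Omega$ with $d_{Euc}(z_k,\xi_k)<1/k$ and $d_{Euc}(f(z_k),F(\xi_k))<1/k$; this is possible whether $\xi_k$ is an interior or a boundary point, directly from the definition of $F$. Then $z_k\to p$, so $f(z_k)\to F(p)$ by the definition of $F(p)$, and therefore $F(\xi_k)\to F(p)$. Since $\overline{\Omega}^{Euc}$ is metrizable, this sequential continuity yields continuity of $F$, which completes the proof.
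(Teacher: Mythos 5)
Your proposal is correct and follows essentially the same route as the paper: push geodesics forward under the isometry, apply the visibility of $\Omega'$ to the pushed-forward geodesics to obtain a bounded quasihyperbolic distance from a base point, pull this bound back through the isometry, and contradict the quasihyperbolically well behaved hypothesis on $\Omega$; then define $F$ by boundary limits and verify continuity by approximating boundary points with interior points. The only cosmetic difference is that you invoke the visibility definition directly (compact set intersection) where the paper cites its equivalent reformulation (Lemma \ref{iff}), and your continuity check at the boundary is slightly more carefully stated, but the argument is the same.
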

	
\begin{thm}\label{quasiisometry}
Suppose $\Omega\subset\mathbb{R}^n$ is quasihyperbolically well behaved domain and $\Omega'\subset\mathbb{R}^n$ is a visibility domain. Then every continuous $(1,\mu)$-quasihyperbolic quasi-isometric embedding $f:(\Omega,k_\Omega)\to (\Omega',k_{\Omega'})$ extends to a continuous map $F:\overline{\Omega}^{Euc}\to \overline{\Omega'}^{Euc}$.
\end{thm}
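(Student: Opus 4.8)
The plan is to establish $F$ via the standard sequential criterion: since $\overline{\Omega}^{Euc}$ and $\overline{\Omega'}^{Euc}$ are compact, it suffices to show that for every $p\in\partial_{Euc}\Omega$ and every sequence $x_k\to p$ in $\Omega$, the images $f(x_k)$ converge in $\overline{\Omega'}^{Euc}$ to a limit depending only on $p$; continuity of $F$ on $\overline{\Omega}^{Euc}$ then follows by a routine diagonal argument. The first preliminary step is to check that $f$ carries boundary-approaching sequences to boundary-approaching sequences. If $x_k\to p\in\partial_{Euc}\Omega$ then $\delta_\Omega(x_k)\to 0$, so $k_\Omega(x_0,x_k)\ge\log(\delta_\Omega(x_0)/\delta_\Omega(x_k))\to\infty$; the lower bound in the $(1,\mu)$-quasi-isometry inequality gives $k_{\Omega'}(f(x_0),f(x_k))\ge k_\Omega(x_0,x_k)-\mu\to\infty$, so $f(x_k)$ leaves every $k_{\Omega'}$-compact subset of $\Omega'$ and each subsequential limit of $f(x_k)$ lies in $\partial_{Euc}\Omega'$.

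For well-definedness I would argue by contradiction. Passing to subsequences and interleaving, it suffices to rule out sequences $a_k,b_k\to p$ with $f(a_k)\to p'$ and $f(b_k)\to q'$ where $p',q'\in\partial_{Euc}\Omega'$ and $p'\neq q'$. Let $\gamma_k\colon[0,T_k]\to\Omega$ be a quasihyperbolic geodesic joining $a_k$ and $b_k$. Since $a_k$ and $b_k$ tend to the \emph{same} point $p$ and $\Omega$ is quasihyperbolically well behaved, $k_\Omega(x_0,\gamma_k)\to\infty$; equivalently, the $\gamma_k$ eventually avoid every compact subset of $\Omega$. Pushing forward, $f\circ\gamma_k$ is a continuous $(1,\mu)$-quasigeodesic in $\Omega'$ joining $f(a_k)$ (near $p'$) to $f(b_k)$ (near $q'$). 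The engine of the contradiction is a pull-back: if I can locate a point $f(\gamma_k(t_k))$ in a fixed compact set $K'\subset\Omega'$, then $k_{\Omega'}(f(x_0),f(\gamma_k(t_k)))\le R':=\max_{w\in K'}k_{\Omega'}(f(x_0),w)$, and the quasi-isometry inequality gives $k_\Omega(x_0,\gamma_k(t_k))\le R'+\mu$. Thus $\gamma_k$ meets the fixed $k_\Omega$-ball of radius $R'+\mu$ about $x_0$, contradicting the escape of $\gamma_k$ forced by the well-behaved hypothesis, and hence $p'=q'$.

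Everything therefore reduces to producing such a bending point on $f\circ\gamma_k$, and this is the step I expect to be the main obstacle. Visibility of $\Omega'$ is stated for genuine quasihyperbolic geodesics (Definition \ref{main-defn}), whereas $f\circ\gamma_k$ is only a quasigeodesic; since $\Omega'$ is not assumed Gromov hyperbolic, a quasigeodesic need not fellow-travel the geodesic joining its endpoints, so the hypothesis cannot be applied verbatim. To bridge this I would promote the visibility of $\Omega'$ to visibility with respect to $(1,\mu)$-quasigeodesics: for $p'\neq q'$, every $(1,\mu)$-quasigeodesic joining a point near $p'$ to a point near $q'$ must meet a compact set $K'$ depending only on $\{p',q'\}$ and $\mu$. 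The natural route is to revisit the proof of the geodesic case and verify that it uses only the endpoint behavior together with the almost-minimizing character of the connecting curve, both of which $f\circ\gamma_k$ inherits; the Gromov-product characterization of Theorem \ref{gromov product visibility} should be the right coarse input, since it is insensitive to the exact parametrization. Alternatively one invokes the equivalence between visibility for geodesics and for almost-geodesics that is standard in this framework. Granting this quasigeodesic visibility, $f\circ\gamma_k$ meets $K'$ at some $f(\gamma_k(t_k))$, the pull-back of the previous paragraph applies, well-definedness follows, and the sequential criterion then yields the continuous extension $F$.
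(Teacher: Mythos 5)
Your proposal follows essentially the same route as the paper's own proof: reduce to well-definedness of the boundary limit, push the $\Omega$-geodesics $\gamma_k$ forward to continuous $(1,\mu)$-quasigeodesics $\sigma_k=f\circ\gamma_k$ in $\Omega'$, show that the $\sigma_k$ meet a fixed compact set, and pull that intersection point back through the lower quasi-isometry inequality to contradict the quasihyperbolically well behaved hypothesis on $\Omega$. The step you single out as the main obstacle is precisely the paper's Claim 1, and the paper settles it by your first suggested route, i.e.\ the Gromov-product characterization: with $u_k=f(x_k)\to\xi$, $v_k=f(y_k)\to\eta$, and $w_k=\sigma_k(t_k)$ a point of $\sigma_k$ escaping to $\partial_{Euc}\Omega'$, the unit-speed parametrization of $\gamma_k$ and the two-sided $(1,\mu)$-inequality give
$$k_{\Omega'}(u_k,v_k)\ge k_{\Omega'}(u_k,w_k)+k_{\Omega'}(w_k,v_k)-3\mu,$$
and rewriting both sides via Gromov products based at $w_0=f(x_0)$ yields
$$2k_{\Omega'}(w_0,w_k)\le 2(u_k|w_k)_{w_0}+2(w_k|v_k)_{w_0}+3\mu,$$
whose right-hand side stays bounded by Theorem \ref{gromov product visibility} (after arranging, e.g.\ by taking $w_k$ on $\sigma_k$ at Euclidean distance $|\xi-\eta|/2$ from $\xi$, that the limit of $w_k$ differs from both $\xi$ and $\eta$ --- a point the paper glosses over by taking the point of maximal $\delta_{\Omega'}$), contradicting $k_{\Omega'}(w_0,w_k)\to\infty$. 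Two cautions about your write-up: first, your fallback option, an ``equivalence between visibility for geodesics and for almost-geodesics that is standard in this framework,'' is not available --- absent Gromov hyperbolicity of $\Omega'$ there is no shadowing lemma, and this equivalence for $(1,\mu)$-quasigeodesics is exactly what the displayed computation has to establish; second, that computation exploits additivity of the parameter along $\gamma_k$ up to additive errors only, so it is genuinely specific to multiplicative constant $1$, which is why the theorem is stated for $(1,\mu)$-embeddings.
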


Further as an application of Theorem \ref{Main-thm-GB-EB} and Theorem \ref{no loop}, we prove that every quasiconformal map has a homeomorphic extension to the Euclidean boundary under the condition that domains are Gromov hyperbolic and visibility domains with no geodesic loops.

\begin{thm}\label{quasiconformal}
Let $\Omega$ and $D$ be bounded domains in $\mathbb{R}^n$. If $\Omega$ and $D$ are both Gromov hyperbolic and visibility domain with no geodesic loops, then any quasi-isometry $f:(\Omega,k_{\Omega})\to (D,k_D)$ which is homeomorphism extends to a homeomorphism $F:\overline{\Omega}^{Euc}\to \overline{D}^{Euc}$. In particular, every quasiconformal map $f:\Omega\to D$ has a homeomorphic extension to the Euclidean boundary.
\end{thm}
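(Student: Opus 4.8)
The plan is to prove Theorem \ref{quasiconformal} by reducing everything to the structure provided by Theorems \ref{Main-thm-GB-EB} and \ref{no loop}, so that the continuous extension across the Euclidean boundary factors through the Gromov boundary. The key observation is that a quasi-isometry (rough quasi-isometry) between two Gromov hyperbolic spaces always extends to a homeomorphism of the Gromov boundaries. Combining this with the identification of the Gromov boundary and Euclidean boundary afforded by the no-geodesic-loop hypothesis yields the result.

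First I would set up the diagram. Since $(\Omega,k_\Omega)$ and $(D,k_D)$ are both Gromov hyperbolic and are both QH-visibility domains with no geodesic loops, Theorem \ref{Main-thm-GB-EB} gives continuous surjective extensions $\Phi_\Omega:\overline{\Omega}^G\to\overline{\Omega}^{Euc}$ and $\Phi_D:\overline{D}^G\to\overline{D}^{Euc}$, and Theorem \ref{no loop} upgrades each of these to a \emph{homeomorphism} between the Gromov closure and the Euclidean closure. In particular $\Phi_\Omega$ restricts to a homeomorphism $\partial_G\Omega\to\partial_{Euc}\Omega$, and similarly for $D$. Next I would invoke the standard fact from Gromov hyperbolic geometry that a quasi-isometry $f:(\Omega,k_\Omega)\to(D,k_D)$ induces a homeomorphism $\partial f:\partial_G\Omega\to\partial_G D$ of Gromov boundaries (this follows because quasi-isometries preserve the Gromov product up to additive and multiplicative constants, hence send equivalence classes of geodesic rays to equivalence classes and act continuously in the Gromov topology; see the cited references \cite{Bridson-book, Vaisala-GH-1}). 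Moreover $\partial f$ together with $f$ itself glue to a homeomorphism $\overline{\Omega}^G\to\overline{D}^G$ of the full Gromov closures.

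Then I would define the boundary extension of $f$ by transport of structure through these three homeomorphisms. Concretely, set
\begin{equation*}
F:=\Phi_D\circ\overline{f}\circ\Phi_\Omega^{-1}:\overline{\Omega}^{Euc}\to\overline{D}^{Euc},
\end{equation*}
where $\overline{f}$ denotes the homeomorphic extension of $f$ to the Gromov closures. Since $\Phi_\Omega^{-1}$, $\overline{f}$, and $\Phi_D$ are each homeomorphisms, their composite $F$ is a homeomorphism of the Euclidean closures. On the interior $\Omega$, $\Phi_\Omega^{-1}$ is the identity map into $\overline{\Omega}^G$, $\overline{f}$ agrees with $f$, and $\Phi_D$ is the identity back to $D$, so $F|_\Omega=f$; thus $F$ genuinely extends $f$. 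This establishes the first assertion.

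For the final sentence, I would specialize to the quasiconformal case. By the cited result of Gehring \cite[Theorem 3]{Gehring-1979}, every quasiconformal map $f:\Omega\to D$ is a (rough) quasi-isometry with respect to the quasihyperbolic metrics and is a homeomorphism, so it satisfies the hypotheses of the theorem and therefore admits a homeomorphic extension $F:\overline{\Omega}^{Euc}\to\overline{D}^{Euc}$. The main obstacle I anticipate is not the gluing argument but justifying that $\overline{f}$ is continuous \emph{up to and including} the boundary in the Gromov topology and that it descends correctly through $\Phi_\Omega,\Phi_D$; this requires care that the quasi-isometry constants interact properly with the Gromov-product characterization of the topology, and that the base-point dependence in the Gromov product (harmless by Theorem \ref{gromov product visibility}) does not obstruct continuity. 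Once the boundary map $\partial f$ is shown to be a well-defined homeomorphism compatible with the interior map, the rest is formal.
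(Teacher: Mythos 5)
Your proposal is correct and follows essentially the same route as the paper: both extend $f$ to a homeomorphism of Gromov closures via the quasi-isometry extension theorem (Theorem \ref{QI extension}), identify Gromov and Euclidean closures through Theorems \ref{Main-thm-GB-EB} and \ref{no loop}, and define $F=\Phi_D\circ\widetilde{f}\circ\Phi_\Omega^{-1}$, with the quasiconformal case handled by Gehring's rough quasi-isometry result. No substantive differences to report.
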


\subsection{QH-visibility for unbounded domains in $\mathbb{R}^n$} We want to study the visibility property of unbounded domains with respect to the quasihyperbolic metric so the visibility of unbounded domains with respect to hyperbolic and quasihyperbolic metrics can be compared. Bharali and Zimmer \cite{Bharali-2022} use the end compactification to define the visibility of unbounded domains with respect to the Kobayashi metric. We use the term BZ-visibility domain. In this line, Masanta \cite[Theorem 1.16]{Rumpa-2024} and Chandel {\it et al.} \cite[Theorem 1.6]{Sarkar-2024} independently proved an important result which says that to check the BZ-visibility of a domain in $\mathbb{C}^n$ it is enough to check that visibility property holds true for Euclidean boundary points, {\it i.e.} there is no need to verify the condition at the ends of $\overline{\Omega}^{Euc}$. Motivated by this we define the visibility of unbounded domains with respect to quasihyperbolic metric as follows

\begin{defn}\label{QH-visible}
		Let $\Omega\subset \mathbb{R}^n$ be a domain (not necessarily bounded). We say that a pair $\{p,q\}\subset\partial_{Euc}\Omega$ of distinct points is QH-visible if there exist 
		\begin{itemize}
			\item neighborhoods $U_p$ of $p$ and $U_q$ of $q$ such that $\overline{U}\cap\overline{V}=\emptyset$, 
			\item and a compact set $K_{\{p,q\}}\subset \Omega$ 
		\end{itemize}
	such that for any quasihyperbolic geodesic $\gamma:[0,T]\to \Omega$ with $\gamma(0)\in U_p$ and $\gamma(T)\in U_q$, we have 
		$$\gamma([0,T])\cap K\ne \emptyset.$$
		Fruther, we say $\partial_{Euc}\Omega$ is {\it QH-visibile} if every every pair of distinct points in $\partial_{Euc}\Omega$ is QH-visible. 
\end{defn}

\begin{defn}[Unbounded QH-visibility domain]\label{unbounded visibility}
Let $\Omega\subset \mathbb{R}^n$ be a domain (not necessarily bounded). We say $\Omega$ is a QH-visibility domain if $\partial_{Euc}\Omega$ is {\it QH-visibile}.
\end{defn}

\begin{thm}\label{unbounded QH-Gromov}
	Let $\Omega\subsetneq \mathbb{R}^n$, $n\ge 2$ be Gromov hyperbolic with respect to quasihyperbolic metric. Then following are equivalent
	\begin{itemize}
		\item[(i)] $\Omega$ is a QH-visibility domain
		\item[(ii)] $id_{\Omega}$ extends to a continuous surjective map $\widehat{id_{\Omega}}: \overline{\Omega}^G\to \overline{\Omega}^{\infty}$, where $\overline{\Omega}^{\infty}$ denote the one-point compactification of $\overline{\Omega}^{Euc}$.
	\end{itemize}
In particular, every unbounded unbounded uniform domain is a QH-visibility domain.
\end{thm}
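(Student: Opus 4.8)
The plan is to mirror the proof of Theorem \ref{Main-thm-GB-EB}, replacing the Euclidean closure $\overline{\Omega}^{Euc}$ by its one-point compactification $\overline{\Omega}^{\infty}$; we may assume $\Omega$ is unbounded, since otherwise the statement reduces to Theorem \ref{Main-thm-GB-EB}. Throughout I use that $(\Omega,k_{\Omega})$ is a proper geodesic space, so that $\overline{\Omega}^{G}$ is compact, and that $\overline{\Omega}^{\infty}$ is compact and metrizable. The heart of the matter is to build the extension $\widehat{id_{\Omega}}$ on $\partial_{G}\Omega$ by taking subsequential limits in the compact space $\overline{\Omega}^{\infty}$: given $\xi\in\partial_{G}\Omega$ and $x_k\to\xi$, any subsequence of $(x_k)$ has a further subsequence converging in $\overline{\Omega}^{\infty}$, and everything comes down to showing that all such subsequential limits agree.

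For the implication $(i)\Rightarrow(ii)$, fix two sequences $x_k\to\xi$ and $y_k\to\xi$ and suppose $x_k\to a$, $y_k\to b$ in $\overline{\Omega}^{\infty}$; I must show $a=b$. Since $x_k,y_k$ represent the same Gromov boundary point, $(x_k\mid y_k)_o\to\infty$, and hence, by the standard comparison between the Gromov product and distance to a geodesic in a $\delta$-hyperbolic space, the geodesics $\gamma_k=[x_k,y_k]$ satisfy $k_{\Omega}(o,\gamma_k)\to\infty$; in particular $\gamma_k$ eventually leaves every compact subset of $\Omega$. If $a,b\in\partial_{Euc}\Omega$ are distinct finite points, this contradicts visibility of the pair $\{a,b\}$ (the unbounded analogue of the equivalence in Theorem \ref{gromov product visibility}): the associated compact set $K_{\{a,b\}}$ would meet each $\gamma_k$ for large $k$, forcing $k_{\Omega}(o,\gamma_k)\le\sup_{z\in K_{\{a,b\}}}k_{\Omega}(o,z)<\infty$, a contradiction.

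The genuinely new case, and the one I expect to be the main obstacle, is $a\in\partial_{Euc}\Omega$ finite while $b=\infty$ --- here no visibility hypothesis is available for the ``pair'' $\{a,\infty\}$, and this is precisely the point addressed by Masanta \cite{Rumpa-2024} and Chandel \emph{et al.} \cite{Sarkar-2024}. I would resolve it by a crossing-sphere argument that reduces it to the already-settled finite case. Fix a small $\rho>0$, so that for large $k$ one has $x_k\in B(a,\rho)$ while $|y_k|\to\infty$ forces $y_k\notin B(a,\rho)$, and let $z_k$ be the first exit point of $\gamma_k$ from $B(a,\rho)$, so $|z_k-a|=\rho$. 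Passing to a subsequence, $z_k\to w$ with $|w-a|=\rho$. If $w\in\Omega$, then $z_k\in\gamma_k$ stays at bounded $k_{\Omega}$-distance from $o$, contradicting $k_{\Omega}(o,\gamma_k)\to\infty$; hence $w\in\partial_{Euc}\Omega$ is a finite boundary point with $w\neq a$. But then the subgeodesic $\gamma_k|_{[x_k,z_k]}$ joins a point near $a$ to a point near $w$, so visibility of the \emph{finite} pair $\{a,w\}$ again forces $\gamma_k$ to meet a fixed compact set, contradicting $k_{\Omega}(o,\gamma_k)\to\infty$. This rules out $b=\infty$ and completes well-definedness; continuity of $\widehat{id_{\Omega}}$ then follows from well-definedness together with compactness of $\overline{\Omega}^{G}$ by a routine sequential argument, and surjectivity holds because every point of $\overline{\Omega}^{\infty}$ is a limit of a sequence in $\Omega$ whose Gromov-subsequential limit maps onto it.

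For $(ii)\Rightarrow(i)$ I argue contrapositively: if some finite pair $\{p,q\}$, $p\neq q$, fails visibility, then there are sequences $x_k\to p$, $y_k\to q$ and geodesics $\gamma_k=[x_k,y_k]$ escaping every compact set, whence $(x_k\mid y_k)_o\to\infty$; passing to subsequences, $x_k\to\xi_1$ and $y_k\to\xi_2$ in $\partial_{G}\Omega$ with $\xi_1=\xi_2=:\xi$, and continuity of $\widehat{id_{\Omega}}$ gives $p=\widehat{id_{\Omega}}(\xi)=q$, a contradiction. Finally, for the ``in particular'' claim, an unbounded uniform domain is Gromov hyperbolic and satisfies the logarithmic growth condition \eqref{growth} with $\phi(t)=C\log t+C'$, which obeys \eqref{int}; since the proof of Theorem \ref{visibility criteria} is local near the two finite boundary points in question, it yields visibility of every finite pair, so such a domain is a QH-visibility domain by Definition \ref{unbounded visibility}.
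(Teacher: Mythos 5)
Your proof of the equivalence $(i)\Leftrightarrow(ii)$ is correct, and it differs from the paper's in two places worth noting. For $(i)\Rightarrow(ii)$, the paper first proves a landing lemma for qh-geodesic rays (Lemma \ref{6.2}): the cluster set of a ray in $\partial^{\infty}\Omega$ is connected, so if it is not a singleton it must contain \emph{two distinct finite} boundary points, to which visibility of finite pairs applies; well-definedness then comes from Lemma \ref{6.3}, and continuity is checked along geodesics via Arzel\'a--Ascoli as in Theorem \ref{Main-thm-GB-EB}. You instead work with arbitrary sequences and dispose of the mixed case (one subsequential limit finite, the other $=\infty$) by the crossing-sphere argument, extracting a first exit point $z_k$ from $B(a,\rho)$ and reducing to visibility of the finite pair $\{a,w\}$; this is a legitimate and rather clean substitute for the paper's connectedness argument. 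For $(ii)\Rightarrow(i)$, the paper constructs the compact set directly, applying Theorem \ref{GV} to the disjoint compact fibers $\widehat{id}^{-1}(p)$, $\widehat{id}^{-1}(q)$, while you argue by contradiction through the Gromov product and continuity of $\widehat{id_{\Omega}}$; both routes use hyperbolicity in an essential way (you through the comparison $d(o,[x,y])\le (x|y)_o+4\delta$), and yours is shorter.

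There is, however, a genuine gap in your justification of the ``in particular'' claim. An unbounded uniform domain does \emph{not} satisfy the growth condition \eqref{growth} for any $\phi$: take $\Omega$ the upper half-space (which is uniform), $x_0=(0,\dots,0,1)$ and $x_t=(t,0,\dots,0,1)$; then $\delta_{\Omega}(x_t)=\delta_{\Omega}(x_0)=1$, so the right-hand side of \eqref{growth} is the constant $\phi(1)$, while $k_{\Omega}(x_0,x_t)\to\infty$ as $t\to\infty$. So Theorem \ref{visibility criteria} cannot be invoked. Nor can its proof be ``localized'' as you suggest: in an unbounded domain, failure of visibility of a finite pair produces geodesics that escape every compact set, but such geodesics need not satisfy $\max_t\delta_{\Omega}(\gamma_k(t))\to 0$ --- they may wander off to infinity with large $\delta_{\Omega}$ --- and that normalization is the starting point of the whole argument of Theorem \ref{visibility criteria}.

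The claim itself is true, but needs a different argument. The elementary route is via the Gehring--Osgood theorem that quasihyperbolic geodesics in a $C$-uniform domain are double cone arcs: if $x_k\to p$, $y_k\to q$ with $p\ne q$ finite, then $l(\gamma_k)\le C'|x_k-y_k|$ forces the geodesics to stay in a fixed bounded set, and the cone-arc condition applied to a point $z_k\in\gamma_k$ with $|z_k-p|=2|p-q|/3$ gives $\delta_{\Omega}(z_k)\ge |p-q|/(3C')$; hence all $\gamma_k$ meet the compact set $\{z:|z-p|\le |p-q|,\ \delta_{\Omega}(z)\ge |p-q|/(3C')\}$. This is precisely the argument of Theorem \ref{cone arc-visibility}, which, unlike Theorem \ref{visibility criteria}, does localize correctly to the unbounded setting. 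Alternatively, one can verify condition $(ii)$ directly from the known identification (Bonk--Heinonen--Koskela, V\"ais\"al\"a) of the Gromov boundary of an unbounded uniform domain with $\partial_{Euc}\Omega\cup\{\infty\}$ and then apply the equivalence you have already proved.
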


Theorem \ref{unbounded QH-Gromov} gives us the following important corollary. 
\begin{cor}\label{Simply connected}
	Let $\Omega\subsetneq \mathbb{C}$ be simply connected. Then following are equivalent
	\begin{itemize}
		\item[(i)] $\Omega$ is a QH-visibility domain 
		\item[(ii)] $\partial^{\infty}\Omega$ is locally connected subset of $\widehat{\mathbb{C}}$.
	\end{itemize}
\end{cor}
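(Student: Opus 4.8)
The plan is to reduce everything, via the Riemann mapping theorem, to Theorem \ref{unbounded QH-Gromov} together with Carath\'eodory's continuity theorem on the Riemann sphere. Since $\Omega\subsetneq\mathbb{C}$ is simply connected, the Riemann mapping theorem gives a conformal map $f:\mathbb{D}\to\Omega$; in particular $\Omega$ is conformally (hence $1$-quasiconformally) equivalent to the disk. By the Koebe distortion estimate the hyperbolic and quasihyperbolic metrics are bilipschitz equivalent on any simply connected planar domain, so $(\Omega,k_{\Omega})$ inherits the Gromov hyperbolicity of $(\mathbb{D},h_{\mathbb{D}})$ (cf.\ Remark \ref{GH equivalence}). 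Thus $\Omega$ satisfies the hypothesis of Theorem \ref{unbounded QH-Gromov}, and (i) is equivalent to the assertion that $id_{\Omega}$ extends to a continuous surjection $\widehat{id_{\Omega}}:\overline{\Omega}^G\to\overline{\Omega}^{\infty}$, where $\overline{\Omega}^{\infty}$ is the closure of $\Omega$ in $\widehat{\mathbb{C}}$.

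First I would transport the Gromov picture to the disk through $f$. Because $f$ is an isometry of the hyperbolic metrics and $h\asymp k$ on both $\mathbb{D}$ and $\Omega$, the map $f$ is a surjective bilipschitz map, hence a quasi-isometry, between the proper geodesic Gromov hyperbolic spaces $(\mathbb{D},k_{\mathbb{D}})$ and $(\Omega,k_{\Omega})$. Consequently $f$ extends to a homeomorphism $\overline{f}:\overline{\mathbb{D}}^{\,G}\to\overline{\Omega}^{\,G}$ of the Gromov compactifications, and since $\overline{\mathbb{D}}^{\,G}=\overline{\mathbb{D}}$ with $\partial_G\mathbb{D}=\partial\mathbb{D}$, this identifies $\overline{\Omega}^{\,G}$ with the closed disk. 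Under this identification the extension $\widehat{id_{\Omega}}$ exists if and only if the composite $\widehat{id_{\Omega}}\circ\overline{f}:\overline{\mathbb{D}}\to\overline{\Omega}^{\infty}\subset\widehat{\mathbb{C}}$ is continuous and onto; but on the interior this composite coincides with $f$, so its existence is precisely the statement that the Riemann map $f$ extends continuously to a surjection $\overline{\mathbb{D}}\to\overline{\Omega}^{\infty}$ in the spherical topology.

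Finally I would invoke Carath\'eodory's continuity theorem in its spherical form: for a simply connected $\Omega\subsetneq\widehat{\mathbb{C}}$ the Riemann map extends continuously to $\overline{\mathbb{D}}$, as a map into $\widehat{\mathbb{C}}$, if and only if the boundary $\partial^{\infty}\Omega$ of $\Omega$ in $\widehat{\mathbb{C}}$ is locally connected; continuity of the extension automatically gives surjectivity onto the spherical closure. Combining this with the previous paragraph yields (i)$\Leftrightarrow$(ii). The step that requires the most care is the compactification bookkeeping in the unbounded case: one must check that $\overline{\Omega}^{\infty}$, the one-point compactification of $\overline{\Omega}^{Euc}$, is genuinely the spherical closure of $\Omega$ and that $\partial^{\infty}\Omega=\partial_{Euc}\Omega\cup\{\infty\}$ is the correct boundary set, so that the surjectivity of $\widehat{id_{\Omega}}$ onto $\overline{\Omega}^{\infty}$ corresponds exactly to continuous surjectivity of $f$ onto the spherical closure. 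When $\Omega$ is bounded the point $\infty$ plays no role and the argument specializes to Corollary \ref{LC boundary}.
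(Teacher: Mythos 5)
Your proposal is correct and follows essentially the route the paper intends: the paper states Corollary \ref{Simply connected} as an immediate consequence of Theorem \ref{unbounded QH-Gromov}, and the implicit argument is exactly yours --- transfer the Gromov compactification to $\overline{\mathbb{D}}$ via the Riemann map (a quasi-isometry for the quasihyperbolic metrics, by the bilipschitz comparison $k_{\Omega}/2\le h_{\Omega}\le 2k_{\Omega}$), so that the continuous surjective extension of $id_{\Omega}$ in Theorem \ref{unbounded QH-Gromov} becomes a continuous spherical extension of the Riemann map, which Carath\'eodory's continuity theorem on $\widehat{\mathbb{C}}$ characterizes by local connectivity of $\partial^{\infty}\Omega$. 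This mirrors the paper's own explicit argument in the bounded case (Theorem \ref{LC along boundry} and Corollary \ref{LC boundary}), and your closing remark correctly handles the one-point-compactification bookkeeping that separates the bounded and unbounded cases.
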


\begin{rem}
	One can easily check that Theorem \ref{GH-H-QH} also holds true for unbounded planar hyperbolic domains, and hence Corollary \ref{Simply connected} gives us that {\it a planar simply connected domain is a H-visibility domain if, and only if, the extended boundary is locally connected in the extended complex plane} (see \cite[Theorem 1.12]{Sarkar-2024})
\end{rem}

	\section{Preliminaries}\label{Preliminaries}
	\subsection{Metric geometry} Let $(X,d)$ be a metric space. A curve is a continuous function $\gamma:[a,b]\rightarrow X$. Let $\mathcal{P}$ denote set of all partitions $a = t_{0}<t_{1}<t_{2}< \cdots<t_{n}=b$ of the interval $[a,b]$. The  length of the curve $\gamma$ in the metric space $(X,d)$ is 
	$$l_{d}(\gamma) = \sup_{\mathcal{P}} \sum_{k=0}^{n-1}d(\gamma(t_{k}), \gamma(t_{k+1})).$$
	A curve is said to be rectifiable if $l_{d}(\gamma) < \infty$.  A metric space $X$ is said to be rectifiably connected if every pair of points $x,y \in X$ can be joined by a rectifiable curve. For a rectifiable curve $\gamma$ we define arc length $s:[a,b]\rightarrow \left[ 0,l_d(\gamma) \right] $ by $s(t) = l_{d}(\gamma|_{[a,t]})$. The arc length function is of bounded variation. For any rectifiable curve $\gamma:[a,b]\to X$, there is a unique map $\gamma_s:[0,l_d(\gamma)]\to X$ such that $\gamma=\gamma_s\circ s$, and such a curve $\gamma_s$ is called the arclength parametrization of $\gamma$.\\
	
	For $x,y\in X$, the inner length metric $\lambda_X(x,y)$ is defined by
	$$\lambda_X(x,y)=\inf\{l_d(\gamma): \gamma \mbox{ is a rectifiable arc joining } x \mbox{ and } y\}.$$
	
	Let $\rho :X\rightarrow[0,\infty]$ be a Borel function. The $\rho$-length of a rectifiable curve $\gamma$ is $$l_{\rho}(\gamma)=\int_{\gamma}\rho\,ds=\int_{a}^{b}\rho(\gamma(t))\, ds(t)=\int_{0}^{l_d(\gamma)}\rho\circ \gamma_s(t)\,dt.$$
	
	If $X$ is rectifiably connected, then the $\rho$-distance between two points $x,y\in X$ is 
	$$d_{\rho}(x,y) = \inf\{l_{\rho}(\gamma): \gamma \mbox{ is a rectifiable curve joining }    x,y \mbox{ in } X\},$$
	where infimum is taken over all rectifiable curves joining $x$ and $y$ in $X$. In general, $d_\rho$ need not be a metric since  $d_\rho(x,y)$ could be zero or infinite. In order to ensure that $(X,d_{\rho})$ is a metric space, we must require that $0<d_{\rho}(x,y)<\infty$. To differentiate this, we call such $\rho$ a metric-density or simply, density on $X$. However, if $\rho$ is positive and continuous then $d_{\rho}$ is a metric and, we say $(X,d_\rho)$ is a conformal deformation of $(X,d)$ by the conformal factor $\rho$.
	
\begin{defn}\label{quasi-convex}
A metric space $(X,d)$ is said to be quasi-convex if there exists a constant $A\ge 1$ such that every pair of points $x,y\in \Omega$ can be joined by a curve $\gamma$ such that
$$l_d(\gamma)\le Ad(x,y).$$ 
\end{defn}
	
	\subsection*{Maps, geodesics and quasigeodesics}
	\begin{defn}[\textbf{bi-Lipschitz}]
		Let $(X,d)$ and $(Y,d')$ be metric spaces. We say that a map $f:X\to Y$ is bi-Lipschitz if there is a constant $\lambda\ge 1$ such that for all $x,y\in X$, we have
		\begin{equation}\label{bi-Lipschitz}
			\lambda^{-1}d(x,y)\le d'(f(x),f(y))\le \lambda d(x,y),
		\end{equation}
		and when this holds we say that $f$ is $\lambda$-bi-Lipschitz.
	\end{defn}
	\begin{defn}[\textbf{Quasiisometry}]
		Let $\lambda\ge 1$ and $\mu\ge 0$ and $(X,d)$ and $(Y,d')$ be metric spaces. We say that a map (not necessarily continuous) $f:X\to Y$ is a $(\lambda,\mu)$-quasiisometry if for all $x,y\in X$, we have
		\begin{equation}\label{quasi-isometry}
			\lambda^{-1}d(x,y)-\mu\le d'(f(x),f(y))\le \lambda d(x,y)+\mu.
		\end{equation}	
		A $(1,0)$ quasiisometry is simply an isometry (onto its range). 
	\end{defn}
	\begin{rem}
		\begin{itemize}
			\item[(1)] We say that two metric spaces $X,Y$ are {\it ($\lambda,\mu$)-quasiisometrically equivalent} if there is a ($\lambda,\mu$)-quasiisometry $f:X\to Y$ with the property that for every $y\in Y$ there is an $x\in X$ with $d'(y,f(X))\le \mu$. An alternative way to describe this is to say that there are quasiisometries in both directions that are rough inverses of each other (see \cite[Exercises 8.16 (2)]{Bridson-book}).
			\item[(2)] It is important to note that some authors use the adjective rough quasiisometry to satisfy our definition of quasiisometry.
		\end{itemize}
	\end{rem}

	\begin{defn}
	A curve $\gamma:[a,b]\to X$ is said to be geodesic if it is an isometry {\it i.e.,} for all $t,t'\in [a,b]$, $$d(\gamma(t), \gamma(t'))=|t-t'|.$$
	An isometry $\gamma:[0,\infty)\to X$ is called a geodesic ray, and an isometry $\gamma:(-\infty,\infty)\to X$ is called a geodesic line. 
\end{defn}

\begin{defn}[\textbf{Quasi-geodesic}]
	Let $(X,d)$ be a metric space. A curve $\gamma:[a,b]\to X$ is said to be $\lambda$-{\it quasigeodesic} (($\lambda,\mu$)-quasi-geodesic), if $\gamma$ is $\lambda$-bi-Lipschitz  (($\lambda,\mu$)-quasiisometry).
	There is another corresponding description of quasi-geodesics: a curve $\gamma:[a,b]\to X$ is said to be $L$-{\it chordarc curve} if
	$\mbox{for all }s,t \in [a,b],\,\,\,\,\, l_d(\gamma|_{[s,t]})\le L\,d(\gamma(s),\gamma(t))$.
\end{defn}
\begin{rem}\label{quasigeodesic and chord arc}
	We note that a $\lambda$-quasi-geodesic is a $\lambda^2$-chordarc curve, and the arc length parametrization of a $L$-chordarc curve is a $L$-quasi-geodesic.
\end{rem}
\begin{defn}\label{GPI}
	A metric space $(X,d)$ is said to be
	\begin{itemize}
		\item[(1)] {\it a geodesic space} if every pair of points $x,y \in X$ can be joined by a geodesic,
		\item[(2)] {\it proper} if every closed ball in $(X,d)$ is compact in $(X,d)$,
			\item[(3)] intrinsic or path or length space if for all $x,y\in X$, we have
		$$d(x,y)=\lambda_X(x,y).$$
	\end{itemize}
\end{defn}
It is clear that if a metric space is proper, then it is locally compact. In general, we have the following well-known result attributed to H. Hopf and W. Rinow  (see \cite[I, Proposition 3.7]{Bridson-book} or \cite[Theorem 2.1.15 and Theorem 2.4.6]{Papadopoulos-book}), which gives the connection between spaces mentioned in Definition \ref{GPI}
\begin{thm}[Hopf-Rinow theorem]
	Let $(X,d)$ be an intrinsic space. Then following holds
	\begin{itemize}
		\item[(1)] If $(X,d)$ is complete and locally compact, then it is proper.
		\item[(2)] If $(X,d)$ is proper, then it is a geodesic space.
		\item[(3)] If $(X,d)$ is locally compact, then it is proper if and only if it is complete.
	\end{itemize}
\end{thm}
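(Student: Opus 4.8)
The plan is to establish the three parts in the order (1), (2), (3), since (3) will follow almost at once from (1) together with a soft general fact, and the intrinsic (length-space) hypothesis is needed in an essential way only for (1) and (2). For part (1) I would run the classical ``radius of compactness'' argument. Fix a basepoint and set
$$\rho(x)=\sup\{r\ge 0 : \overline{B}(x,r)\text{ is compact}\}.$$
Local compactness gives $\rho(x)>0$ everywhere, and a short estimate shows $\rho$ is $1$-Lipschitz, hence either finite everywhere or identically $+\infty$. Suppose for contradiction that $\rho(x_0)=:R<\infty$. The key step is to show that $\overline{B}(x_0,R)$ is \emph{itself} compact: here the length-space hypothesis is decisive, since every point at distance less than $R+\epsilon$ from $x_0$ lies within $\epsilon$ of a point at distance at most $R-\epsilon$ (travel along a path whose length is close to the distance), so $\overline{B}(x_0,R)$ is totally bounded, and completeness upgrades total boundedness to compactness.

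Once $\overline{B}(x_0,R)$ is compact, I would apply local compactness at each of its points and extract, by a finite-subcover argument over this compact ball, a uniform radius $\delta>0$ such that $\overline{B}(y,\delta)$ is compact for every $y\in\overline{B}(x_0,R)$. Using the length-space property once more, every point of $\overline{B}(x_0,R+\delta/2)$ lies within $\delta$ of a point of $\overline{B}(x_0,R)$, so $\overline{B}(x_0,R+\delta/2)$ is contained in a finite union of the compact balls $\overline{B}(y,\delta)$ and is therefore compact. This contradicts $\rho(x_0)=R$, so $\rho\equiv+\infty$ and $X$ is proper.

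For part (2), given $x,y\in X$, I would take a minimizing sequence of rectifiable paths $\gamma_n:[0,1]\to X$ from $x$ to $y$ with $l_d(\gamma_n)\to d(x,y)$ (such paths exist because $d$ is the length metric), reparametrized proportionally to arc length so they are uniformly Lipschitz. Properness confines all their images to the compact ball $\overline{B}(x,d(x,y)+1)$, so the Arzel\`a--Ascoli theorem yields a uniformly convergent subsequence with limit path $\gamma$ joining $x$ to $y$. Lower semicontinuity of length under uniform convergence gives $l_d(\gamma)\le\liminf_n l_d(\gamma_n)=d(x,y)$, while $l_d(\gamma)\ge d(x,y)$ always holds; hence $l_d(\gamma)=d(x,y)$, and the arc-length reparametrization of $\gamma$ is the desired geodesic. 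For part (3), the implication ``proper $\Rightarrow$ complete'' needs no intrinsic hypothesis, since a Cauchy sequence is bounded, hence lies in a compact closed ball, hence has a convergent subsequence and therefore converges; the converse ``complete $\Rightarrow$ proper'' is exactly part (1). Thus (3) is immediate.

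The main obstacle is the compactness-attainment step inside part (1), namely showing that the supremum defining $\rho$ is actually attained, i.e.\ that $\overline{B}(x_0,R)$ is compact. This is precisely the point where the length-space structure and completeness must be combined, since neither hypothesis alone suffices: completeness turns total boundedness into compactness, but it is the intrinsic geometry that produces the total boundedness of the limiting ball in the first place.
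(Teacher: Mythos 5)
Your proposal is correct, but note that the paper does not prove this statement at all: it records the Hopf--Rinow theorem as a known classical result, citing Bridson--Haefliger (I, Proposition 3.7) and Papadopoulos, and your argument is precisely the standard proof found in those references (the ``radius of compactness'' dichotomy for part (1), Arzel\`a--Ascoli with lower semicontinuity of length for part (2), and the soft Cauchy-sequence argument plus part (1) for part (3)). The only blemish is quantitative bookkeeping in part (1): a point at distance less than $R+\epsilon$ from $x_0$ is obtained within roughly $2\epsilon$ (not $\epsilon$) of a point at distance at most $R-\epsilon$ by backing up along a nearly minimizing path, but this does not affect the total boundedness of $\overline{B}(x_0,R)$ or the contradiction that concludes the argument.
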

\begin{rem}
	\begin{itemize}
		\item[(1)] For further discussion on Hopf-Rinow theorem, we refer to \cite[Notes on Chapter 2]{Papadopoulos-book} and references therein.
		\item[(2)] It is worth to mention that \cite[Theorem 2.8]{Martin-1985} has proved that if $\Omega$ is a proper subdomain of $\mathbb{R}^n$ and $\rho$ is a continuous density on $\Omega$, and it defines a complete metric space $(\Omega,d_{\rho})$, then $(\Omega,d_{\rho})$ is a geodesic metric space.
	\end{itemize}
\end{rem}

	\subsection{Hyperbolic and quasihyperbolic metric} 
	
	\begin{defn}[\textbf{The Poincar\'e metric}]
		The Poincar\'e metric $\lambda_{\mathbb{D}}(z)|dz|$ in the unit disk $\mathbb{D}$ is defined by the length element	
	$$\frac{|dz|}{1-|z|^2},\quad \lambda_{\mathbb{D}}(z)=\frac{1}{1-|z|^2}.$$
	Thus, the Poincar\'e/hyperbolic distance $h_{\mathbb{D}}$ in $\mathbb{D}$ is given by 
	\begin{equation}\label{Poincare distance}
		h_{\mathbb{D}}(z_1,z_2) = \inf_{\gamma} l_h(\gamma), \mbox{ where } l_h(\gamma)=\int_{\gamma}\frac{|dz|}{1-|z|^{2}}
	\end{equation}
	is the hyperbolic length of a rectifiable curve $\gamma$ in $\mathbb{D}$, and the infimum is taken over all rectifiable curves $\gamma$ joining $z_1$ and $z_2$.
	\end{defn} 
	
A domain $\Omega \subsetneq\mathbb{C}$ is said to be hyperbolic if $\mathbb{C}\setminus\Omega$ contains atleast two points. It is known that the unit disk covers every hyperbolic domain, {\it i.e.,} there is a holomorphic universal covering map $h:\mathbb{D}\to \Omega$. We define the hyperbolic metric on any hyperbolic domain $\Omega$ via the pull-back by $h$. For details on pull-back we refer to \cite[Definition 9.2]{Beardon-Minda-notes}.
\begin{thm}\cite[Theorem 10.3]{Beardon-Minda-notes}
	Each hyperbolic domain carries a unique maximal metric $\lambda_{\Omega}(w)|dw|$ such that the pull back of $\lambda_{\Omega}(w)|dw|$ by $h$ is $\lambda_{\mathbb{D}}(z)|dz|$. The metric $\lambda_{\Omega}(w)|dw|$ has constant curvature $-4$. 
\end{thm}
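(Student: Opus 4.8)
The plan is to build $\lambda_\Omega$ directly from the universal covering map $h:\mathbb{D}\to\Omega$ and the defining pull-back relation, and then verify in turn that the resulting object is well defined, unique, positive and smooth, and has curvature $-4$. First I would unwind what the pull-back condition forces: if $\lambda_\Omega(w)|dw|$ pulls back to $\lambda_{\mathbb{D}}(z)|dz|$ under $h$, then for every $z\in\mathbb{D}$ one must have $\lambda_\Omega(h(z))\,|h'(z)| = \lambda_{\mathbb{D}}(z)$. Since $h$ is a covering map it is a surjective local biholomorphism with $h'\ne 0$, so this relation determines the value of the density at each point $w=h(z)$ by $\lambda_\Omega(w) = \lambda_{\mathbb{D}}(z)/|h'(z)|$. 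This simultaneously yields uniqueness among metrics satisfying the pull-back property and produces a candidate for the existence part; what remains is to check that the formula does not depend on the choice of preimage $z$.

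The key step is well-definedness. Given two preimages $z_1,z_2$ of the same $w$, there is a covering (deck) transformation $\tau$ with $\tau(z_1)=z_2$ and $h\circ\tau = h$. I would use the standard fact that every such $\tau$ is the restriction of a conformal automorphism of $\mathbb{D}$, hence a Poincaré isometry, so that $\lambda_{\mathbb{D}}(\tau(z))\,|\tau'(z)| = \lambda_{\mathbb{D}}(z)$. Differentiating $h\circ\tau = h$ gives $|h'(\tau(z))|\,|\tau'(z)| = |h'(z)|$, and combining the two identities shows $\lambda_{\mathbb{D}}(z_2)/|h'(z_2)| = \lambda_{\mathbb{D}}(z_1)/|h'(z_1)|$, so $\lambda_\Omega(w)$ is unambiguous. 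The same computation, applied now to the automorphism $\phi$ relating any two universal covers via $h_1 = h_2\circ\phi$, shows that $\lambda_\Omega$ is independent of the chosen covering map. Positivity and smoothness are then immediate from $h'\ne0$ together with the smoothness and positivity of $\lambda_{\mathbb{D}}$.

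For the curvature I would invoke the conformal invariance of Gaussian curvature under holomorphic local isometries: writing $K_\sigma = -\sigma^{-2}\,\Delta\log\sigma$ for a conformal density $\sigma$, the chain rule applied to the local biholomorphism $h$ gives $K_{\lambda_\Omega}(h(z)) = K_{\lambda_{\mathbb{D}}}(z)$. It therefore suffices to compute the curvature of the Poincaré metric once: with $\log\lambda_{\mathbb{D}} = -\log(1-|z|^2)$ and $\Delta = 4\,\partial_z\partial_{\bar z}$ one finds $\Delta\log\lambda_{\mathbb{D}} = 4(1-|z|^2)^{-2}$, whence $K_{\lambda_{\mathbb{D}}} \equiv -4$, and so $K_{\lambda_\Omega}\equiv -4$ on $\Omega$.

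Finally, for the maximality clause I would appeal to the Ahlfors--Schwarz lemma: any conformal metric $\sigma(w)|dw|$ on $\Omega$ with Gaussian curvature at most $-4$ satisfies $\sigma\le\lambda_\Omega$, so $\lambda_\Omega$ is the maximal metric of curvature bounded above by $-4$ and, having curvature exactly $-4$, is the unique maximal one. The main obstacle in the whole argument is the well-definedness step: it is the only place where the global covering-space structure genuinely enters, and it hinges on identifying the deck transformations as Poincaré isometries so that the two competing expressions for $\lambda_\Omega(w)$ coincide. Everything else reduces to a local computation or a citation of Ahlfors' lemma.
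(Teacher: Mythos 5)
Your proof is correct: the formula $\lambda_\Omega(w)=\lambda_{\mathbb{D}}(z)/|h'(z)|$, the deck-transformation argument for well-definedness, the curvature computation $K\equiv-4$, and the Ahlfors--Schwarz argument for maximality together constitute the standard proof of this result. Note that the paper itself offers no proof of this statement --- it is quoted verbatim from Beardon--Minda \cite[Theorem 10.3]{Beardon-Minda-notes} as background --- and your argument is essentially the one given in that cited source, so there is nothing in the paper to compare it against beyond the citation.
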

Note that some authors prefer to use $2\lambda_{\Omega}$ which has Gaussian curvature $-1$. We call $\lambda_{\Omega}(z)$ the hyperbolic density which induces the metric $h_{\Omega}$ on $\Omega$ called the hyperbolic metric or hyperbolic distance defined by
\begin{equation}\label{hyperbolic distance}
	h_{\Omega}(w_1,w_2)=\inf_{\gamma}\int_{\gamma}\lambda_{\Omega}(w)|dw|,
\end{equation}
where the infimum is taken over all the rectifiable curve $\gamma$ joining $w_1,w_2\in \Omega$. The topology induced by $h_{\Omega}$ on $\Omega$ coincides with the Euclidean topology. Moreover, $(\Omega, h_{\Omega})$ is a complete metric space. For more details on hyperbolic metric and its connecten with geometric function theory we refer to \cite{Beardon-Minda-notes}.
	\subsection*{Hyperbolic Geodesic.} 	A curve $\gamma$ in $\Omega$ joining $x$ and $y\in \Omega$ is said to be hyperbolic geodesic if
	\begin{equation*}
		l_h(\gamma) = h_{\Omega}(x,y)
	\end{equation*}
Hyperbolic geodesics in the unit disk are arc of circle orthogonal to the unit circle or the straight line segment joining $0$ and $z$, where $z\in \mathbb{D}$ is a non-zero point. Hyperbolic geodesics in the uper half plane are arcs of semi circles orthoganl to the real axis and verticle line segments.

\subsection*{The quasihyperbolic metric} Gehring and Palka studied quasiconformally homogeneous domains with a hope that this may eventually lead to a new characterization for domains quasiconformally equivalent to the unit ball $B^n$.  It is easy to observe that such domains are homogeneous with respect to quasiconformal family. Gehring and Palka Gehring and Plaka \cite{Gehring-1976} have introduced quasihyperbolic metric as a tool to study quasiconformal homogeneity, and proved that the maximal dilatation of such quasiconformal mappings can be estimated in terms of quasihyperbolic metric and using these estimates they obtained useful results related to the homogeneity of domains with respect to quasiconformal family. Since then this metric has found numbers of applications. We now give a precise definition of the quasihyperbolic metric.
\begin{defn}
	Let $\Omega\subsetneq\mathbb{R}^n$ be a domain. We define the {\it quasihyperbolic metric} $k_{\Omega}$ in $\Omega$ by the length element
	$$\frac{|dx|}{\delta_{\Omega}(x)},\,\,\,\,\, \delta_{\Omega}(x)=d_{Euc}(x, \partial_{Euc} \Omega),$$
	where $|dx|$ dentoes the Euclidean length element. Thus, 	$k_{\Omega}(x,y)=\inf_{\gamma}l_{k}(\gamma)$, where the infimum is taken over all rectifiable curves $\gamma$ in $\Omega$ joining $x$ and $y$, and
	$$l_{k}(\gamma)=\int_{\gamma}\frac{|dx|}{\delta_{\Omega}(x)}$$ 
	is called the {\it quasihyperbolic length} of a rectifiable curve $\gamma$ in $\Omega$.
\end{defn}

It is clear that in the upper half-space $\delta_{\Omega}(x)^{-1}|dx|=x_n^{-1}|dx|$, which is the Riemannian metric associated to the hyperbolic metric. Thus, in the upper half-space the hyperbolic and quasihyperbolic metric coincides. This motivates the term {\it quasihyperbolic}. We have the following comparison results between hyperbolic and quasihyperbolic metric.
	\begin{itemize}
		\item[(i)] $k_{\Omega}=h_{\Omega}$ when $\Omega$ is a half space, and  $h_{\Omega}\le 2k_{\Omega}$ for every hyperbolic domain $\Omega$.
		\item[(ii)] $k_{\Omega}\le h_{\Omega}\le 2k_{\Omega}$, when $\Omega$ is a ball in $\mathbb{R}^n$.
		\item[(iii)] $k_{\Omega}/2\le h_{\Omega}\le 2k_{\Omega}$, when $\Omega$ planar simply connected domain.
		\item[(iv)] In general, these two metrics are comparable if, and only if, the boundary is uniformly perfect (see \cite{BP-1978,P-1979})
	\end{itemize}
For further geometric properties of quasihyperbolic metric, we refer to \cite{Gehring-1979,Matti-book}. 	
\subsection*{Quasi-hyperbolic geodesic}
	A rectifiable curve $\gamma\subset\Omega$ is said to be a quasihyperbolic geodesic if $k_{\Omega}(x,y)=l_{k}(\gamma|_{[x,y]})$, for each pair of points $x,y\in \gamma$. Obviously each subarc of quasihyperbolic geodesic is again a geodesic. Gehring and Osgood \cite[Lemma 1]{Gehring-1979} have proved the existence of quasihyperbolic geodesics between any two points of the domain $\Omega$. For important results on quasihyperbolic geodesics in domain in $\mathbb{R}^n$, we refer to Martin \cite{Martin-1985}. Moreover, Martin \cite[Corollary 4.8]{Martin-1985} has proved that quasihyperbolic geodesics in $\mathbb{R}^n$ are $C^{1,1}$ smooth {\it i.e.,} the arc length parametrization has Lipschitz continuous derivatives.
		
	\subsection*{Estimates on quasihyperbolic metric} We recall some basic estimates for the quasihyperbolic metric which have been first introduced by Gehring and Palka \cite[2.1]{Gehring-1976} in $\mathbb{R}^n$.  Let $\Omega\subsetneq \mathbb{R}^n$ be a domain and $x,y\in \Omega$ and let $\gamma$ be a rectifiable curve in $\Omega$ joining $x$ and $y$. Then we have the following:
	\begin{equation}\label{qh-eq-1}
		k_{\Omega}(x,y)\ge log\left(1+\frac{\lambda_{\Omega}(x,y)}{\min\{\delta(x),\delta(y)\}}\right)\ge log\left(1+\frac{|x-y|}{\min\{\delta(x),\delta(y)\}}\right)\ge \left|log \frac{\delta(y)}{\delta(x)}\right|
	\end{equation}
	and 
	\begin{equation}\label{qh-eq-2}
		l_k(\gamma)\ge log\left(1+\frac{l(\gamma)}{\min\{\delta(x),\delta(y)\}}\right).
	\end{equation}

	The inequality \ref{qh-eq-1} implies that
	\begin{equation}\label{qh-eq-3}
		\lim_{y\to \partial_{Euc}\Omega}k_{\Omega}(x,y)=\infty
	\end{equation}
	for $x\in \Omega$. Using \eqref{qh-eq-3}, we have that $(\Omega, k_{\Omega})$ is a complete metric space. Moreover, it induces the usual topology \cite[Corollary 2.2]{Gehring-1976}. In view of \cite[Proposition 2.8]{Koskela-2001}\label{Koskela}, it is well-known that $(\Omega,k_\Omega)$ is a complete, proper and geodesic metric space. The following result is a simple fact which we will frequently use in our proofs.
	
\subsection*{Result 1} \cite[Corollary 2.3]{Gehring-1976} Let $\Omega$ be a proper subdomain in $\mathbb{R}^n$. Then for each set $F\subset\Omega$, $k_{\Omega}(F)=\sup_{x,y\in F}k_{\Omega}(x,y)<\infty$ if, and only if, $\overline{F}^{Euc}\subset \Omega$.

	The following observation immediately follows from \eqref{qh-eq-1}.
	\begin{observation}\label{observation}
		Let $\Omega$ be a bounded domain. If sequences $\{z_k\}$, $\{w_k\} \subset \Omega$ converge to two different points on $\partial_{Euc}\Omega$, then 
		$$\lim_{k\to \infty}k_{\Omega}(z_k,w_k)=\infty.$$
	\end{observation}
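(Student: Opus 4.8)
The plan is to read off the conclusion directly from the chain of inequalities in \eqref{qh-eq-1}, so the argument is essentially immediate. Write $z_k\to p$ and $w_k\to q$ with $p,q\in\partial_{Euc}\Omega$ and $p\ne q$. Applying the middle inequality of \eqref{qh-eq-1} to the pair $z_k,w_k$ gives
$$k_{\Omega}(z_k,w_k)\ge \log\left(1+\frac{|z_k-w_k|}{\min\{\delta(z_k),\delta(w_k)\}}\right),$$
so it suffices to show that the argument of the logarithm diverges as $k\to\infty$.

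For the numerator, since $z_k\to p$ and $w_k\to q$ in the Euclidean metric, continuity of the norm gives $|z_k-w_k|\to|p-q|$, and this limit is strictly positive because $p\ne q$; hence $|z_k-w_k|$ is bounded below by a positive constant for all large $k$. For the denominator, the distance-to-boundary function $x\mapsto\delta(x)=d_{Euc}(x,\partial_{Euc}\Omega)$ is continuous (indeed $1$-Lipschitz) on $\overline{\Omega}^{Euc}$ and vanishes on $\partial_{Euc}\Omega$. Since $z_k\to p\in\partial_{Euc}\Omega$ and $w_k\to q\in\partial_{Euc}\Omega$, we obtain $\delta(z_k)\to 0$ and $\delta(w_k)\to 0$, whence $\min\{\delta(z_k),\delta(w_k)\}\to 0$. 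Combining the two observations, the ratio tends to $+\infty$, and therefore $k_{\Omega}(z_k,w_k)\to\infty$, as claimed.

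There is no genuine obstacle here; the only point worth a remark is the choice of which inequality from \eqref{qh-eq-1} to invoke. One must use the middle inequality, which exploits the Euclidean separation $|z_k-w_k|\to|p-q|>0$, rather than the rightmost estimate $k_{\Omega}(x,y)\ge\bigl|\log(\delta(y)/\delta(x))\bigr|$: the latter controls only the ``radial'' discrepancy of the two boundary distances, and if $\delta(z_k)$ and $\delta(w_k)$ decay at comparable rates that ratio stays bounded, so it would not by itself force divergence. It is precisely the fact that the two limit points $p$ and $q$ are \emph{distinct} that keeps $|z_k-w_k|$ away from zero and drives the quasihyperbolic distance to infinity.
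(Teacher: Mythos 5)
Your argument is correct and is precisely what the paper intends: the paper states that the Observation ``immediately follows from \eqref{qh-eq-1},'' and your proof simply fills in that deduction, using the middle inequality together with $|z_k-w_k|\to|p-q|>0$ and $\min\{\delta(z_k),\delta(w_k)\}\to 0$. Your closing remark on why the rightmost estimate $\left|\log(\delta(y)/\delta(x))\right|$ would not suffice is a correct and worthwhile clarification, but the approach is the same as the paper's.
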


	\par Throughout this paper, we denote, in short, quasihyperbolic geodesic as qh-geodesic whenever required.
	
	\subsection{Gromov hyperbolic space}\label{GHS}  Let $(X,d)$ be a geodesic metric space, a geodesic triangle is the union of geodesics $\gamma_i:[a_i,b_i]\to X$, $i=1,2,3$ such that $a_i< b_i$ for every $i=1,2,3$ and $\gamma_1(b_1)=\gamma_2(a_1), \gamma_2(b_2)=\gamma_3(a_3), \gamma_3(b_2)=\gamma_1(a_1)$. The geodesics $\gamma_1, \gamma_2$ and $\gamma_3$ are called sides of the geodesic triangle.
	
	\begin{defn}
		Let $\delta\ge 0$. A geodesic metric space $(X,d)$ is said to be $\delta$-hyperbolic if every geodesic triangle is $\delta$-thin. That is, each side of the geodesic triangle is contained in the $\delta$-neighborhood of the union of the other two sides.
	\end{defn}
	
	\begin{defn}
		A geodesic metric space $(X,d)$ is said to be Gromov hyperbolic if it is $\delta$-hyperbolic for some $\delta\ge 0$.
	\end{defn}
	
	\subsection*{Examples} Clearly, any bounded geodesic metric space is Gromov hyperbolic. Clearly $\mathbb{R}$ is Gromov hyperbolic with $\delta=0$. For $n\ge 2$, $\mathbb{R}^n$ is not Gromov hyperbolic. The hyperbolic space $\mathbb{H}^n$ is Gromov hyperbolic with $\delta=\log3$. Moreover, every CAT($k$) space, $k<0$ is Gromov hyperbolic for $\delta=(\log\,3)/\sqrt{-k}$. In particular, every complete simply connected Riemannian manifold with sectional curvature bounded from above by $- k$, with $k > 0$, is Gromov hyperbolic. One can construct more examples by considering the Gromov hyperbolicity of Riemann surfaces, for this we refer to \cite{Rodriguez-2006}. In particular, some classical surfaces such as punctured unit disk and the annuli with Poincar\'e metric are Gromov hyperbolic (for the bound on the hyperbolicity constant see \cite[Theorem 5.1]{Rodriguez-2006}.
	
		\subsection*{Geodesics and quasigeodesics in Gromov hyperbolic spaces} 
One of the important propetry of Gromov hyperbolicity is that it is invariant under quasi-isometries. The proof of this result is based on the stability of geodesics in $\delta$-hyperbolic spaces. Geodesics in Gromov hyperbolic spaces exhibit some stability and are generally well-behaved.  which we will discuss next. In particular, quasi-geodesics and geodesics sharing the same endpoints stay close to each other.
	
	\begin{defn}
		Let $X$ and $Y$ be two non-empty subsets of a metric space $(M,d)$. We define their Hausdorff distance $d^{\mathcal{H}}(X,Y)$ by 
		$$d^{\mathcal{H}}(X,Y)=\max\left\{\sup_{x\in X}d(x,Y),\, \sup_{y\in Y}d(X,y)\right\},$$
		where $d(a,B)=\inf_{b\in B}d(a,b)$ denotes the distance from a point $a\in M$ to a subset $B\subset M$.
	\end{defn}

	\begin{thm}\label{morse}(\textbf{Shadowing lemma/ Geodesic stability})
		Suppose $(X,d)$ be a Gromov hyperbolic space with $\delta\geq0$. For any $\lambda \geq 1, \mu \geq 0$ there exists a constant $M=M(\delta, \lambda, \mu)$ such that: If $\gamma:[a_1,b_1]\to X$ is a $(\lambda, \mu)$-quasi-geodesic and $\sigma:[a_2,b_2]\to X$ is a geodesic with $\gamma(a_1)=\sigma(a_2)$, $\gamma(a_2)=\sigma(a_2)$, then $$d^{\mathcal{H}}(\gamma([a_1,b_1]), \sigma([a_2,b_2]))\le M,$$ 
	\end{thm}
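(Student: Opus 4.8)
The plan is to prove this as the classical Morse lemma (stability of quasi-geodesics) in $\delta$-hyperbolic spaces, following the route in \cite[Part III.H, Theorem 1.7]{Bridson-book}: a logarithmic divergence estimate for continuous paths, combined with a self-improvement (bootstrapping) argument. Write $\Gamma=\gamma([a_1,b_1])$ and $\Sigma=\sigma([a_2,b_2])$, set $N_r(\Sigma)=\{x:d(x,\Sigma)\le r\}$, and note that the intended endpoint hypothesis is $\gamma(a_1)=\sigma(a_2)$ and $\gamma(b_1)=\sigma(b_2)$. The first step is to reduce to a \emph{tame} quasi-geodesic: every $(\lambda,\mu)$-quasi-geodesic lies within bounded Hausdorff distance of a continuous $(\lambda,\mu')$-quasi-geodesic with the same endpoints whose image is rectifiable and satisfies a controlled subarc bound $l_d(\gamma|_{[s,t]})\le k_1\,d(\gamma(s),\gamma(t))+k_2$, with $k_1,k_2$ depending only on $\lambda,\mu$. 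This is obtained by sampling $\gamma$ at integer parameters and joining consecutive samples by geodesics, the $(\lambda,\mu)$-inequality bounding the length added on each segment. Since the Hausdorff distance changes by only a bounded amount, it suffices to treat the tame curve, which I rename $\gamma$.

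The workhorse is a divergence lemma: in a $\delta$-hyperbolic space, if $c$ is a continuous rectifiable path from $p$ to $q$ and $[p,q]$ is a geodesic, then every $x\in[p,q]$ satisfies $d(x,\operatorname{im}c)\le \delta\log_2 l_d(c)+1$. I would prove this by dyadic bisection: split $c$ at its midpoint $m$, form the geodesic triangle on $\{p,q,m\}$, use $\delta$-thinness to move $x$ to within $\delta$ of one of $[p,m]$, $[m,q]$, and recurse on the relevant half. After about $\log_2 l_d(c)$ steps the surviving geodesic edge is shorter than a fixed length, which yields the stated bound.

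The main step, and the principal obstacle, is to bound $D_0:=\sup_t d(\sigma(t),\Gamma)$. I would take $x_0=\sigma(t_0)$ realizing $D_0$, choose flanking points $p=\sigma(t_0-2D_0)$ and $q=\sigma(t_0+2D_0)$ on $\sigma$ (truncating at the endpoints when necessary, in which case the relevant flanking point already lies on $\Gamma$), and select $p',q'\in\Gamma$ with $d(p,p')\le D_0$ and $d(q,q')\le D_0$. Concatenating the geodesics $[p,p']$, the $\gamma$-subarc from $p'$ to $q'$, and $[q',q]$ gives a continuous rectifiable path $c$ from $p$ to $q$ whose length is \emph{linear} in $D_0$: the two flanking geodesics contribute at most $2D_0$, while $d(p',q')\le 6D_0$ and tameness give $l_d(\gamma|_{p'\to q'})\le 6k_1 D_0+k_2$. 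Applying the divergence lemma to $c$ and the geodesic $\sigma|_{[p,q]}$ bounds $d(x_0,\operatorname{im}c)$ by $\delta\log_2(AD_0+B)+1$. The decisive point is converting this into a bound on $D_0$ itself: if the nearest point of $c$ to $x_0$ lies on the $\gamma$-subarc it already belongs to $\Gamma$, so $D_0\le d(x_0,\operatorname{im}c)$; and if it lies on a flanking geodesic, then since $d(x_0,p)=2D_0$ while that geodesic has length $\le D_0$, it sits at distance at least $2D_0-D_0=D_0$ from $x_0$, so again $D_0\le d(x_0,\operatorname{im}c)$. Either way $D_0\le \delta\log_2(AD_0+B)+1$, and because a logarithm eventually grows slower than its argument, this forces $D_0$ to be bounded by a constant depending only on $\delta,\lambda,\mu$. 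I expect the bookkeeping here, especially the flanking-point distance estimate and the endpoint truncations, to be the delicate part.

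Finally I would bound the reverse inclusion $\Gamma\subset N_{D_1}(\Sigma)$. Knowing $\Sigma\subset N_{D_0}(\Gamma)$, suppose a $\gamma$-subarc between parameters $s<t$ leaves $N_{D_0}(\Sigma)$; its endpoints lie within $D_0$ of points of $\Sigma$, the $\gamma$-length between $s$ and $t$ is at most $\lambda\,d(\gamma(s),\gamma(t))+\mu$ by the quasi-geodesic inequality, and hence every intermediate point is within a controlled distance of $\{\gamma(s),\gamma(t)\}$ and thus of $\Sigma$. Combining the two inclusions gives $d^{\mathcal{H}}(\Gamma,\Sigma)\le M$ with $M=M(\delta,\lambda,\mu)$, after undoing the tame reduction and absorbing its bounded error into $M$.
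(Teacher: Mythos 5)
The paper itself offers no proof of Theorem \ref{morse}: it is quoted in the preliminaries as a classical background fact (the Morse lemma / stability of quasi-geodesics, cf.\ Bridson--Haefliger, Part III.H, Theorem 1.7, with the converse attributed to Bonk), so your proposal can only be judged on its own terms. Your route is exactly the standard one, and the genuinely hard half is executed correctly: the reduction to a tame (continuous, rectifiable, length-controlled) quasi-geodesic by sampling at integer times; the dyadic-bisection divergence lemma $d(x,\operatorname{im}c)\le\delta\log_2 l_d(c)+1$; and the bootstrap in which $D_0=\sup_t d(\sigma(t),\Gamma)$ is played off against a concatenated path of length linear in $D_0$, each of whose points is at distance at least $D_0$ from the maximizing point $x_0$, so that $D_0\le\delta\log_2(AD_0+B)+1$ forces $D_0\le D_0(\delta,\lambda,\mu)$. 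You also correctly repaired the typo in the endpoint hypothesis (it should read $\gamma(b_1)=\sigma(b_2)$).

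There is, however, a genuine gap in your final step, the inclusion $\Gamma\subset N_{D_1}(\Sigma)$. You take an excursion $\gamma|_{[s,t]}$ of the quasi-geodesic outside $N_{D_0}(\Sigma)$, note that its endpoints are $D_0$-close to $\Sigma$, and bound the length of the subarc by (a linear function of) $d(\gamma(s),\gamma(t))$; but nothing in your sketch bounds $d(\gamma(s),\gamma(t))$ itself. The two points of $\Sigma$ near $\gamma(s)$ and $\gamma(t)$ could a priori be far apart along $\sigma$, in which case ``within a controlled distance of $\{\gamma(s),\gamma(t)\}$'' controls nothing. The missing ingredient is the connectedness/covering argument: the subsegment of $\sigma$ between those two nearby points is connected and is covered by the two closed sets consisting of points within $D_0$ of $\gamma([a_1,s])$ and of points within $D_0$ of $\gamma([t,b_1])$ (this uses the already-established inclusion $\Sigma\subset N_{D_0}(\Gamma)$ together with the fact that the open excursion avoids $N_{D_0}(\Sigma)$); both sets are nonempty, so they intersect, producing $y\in\Sigma$ and parameters $u\le s$, $v\ge t$ with $d(y,\gamma(u))\le D_0$ and $d(y,\gamma(v))\le D_0$. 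Then $d(\gamma(u),\gamma(v))\le 2D_0$, the quasi-geodesic inequality gives $|u-v|\le\lambda(2D_0+\mu)$, and every point of $\gamma([s,t])\subset\gamma([u,v])$ lies within $\lambda^2(2D_0+\mu)+\mu+D_0$ of $\Sigma$. With this step inserted, your proof is complete and coincides with the classical argument.
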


This says that Gromov hyperbolic spaces are geodesically stable in the sense that each quasigeodesic segment is contained in a neighborhood of a geodesic segment. The converse is also true and is due to Bonk \cite{Bonk-1995}.	Theorem \ref{morse} has the following important consequence which says that one can define Gromov hyperbolicity in terms of  $(\lambda, \epsilon)$-quasi-geodesic triangles.
	
	\begin{cor}
		A geodesic metric space $X$ is hyperbolic if and only if, for every $\lambda \geq 1$ and every $\mu \geq 0$, there exists a constant $M$ such that every $(\lambda, \epsilon)$-quasi-geodesic triangle in $X$ is $M$-thin. (If $X$ is $\delta$-hyperbolic, then $M$ depends only on $\delta, \lambda$ and $\epsilon$.)
	\end{cor}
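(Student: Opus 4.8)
The backward implication is immediate and requires nothing beyond the definitions: if the stated uniform thinness holds for every $\lambda,\mu$, then specialize to $\lambda=1$, $\mu=0$. Since a $(1,0)$-quasi-isometry is an isometry, the $(1,0)$-quasi-geodesics are exactly the geodesics, so the hypothesis furnishes a constant $M$ for which every geodesic triangle is $M$-thin; that is, $X$ is $M$-hyperbolic and hence Gromov hyperbolic. Thus the entire content lies in the forward direction, and the plan is to deduce it from the Shadowing Lemma, Theorem \ref{morse}.

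For the forward direction I would assume $X$ is $\delta$-hyperbolic, fix $\lambda\ge 1$ and $\mu\ge 0$, and set $M_0=M(\delta,\lambda,\mu)$, the constant supplied by Theorem \ref{morse}. Given a $(\lambda,\mu)$-quasi-geodesic triangle with sides $\gamma_1,\gamma_2,\gamma_3$ and vertices $p_1,p_2,p_3$, the first step is to replace each side $\gamma_i$ by a genuine geodesic $\sigma_i$ joining the same pair of vertices. These exist because $X$ is a geodesic space (part of the standing hypothesis), and $\sigma_1,\sigma_2,\sigma_3$ form a geodesic triangle on the same three vertices, which is $\delta$-thin by hyperbolicity. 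Since $\gamma_i$ and $\sigma_i$ share endpoints, Theorem \ref{morse} applies to each pair and gives
$$d^{\mathcal{H}}(\gamma_i,\sigma_i)\le M_0,\qquad i=1,2,3.$$

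The remaining step is a point-chase. To show that $\gamma_1$ lies within distance $M$ of $\gamma_2\cup\gamma_3$, take $x\in\gamma_1$: shadowing yields $y\in\sigma_1$ with $d(x,y)\le M_0$; $\delta$-thinness of the geodesic triangle yields $z\in\sigma_2\cup\sigma_3$ with $d(y,z)\le\delta$; and a second application of shadowing, to whichever of $\sigma_2,\sigma_3$ contains $z$, yields $w\in\gamma_2\cup\gamma_3$ with $d(z,w)\le M_0$. Hence $d(x,w)\le 2M_0+\delta$. By the symmetric roles of the three sides, the triangle is $M$-thin with $M:=2M(\delta,\lambda,\mu)+\delta$, which depends only on $\delta,\lambda,\mu$, exactly as claimed.

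I do not expect a genuine obstacle here, since the result is essentially a packaging of Theorem \ref{morse}; the only point demanding care is the correct double use of the Hausdorff estimate — once to push the chosen point $x$ off the quasi-geodesic side onto the nearby comparison geodesic, and once to pull the point produced by $\delta$-thinness back from a comparison geodesic onto the corresponding quasi-geodesic side — together with the bookkeeping that keeps $M$ a function of $\delta,\lambda,\mu$ alone. It is also worth recording that the construction of the comparison geodesic triangle relies on $X$ being geodesic, which is assumed throughout.
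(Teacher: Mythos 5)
Your proof is correct and follows exactly the route the paper intends: the paper states this corollary without proof, immediately after Theorem \ref{morse}, as a direct consequence of that shadowing result, and your argument (replace each quasi-geodesic side by a geodesic with the same endpoints, use $\delta$-thinness of the comparison geodesic triangle, and apply the Hausdorff estimate twice to transfer points back and forth) is the standard way of filling in those details, yielding $M=2M(\delta,\lambda,\mu)+\delta$ as required. The backward direction via $(1,0)$-quasi-geodesics is also handled correctly.
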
	
	
	Further, Theorem \ref{morse} gives the invariance of Gromov hyperbolicity under quasi-isometry.		
	
	\begin{thm}\label{thm2}
		Let $X$ and $Y$ be geodesic metric spaces and let $f : X \rightarrow Y$ be a quasi-isometric embedding. If $Y$ is Gromov hyperbolic then $X$ is Gromov hyperbolic. ({\it i.e.,} If $Y$ is $\delta$-hyperbolic and $f : X \rightarrow Y$ is a $(\lambda, \mu)$-quasi-isometric embedding, then $X'$ is $\delta'$-hyperbolic, where $\delta'$ depends only on $\delta, \lambda$ and $\mu$). In addition, if $f(X)=Y$ then $X$ is Gromov hyperbolic if, and only if, $Y$ is Gromov hyperbolic
	\end{thm}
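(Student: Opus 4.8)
The plan is to transfer the thinness of geodesic triangles from $Y$ to $X$ across the quasi-isometric embedding, using the Shadowing Lemma (Theorem \ref{morse}) as the sole nontrivial input. Write $f$ as a $(\lambda,\mu)$-quasi-isometric embedding and suppose $Y$ is $\delta$-hyperbolic. Take an arbitrary geodesic triangle in $X$ with sides $\gamma_1,\gamma_2,\gamma_3$ and vertices $p,q,r$. Since each $\gamma_i$ is an isometric embedding and $f$ satisfies \eqref{quasi-isometry}, the composite $f\circ\gamma_i$ satisfies the quasi-geodesic inequalities on its parameter interval, so it is a $(\lambda,\mu)$-quasi-geodesic in $Y$, although possibly discontinuous. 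Because Theorem \ref{morse} is phrased for continuous quasi-geodesic curves, I would first \emph{tame} each $f\circ\gamma_i$: sample it at the integer parameter values (retaining the endpoints) and join consecutive sample points by geodesics in $Y$, which exist because $Y$ is a geodesic space. Standard estimates show the resulting continuous curve $\hat\gamma_i$ is a $(\lambda',\mu')$-quasi-geodesic with $\lambda',\mu'$ depending only on $\lambda,\mu$, has the same endpoints as $f\circ\gamma_i$, and lies within bounded Hausdorff distance of $f\circ\gamma_i$. I expect this taming step to be the only genuinely technical point; everything else is bookkeeping of constants.

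Next I would apply Theorem \ref{morse} to each $\hat\gamma_i$: it lies within Hausdorff distance $M=M(\delta,\lambda',\mu')$ of a genuine geodesic $\sigma_i$ in $Y$ joining the same pair of vertex images among $f(p),f(q),f(r)$. The three geodesics $\sigma_1,\sigma_2,\sigma_3$ form a geodesic triangle in $Y$, which is $\delta$-thin by hypothesis. Chaining the $\delta$-thinness of the $\sigma$-triangle with the two $M$-neighborhood containments shows that the quasi-geodesic triangle formed by the $\hat\gamma_i$, and hence the one formed by the $f\circ\gamma_i$, is $M'$-thin for a constant $M'=M'(\delta,\lambda,\mu)$; this is exactly the content of the Corollary stated after Theorem \ref{morse}.

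It remains to pull this bound back to $X$. Let $x$ be any point of $\gamma_1$. Then $f(x)$ lies on $f\circ\gamma_1$, so by $M'$-thinness there is a point of $f\circ\gamma_2\cup f\circ\gamma_3$, necessarily of the form $f(y)$ with $y\in\gamma_2\cup\gamma_3$, satisfying $d'(f(x),f(y))\le M'$. The lower estimate in \eqref{quasi-isometry} gives $\lambda^{-1}d(x,y)-\mu\le M'$, whence $d(x,y)\le\lambda(M'+\mu)$. Thus $x$ lies within $\delta':=\lambda(M'+\mu)$ of $\gamma_2\cup\gamma_3$ in $X$. As $\delta'$ depends only on $\delta,\lambda,\mu$, every geodesic triangle in $X$ is $\delta'$-thin, so $X$ is Gromov hyperbolic, proving the first assertion.

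For the final equivalence, suppose in addition $f(X)=Y$. Choosing for each $y\in Y$ a preimage and setting $g(y)\in f^{-1}(y)$ defines a map $g:Y\to X$; a direct manipulation of the two inequalities in \eqref{quasi-isometry} shows that $g$ is a $(\lambda,\lambda\mu)$-quasi-isometric embedding. Applying the argument above to $g$ shows that if $X$ is Gromov hyperbolic then so is $Y$. Combined with the implication already established, this yields that $X$ is Gromov hyperbolic if and only if $Y$ is, as claimed.
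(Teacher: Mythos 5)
Your proof is correct and follows exactly the route the paper intends: the paper states this standard result without proof, remarking only that it is a consequence of the Shadowing Lemma (Theorem \ref{morse}), and your argument --- taming the (possibly discontinuous) image quasi-geodesics, invoking geodesic stability to compare with a $\delta$-thin geodesic triangle in $Y$, chaining the neighborhood bounds, and pulling thinness back to $X$ via the lower quasi-isometry inequality, together with the quasi-inverse construction for the surjective case --- is precisely that standard derivation with the constants tracked correctly.
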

	
	\begin{rem}
		Theorem \ref{thm2} can be used to construct non-examples of Gromov hyperbolic spaces. For instance, consider the two-dimensional jungle-gym (a $\mathbb{Z}^2$ covering of a Riemann surface of genus $\ge 2$) equipped with a metric of constant negative curvature. This is quasi-isometric to $\mathbb{R}^2$ and hence not Gromov hyperbolic. Infact, geodesic triangles are nearly Euclidean.
	\end{rem}
	
	\subsection{Gromov compactification}
	For the rest of this subsection we assume that $(X,d)$ is a proper geodesic Gromov hyperbolic metric space. Let $p\in X$ and $\mathcal{G}_{p}$ be the space of all geodesic rays $\gamma:[0,\infty)\to X$ with $\gamma(0)=p$, endowed with the topology of uniform convergence on compact subsets of $[0,\infty)$. Define an equivalence relation $\sim$ on $\mathcal{G}_{p}$ as follows: Let $\gamma,\sigma\in \mathcal{G}_{p}$ be two geodesic rays. Then
	$$\gamma\sim \sigma \iff \sup_{t\ge 0} d(\gamma(t),\sigma(t))< \infty.$$ 
	
	\begin{defn}
		\begin{itemize}
			\item[(1)] The {\it Gromov boundary} of $X$ is denoted by $\partial_{G} X$ and is defined as the quotient space $\mathcal{G}_{p}/\sim$ endowed with the quotient topology.
			\item[(2)]  The {\it Gromov closure} of $X$ is denoted by $\overline{X}^{G}$ and is defined as $\overline{X}^{G}=X\cap \partial_{G}X$.
		\end{itemize}
	\end{defn}

\begin{rem}
The definition of the Gromov boundary does not depend on the base point. Indeed for given $p,q\in X$, there is a natural map $J: \mathcal{G}_p/\sim\, \to \mathcal{G}_q/\sim$. Let $[\gamma]\in \mathcal{G}_p$, where $\gamma:[0,\infty)\to X$ is a geodesic ray with $\gamma(0)=p$. For each $n\in\mathbb{N}$, consider a sequence of geodesics $\gamma_n:[0,R_n]\to X$ with $\gamma_n(0)=q$ and $\gamma_n(R_n)=\gamma(n)$. Then by Ar\'zela-Ascoli's theorem, up to a subsequence, we can assume that $\gamma_n$ converges uniformly on every compact subset to a geodesic ray $\sigma:[0,\infty)\to X$ with $\gamma(0)=q$. Now define, $J([\gamma])=[\sigma]$. Since $(X,d)$ is Gromov hyperbolic, $J$ is well defined and onto. Infact $J$ is a homeomorphism (see \cite[Chapter III.H.3, Lemma 3.3, Lemma 3.6 and Proposition 3.7]{Bridson-book}).
\end{rem}

	\subsection*{Gromov topology} The set $\overline{X}^{G}$  admits a natural topology and with this topology $\overline{X}^{G}$ is a compactification of $X$, known as Gromov compactification of $X$ (see for instance \cite[Chapter III.H.3, Proposition 3.7]{Bridson-book}).\\
	
	 To understand this topology we need some additional notation: A generalized ray is either a geodesic ray or a geodesic. For a geodesic ray $\gamma \in \mathcal{G}_p$ define $\mbox{End}(\gamma)=\{\sigma:[0,\infty)\to X: \sigma \mbox{ is a geodesic ray with } \gamma(0)=p \mbox{ and } \sigma \sim \gamma\}=[\gamma]$, and for a geodesic $\gamma:[0,R]\to X$ with $\gamma(0)=p$, where $R>0$, define $\mbox{ End}(\gamma)=\gamma(R)$. Then a sequence $\{x_n\}_{n\in \mathbb{N}}$ in $\overline{X}^G$ converges to $x\in \overline{X}^G$ (denoted by $x_n \stackrel{Gromov}\longrightarrow x$), if, and only if, for every generalized ray $\gamma_n$ with $\gamma_n(0)=p$ and $\mbox{ End}(\gamma_n)=x_n$ its every subsequence has a subsequence which converges uniformly on every compact subset to a generalized ray $\gamma$ with $\gamma(0)=x_0$ and $\mbox{ End}(\gamma)=x$.\\

	\subsection{Gromov product}
	There is an alternate description of Gromov hyperbolic spaces via Gromov product which is useful in the following sense
	\begin{itemize}
		\item[(i)] This notion allows us to define Gromov hyperbolicity for arbitrary metric space (not necessarily geodesic or proper (see \cite{Vaisala-GH})).
		\item[(ii)]This notion allows us to define Gromov boundary for arbitrary metric space (not necessairly Gromov hyperbolic)
		\item[(iii)] This notion allows us to define a canonical gauge of metrics on Gromov boundary, which are known as visual metrics.
	\end{itemize}
	
\begin{defn}
Fix a base point $o\in X$ and let $x,y\in X$. The Gromov product of $x$ and $y$ is defined as
$$(x|y)_o=\frac{1}{2}(d(x,o)+d(o,y)-d(x,y)).$$
\end{defn}	

We define the Gromov hyperbolicity of arbitrary metric space $X$ which coincides with the thin triangles definition when $X$ is a geodesic metric space.
\begin{defn}
	 Let $\delta\ge 0$. A metric space $(X,d)$ is said to be $\delta$-hyperbolic if there exists a $\delta \geq 0$ such that $\forall\,x,y,z,p \in X$, the following inequality holds:
	$$(x|z)_p\,\geq\,\min\{(x|y)_p,(y|z)_p\}-\delta.$$
\end{defn}

\begin{defn}
	\begin{itemize}
		\item[(1)] We say that a sequence $\{x_n\}\subset X$ is a {\it Gromov sequence} if it goes to infinity in Gromov sense {\it i.e.,} if
		$$\lim_{m,n\to \infty}(x_n|x_m)_o= +\infty.$$
		\item[(2)] Let $\Gamma^G$ denote the set of Gromov sequence and  $\{x_n\},\{y_n\}\in \Gamma^G$. If $X$ is Gromov hyperbolic then the following relation 
		$$x_n\sim_s y_n \iff \lim_{n\to \infty}(x_n|y_n)_o= \infty$$
		is an equivalence relation on $\Gamma^G$. This definition is also irrelevant of chosen point $o$, since $|(x|y)_p-(x|y)_q|\le d(p,q)$.
		\item[(3)] We define the set
		$$\partial_{G}^{S}X:=\Gamma^G/\sim_s.$$
		as the {\it Gromov boundary}. 
	\end{itemize}
\end{defn}
	The set $\partial_{G}^{S}X$ admits a canonical bijection with $\partial_{G}X$, that is,
	$$\partial_{G}^{S}X\to \partial_{G}X \,\,\,\,\, \mbox{ by }\,\,\,\,\, [\gamma]\to [(\gamma(n))_{n\in \mathbb{N}}]_s.$$
	
\subsection*{Result} Let $(X,d)$ be Gromov hyperbolic. A sequence $(x_n)_{n\ge 1}$ in $X$ converges to a point $\xi\in \partial_{G}X$ if, and only if, $(x_n)_{n\ge 1}$ is a Gromov sequence and $[(x_n)_{n \ge 1}]=\xi$.\\

The following proposition says that two sequences converge to the same points of the Gromov boundary if, and only if, they are equivalent.
	\begin{prop}
		Let $x_n$, $y_n$ be two sequences such that
		$$x_n\to \xi\in\partial_{G}X,\,\,\,\,\,\, y_n\to \eta \in \partial_{G}X.$$
		Then $\xi=\eta$ if, and only if, $(x_n|y_n)_o\to \infty$.
	\end{prop}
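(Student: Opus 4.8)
The plan is to reduce the statement to the very definition of the sequential Gromov boundary $\partial_G^S X=\Gamma^G/{\sim_s}$, using the convergence characterization recorded immediately before the proposition (the ``Result'' stating that $x_n\to\xi$ iff $(x_n)$ is a Gromov sequence with class $\xi$) together with the canonical bijection $\Psi\colon\partial_G^S X\to\partial_G X$. Because the equivalence relation $\sim_s$ is \emph{defined} by $x_n\sim_s y_n\iff\lim_{n\to\infty}(x_n\mid y_n)_o=\infty$, once convergence is translated into the sequential picture the proposition should become essentially formal.

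First I would invoke the convergence characterization: since $x_n\to\xi$ and $y_n\to\eta$ with $\xi,\eta\in\partial_G X$, both $(x_n)$ and $(y_n)$ are Gromov sequences, and under the identification furnished by $\Psi$ one has $\Psi([(x_n)]_s)=\xi$ and $\Psi([(y_n)]_s)=\eta$. In particular $d(x_n,o)\to\infty$ and $d(y_n,o)\to\infty$ (from $(x_n\mid x_m)_o\le\min\{d(x_n,o),d(x_m,o)\}$), so the Gromov products $(x_n\mid y_n)_o$ are eventually meaningful and the relation $\sim_s$ genuinely applies to the pair $\{(x_n),(y_n)\}$.

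Next I would handle the two implications by transporting equality across $\Psi$. For the forward direction, assuming $\xi=\eta$ and using that $\Psi$ is a bijection, $\Psi([(x_n)]_s)=\Psi([(y_n)]_s)$ forces $[(x_n)]_s=[(y_n)]_s$, i.e.\ $(x_n)\sim_s(y_n)$, which by definition of $\sim_s$ is precisely $\lim_{n\to\infty}(x_n\mid y_n)_o=\infty$. For the converse, the hypothesis $(x_n\mid y_n)_o\to\infty$ is exactly the statement $(x_n)\sim_s(y_n)$, hence $[(x_n)]_s=[(y_n)]_s$, and applying $\Psi$ gives $\xi=\eta$.

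The only point that is not purely formal, and the step I would be careful to flag, is that $\sim_s$ must genuinely be an equivalence relation on $\Gamma^G$ so that the quotient classes $[(x_n)]_s$ make sense; this is exactly where Gromov hyperbolicity enters, through the four-point inequality $(x\mid z)_o\ge\min\{(x\mid y)_o,(y\mid z)_o\}-\delta$, which supplies transitivity (reflexivity and symmetry being immediate from $d(x_n,o)\to\infty$ and the symmetry of the Gromov product). I therefore expect no serious obstacle: the substantive content is already carried by the convergence characterization and by the bijectivity of $\Psi$, and the present argument merely transports equality of boundary points across $\Psi$ and unwinds the definition of $\sim_s$.
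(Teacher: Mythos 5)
Your proposal is correct. The paper itself contains no proof of this proposition---it is stated in the preliminaries as standard background on Gromov hyperbolic spaces, immediately after the ``Result'' characterizing convergence to a Gromov boundary point---and your argument, which transports equality of boundary points across the canonical bijection between $\partial_{G}^{S}X$ and $\partial_{G}X$ and then unwinds the definition of $\sim_s$, is exactly the justification that the paper's arrangement of definitions presupposes. You are also right to single out the one non-formal ingredient: Gromov hyperbolicity is needed precisely so that $\sim_s$ is transitive, since otherwise equality of the classes $[(x_n)]_s$ and $[(y_n)]_s$ would not be equivalent to the relation $(x_n)\sim_s(y_n)$, i.e.\ to $(x_n|y_n)_o\to\infty$.
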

	Moreover, the Gromov boundary $\partial_{G}X$ has a canonical quasiconformal structure in the following sense.
	
	\begin{thm}\label{QI extension}
		Let $X$ and $Y$ be proper geodesic Gromov hyperbolic space. If $f:X\to Y$ is quasi-isometry, then $f$ extends to a homeomorphism $\overline{f}:\partial_{G}X\to\partial_{G}Y$. Moreover, the boundary map is quasi-conformal.
	\end{thm}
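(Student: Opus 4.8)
The plan is to construct the boundary map $\overline{f}$ directly from geodesic rays and then verify it is a homeomorphism by means of a Gromov--product comparison. Since $f$ is a $(\lambda,\mu)$-quasi-isometry, the image $f\circ\gamma$ of any geodesic ray $\gamma:[0,\infty)\to X$ is, after arclength reparametrization, a $(\lambda,\mu)$-quasi-geodesic ray in $Y$. Applying the Shadowing Lemma (Theorem \ref{morse}) on each finite subsegment together with properness of $Y$ and Arzel\`a--Ascoli, one obtains a genuine geodesic ray $\sigma$ in $Y$ with $d^{\mathcal H}(f\circ\gamma,\sigma)\le M=M(\delta_Y,\lambda,\mu)$. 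I would set $\overline{f}([\gamma])=[\sigma]\in\partial_G Y$. This is well defined on $\sim$-classes: if $\gamma_1\sim\gamma_2$, i.e.\ $\sup_t d_X(\gamma_1(t),\gamma_2(t))<\infty$, then since $f$ is coarsely Lipschitz their images stay at bounded distance, so the shadowing geodesic rays $\sigma_1,\sigma_2$ are equivalent in $Y$.

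The heart of the argument is the comparison of Gromov products
$$\lambda^{-1}(x|y)_o-C\ \le\ (f(x)|f(y))_{f(o)}\ \le\ \lambda\,(x|y)_o+C,$$
for a constant $C=C(\delta_X,\delta_Y,\lambda,\mu)$. The naive substitution of the quasi-isometry inequalities into the definition of the Gromov product fails, because it produces the wrong coefficient on the $d(x,y)$ term; this is precisely where hyperbolicity is indispensable. Instead I would use the standard fact that in a $\delta$-hyperbolic space $(x|y)_o$ agrees with $d(o,[x,y])$ up to an additive multiple of $\delta$, for any geodesic $[x,y]$. The image $f([x,y])$ is a quasi-geodesic with endpoints $f(x),f(y)$, so by Theorem \ref{morse} it shadows the geodesic $[f(x),f(y)]$; comparing $d(f(o),[f(x),f(y)])$ with $d(f(o),f([x,y]))$ and the latter with $d(o,[x,y])$ through the quasi-isometry bounds yields the displayed inequalities. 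In particular $f$ carries Gromov sequences to Gromov sequences and preserves $\sim_s$, since $(x_n|y_n)_o\to\infty\iff(f(x_n)|f(y_n))_{f(o)}\to\infty$; this reconfirms well-definedness and gives injectivity of $\overline{f}$.

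For bijectivity and continuity I would invoke the quasi-inverse. Because $f$ is a quasi-isometry in the sense of quasi-isometric equivalence (coarsely dense image), there is a $(\lambda',\mu')$-quasi-isometry $g:Y\to X$ whose compositions $g\circ f$ and $f\circ g$ lie within uniformly bounded sup-distance of $\mathrm{id}_X$ and $\mathrm{id}_Y$. The same construction produces $\overline{g}:\partial_G Y\to\partial_G X$, and since a bounded perturbation of a geodesic ray is $\sim$-equivalent to it, $\overline{g}\circ\overline{f}=\mathrm{id}_{\partial_G X}$ and $\overline{f}\circ\overline{g}=\mathrm{id}_{\partial_G Y}$; hence $\overline{f}$ is a bijection with inverse $\overline{g}$. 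Continuity follows from the Gromov-product characterization of convergence ($\xi_n\to\xi$ iff $(\xi_n|\xi)_o\to\infty$): extending the comparison inequality to boundary points by taking $\liminf$ along representative sequences shows $(\overline{f}\xi_n\mid\overline{f}\xi)_{f(o)}\to\infty$ whenever $(\xi_n|\xi)_o\to\infty$, so $\overline{f}$ is continuous, and symmetrically so is $\overline{g}=\overline{f}^{-1}$. Thus $\overline{f}$ is a homeomorphism.

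Finally, for quasiconformality I would use that $\partial_G X,\partial_G Y$ carry visual metrics $\rho_\epsilon(\xi,\eta)\asymp e^{-\epsilon(\xi|\eta)_o}$. The Gromov-product comparison transfers to
$$\rho_\epsilon(\overline{f}\xi,\overline{f}\eta)\ \asymp\ e^{-\epsilon(f\xi|f\eta)}\ \asymp\ \rho_\epsilon(\xi,\eta)^{\lambda^{\pm1}},$$
a power-law (bi-H\"older) distortion, which is exactly the statement that $\overline{f}$ is quasisymmetric, hence quasiconformal, with respect to the visual metrics. Throughout, the single genuine difficulty is establishing the lower bound of the Gromov-product comparison; once Theorem \ref{morse} is used to reduce it to the distance-to-geodesic interpretation, everything else is bookkeeping with the quasi-inverse and the convergence criterion.
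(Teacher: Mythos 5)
The paper never proves this statement: it appears in the Preliminaries (Section 3) as quoted background from the standard theory of Gromov hyperbolic spaces, so the only fair comparison is with the classical literature argument. Your construction of $\overline{f}$ and the proof that it is a homeomorphism follow that classical argument essentially verbatim, and this part is sound: shadowing of the quasi-geodesic $f\circ\gamma$ plus properness and Arzel\`a--Ascoli to define $\overline{f}$, the Gromov-product comparison
$\lambda^{-1}(x|y)_o-C\le (f(x)|f(y))_{f(o)}\le \lambda (x|y)_o+C$
obtained through the identification of $(x|y)_o$ with $d(o,[x,y])$ up to $O(\delta)$ (you are right that the naive substitution fails, and right about how hyperbolicity rescues it), the quasi-inverse to get bijectivity, and the Gromov-product criterion for continuity. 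One pedantic point: with the paper's definition a ``quasiisometry'' is only a quasi-isometric embedding, and then the boundary map need not be surjective (a geodesic line in $\mathbb{H}^2$ gives a two-point boundary inside a circle); you correctly flag that coarse density of the image is needed, which is what the theorem implicitly assumes.

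The genuine gap is in the last paragraph, the ``moreover'' clause. From the two-point comparison of Gromov products you obtain a bi-H\"older (power-law) distortion of visual metrics, but it is \emph{not} true that bi-H\"older distortion ``is exactly the statement that $\overline{f}$ is quasisymmetric.'' Bi-H\"older control allows the two sides of the estimate to act with different exponents on different point pairs: if $\rho(\xi,\eta)=\rho(\xi,\zeta)=t$, your bounds permit $\rho(\overline{f}\xi,\overline{f}\eta)\asymp t^{\lambda}$ while $\rho(\overline{f}\xi,\overline{f}\zeta)\asymp t^{1/\lambda}$, so the ratio of image distances degenerates like $t^{\lambda-1/\lambda}$ as $t\to 0$; this is incompatible with quasisymmetry and with the metric definition of quasiconformality, both of which are statements about \emph{ratios} of distances of comparable pairs, not about absolute distortion. (A concrete model: glue two segments at a point and apply $t\mapsto t^{\lambda}$ on one and $t\mapsto t^{1/\lambda}$ on the other; the result is bi-H\"older but not quasisymmetric at the gluing point.) The standard repair is to upgrade from two-point to four-point control: using stability of geodesics on the configuration of geodesics spanned by a quadruple, one shows that the quasi-isometry quasi-preserves the cross-ratio type quantities built from differences of Gromov products, whence $\overline{f}$ is (power) quasi-M\"obius with respect to visual metrics, and quasi-M\"obius homeomorphisms are metrically quasiconformal. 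That cross-ratio step is a genuinely separate argument, not ``bookkeeping,'' and as written your proposal does not establish the quasiconformality claim.
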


	\subsection*{Visibility in Gromov hyperbolic spaces}
In view of the Gromov topology of $\overline{X}^G$ we have the following observation for geodesic rays and geodesic lines.
\begin{rem}\label{limit}
	For any geodesic ray $\gamma:[0,\infty)\to X$,
	$$\lim_{t\to\infty}\gamma(t)=\mbox{ End }(\gamma)=[\gamma]\in \partial_{G}X$$
	and for any geodesic line $\gamma:(-\infty,\infty)\to X$, both the following limits exist and 
	$$ \lim_{t\to-\infty}\gamma(t)\ne\lim_{t\to \infty}\gamma(t).$$
	For a geodesic line $\gamma$, we denote $\gamma(-\infty)=\lim_{t\to-\infty}\gamma(t)$ and $\gamma(\infty)=\lim_{t\to \infty}\gamma(t)$. 
\end{rem}
Following theorem tells us that any point in the Gromov hyperbolic space can be joined to any Gromov boundary point via geodesic ray.

\begin{thm}\cite[Chapter III.H.3, Lemma 3.1]{Bridson-book}
	For each $p\in X$ and $\xi\in \partial_{G}X$ there exists a geodesic ray $\gamma:[0,\infty)\to X$ such that $\mbox{ End }(\gamma)=\xi$.
\end{thm}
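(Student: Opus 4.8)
The plan is to realize $\xi$ as a limit of geodesic segments issuing from $p$, extract a geodesic ray from them by a normal-families argument, and then verify that this ray lands at $\xi$ via a Gromov-product estimate. Since the hypotheses of this subsection are that $(X,d)$ is a \emph{proper}, \emph{geodesic}, Gromov hyperbolic space, each of these three properties will be used exactly once: the geodesic hypothesis to produce the segments, properness to run Arz\'ela--Ascoli, and $\delta$-hyperbolicity to guarantee that the relation $\sim_s$ is a genuine (in particular transitive) equivalence relation on Gromov sequences.

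First I would fix a Gromov sequence $\{x_n\}\subset X$ representing $\xi$. Taking $m=n$ in $\lim_{m,n\to\infty}(x_n|x_m)_o=+\infty$ shows $d(o,x_n)\to\infty$, hence $d(p,x_n)\to\infty$ as well. Because $X$ is geodesic, for each $n$ I choose a geodesic segment $\sigma_n:[0,R_n]\to X$ with $\sigma_n(0)=p$, $\sigma_n(R_n)=x_n$ and $R_n=d(p,x_n)\to\infty$. Each $\sigma_n$ is an isometric embedding, hence $1$-Lipschitz, and for $R_n\ge T$ its restriction to $[0,T]$ takes values in the compact ball $\overline{B}(p,T)$. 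Applying Arz\'ela--Ascoli on $[0,T]$ for $T=1,2,3,\dots$ and passing to a diagonal subsequence, I obtain a subsequence $\sigma_{n_k}$ converging uniformly on every compact subset of $[0,\infty)$ to a map $\gamma:[0,\infty)\to X$. Letting $k\to\infty$ in $d(\sigma_{n_k}(s),\sigma_{n_k}(t))=|s-t|$ shows $\gamma$ is a geodesic ray with $\gamma(0)=p$; in particular $(\gamma(m)|\gamma(m'))_p=\min\{m,m'\}\to\infty$, so $\{\gamma(m)\}$ is a Gromov sequence and $\mathrm{End}(\gamma)=[\gamma]$ is a well-defined boundary point.

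The heart of the argument is showing $[\gamma]=\xi$. For each $m$, uniform convergence on $[0,m]$ together with $R_{n_k}\ge m$ for large $k$ lets me choose indices $k_m$, strictly increasing in $m$, with $R_{n_{k_m}}\ge m$ and $d(\sigma_{n_{k_m}}(m),\gamma(m))\le 1$. Set $y_m=x_{n_{k_m}}$. Since $\sigma_{n_{k_m}}(m)$ lies at arclength $m$ on the geodesic $[p,y_m]$, a direct computation gives $(y_m|\sigma_{n_{k_m}}(m))_p=\tfrac12\bigl(R_{n_{k_m}}+m-(R_{n_{k_m}}-m)\bigr)=m$, and because the Gromov product is $1$-Lipschitz in each entry I get $(y_m|\gamma(m))_p\ge m-1$. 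Thus $\lim_{m\to\infty}(\gamma(m)|y_m)_p=\infty$, i.e. $\{\gamma(m)\}\sim_s\{y_m\}$. On the other hand $\{y_m\}$ is a subsequence of the Gromov sequence $\{x_n\}$, so $\lim_{m\to\infty}(x_m|y_m)_o=\infty$ and $\{y_m\}\sim_s\{x_n\}$; by transitivity of $\sim_s$ (here $\delta$-hyperbolicity is essential) we conclude $\{\gamma(m)\}\sim_s\{x_n\}$, that is $[\gamma]=\xi$, so $\mathrm{End}(\gamma)=\xi$ as required.

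The step I expect to be the main obstacle is the last one: reconciling the convergence $\sigma_{n_k}\to\gamma$, which is only available on compact subintervals, with the behaviour of the sequences \emph{at infinity} that governs the boundary point. The diagonal choice $y_m=x_{n_{k_m}}$ is exactly what keeps the two indices running to infinity together so that the estimate $(\gamma(m)|y_m)_p\ge m-1$ can be fed into the paper's definition of $\sim_s$; getting the order of limits right here is the only genuinely delicate point, and everything else (existence of segments, the normal-families limit, the exact Gromov product on a geodesic) is routine.
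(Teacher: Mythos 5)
Your proof is correct. Note first that the paper gives no proof of this statement at all: it is quoted from Bridson--Haefliger (Chapter III.H.3, Lemma 3.1), so there is no in-paper argument to compare against. The standard proof in that reference begins exactly as your first two paragraphs do --- join $p$ by geodesic segments to a sequence of points tending to infinity along a representative of $\xi$, and extract a limiting ray by Arzel\`a--Ascoli using properness --- but it then verifies $\mathrm{End}(\gamma)=\xi$ by a thin-triangles estimate: each segment $\sigma_n$ lies in the $\delta$-neighbourhood of $[p,x_0]\cup \gamma'([0,n])$ for a representative ray $\gamma'$ of $\xi$, whence in the limit $\sup_t d(\gamma(t),\gamma'(t))$ is finite and the rays are asymptotic. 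You instead verify the landing point through the Gromov-product/sequence model of the boundary, which buys you a basepoint-free argument but costs you a dependence on the canonical bijection $\partial_{G}^{S}X\to\partial_{G}X$, $[\gamma]\mapsto[(\gamma(n))]_s$, stated in Section 3 of the paper. That dependence is legitimate and non-circular, because you only use the easy half of the identification: well-definedness (the points of a ray form a Gromov sequence, and asymptotic rays give $\sim_s$-equivalent sequences) and injectivity, which is the standard hyperbolicity estimate $(\gamma(t)|\sigma(t))_{x_0}\ge t-2\delta$ forcing $d(\gamma(t),\sigma(t))\le 4\delta$; you never invoke surjectivity, which is essentially the statement being proved. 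Your individual computations --- $(x_n|x_n)_o=d(o,x_n)\to\infty$, the exact product $(y_m|\sigma_{n_{k_m}}(m))_p=m$ on a geodesic, the $1$-Lipschitz continuity of the Gromov product, the fact that a Gromov sequence is $\sim_s$-equivalent to each of its subsequences, and transitivity of $\sim_s$ under $\delta$-hyperbolicity --- are all correct, so the diagonal choice $y_m=x_{n_{k_m}}$ does close the argument.
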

Gromov hyperbolic space has the following visibility property. 
\begin{thm}\cite[Chapter III.H.3, Lemma 3.2]{Bridson-book}\label{GV-I}
	Let $(X,d)$ be a proper, geodesic and Gromov hyperbolic metric space, then for each distinct points $\xi, \eta\in \partial_{G}X$,
	there exists a geodesic line $\gamma$ with 	$\gamma(-\infty)=\xi$ and $\gamma(\infty)=\eta$.
\end{thm}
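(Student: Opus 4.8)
The plan is to realise the desired geodesic line as a limit of geodesic segments joining sequences that represent $\xi$ and $\eta$, extracting the limit by Arzel\`a--Ascoli thanks to properness. First I would fix the base point $o\in X$ and choose Gromov sequences $x_n\to\xi$ and $y_n\to\eta$. Since $\xi\ne\eta$, the Proposition characterising equal boundary points gives $(x_n|y_n)_o\not\to\infty$, so after passing to a subsequence I may assume the Gromov products are uniformly bounded, say $(x_n|y_n)_o\le R_0$ for all $n$. I would also record that $(x_n|x_n)_o=d(o,x_n)$, so the Gromov-sequence condition forces $d(o,x_n)\to\infty$ and likewise $d(o,y_n)\to\infty$.

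Next I would invoke the standard comparison between the Gromov product and the distance to a geodesic: in a $\delta$-hyperbolic geodesic space one has, for any geodesic segment $[x,y]$,
$$(x|y)_o\le d(o,[x,y])\le (x|y)_o+C,$$
with $C=C(\delta)$. Taking for each $n$ a geodesic segment $\sigma_n:[a_n,b_n]\to X$ with $\sigma_n(a_n)=x_n$ and $\sigma_n(b_n)=y_n$ (available since $X$ is geodesic), the right-hand inequality yields $d(o,\sigma_n)\le R_0+C=:R$, so every $\sigma_n$ meets the closed ball $\overline{B}(o,R)$. I would then reparametrise each $\sigma_n$ by arclength so that $\sigma_n(0)$ is a point of $\sigma_n$ closest to $o$, hence $\sigma_n(0)\in\overline{B}(o,R)$. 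Because $|a_n|=d(\sigma_n(0),x_n)\ge d(o,x_n)-R$ and $|b_n|=d(\sigma_n(0),y_n)\ge d(o,y_n)-R$, the domains satisfy $a_n\to-\infty$ and $b_n\to+\infty$.

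Now I would use properness: closed balls are compact, the maps $\sigma_n$ are $1$-Lipschitz (hence equicontinuous) and all pass through the fixed compact set $\overline{B}(o,R)$, so Arzel\`a--Ascoli produces a subsequence converging uniformly on compact subsets of $\mathbb{R}$ to a map $\gamma:(-\infty,\infty)\to X$. A locally uniform limit of geodesic segments whose domains exhaust $\mathbb{R}$ is again an isometric embedding, so $\gamma$ is a geodesic line.

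The main obstacle is the final step of verifying $\gamma(-\infty)=\xi$ and $\gamma(\infty)=\eta$. Here I would show that the forward ray $\gamma|_{[0,\infty)}$ is Gromov-equivalent, under $\sim_s$, to $(y_n)$, and that the reversed ray $t\mapsto\gamma(-t)$ is equivalent to $(x_n)$. Concretely, for fixed $T>0$ the point $\gamma(T)=\lim_n\sigma_n(T)$ lies on the $y_n$-side of $\sigma_n$, and a short thin-triangle computation with vertices $o,\sigma_n(T),y_n$ (using $d(o,\sigma_n(0))\le R$) gives $(\sigma_n(T)|y_n)_o\ge T-R$; since the Gromov product is $1$-Lipschitz in its first argument, this passes to $\liminf_n(\gamma(T)|y_n)_o\ge T-R$. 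Combining this with the estimate $(\gamma(m)|\gamma(T))_o\ge T-R$ for $m\ge T$ and the $\delta$-inequality $(\gamma(m)|y_n)_o\ge\min\{(\gamma(m)|\gamma(T))_o,(\gamma(T)|y_n)_o\}-\delta$ shows $(\gamma(m)|y_n)_o\to\infty$ as $m,n\to\infty$, so $[\gamma|_{[0,\infty)}]=\eta$; by Remark \ref{limit} this means $\gamma(\infty)=\eta$, and the symmetric argument gives $\gamma(-\infty)=\xi$. The delicate point is controlling this fellow-travelling uniformly across the subsequence rather than for a single segment, but it follows from $\delta$-hyperbolicity together with the uniform bound $d(o,\sigma_n)\le R$.
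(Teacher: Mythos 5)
Your proposal is correct, but note that the paper does not prove Theorem \ref{GV-I} at all: it is quoted from \cite[Chapter III.H.3, Lemma 3.2]{Bridson-book}, and what the paper actually proves (in its Appendix) is the derived compact-set statement, Theorem \ref{GV}. Your argument is essentially the standard Bridson--Haefliger construction, organized through the Gromov product rather than through rays: you bound $d(o,\sigma_n)$ by $(x_n|y_n)_o + C(\delta)$, which stays bounded precisely because $\xi\ne\eta$, then run Arzel\`a--Ascoli on the chords $\sigma_n$ (properness plus the $1$-Lipschitz property of geodesics), and identify the endpoints via the joint limit $(\gamma(m)|y_n)_o\to\infty$. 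The individual estimates you invoke all check out: $(\sigma_n(T)|y_n)_o\ge T-R$ follows from $d(\sigma_n(T),y_n)=b_n-T$, $d(o,y_n)\ge b_n-R$ and $d(o,\sigma_n(T))\ge T-R$; the bound $(\gamma(m)|\gamma(T))_o\ge T-R$ for $m\ge T$ uses only $d(o,\gamma(0))\le R$; and combining these with the four-point $\delta$-inequality does give the joint divergence of $(\gamma(m)|y_n)_o$, hence $[\gamma|_{[0,\infty)}]=\eta$ under the paper's identification of the ray and sequence descriptions of $\partial_G X$. By contrast, the paper's Appendix argument for Theorem \ref{GV} starts from geodesic rays representing $\xi$ and $\eta$ and uses thin triangles to manufacture a compact set met by all geodesics joining points near $\xi$ to points near $\eta$; it takes the existence of the geodesic line---your statement---as an input from \cite{Bridson-book} rather than reproving it, so your proof supplies exactly the step the paper outsources. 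Two cosmetic remarks: your estimate $(\sigma_n(T)|y_n)_o\ge T-R$ needs only the triangle inequality together with $d(o,\sigma_n(0))\le R$, not a thin-triangle argument; and since $\sigma_n\to\gamma$ only along a subsequence, the endpoint identification should be phrased along that subsequence, which is harmless because a subsequence of $(y_n)$ still represents $\eta$.
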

Theorem \ref{GV-I} can be interpret in the following equivalent way. For the proof see Section \ref{Appendix}.

\begin{thm}\label{GV}
	Let $(X,d)$ be a proper, geodesic and Gromov hyperbolic metric space. If $\xi, \eta\in \partial_{G}X$ and $V_{\xi}$ and $V_{\eta}$ are neighborhoods in $\overline{X}^G$ of $\xi$ and $\eta$ respectively so that $\overline{V_{\xi}}\cap \overline{V_{\eta}}=\emptyset$ 
	then there exists a compact set $K\subset X$ such that:
if $\gamma:[0,T]\to X$ is a geodesics with $\gamma(0)\in V_{\xi}$ and $\gamma(T)\in V_{\eta}$, then $\gamma([0,T])\cup K\ne \phi$. 
\end{thm}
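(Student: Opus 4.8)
The plan is to prove the statement by contradiction, converting the geometric assertion about geodesic segments into a statement about Gromov products and then exploiting the compactness of the Gromov compactification $\overline{X}^G$. The connection to Theorem \ref{GV-I} is that both express the Eberlein--O'Neill visibility of $\partial_G X$; however, the uniform formulation here is cleanest to obtain directly rather than by unwinding the connecting-geodesic statement.

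First I would fix a base point $o\in X$ and suppose, for contradiction, that no compact $K$ has the stated property. Since $X$ is proper, the closed balls $\overline{B}(o,n)$ form a compact exhaustion of $X$, so the failure yields for each $n$ a geodesic segment $\gamma_n:[0,T_n]\to X$ with $x_n:=\gamma_n(0)\in V_\xi$, $y_n:=\gamma_n(T_n)\in V_\eta$, and $\gamma_n([0,T_n])\cap\overline{B}(o,n)=\emptyset$, whence $d(o,\gamma_n([0,T_n]))\ge n$. Using the standard comparison $d(o,[x_n,y_n])\le (x_n|y_n)_o+C(\delta)$ valid in a $\delta$-hyperbolic geodesic space, this forces $(x_n|y_n)_o\to\infty$.

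Next I would run a compactness argument. Because the geodesics avoid $\overline{B}(o,n)$ we also have $d(o,x_n)\ge n$ and $d(o,y_n)\ge n$, so both $\{x_n\}$ and $\{y_n\}$ eventually leave every compact subset of $X$. As $X$ is proper and Gromov hyperbolic, $\overline{X}^G$ is compact, so after passing to a single subsequence I may assume $x_n\to\xi'$ and $y_n\to\eta'$ in $\overline{X}^G$; the escape to infinity forces $\xi',\eta'\in\partial_G X$. Since $x_n\in V_\xi\subseteq\overline{V_\xi}$ and $y_n\in V_\eta\subseteq\overline{V_\eta}$ (closures taken in $\overline{X}^G$), the limits satisfy $\xi'\in\overline{V_\xi}$ and $\eta'\in\overline{V_\eta}$, and the hypothesis $\overline{V_\xi}\cap\overline{V_\eta}=\emptyset$ gives $\xi'\ne\eta'$.

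Finally I would invoke the proposition characterizing boundary limits through the Gromov product: for $x_n\to\xi'$ and $y_n\to\eta'$ one has $\xi'=\eta'$ if and only if $(x_n|y_n)_o\to\infty$. Since $\xi'\ne\eta'$, the product cannot diverge along this subsequence, contradicting the first step. The hard part will be purely bookkeeping: verifying the $\delta$-comparison $|d(o,[x,y])-(x|y)_o|\le C(\delta)$, and extracting one subsequence along which both endpoint sequences converge to genuine, necessarily distinct, boundary points lying in the disjoint closed neighborhoods. No deeper difficulty arises, since the remaining ingredients are exactly the compactness of $\overline{X}^G$ and the Gromov-product description of convergence.
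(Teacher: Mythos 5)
Your proof is correct, but it follows a genuinely different route from the paper's. The paper argues directly: it fixes sequences $z_n \to \xi$, $w_n \to \eta$, extracts via Arzel\`a--Ascoli geodesic rays $\gamma \in \xi$ and $\sigma \in \eta$ emanating from a base point, chooses $T$ so large that $d(\gamma(R), \mathrm{range}(\sigma)) > \delta$ for all $R > T$, and then applies $\delta$-thinness to the triangles with sides $\gamma|_{[0,R]}$, $\sigma|_{[0,R]}$ and the connecting geodesics $\Gamma_n$, concluding that every $\Gamma_n$ enters the closed $\delta$-ball about $\gamma(T)$; that ball is the compact set $K$. You instead argue by contradiction: an exhaustion of $X$ by closed balls produces geodesics escaping to infinity, the comparison $d(o,[x,y]) \le (x|y)_o + C(\delta)$ forces the Gromov products of the endpoints to diverge, while compactness of $\overline{X}^G$ together with the disjointness of $\overline{V_{\xi}}$ and $\overline{V_{\eta}}$ yields subsequential endpoint limits $\xi' \ne \eta'$ in $\partial_{G}X$, contradicting the paper's proposition that $(x_n|y_n)_o \to \infty$ if and only if the limit boundary points coincide. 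What each approach buys: the paper's argument is constructive (the compact set is an explicit $\delta$-ball centered at a point of a limiting ray) and is the classical Eberlein--O'Neill-style visibility argument; yours is shorter, avoids both the ray extraction and the somewhat delicate choice of $T$ (the paper's assertion that $d(\gamma(R),\mathrm{range}(\sigma)) > \delta$ for all large $R$ itself requires the divergence of non-asymptotic rays), and it ties Theorem \ref{GV} directly to the Gromov-product characterization of visibility that underlies Theorem \ref{gromov product visibility}. The price is non-constructiveness and reliance on two standard facts the paper never states explicitly---the $C(\delta)$-comparison between $d(o,[x,y])$ and $(x|y)_o$, and sequential compactness of $\overline{X}^G$---both of which are indeed valid for proper geodesic Gromov hyperbolic spaces, the latter being obtainable within the paper's framework by applying Arzel\`a--Ascoli to the geodesics $[o,x_n]$ and $[o,y_n]$.
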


One can observe that the Theorem \ref{GV} is equivalent to the following.

\begin{rem}\label{EQGV}
	Let $(X,d)$ be a proper, geodesic and Gromov hyperbolic metric space.  If $\xi, \eta\in \partial_{G}X$, $\xi\ne \eta$ and $z_k$, $w_k$ be any sequence with  $z_k \stackrel{Gromov}\longrightarrow \xi$ and $w_k \stackrel{Gromov}\longrightarrow \eta$, then there exists a compact set $K\subset X$ with the following property: if $\gamma_k:[0,T_k]\to X$ be a sequence of geodesics with $\gamma_k(0)=z_k$ and $\gamma_k(T_k)=w_k$, then  $\gamma_k([0,T_k])\cap K\ne \phi$.
\end{rem}

	\subsection{Quasihyperbolic Domains}\label{QHG and GH} In this section we recall some classical domains for which we will study the visibility property.
	
	\subsection*{Uniform Domains}\label{uniform} Let $C\ge 1$. A domain $\Omega\subset\mathbb{R}^n$ is said to be $C$-uniform domain if for each pair of points $x,y\in \Omega$, there exists a rectifiable arc $\gamma$ in $\Omega$ joining $x$ and $y$ such that 
	\begin{itemize}
		\item[(i)] $\min\{l(\gamma[x,z]),l(\gamma[z,y])\}\le C\,\delta_{\Omega}(z)$, for all $z\in \gamma$, and
		\item[(ii)]	$l(\gamma)\le C|x-y|$,	
	\end{itemize}
	where $l(\gamma)$ denotes the arc length of $\gamma$ in $(\Omega,|.\,|)$, where $|.\,|$ is metric induced by norm, and $\gamma[x,z]$ denotes the part of $\gamma$ between $x$ and $z$. Also we say that curve $\gamma$ is a double $C$-cone arc. 
	
	\subsection*{Inner uniform domains}Let $C\ge 1$. A domain $\Omega\subset\mathbb{R}^n$ is said to be $C$-inner uniform in the norm metric if for each pair of points $x,y\in \Omega$, there exists a rectifiable arc $\gamma$ in $D$ joining $x$ to $y$ such that 
	\begin{itemize}
		\item[(i)] $\min\{l(\gamma[x,z]),l(\gamma[z,y])\}\le C\,\delta_{\Omega}(z)$, for all $z\in \gamma$,
		\item[(ii)]	$l(\gamma)\le C \lambda_{\Omega}(x,y)$.
	\end{itemize}
	
	\subsection*{John Domain}\label{John}	Let $C\ge 1$. A domain $\Omega\subset\mathbb{R}^n$ is said to be $C$-John domain with center $x_0\in \Omega$ if every point $x\in \Omega$ can be joined to $x_0$ by a rectifiable curve $\gamma$ in $\Omega$ which satisfies 
	\begin{equation}\label{John-2}
		l(\gamma[x,y])\le C\delta_{\Omega}(y)\,\,\, \mbox{ for all } y\in \gamma
	\end{equation}
	and we say that such a curve $\gamma$ is a $C$-cone arc. Note that from above definition it follows that $\Omega$ is bounded $(\Omega\subset B(x_0,C\delta_{\Omega}(x_0)))$. The following definition works well for unbounded domains.
	
	\begin{defn}
		Let $C\ge 1$. A domain $\Omega\subset\mathbb{R}^n$ is said to be $C$-John domain if for each pair of points $x,y\in \Omega$, there exists a rectifiable arc $\gamma$ in $\Omega$ joining $x$ and $y$ such that 
		$$\min\{l(\gamma[x,z]),l(\gamma[z,y])\}\le C\delta_{\Omega}(z), \mbox{ for all } z\in\gamma.$$
	\end{defn}

	\begin{rem}
		By \cite[Lemma 2.2]{Rasila-2022}, we have that if $\Omega$ is $C$-John domain with center $x_0\in \Omega$, the $\Omega$ is $C$-John. Moreover if $\Omega$ is $C$-John then $\Omega$ is $4C^2$-John with center $x_0\in \Omega$, where $x_0$ is such that $\delta_{\Omega}(x_0)=\max \delta_{\Omega}(x)$. Since we are considering bounded domains only so we will use both the definitions according to our convinience.
	\end{rem}

	\begin{rem} We mention some of the remarks which gives the connection between the aforesaid domains.
		
		\begin{itemize}
			\item[(i)] $C$-uniform implies inner $C$-uniform implies $C$-John
			\item[(ii)] $\mathbb{R}^2\setminus \{(x,0):x\ge 0\}$ is inner uniform but not unifrom.
			\item[(iii)] $\mathbb{R}^2\setminus \{(n,0):n\in \mathbb{N}\}$ is a John domain but not inner uniform.
		\end{itemize}
	\end{rem}

	\begin{defn}[\textbf{QHBC domains}]\label{QHBC-domains}
Following \cite{Gehring-1985} we say that a domain $\Omega\subset\mathbb{R}^n$ satisfies a quasihyperbolic boundary condition if for some $x_0$ there exists $\beta>0$  and $C_0=C_0(x_0)$ such that
\begin{equation}\label{QHBC}
	k_{\Omega}(x_0,x) \leq \frac{1}{\beta} \log \dfrac{\delta_{\Omega}(x_{0})}{\delta_{\Omega}(x)} + C_0\,\,\,
\end{equation}
for all $x\in \Omega$. 
	\end{defn}
	It easily follows that if $\Omega$ satisfies \eqref{QHBC} for some $x_0\in \Omega$, then $\Omega$ satisfies \eqref{QHBC} for any $x_1\in \Omega$ with same $\beta$ and possibly a different $C=C_0+k_{\Omega}(x_0,x_1)$.  Also, notice that, in view of \eqref{qh-eq-1}, it is necessary that $\beta\le 1$ . In short, we say that $\Omega$ is  $\beta$-QHBC domain. Every $\beta$-QHBC domain $\Omega$ is bounded (see \cite[Lemma 3.9]{Gehring-1985}). Infact, $\mbox{diam}(\Omega)\le 2\beta e^{C_0/\beta}$.
	
	\begin{rem}\label{John-QHBC}
		In view of \cite[Lemma 3.11]{Gehring-1985}, we have that every bounded $C$-John domain satisfies the following quasihyperbolic boundary conditions
		$$k_{\Omega}(x,x_{0}) \le C \log \dfrac{\delta_{\Omega}(x_{0})}{\delta_{\Omega}(x)} + A,\,\,\, \mbox{ for all } x \in \Omega,$$
		where $C\ge 1$ and $A = C + C\log(1+C)$. 
	\end{rem}
	
	\subsection{Gromov hyperbolicity of hyperbolic and quasihyperbolic metric}
	\begin{defn}
		Let $\delta\ge 0$. A domain $\Omega\subset\mathbb{R}^n$ is said to be $\delta$-hyperbolic if $(\Omega,k_\Omega)$ is $\delta$-hyperbolic. 
	\end{defn}
	
	\begin{defn}
		A domain $\Omega\subset\mathbb{R}^n$ is said to be Gromov hyperbolic if $(\Omega,k_\Omega)$ is $\delta$-hyperbolic for some $\delta\ge 0$.
	\end{defn}
	It is known that the following domains in $\mathbb{R}^n$ are Gromov hyperbolic
	\begin{itemize}
		\item[(1)] All $c$- uniform and inner $c$- uniform domains are Gromov $\delta=\delta(c)$ hyperbolic;
		\item[(2)] bounded convex domains in $\mathbb{R}^n$;
		\item[(3)]Quasiconformal image of Gromov hyperbolic domains. In particular, proper simply connected domains in $\mathbb{C}$ are Gromov hyperbolic.
	\end{itemize}
	 $\mathbb{R}^2\setminus \{(n,0):n\in \mathbb{N}\}$ is a John domain which is not Gromov hyperbolic.
	
	\subsection*{Gehring-Hayman condition}
	Let $\Omega \subset \mathbb{R}^n$ be a proper subdomain. $\Omega$ satisfies  a Gehring-Hayman condition if there is a constant $C_{gh} \geq 1$ such that for each pair of points $x,y \in \Omega$ and for each quasi-hyperbolic geodesic $\gamma$ joining $x$ and $y$, it holds that-
	$$l(\gamma[x,y]) \leq C_{gh}l(\sigma[x,y]),$$
	where $\sigma$ is any other curve joining $x$ and $y$ in $\Omega$. In other words, it says that quasihyperbolic geodesics are essentially the shortest curves in $\Omega$.
	
	\subsection*{Ball separation condition}	Let $\Omega \subset \mathbb{R}^n$ be a proper subdomain. $\Omega$ satisfies  a ball separation condition if there exists a constant $C_{bs} \geq 1$ such that for each pair of points $x$ and $y$, for
	each quasihyperbolic geodesic $\gamma$ joining $x$ and $y$, for every $z \in \gamma[x, y]$, and for every curve $\sigma$ joining $x$ and $y$ it holds that-
	$$B(z, C_{bs}d(z,\partial \Omega)) \cap \sigma[x,y] \neq \phi$$
	Bonk {\it et. al.} \cite[Proposition 7.12, 7.14]{Koskela-2001} have proved that Gehring-Hayman condition and Ball separation condition are necessary for $(\Omega,k_{\Omega})$ to be Gromov hyperbolic. Later, in 2003, Balogh and Buckley \cite[Theorem 0.1]{Balogh-2002} proved that both the conditions together are sufficient also. Thus, combining these two we have the following result.
	\begin{thm}{\cite[Proposition 7.12, 7.14]{Koskela-2001}, \cite[Theorem 0.1]{Balogh-2002}}
		Let $\Omega \subset \mathbb{R}^n$ be a proper subdomain. Then $(\Omega,k_{\Omega})$ is Gromov hyperbolic if, and only if, $\Omega$ satisfies both a Gehring-Hayman condition and ball separation condition.
	\end{thm}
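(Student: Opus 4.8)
The plan is to prove the two implications separately: the necessity of the Gehring--Hayman and ball separation conditions is \cite[Proposition 7.12, 7.14]{Koskela-2001}, while their joint sufficiency is \cite[Theorem 0.1]{Balogh-2002}, and the sufficiency direction is by far the harder one. Throughout I would use that $(\Omega,k_\Omega)$ is a complete, proper, geodesic metric space, that $ds_{Euc}=\delta_\Omega\,ds_k$ along any rectifiable curve, and the basic estimates \eqref{qh-eq-1}--\eqref{qh-eq-2}, together with geodesic stability (Theorem \ref{morse}).

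For necessity, assume $(\Omega,k_\Omega)$ is $\delta$-hyperbolic. For the Gehring--Hayman condition I would first show that, along a quasihyperbolic geodesic $\gamma$ joining $x$ and $y$, the boundary distance $\delta_\Omega$ decays exponentially as one moves (in $k_\Omega$) away from the point $z_0\in\gamma$ at which $\delta_\Omega$ is maximal: thin triangles force $k_\Omega(z_0,\gamma(t))\ge c\,\bigl|\log(\delta_\Omega(\gamma(t))/\delta_\Omega(z_0))\bigr|$ up to an additive constant. Integrating $\ell(\gamma)=\int_\gamma \delta_\Omega\,ds_k$ against this decay gives $\ell(\gamma)\lesssim \delta_\Omega(z_0)$, and a second thin-triangle comparison bounds the apex depth $\delta_\Omega(z_0)$ by the length of any competing curve $\sigma$ joining $x,y$, which yields $\ell(\gamma)\le C_{gh}\ell(\sigma)$. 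For ball separation, fix a quasihyperbolic geodesic $\gamma$ from $x$ to $y$, a point $z\in\gamma$, and a curve $\sigma$ from $x$ to $y$; viewing $\gamma$ and $\sigma$ as the two sides of a bigon, $\delta$-thinness produces $w\in\sigma$ with $k_\Omega(z,w)\le 2\delta$. Then \eqref{qh-eq-1} gives $\delta_\Omega(w)\ge e^{-2\delta}\delta_\Omega(z)$ and hence $|z-w|\le (1-e^{-2\delta})\delta_\Omega(z)$, so $\sigma$ meets $B(z,C_{bs}\delta_\Omega(z))$ with $C_{bs}=C_{bs}(\delta)$.

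For sufficiency, assume both conditions and try to verify that geodesic triangles are uniformly thin. Let $\gamma_1$ be a quasihyperbolic geodesic side with endpoints $x,y$, and let $\sigma=\gamma_2\cup\gamma_3$ be the concatenation of the other two sides, a curve joining $x$ and $y$. Ball separation provides, for each $z\in\gamma_1$, a point $w\in\sigma$ with $|z-w|\le C_{bs}\delta_\Omega(z)$. When $\delta_\Omega(w)$ is comparable to $\delta_\Omega(z)$, estimate \eqref{qh-eq-1} immediately gives $k_\Omega(z,w)\le \log(1+C_{bs})+\text{const}$; the genuine difficulty is the case in which $w$ lies much closer to $\partial_{Euc}\Omega$ than $z$, where $k_\Omega(z,w)$ can a priori be large and ball separation alone fails to control it. Here the Gehring--Hayman condition is indispensable: it prevents the quasihyperbolic geodesic between $z$ and $w$ from detouring deep into the interior, so that the portion of $\sigma$ near $w$ stays quasihyperbolically close to $z$. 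Combining the two conditions one concludes that $\gamma_1$ lies in a uniformly bounded $k_\Omega$-neighborhood of $\gamma_2\cup\gamma_3$, i.e. the triangle is $M$-thin with $M=M(C_{gh},C_{bs})$, which is Gromov hyperbolicity.

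I expect this last step to be the crux. The interplay between the two conditions has to be iterated along a dyadic decomposition of $\gamma_1$ organized by the scale $\delta_\Omega$, and the technical heart of \cite{Balogh-2002} is precisely to keep all constants uniform while controlling the points at which the boundary distance of the projected point $w$ degenerates; this is where a naive estimate breaks down and both hypotheses must be used in tandem.
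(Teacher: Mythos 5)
The paper itself offers no proof of this statement: it is recorded as a direct combination of \cite[Propositions 7.12, 7.14]{Koskela-2001} (necessity of the two conditions) and \cite[Theorem 0.1]{Balogh-2002} (their joint sufficiency), and your division of labor reproduces that citation structure exactly, including the observation that sufficiency is the substantially harder half. Had you simply invoked the two references as black boxes, your proposal would coincide with what the paper does. Since you instead sketch arguments for both directions, those sketches have to be judged on their own terms, and there they contain steps that genuinely fail.

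For ball separation you apply $\delta$-thinness to a ``bigon'' one of whose sides is an \emph{arbitrary} rectifiable curve $\sigma$; thinness constrains only geodesic sides (or quasi-geodesic ones, via Theorem \ref{morse}), and the conclusion you draw from it is false for general curves. In the upper half-plane, where $k_\Omega$ coincides with the hyperbolic metric, take $x=(-R,1)$, $y=(R,1)$, let $z$ be the apex of the geodesic semicircle joining them, and let $\sigma$ descend from $x$ to height $\eta\le 1$, run across, and ascend to $y$: then $k_\Omega(z,\sigma)\gtrsim \log R$ is unbounded, so no point $w\in\sigma$ satisfies $k_\Omega(z,w)\le 2\delta$, yet $\sigma$ still meets the Euclidean ball $B(z,\delta_\Omega(z))$. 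Ball separation is a Euclidean statement that holds precisely in situations your quasihyperbolic reduction cannot see, and the standard substitute for arbitrary paths, the detour estimate $k_\Omega(z,\sigma)\le \delta\,\bigl|\log_2 \ell_k(\sigma)\bigr|+1$ from \cite{Bridson-book}, grows with the length of $\sigma$ and cannot produce a uniform $C_{bs}$; this is why \cite[Proposition 7.14]{Koskela-2001} requires a different argument. The Gehring--Hayman sketch has two problems: the inequality you display, $k_\Omega(z_0,\gamma(t))\ge c\,\bigl|\log(\delta_\Omega(\gamma(t))/\delta_\Omega(z_0))\bigr|$, is just \eqref{qh-eq-1} and holds in \emph{every} domain, whereas the exponential decay of $\delta_\Omega$ along $\gamma$ (and hence $\ell(\gamma)\lesssim\delta_\Omega(z_0)$) needs the reverse bound $k_\Omega(z_0,\gamma(t))\le C\log\bigl(\delta_\Omega(z_0)/\delta_\Omega(\gamma(t))\bigr)+C$, which is where hyperbolicity must actually enter; and the subsequent step ``$\delta_\Omega(z_0)\lesssim\ell(\sigma)$'' is simply false: for $\Omega=\mathbb{B}^n$, $x$ the center and $|x-y|=\varepsilon$, one has $\delta_\Omega(z_0)=1$ while a competing segment has length $\varepsilon$. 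Finally, for sufficiency you describe the intended interplay of the two conditions but, as you acknowledge, do not carry it out, so that direction remains a citation of \cite{Balogh-2002} in all but name. None of this affects the paper, which uses the theorem as a quoted result; but as a self-contained proof the proposal has gaps in all three components.
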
	
\section{Equivalence of Gromov boundary and Euclidean boundary}\label{GH and visibility}
We first give the proof of Theorem \ref{gromov product visibility}. For this we need the following lemma which gives an equivalent visibility criteria.
\begin{lem}\label{iff}
	Let $\Omega$ be a bounded domain in $\mathbb{R}^n$. Then $\Omega$ is a QH-visibility domain if, and only if, the following holds:\\
	If $p,q\in \partial_{Euc}\Omega$, $p\ne q$, and $\{x_k\}, \{y_k\}$ be sequence in $\Omega$ with $x_k\to p$, $y_k\to q$, then for every qh-geodesic $\gamma_k:[a,b]\to \Omega$ with $\gamma_k(a)=x_k$ and $\gamma_k(b)=y_k$, there exists a positive constant $M>0$  such that
	$$\sup_{k\ge 1}{k_{\Omega}}(x_0,\gamma_k)\le M$$
	for every $x_0\in \Omega$.
\end{lem}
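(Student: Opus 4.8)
The plan is to prove both directions of the biconditional, where the right-hand side is a uniform-boundedness statement about the quasihyperbolic distance from a basepoint $x_0$ to the geodesics $\gamma_k$. The key technical tool throughout is the estimate \eqref{qh-eq-1}, which controls $k_\Omega$ from below by a logarithm of the ratio of boundary distances, together with Result 1, which says that $k_\Omega(F)<\infty$ is equivalent to $\overline{F}^{Euc}\subset\Omega$. I would first carefully note that the condition $\sup_{k\ge 1}k_\Omega(x_0,\gamma_k)\le M$ means every point of every $\gamma_k$ lies in the closed quasihyperbolic ball $\overline{B}_k(x_0,M)$, which by completeness and properness of $(\Omega,k_\Omega)$ is a compact subset of $\Omega$.

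**The easy direction.** Suppose the uniform-boundedness condition holds, and let me verify QH-visibility via the sequential reformulation in the Remark after Definition~\ref{main-defn}. Fix distinct $p,q\in\partial_{Euc}\Omega$ and take any sequences $x_k\to p$, $y_k\to q$ with qh-geodesics $\gamma_k$ joining them. By hypothesis, $\sup_{k}k_\Omega(x_0,\gamma_k)\le M$, so each geodesic image $\gamma_k([a,b])$ is contained in the set $K:=\{z\in\Omega:k_\Omega(x_0,z)\le M\}$. By \eqref{qh-eq-3} and completeness of $(\Omega,k_\Omega)$, this $K$ is a closed and bounded (hence compact) subset of $\Omega$, and it is independent of the particular sequences. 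Thus $\gamma_k([a,b])\cap K=\gamma_k([a,b])\ne\emptyset$ for all $k$, which is exactly the visibility condition with $k_0=1$. Choosing disjoint neighborhoods $U_p,U_q$ of $p,q$ is then routine since $p\ne q$.

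**The hard direction.** For the converse, assume $\Omega$ is a QH-visibility domain but, for contradiction, that the uniform bound fails: there exist distinct $p,q$, sequences $x_k\to p$, $y_k\to q$, and qh-geodesics $\gamma_k$ joining them with $\sup_k k_\Omega(x_0,\gamma_k)=\infty$. Passing to a subsequence I may assume there are points $z_k\in\gamma_k$ with $k_\Omega(x_0,z_k)\to\infty$; by \eqref{qh-eq-3} (in the form that bounded qh-distance corresponds to staying away from the boundary) this forces $\delta_\Omega(z_k)\to 0$, so $z_k\to\partial_{Euc}\Omega$. The main obstacle is to extract from this a genuine contradiction with visibility: visibility gives a compact $K=K_{\{p,q\}}$ that every $\gamma_k$ must hit once $x_k\in U_p$ and $y_k\in U_q$, but a priori $\gamma_k$ could hit $K$ and \emph{also} wander arbitrarily far away in $k_\Omega$-distance. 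So hitting $K$ does not immediately bound $\sup k_\Omega(x_0,\gamma_k)$.

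**Resolving the obstacle.** The way around this is to split each geodesic at its hitting point $w_k\in\gamma_k\cap K$ and use the geodesic (length-minimizing) property on the two subarcs. Since $K$ is compact in $\Omega$, there is a uniform bound $k_\Omega(x_0,w_k)\le R$ with $R:=\sup_{w\in K}k_\Omega(x_0,w)<\infty$ by Result 1. The subtle point is that an arbitrary point $z_k\in\gamma_k$ with large $k_\Omega(x_0,z_k)$ lies on one of the two subarcs from $w_k$ to an endpoint, say to $x_k$; along that subarc, which is itself a qh-geodesic, one has $k_\Omega(w_k,z_k)\le k_\Omega(w_k,x_k)$, and the endpoint distance $k_\Omega(x_0,x_k)$ can itself be large as $x_k\to p$. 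This shows the naive argument genuinely needs more than one application of visibility. I expect the correct route is to apply visibility not to the whole pair but along the geodesic: any point $z_k$ with $k_\Omega(x_0,z_k)\to\infty$ and $z_k\to\partial_{Euc}\Omega$ has a subsequential Euclidean limit $\zeta\in\partial_{Euc}\Omega$, and then I split into the cases $\zeta\notin\{p,q\}$ and $\zeta\in\{p,q\}$. In the first case, applying visibility to the pairs $\{p,\zeta\}$ and $\{\zeta,q\}$ to the two subarcs of $\gamma_k$ forces each subarc to return to a fixed compact set, eventually bounding the whole geodesic; in the second case, say $\zeta=p$, one uses that the subarc of $\gamma_k$ from $x_k$ to $z_k$ is a geodesic with both endpoints tending to $p$, contradicting that a qh-geodesic cannot have its image escape to the boundary while its two endpoints converge to the same point together with \eqref{qh-eq-2}. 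Carrying out this case analysis cleanly—ensuring the compact sets from finitely many visibility applications can be amalgamated into a single $M$—is the crux of the argument, and it parallels the reasoning in \cite[Proposition 2.4, Proposition 2.5]{Bracci-2022}.
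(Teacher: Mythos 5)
Your proposal founders on a misreading of the notation, and the misreading is fatal rather than cosmetic. In this paper $k_{\Omega}(x_0,\gamma_k)$ denotes the distance from the point $x_0$ to the set $\gamma_k$, that is, $\inf_{z\in\gamma_k}k_{\Omega}(x_0,z)$ (this is the paper's stated convention $d(a,B)=\inf_{b\in B}d(a,b)$, and the paper's own proof of Lemma \ref{iff} uses compactness of $\gamma_k$ in $(\Omega,k_{\Omega})$ precisely to attain this infimum at a point $z_k$). You instead read the hypothesis $\sup_{k\ge 1}k_{\Omega}(x_0,\gamma_k)\le M$ as saying that \emph{every} point of every $\gamma_k$ lies in the closed ball of qh-radius $M$ about $x_0$. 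Under your reading the lemma is simply false: since $x_k\to p\in\partial_{Euc}\Omega$, \eqref{qh-eq-3} forces $k_{\Omega}(x_0,x_k)\to\infty$, hence $\sup_{z\in\gamma_k}k_{\Omega}(x_0,z)\to\infty$ for any choice of geodesics whatsoever --- already in the unit ball, which is certainly a QH-visibility domain. This is why your ``hard direction'' (visibility implies the bound) seemed to need extra ideas: the obstacle you describe (``$\gamma_k$ could hit $K$ and also wander arbitrarily far away'') is not an obstacle to be circumvented but a symptom that the statement you set out to prove is wrong. Consequently the proposed repair cannot succeed either; in particular, in your case $\zeta=p$ there is no contradiction to be extracted --- visibility constrains only geodesics whose endpoints approach two \emph{distinct} boundary points, and a qh-geodesic both of whose endpoints tend to $p$ (for instance a very short geodesic near $p$) can stay entirely close to the boundary.

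Under the correct reading both directions collapse to the short argument the paper gives, and the direction you thought hard is the immediate one. If $\Omega$ has the visibility property, choose $z_k\in\gamma_k\cap K$ for $k\ge k_0$; then $k_{\Omega}(x_0,\gamma_k)\le k_{\Omega}(x_0,z_k)\le\sup_{w\in K}k_{\Omega}(x_0,w)<\infty$ by Result 1, while the finitely many remaining $\gamma_k$ each have finite distance to $x_0$ because they are compact subsets of $\Omega$. Conversely, if the bound holds, compactness of each $\gamma_k$ in $(\Omega,k_{\Omega})$ yields $z_k\in\gamma_k$ with $k_{\Omega}(x_0,z_k)=k_{\Omega}(x_0,\gamma_k)\le M$; then $\sup_{j,k}k_{\Omega}(z_j,z_k)\le 2M$, so by Result 1 the Euclidean closure of $\{z_k\}$ is a compact subset of $\Omega$ which every $\gamma_k$ meets, and this is the sequential form of visibility. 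Note that your ``easy direction'' happens to reach the correct conclusion only because containment in $K$ is stronger than intersection with $K$; the containment itself is not something the hypothesis gives you.
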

\begin{pf}
	Let $\Omega$ be a bounded domain in $\mathbb{R}^n$. Fix $x_0\in \Omega$ and suppose $\Omega$ is a QH-visibility domain, then there exists a compact set $K\subset\Omega$ such that $\gamma_k$ intersects with $K$ for sufficiently large $k$. Let $z_k$ be the intersection points. Then for sufficiently large $k$, we have the following
	$$k_{\Omega}(x_0,z_k)\le k_{\Omega}(x_0,z),\,\,\,\,\, z\in \gamma_k$$
	That is $k_{\Omega}(x_0,\gamma_k)=k_{\Omega}(x_0,z_k)$ for suffieiently large $k$, and since for each $k$, $z_k$ lies in a compact set, we have $$\sup_{k\ge 1}{k_{\Omega}}(x_0,\gamma_k)\le M.$$
	Conversly, Let $p,q\in \partial_{Euc}\Omega$, $p\ne q$, and $\{x_k\}, \{y_k\}$ be sequences in $\Omega$ with $x_k\to p$, $y_k\to q$, then for every qh-geodesic $\gamma_k:[a,b]\to \Omega$ with $\gamma_k(a)=x_k$ and $\gamma_k(b)=y_k$, we have 
	$$\sup_{k\ge 1}{k_{\Omega}}(x_0,\gamma_k)\le M$$
	for some $M>0$. Since $\gamma_k$ is compact in $(\Omega,k_{\Omega})$, therefore there exits $z_k\in \gamma_k$ such that 
	$$k_{\Omega}(x_0,\gamma_k)=k_{\Omega}(x_0,z_k).$$ 
	By the assumption $\sup_{k\ge 1}k_{\Omega}(x_0,z_k)<M$, Lemma  gives that the set $\{z_k\}$ is compactly contained in $\Omega$. Therefore, there exists a compact set $K$ with $\{z_k\}\subset K$, and hence $\gamma_k$ intersects with $K$, which gives that $\Omega$ is a QH-visibility domain. This completes the proof.
\end{pf}

	\begin{pf}[\textbf{Proof of Theorem \ref{gromov product visibility}}]
	$(1)\implies (2)$: Let $\Omega$ be a QH-visibility domain and $z_k$, $w_k$ be any sequence in $\Omega$ with $z_k\to p$ and $w_k\to q$, where $p,q\in \partial_{Euc}\Omega$, $p\ne q$. Let $\gamma_k$ be qh-geodesics joining $z_k$ and $w_k$. Then by the visibility property there exists a compact set $K$ and $k_0\in \mathbb{N}$ such that $\gamma_k$ intersects with $K$ for all $k\ge k_0$. Let $\theta_k$ be the inetrsection points of $\gamma_k$ and $K$. Fix any point $o\in \Omega$. then for all $k\ge k_0$, we have
	\begin{eqnarray*}
		k_{\Omega}(z_k,w_k)&=&k_{\Omega}(z_k,\theta_k)+k_{\Omega}(\theta_k, w_k)\\ \nonumber
		&\ge&k_{\Omega}(z_k,o)+k_{\Omega}(o,w_k)-2k_{\Omega}(o,\theta_k)\\ \nonumber
		&\ge&k_{\Omega}(z_k,o)+k_{\Omega}(o,w_k)-2\max_{\theta\in K}k_{\Omega}(o,\theta)
	\end{eqnarray*}
	which gives that 
	$$\limsup_{k\to \infty}(z_k|w_k)_o=\lim_{k\to \infty}\frac{1}{2}\left(k_{\Omega}(z_k,o)+k_{\Omega}(o,w_k)- 	k_{\Omega}(z_k,w_k)\right)\le\max_{\theta\in K}k_{\Omega}(o,\theta)< \infty.$$ 
	$(2)\implies(1)$: Assume by contradiction that $\Omega$ is not a QH-visibility domain. Then by Lemma \ref{iff} there is a pair $\{p,q\}\subset\partial_{Euc}\Omega$, a sequences $z_k\to p$ and $w_k\to q$ and a sequence of qh-geodesics $\gamma_{k}$ joining $z_k$ and $w_k$ such that 
	$$\sup_{k\ge 1}k_{\Omega}(o,\gamma_{k})=\infty$$
	Hence
	\begin{equation}\label{*}
		\limsup_{k\to \infty}k_{\Omega}(o,\gamma_{k})=\limsup_{k\to \infty}k_{\Omega}(o,\theta_{k})=\infty
	\end{equation}
	where $\theta_k\in \gamma_{k}$. Now by the completeness property of quasihyperbolic metric it follows that $\theta_k\to p^*\in\partial_{Euc}\Omega$. We can always choose some $z_k$ at the place of $\theta_{k}$ so that $p^*$ will be different from $p$ and $q$. It is easy to see that
	\begin{eqnarray*}
		k_{\Omega}(z_k,o)+k_{\Omega}(o,w_k)&\ge& k_{\Omega}(z_k,w_k)=k_{\Omega}(z_k,\theta_k)+k_{\Omega}(\theta_k, w_k)\\ \nonumber
		&=&-2(z_k|\theta_k)_o-2(\theta_{k}|w_k)_o+k_{\Omega}(z_k,o)+k_{\Omega}(o,w_k)+2k_{\Omega}(o,\theta_k)\\ \nonumber
		&\ge&k_{\Omega}(z_k,o)+k_{\Omega}(o,w_k)+2k_{\Omega}(o,\theta_k)-C
	\end{eqnarray*}
	This gives that $k_{\Omega}(o,\theta_k)<C$, which contradicts \eqref{*}. This completes the proof.
\end{pf}

We are now going to prove Theorem \ref{Main-thm-GB-EB} and Theorem \ref{no loop}. We begin with some definitions.
	
	\begin{defn}
	Fix $x_0\in \Omega$ and	let $\gamma_{k}:[0,T_k]\to(\Omega,k_{\Omega})$ be a sequence of qh-geodesics issuing from $x_0$. We say that $\gamma_{k}$ converges uniformly on compacta to a geodesic ray $\gamma:[0,T]\to (\Omega,k_{\Omega})$, if $T_k\to \infty$ and for any fixed $T>0$, the following holds:\\
	for every $\epsilon>0$, there exists a natural number $N$ such that for all $n\ge N$, we have
	$$\sup_{t\in [0,T]} k_{\Omega}(\gamma_{k}(t), \gamma(t))<\epsilon.$$ 
	\end{defn}	
	
	\begin{defn}
		Let $\Omega$ be a bounded domain in $\mathbb{R}^n$ and $\gamma:[0,\infty)\to \Omega$ be a qh-geodesic ray. We denote the cluster set of $\gamma$ at $\infty$ by $\Gamma^{\infty}_{\gamma}$ and is defined as 
		$$\Gamma^{\infty}_{\gamma}=\left\{x\in \overline{\Omega}^{Euc}:\mbox{there exists a sequence } t_k\to \infty \mbox{ such that } \lim_{k\to \infty}\gamma(t_k)=x\right\}$$
		similarly, for qh-geodesic line we can define the cluster set at $-\infty$ as 
		$$\Gamma^{-\infty}_{\gamma}=\left\{x\in \overline{\Omega}^{Euc}:\mbox{there exists a sequence } t_k\to -\infty \mbox{ such that } \lim_{k\to -\infty}\gamma(t_k)=x\right\}$$
	\end{defn}
	
\begin{rem}\label{5.2}
Let $\Omega$ be a bouded domain in $\mathbb{R}^n$, and $\gamma$ be a qh-geodesic in $\Omega$. Then for any sequence $t_k\to \infty$, we have $|t_k|=k_{\Omega}(\gamma(0),\gamma(t_k)) \to \infty$ as $k\to \infty$. By Result 1, it is clear that all the limit points of $\{\gamma(t_k)\}$ belongs to $\partial_{Euc}\Omega$. In particular, $\Gamma^{\infty}_{\gamma}\subset \partial_{Euc}\Omega$.
\end{rem}
	
	\begin{defn}
		Let $\Omega \in \mathbb{R}^n$ be a bounded domain and $X$ be a compactification of $\Omega$ (for instance, $\overline{\Omega}^{Euc}$ and $\overline{\Omega}^G$ are compactification of $\Omega$). We say that a qh-geodesic line $\gamma:(-\infty,\infty)\to \Omega$ is a qh-geodesic loop in $X$ if $$\Gamma^{\infty}_{\gamma}=\Gamma^{-\infty}_{\gamma}$$
	\end{defn}
	
	\begin{defn}
		If there is no qh-geodesic line in $\Omega$ which is a qh-geodesic loop in $X$, then we say that $\Omega$ has no geodesic loop in $X$.
	\end{defn}
	In view of Remark \ref{limit}, $\Omega$ has no geodesic loop in $\overline{\Omega}^G$. On the other hand, $\Omega$ can have geodesic loop in $\overline{\Omega}^{Euc}$. See the following example,
	\begin{example}
		Let $\Omega=\mathbb{D}\setminus\{(-1,0]\}$. Then in view of Riemann mapping theorem, consider the conformal (analytic, one-to-one) map
		$f:\mathbb{D}\to \Omega$. Let $p,q\in \partial_{Euc}\mathbb{D}$, $p\ne q$, and a geodesic line $\gamma$ joining $p$ and $q$. Since every Riemann map has continuous extension on the boundary, we can assume that $f(p)=f(q)=-1/2$. Hence, $f(\gamma)$ is a geodesic line in $\Omega$ which is a geodesic loop in $\overline{\Omega}^{Euc}$.
	\end{example}

In the next lemma, we show that in the case of QH-visibility domain, for every sequence $\{t_k\}$ with $t_k\to\infty$ as $k\to \infty$, the limit $\lim_{k\rightarrow \infty}\gamma(t_k)$ exist, and hence by Remark \ref{5.2} it belongs to $\partial_{Euc}\Omega$.

\begin{lem}
Let $\Omega$ be a bounded domain in $\mathbb{R}^n$, and $\gamma$ be a qh-geodesic ray in $\Omega$. If $\Omega$ is a QH-visibility domain, then for any sequence $t_k\to \infty$ as $k\to \infty$, $\lim_{k\to \infty}\gamma(t_k)$ exists in $\partial_{Euc}\Omega$. 
\end{lem}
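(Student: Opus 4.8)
The plan is to prove the lemma by showing that the cluster set $\Gamma^{\infty}_{\gamma}$ is a single point. By Remark \ref{5.2} every element of $\Gamma^{\infty}_{\gamma}$ lies in $\partial_{Euc}\Omega$, and since $\overline{\Omega}^{Euc}$ is compact, the cluster set is nonempty and the assertion ``$\lim_{k\to\infty}\gamma(t_k)$ exists for every sequence $t_k\to\infty$'' is \emph{equivalent} to $\Gamma^{\infty}_{\gamma}$ being a singleton. So the whole proof reduces to ruling out two distinct cluster points.

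I would argue by contradiction: suppose $p,q\in\Gamma^{\infty}_{\gamma}$ with $p\neq q$. Then there are sequences $t_k\to\infty$ and $s_k\to\infty$ with $\gamma(t_k)\to p$ and $\gamma(s_k)\to q$. Here is where the visibility hypothesis enters. Because $\Omega$ is a QH-visibility domain, the distinct pair $\{p,q\}$ has visible quasihyperbolic geodesics in the sense of Definition \ref{main-defn}: there exist neighborhoods $U\ni p$, $V\ni q$ with $\overline{U}\cap\overline{V}=\emptyset$ and a compact set $K\subset\Omega$ such that every qh-geodesic having one endpoint in $U$ and the other in $V$ meets $K$. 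For all sufficiently large $k$ we have $\gamma(t_k)\in U$ and $\gamma(s_k)\in V$. Setting $a_k=\min(t_k,s_k)$ and $b_k=\max(t_k,s_k)$, the subarc $\gamma|_{[a_k,b_k]}$ is again a qh-geodesic (every subarc of a geodesic is a geodesic), and its two endpoints are precisely $\gamma(t_k)$ and $\gamma(s_k)$, one of which lies in $U$ and the other in $V$. Hence visibility yields a parameter $\tau_k\in[a_k,b_k]$ with $\gamma(\tau_k)\in K$.

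The contradiction then comes from a clash between the arc-length parametrization and the compactness of $K$. Since $a_k=\min(t_k,s_k)\to\infty$ and $\tau_k\geq a_k$, we get $\tau_k\to\infty$; as $\gamma$ is an arc-length parametrized geodesic ray, $k_{\Omega}(\gamma(0),\gamma(\tau_k))=\tau_k\to\infty$. On the other hand $\gamma(\tau_k)\in K$ with $K$ compactly contained in $\Omega$, so by Result 1 one has $k_{\Omega}(K)<\infty$, and fixing any $\theta_0\in K$ the triangle inequality gives $k_{\Omega}(\gamma(0),\gamma(\tau_k))\leq k_{\Omega}(\gamma(0),\theta_0)+k_{\Omega}(K)<\infty$, uniformly in $k$. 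This contradicts $\tau_k\to\infty$, so $\Gamma^{\infty}_{\gamma}$ cannot contain two distinct points, which finishes the argument.

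The step needing the most care is the bookkeeping in the middle paragraph: one must ensure that the subarc between $\gamma(t_k)$ and $\gamma(s_k)$ qualifies as an admissible geodesic for the visibility property irrespective of which of $t_k,s_k$ is larger (this is why I pass to $a_k,b_k$ and use that the condition $\gamma([a_k,b_k])\cap K\neq\emptyset$ concerns only the image set, which is symmetric under reversing the parametrization), and that the intersection parameter $\tau_k$ genuinely tends to infinity. Everything else is routine once this is in place.
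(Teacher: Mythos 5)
Your proposal is correct and follows essentially the same route as the paper: assume two distinct cluster points $p\neq q$, apply the visibility property to the subarcs of $\gamma$ joining points near $p$ to points near $q$, and derive a contradiction from the fact that the intersection points with the compact set $K$ have parameters tending to infinity. The only cosmetic difference is that the paper interleaves the two subsequences and phrases the contradiction as $\Gamma^{\infty}_{\gamma}\cap K\ne\emptyset$, while you make the same contradiction explicit via Result 1 and the isometry property of the geodesic ray.
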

	
	\begin{pf}
	Since $\overline{\Omega}^{Euc}$ is compact, $\{\gamma(t_k)\}$ has a limit point, and by Remark \ref{5.2}, all its limit point belongs to $\partial_{Euc}\Omega$. As $\{\gamma(t_k)\}$ is a bounded infinite sequence, to show that it converges, we shall show that it has a unique limit point. Suppose for contradiction that $\gamma(t_k)$ has two distinct limit points $p,q\in \partial_{Euc}\Omega$ {\it i.e,} there are two subsequence of $\gamma(t_k)$, say, $\gamma(t_{n_{k}})$ and $\gamma(t_{s_{k}})$ such that 
		$$\lim\limits_{k\to \infty}\gamma(t_{n_{k}})=p\ne q= \lim\limits_{k\to \infty}\gamma(t_{s_{k}}).$$
		Upto subsequence we can assume $t_{n_{k}}<t_{s_{k}}<t_{n_{k+1}}$ for all $k$. Consider the qh-geodesics $\gamma_k=\gamma|_{[t_{n_{k}},t_{s_{k}}]}$. Since $\Omega$ is a QH-visibility domain, therefore there exists a compact set $K\subset \Omega$ such that for sufficiently large $k$, there exists $t'_{n_{k}}\in (t_{n_{k}},t_{s_{k}})$ such that $\gamma(t'_{n_{k}})\in K$. But then $\Gamma^{\infty}_{\gamma}\cap K\ne \emptyset$, which is a contradiction.
	\end{pf}

In the next lemma, we show that in the case of QH-visibility domain, for every sequence $\{t_k\}$ with $t_k\to\infty$, the limit $\lim_{k\rightarrow \infty}\gamma(t_k)$ is same. We give a name to this phenomenon: We say that a qh-geodesic ray $\gamma:[0,\infty)\to \Omega$  lands at a point $p\in \partial_{Euc}\Omega$ if $\Gamma^{\infty}_{\gamma}=\{p\}$. 

	\begin{lem}\label{lands}
		Let $\Omega$ be a QH-visibility domain. Then any qh-geodesic ray $\gamma$ {\it lands at a point} $p\in  \partial_{Euc}\Omega$ {\it i.e.,} $\lim\limits_{t\to \infty}\gamma(t)$ exists in $\partial_{Euc}\Omega$.
	\end{lem}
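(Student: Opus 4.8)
The plan is to show that the cluster set $\Gamma^{\infty}_{\gamma}$ consists of a single point, which is exactly the assertion that $\gamma$ lands at a point of $\partial_{Euc}\Omega$. The preceding lemma already supplies the crucial ingredient: along \emph{every} sequence $t_k\to\infty$ the limit $\lim_{k\to\infty}\gamma(t_k)$ exists and lies in $\partial_{Euc}\Omega$. What remains is to verify that this limit is independent of the chosen sequence; once that is done, $\Gamma^{\infty}_{\gamma}$ is forced to be a singleton $\{p\}$, and $\lim_{t\to\infty}\gamma(t)=p$ exists by the very definition of landing.

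To carry this out I would argue by contradiction. Suppose $\Gamma^{\infty}_{\gamma}$ contained two distinct points $p\ne q$. Then there are sequences $s_k\to\infty$ and $u_k\to\infty$ with $\gamma(s_k)\to p$ and $\gamma(u_k)\to q$. After passing to subsequences I can interleave them into a single strictly increasing sequence $t_1<t_2<\cdots\to\infty$ whose terms alternate between the $s$'s and the $u$'s. The sequence $\{\gamma(t_k)\}$ then admits both $p$ and $q$ as subsequential limits, so $\lim_{k\to\infty}\gamma(t_k)$ fails to exist, directly contradicting the preceding lemma. Hence $\Gamma^{\infty}_{\gamma}=\{p\}$ for some $p\in\partial_{Euc}\Omega$.

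A fully self-contained variant, bypassing the reduction to the preceding lemma, would instead feed the interleaved data straight into the visibility hypothesis: arranging the ordering so that $s_1<u_1<s_2<u_2<\cdots$, one applies the QH-visibility property to the geodesic segments $\gamma|_{[s_k,u_k]}$, whose endpoints approach the distinct boundary points $p$ and $q$. This yields a compact set $K\subset\Omega$ and parameters $t_k'\in(s_k,u_k)$ with $\gamma(t_k')\in K$ for all large $k$; since $t_k'\to\infty$ and $K$ is compact, $\{\gamma(t_k')\}$ has a limit point in $K\subset\Omega$, contradicting Remark \ref{5.2}, which places every limit point of $\gamma$ at infinity on $\partial_{Euc}\Omega$.

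I do not expect a serious obstacle here, since the heavy lifting is already done in the preceding lemma; the only point requiring care is the bookkeeping of the interleaving, namely producing a single monotone sequence tending to infinity that exhibits two distinct subsequential limits, which is elementary.
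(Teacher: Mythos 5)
Your proposal is correct and matches the paper's argument: your ``self-contained variant'' is precisely the paper's proof, which interleaves the two sequences so that $s_k<u_k<s_{k+1}$, applies the visibility property to the geodesic segments $\gamma|_{[s_k,u_k]}$, and derives a contradiction with the fact (Remark \ref{5.2}) that all cluster points of $\gamma$ lie on $\partial_{Euc}\Omega$ while the compact set $K$ sits inside $\Omega$. Your first reduction, via the preceding lemma and a non-convergent interleaved sequence, is only a cosmetic repackaging of the same idea, since that lemma was itself proved by the identical interleaving-plus-visibility argument.
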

	
	\begin{pf}
		Suppose there are two distinct points $p,q\in \Gamma^{\infty}_{\gamma}$, {\it i.e.,} there exists sequences $s_k\to \infty$ and $t_k\to \infty$ such that $\lim_{k\to \infty}\gamma(s_k)=p$ and $\lim_{k\to \infty}\gamma(t_k)=q$. Upto subsequence we can assume $s_k<t_k<s_{k+1}$ for all $k$. Consider the qh-geodesics $\gamma_k=\gamma|_{[s_k,t_k]}$, then by the visibility property, there exists a compact set $K\subset \Omega$ such that for sufficiently large $k$, there exists $s'_k\in (s_k,t_k)$ such that $\gamma(s'_k)\in K$. But then $\Gamma^{\infty}_{\gamma}\cap K\ne \phi$, which is a contradiction.
	\end{pf}
	
Next we show that in QH-visibility domain, equivalent qh-geodesic rays lands at the same point. This will gives us that the map is well defined.
	
	\begin{lem}\label{well-defined}
		Let $\Omega$ be QH-visibility domain in $\mathbb{R}^n$. Fix $x_0\in \Omega$ and let $\gamma$ and $\sigma$ be any two qh-geodesic rays with $\gamma(0)=x_0=\sigma(0)$ such that $\gamma \sim \sigma$. Then $\gamma$ and $\sigma$ lands at a same point in $\partial_{Euc}\Omega$.
	\end{lem}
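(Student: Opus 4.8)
The plan is to argue by contradiction, exploiting the fact that two equivalent rays stay within bounded quasihyperbolic distance of one another while the visibility property forces any geodesic connecting far-out points with \emph{distinct} limits to return to a fixed compact set. First, by Lemma \ref{lands} each of the two rays lands: write $\Gamma^{\infty}_{\gamma}=\{p\}$ and $\Gamma^{\infty}_{\sigma}=\{q\}$ with $p,q\in\partial_{Euc}\Omega$, so that $\gamma(t)\to p$ and $\sigma(t)\to q$ in the Euclidean sense as $t\to\infty$. Suppose, for contradiction, that $p\ne q$.

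Since $\gamma\sim\sigma$, there is a constant $M>0$ with $k_{\Omega}(\gamma(t),\sigma(t))\le M$ for all $t\ge 0$. I would set $z_k=\gamma(k)$ and $w_k=\sigma(k)$; then $z_k\to p$, $w_k\to q$, and $k_{\Omega}(z_k,w_k)\le M$ for every $k$. Let $\eta_k:[0,T_k]\to\Omega$ be a qh-geodesic joining $z_k$ and $w_k$, so that $l_k(\eta_k)=k_{\Omega}(z_k,w_k)\le M$. Because $p\ne q$, the QH-visibility property (Definition \ref{main-defn} together with its equivalent sequential formulation) provides a compact set $K\subset\Omega$ and an index $k_0$ such that $\eta_k([0,T_k])\cap K\ne\emptyset$ for all $k\ge k_0$; choose $\theta_k\in\eta_k([0,T_k])\cap K$.

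Next I would estimate the quasihyperbolic distance from the base point. As $\theta_k$ lies on the geodesic $\eta_k$ joining $z_k$ and $w_k$, one has $k_{\Omega}(z_k,\theta_k)\le k_{\Omega}(z_k,w_k)\le M$, while the arc-length parametrization of $\gamma$ gives $k_{\Omega}(x_0,z_k)=k_{\Omega}(\gamma(0),\gamma(k))=k$. The triangle inequality then yields
\[
k_{\Omega}(x_0,\theta_k)\ge k_{\Omega}(x_0,z_k)-k_{\Omega}(z_k,\theta_k)\ge k-M\longrightarrow\infty .
\]
On the other hand $\theta_k\in K$ with $\overline{K}^{Euc}\subset\Omega$, so by Result 1 we have $k_{\Omega}(x_0,\theta_k)\le\sup_{\theta\in K}k_{\Omega}(x_0,\theta)<\infty$ uniformly in $k$. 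This contradiction forces $p=q$, proving that $\gamma$ and $\sigma$ land at the same point.

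The heart of the argument is the interplay encoded in the displayed estimate: visibility pins the intersection point $\theta_k$ inside a fixed compact set, so its quasihyperbolic distance to $x_0$ is bounded, whereas the shortness of $\eta_k$ combined with the arc-length parametrization of $\gamma$ forces that same distance to grow without bound. I expect the only delicate point to be the clean application of the visibility condition to the pair $\{z_k,w_k\}$ — specifically verifying that the hypotheses $z_k\to p$, $w_k\to q$ with $p\ne q$ place us exactly in the situation covered by the QH-visibility definition, so that a single compact set $K$ serves for all large $k$.
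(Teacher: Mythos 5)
Your proof is correct, but the contradiction step runs along a genuinely different line from the paper's. After invoking Lemma \ref{lands} (as the paper also does), the paper finishes in one stroke: taking $x_k=\gamma(s_k)\to p$ and $y_k=\sigma(s_k)\to q$ with $p\ne q$, Observation \ref{observation} (which is an immediate consequence of the elementary estimate \eqref{qh-eq-1}, with no visibility input) gives $k_{\Omega}(x_k,y_k)\to\infty$, flatly contradicting $\gamma\sim\sigma$. You instead re-invoke the visibility property: you connect $z_k=\gamma(k)$ and $w_k=\sigma(k)$ by short geodesics $\eta_k$ of length at most $M$, use visibility to force $\eta_k$ through a fixed compact set $K$, and then show the intersection points $\theta_k$ would have to be simultaneously at bounded distance from $x_0$ (being in $K$) and at distance at least $k-M\to\infty$ (by the triangle inequality and the isometric parametrization of $\gamma$). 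Every step of this checks out. What the paper's route buys is brevity and a cleaner logical accounting: visibility is needed only for landing, and the uniqueness of the landing point is then pure quasihyperbolic metric geometry. What your route buys is robustness: your argument uses only geodesic existence, the landing lemma, visibility, and continuity of the distance on compact sets, so it would survive in a geodesic metric-space setting where a quantitative blow-up estimate like \eqref{qh-eq-1} is unavailable — at the cost of consuming the visibility hypothesis a second time where the paper does not need it.
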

	
	\begin{pf}
		Let $\Omega$ be a visibility domain. In view of lemma \ref{lands}, by contradiction suppose that $\gamma$ and $\sigma$ lands at two distinct point $p,q\in\partial_{Euc}\Omega$. For any sequence $\{s_k\}$ with $s_k\to \infty$ write $x_k=\gamma(s_k)$ and $y_k=\sigma(s_k)$, it is clear that that $x_k\to p$ and $y_k\to q$ in the Euclidean metric. In view of Observation \ref{observation}, we have that $\lim_{k\to \infty}k_{\Omega}(z_k,w_k)=\infty$, which is a contradiction to the fact that $\gamma\sim \sigma$.
	\end{pf}

Now we are ready to define the extension map. Fix a point $x_0\in \Omega$, and let $\gamma$ be any representative of $\xi\in \partial_{G}\Omega$. In view of Lemma \ref{lands}, define the map $\Phi:\overline{\Omega}^G\to \overline{\Omega}^{Euc}$ as
\[\Phi(z)=\begin{cases}
	z&\mbox{ if } x\in \Omega \\[3mm]
	p&\mbox{ if } \xi\in\partial_{G}\Omega
\end{cases}
\]
In view of Lemma \ref{well-defined}, the map $\Phi$ is well-defined.	The next lemma will help us to prove that $\Phi$ is continuous.

\begin{lem}\label{continuous}
	Let $\Omega$ be a bounded domain. Suppose $\Omega$ is a QH-visibility domain.
	\begin{itemize}
		\item[(i)] If $\gamma_k:[0,\infty)\to (\Omega,k_{\Omega})$ be a sequence of qh-geodesic rays converges uniformly on compacta to a geodesic ray $\gamma:[0,\infty)\to (\Omega,k_{\Omega})$, then
		$$\lim\limits_{t\to \infty}\gamma(t)=\lim\limits_{k\to \infty}\lim\limits_{t\to\infty}\gamma_k(t).$$
		\item[(ii)]  If $\gamma_k:[0,T_k]\to (\Omega,k_{\Omega})$ be a sequence of qh-geodesics converges uniformly on compacta to a geodesic ray $\gamma:[0,\infty)\to (\Omega,k_{\Omega})$, then
		$$\lim\limits_{t\to \infty}\gamma(t)=\lim\limits_{k\to \infty}\gamma_k(T_K).$$
	\end{itemize}
\end{lem}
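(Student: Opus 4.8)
The plan is to handle both parts by a diagonal/limit argument, exploiting that QH-visibility forces geodesic rays to land at unique Euclidean boundary points (Lemma \ref{lands}) and that equivalent rays land at the same point (Lemma \ref{well-defined}). For part (i), write $p_k = \lim_{t\to\infty}\gamma_k(t)$ and $p = \lim_{t\to\infty}\gamma(t)$; these exist in $\partial_{Euc}\Omega$ by Lemma \ref{lands}. The goal is $p_k \to p$ in the Euclidean metric. Since $\overline{\Omega}^{Euc}$ is compact, it suffices to show every convergent subsequence of $\{p_k\}$ has limit $p$. So I would pass to a subsequence with $p_{k}\to p^*$ and argue $p^*=p$ by contradiction, assuming $p^*\neq p$.

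**The core contradiction.** Suppose $p^* \neq p$. The idea is to produce, for each large $k$, a point on $\gamma_k$ that is Euclidean-close to $p^*$ and simultaneously a point on $\gamma_k$ (near its initial segment, where it tracks $\gamma$) that is close to $p$, then invoke visibility on the subarc between them. Concretely: because $\gamma_k(t)\to p_k$ as $t\to\infty$ and $p_k\to p^*$, I can choose times $\tau_k\to\infty$ with $\gamma_k(\tau_k)\to p^*$. On the other hand, uniform convergence on compacta gives that for any fixed large $T$, $\gamma_k(T)\to\gamma(T)$, and since $\gamma(T)\to p$ as $T\to\infty$, I can select (by a diagonal argument) times $\sigma_k\to\infty$ with $\sigma_k < \tau_k$ and $\gamma_k(\sigma_k)\to p$. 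Now $\gamma_k|_{[\sigma_k,\tau_k]}$ is a qh-geodesic whose endpoints converge to the distinct boundary points $p$ and $p^*$. By the QH-visibility property there is a compact $K\subset\Omega$ met by each such subarc for large $k$, say at $\gamma_k(\rho_k)$ with $\rho_k\in(\sigma_k,\tau_k)$. But $k_\Omega(x_0,\gamma_k(\rho_k))\ge k_\Omega(x_0,\gamma_k(\sigma_k)) = \sigma_k \to \infty$ while $\gamma_k(\rho_k)\in K$ forces $k_\Omega(x_0,\gamma_k(\rho_k))$ bounded (by Result 1), a contradiction. Hence $p^*=p$.

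**Part (ii) and the main obstacle.** Part (ii) is essentially the same argument with the outer limit replaced by the endpoint: here $\gamma_k(T_k)$ is the terminal point of a finite geodesic, and I want $\gamma_k(T_k)\to p=\lim_{t\to\infty}\gamma(t)$. Again passing to a convergent subsequence $\gamma_k(T_k)\to p^*$ and assuming $p^*\neq p$, I take $\sigma_k\to\infty$ with $\gamma_k(\sigma_k)\to p$ (from uniform convergence on compacta, which also forces $T_k\to\infty$), and apply visibility to $\gamma_k|_{[\sigma_k,T_k]}$, reaching the same contradiction that a point of $K$ lies at unbounded quasihyperbolic distance from $x_0$. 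The main obstacle I anticipate is the bookkeeping in the diagonal selection of the times $\sigma_k$: I must ensure $\gamma_k(\sigma_k)\to p$ while keeping $\sigma_k\to\infty$, which requires interleaving the convergence $\gamma(T)\to p$ (a statement about $\gamma$ as $T\to\infty$) with the uniform-on-compacta convergence $\gamma_k\to\gamma$ (a statement for fixed $T$ as $k\to\infty$). This is a standard but delicate two-parameter limit; handling the order of quantifiers carefully — choosing $T$ large first to get $\gamma(T)$ near $p$, then $k$ large to get $\gamma_k(T)$ near $\gamma(T)$ — is where the argument must be written precisely.
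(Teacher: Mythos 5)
Your proposal is correct and follows essentially the same route as the paper's own proof: assume the two limits are distinct boundary points, use the landing of the rays (Lemma \ref{lands}) together with uniform convergence on compacta to extract parameters $\sigma_k,\tau_k\to\infty$ (the paper's $a_k,b_k$) with $\gamma_k(\sigma_k)$ and $\gamma_k(\tau_k)$ tending to the two distinct points, apply QH-visibility to the subarcs $\gamma_k|_{[\sigma_k,\tau_k]}$, and derive the contradiction that the intersection points with the compact set lie at quasihyperbolic distance from the base point that is simultaneously bounded (Result 1) and at least $\sigma_k\to\infty$. The quantifier bookkeeping you flag as the main obstacle is handled in the paper by exactly the diagonal selection you describe, so there is no substantive difference.
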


\begin{pf}
	Since $\overline{\Omega}^{Euc}$ is compact it is enough to consider the case when the limit
	$$\lim\limits_{k\to \infty}\lim\limits_{t\to T_k}\gamma_k(t)\,\,\, \mbox{exists}.$$ 
\begin{pf}[Proof of (i)]
By Lemma \ref{lands}, for each $k$, we have that $\lim_{t\to \infty}\gamma_{k}(t)=p_k\in \partial_{Euc}\Omega$, and $\lim_{t\to \infty}\gamma(t)=q$. Let $\lim_{k\to \infty}p_k=p\in \partial_{Euc}\Omega$. Our aim is to show that $p=q$. If not, then as $\lim_{k\to \infty}\gamma_{k}(t)=\gamma(t)$ for every $t$, we can pick $a_k\to \infty$, $b_k\to \infty$ such that $\gamma_{k}(a_k)\to p$ and $\gamma_{k}(b_k)\to q$. Upto a subsequence we can assume $a_k<b_k$. Then by visibility property, there exists a comapct set $K\subset \Omega$ such that $\gamma_k|_{[a_k,b_k]}$ intersects with $K$ for sufficiently large $k$, say $\gamma_k(c_k)\in K$ for some $c_k\in [a_k,b_k]$. Therefore, $$c_k=k_{\Omega}(x_0=\gamma_k(0),\gamma_k(c_k))\le\max_{x\in K}k_{\Omega}(x_0,x).$$
which gives that $c_k$ does not tend to $\infty$, a contradiction. Hence, $p=q$. This completes the proof.
\end{pf}

\begin{pf}[Proof of (ii)]	
Let $\gamma_{k}(T_k)=x_k\in \Omega$. Sicne each $\gamma_{k}$ is a qh-geodesic, the sequence $\{x_k\}$ converges to a point in $p\in\partial_{Euc}\Omega$. Further, by Lemma \ref{lands}, $\lim_{t\to\infty}\gamma(t)=q\in \partial_{Euc}\Omega$. Our aim is to show that $p=q$. If not, then as $\lim_{k\to \infty}\gamma_{k}(t)=\gamma(t)$ for every $t$, we can pick $a_k\to \infty$, $b_k\to \infty$ such that $\gamma_{k}(a_k)\to p$ and $\gamma_{k}(b_k)\to q$. Upto a subsequence we can assume $a_k<b_k$. Then by visibility property, there exists a comapct set $K\subset \Omega$ such that $\gamma_k|_{[a_k,b_k]}$ intersects with $K$ for sufficiently large $k$, say $\gamma_k(c_k)\in K$ for some $c_k\in [a_k,b_k]$. Therefore, $$c_k=k_{\Omega}(x_0=\gamma_k(0),\gamma_k(c_k))\le\max_{x\in K}k_{\Omega}(x_0,x).$$
which gives that $c_k$ does not tend to $\infty$, a contradiction. Hence, $p=q$. This completes the proof.
\end{pf}

Combining the Proof of (i) and Proof of (ii), we have the proof of the Lemma.	
\end{pf}

Following lemma will give us that $\Phi$ is onto.
	
	\begin{lem}\label{onto}
		Suppose $\Omega$ is a QH-visibility domain. Fix any point $p\in \partial_{Euc}\Omega$ and $x_0\in \Omega$, then there exists a qh-geodesic ray $\gamma$ issuing from $x_0$ which lands at $p$.
	\end{lem}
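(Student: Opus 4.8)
The plan is to realize $p$ as the landing point of a geodesic ray obtained as a limit of geodesic segments aimed at $p$. First I would fix a sequence $\{x_k\}\subset\Omega$ with $x_k\to p$ in the Euclidean metric, and for each $k$ invoke the existence of quasihyperbolic geodesics (Gehring--Osgood, together with the fact that $(\Omega,k_{\Omega})$ is proper and geodesic) to choose a qh-geodesic $\gamma_k:[0,T_k]\to\Omega$, parametrized by arclength, with $\gamma_k(0)=x_0$ and $\gamma_k(T_k)=x_k$, so that $T_k=k_{\Omega}(x_0,x_k)$. Since $x_k\to p\in\partial_{Euc}\Omega$, the growth estimate \eqref{qh-eq-3} forces $T_k\to\infty$.

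Next I would extract a limiting geodesic ray by an Arzel\`a--Ascoli argument. Each $\gamma_k$ is a $1$-Lipschitz map into $(\Omega,k_{\Omega})$, hence the family is equicontinuous, and for any fixed $T>0$ the images $\gamma_k([0,T])$ lie in the closed ball $\overline{B}_{k_{\Omega}}(x_0,T)$, which is compact because $(\Omega,k_{\Omega})$ is proper. A diagonal argument over an increasing sequence of such $T$ then yields a subsequence (still denoted $\gamma_k$) converging uniformly on compacta to a map $\gamma:[0,\infty)\to\Omega$ with $\gamma(0)=x_0$. The isometry condition $k_{\Omega}(\gamma_k(s),\gamma_k(t))=|s-t|$ passes to the pointwise limit, so $\gamma$ is an isometric embedding; and because $T_k\to\infty$, $\gamma$ is defined on all of $[0,\infty)$, that is, $\gamma$ is a genuine qh-geodesic ray issuing from $x_0$.

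Finally, I would identify the landing point using the continuity lemma. By Lemma \ref{lands} the ray $\gamma$ lands at some point of $\partial_{Euc}\Omega$, and the sequence $\gamma_k:[0,T_k]\to\Omega$ converges uniformly on compacta to $\gamma$, so Lemma \ref{continuous}(ii) gives
$$\lim_{t\to\infty}\gamma(t)=\lim_{k\to\infty}\gamma_k(T_k)=\lim_{k\to\infty}x_k=p.$$
Hence $\Gamma^{\infty}_{\gamma}=\{p\}$, which is exactly the assertion that $\gamma$ lands at $p$.

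I expect the only genuinely delicate point to be the Arzel\`a--Ascoli step: one must confirm that properness of $(\Omega,k_{\Omega})$ supplies the compactness needed to run the diagonal extraction and, crucially, that the divergence $T_k\to\infty$ guarantees the limit is a full geodesic ray rather than merely a finite geodesic segment. Once the limiting ray is in hand, the visibility hypothesis has already done its work inside Lemma \ref{continuous}(ii), so the identification of the landing point with $p$ is immediate.
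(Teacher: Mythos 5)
Your proof is correct and follows essentially the same route as the paper's: take qh-geodesic segments from $x_0$ to a sequence $x_k\to p$, use Arzel\`a--Ascoli with a diagonal argument (relying on properness of $(\Omega,k_{\Omega})$) to extract a limiting qh-geodesic ray, and then apply Lemma \ref{continuous}(ii) to identify the landing point as $p$. The only difference is that you make explicit the details the paper leaves implicit, namely that $T_k=k_{\Omega}(x_0,x_k)\to\infty$ by \eqref{qh-eq-3} and that the isometry property passes to the limit so the limit is a genuine ray.
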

	
	\begin{pf}
		Let $p\in \partial_{Euc}\Omega$, there is a sequence $\{x_k\} \subset\Omega$ which converges to point $p$. Let $\gamma_k$ be a sequence of qh-geodesics joining $x_0$ and $x_k$. As $(\Omega,k_{\Omega})$ is a proper metric space, by Arzel\'a-Ascoli's theorem, and a diagonal argument, upto a subsequences, we can assume that $\{\gamma_k\}$ converges uniformly on compacta to a qh-geodesic ray $\gamma$. Then by Lemma \ref{continuous} (ii), $\gamma$ lands at $p\in \partial_{Euc}\Omega$. This completes the proof.
	\end{pf}	

\begin{rem}
There is another topological way to prove that $\Phi$ is onto: Since $\overline{\Omega}^G$ and $\overline{\Omega}^{Euc}$ is compact and $\Phi$ is continuous with $\Phi(\Omega)=\Omega$ which is dense in $\overline{\Omega}^{Euc}$, theorefore $\Phi$ is onto.
\end{rem}
	\addtocontents{toc}{\protect\setcounter{tocdepth}{1}}
	\begin{pf}[\textbf{Proof of Theorem \ref{Main-thm-GB-EB}}]
		Suppose the identity map extends as a continuous surjective map $\widehat{id}:\overline{\Omega}^G \to \overline{\Omega}^{Euc}$. It is clear that $\widehat{id}(\partial_{G}\Omega)=\partial_{Euc}\Omega$. Let $p,q\in \partial_{Euc}\Omega$, $p\ne q$. Consider the two subsets $L_p=\widehat{id}^{-1}(p)$, $L_q=\widehat{id}^{-1}(q)$ of $\partial_{G}\Omega$. Clearly both are compact and $L_p\cap L_q=\emptyset$. Since $(\Omega,k_{\Omega})$ is Gromov hyperbolic, by Theorem \ref{GV}, it is easy to prove that there exist $\overline{\Omega}^G$-open subset $V_p$ and $V_q$ of $p$ and $q$ respectively and  compact set $K_{p,q}\subset \Omega$ such that $L_p\subset V_p$, $L_q\subset V_q$ and $\overline{V_p}\cap \overline{V_q}=\emptyset$ with the property that if $\gamma:[0,T]\to \Omega$ is any quasihyperbolic geodesic with 
		$\gamma(0)\in V_p\cap \Omega$ and $\gamma(T)\in V_q\cap \Omega$, then $\gamma([0,T])\cap K_{p,q}\ne \phi$.
		\par Now, suppose $x_k$ and $y_k$ are sequences in $\Omega$ converging in the Euclidean topology to $p$ and $q$ respectively. Then there exists $k_0\in \mathbb{N}$ such that $x_k\in V_p\cap \Omega$ and $y_k\in V_q\cap \Omega$, for all $k\ge k_0$. Thus, for $k\ge k_0$ any qh-geodesic $\gamma_k$ joining $x_k$ and $y_k$ intersects with $K_{p,q}$, and hence $\Omega$ is a QH-visibility domain.\\
		
		Conversly, assume that $\Omega$ is a QH-visibility domain. To show that $\Phi$ is continuous we need to consider following two cases
		\begin{itemize}
			\item[Case I:] If $\{z_k\}$ is a sequence in $\Omega$ such that $z_k \stackrel{Gromov}\longrightarrow \xi\in \partial_{G}\Omega$: In this case, for every qh-geodesic $\gamma_k:[0,R_k]\to \Omega$ with $\gamma_k(0)=z_0$ and $\gamma_k(R_k)=z_k$, its every subsequence has a subsequence which converges uniformly on compacta to a geodesic ray $\gamma\in \xi$, which in view of Lemma \ref{lands} lands at $p$. Thus, by Lemma \ref{continuous} (ii), we have $\Phi(z_k)=z_k\longrightarrow  p=\Phi(\xi)$.
			\item[Case II.] If $\{\xi_k\}$ is a sequence in $\partial_{G}\Omega$ such that $\xi_k \stackrel{Gromov}\longrightarrow \xi\in \partial_{G}\Omega$:\\
			Let $\gamma_k$ be the representative from $\xi_k$. Then $\xi_k \stackrel{Gromov}\longrightarrow \xi$ gives that every subsequence of $\gamma_k$ has a subsequence which converges uniformly on compacta to a geodesic ray $\gamma\in \xi$. Thus, by Lemma \ref{continuous} (i), we have that $\Phi(\xi_k)\stackrel{Euc}\longrightarrow \Phi(\xi)$. 
		\end{itemize}
		Hence in view of Case I and Case II, $\Phi$ is continuous. This completes the proof.
	\end{pf}
	
	\begin{pf}[\textbf{Proof of Theorem \ref{no loop}}]
		Since $\Omega$ has no geodesic loop in $\overline{\Omega}^G$ and $\Phi$ is a homeomorphism, it follows that $\Omega$ has no geodesic loop in $\overline{\Omega}^{Euc}$.
		\par Conversly, suppose $\Omega$ has no geodesic loop in $\overline{\Omega}^{Euc}$. Since $\overline{\Omega}^G$ and $\overline{\Omega}^{Euc}$ are Hausdorff and compact, therefore if we show that $\Phi$ is injective it will imply that $\Phi$ is a homeomorphism. Therefore, it is sufficient to prove that if $\Omega$ has no geodesic loop in $\overline{\Omega}^{Euc}$, then $\Phi$ is injective. This is equivalent to proving that if $\Phi$ is not injective, then $\Omega$ has a geodesic loop in $\overline{\Omega}^{Euc}$. Therefore, assume that $\Phi$ is not injective {\it i.e.,} $\Phi(\xi)=\Phi(\theta)$, for some $\xi,\theta\in\partial_{G}\Omega$, $\xi\ne \theta$. Let $\gamma\in \xi$ and $\sigma\in \theta$ be two representatives; $\gamma$ and $\sigma$ are qh-geodesic rays in $\Omega$ such that both $\gamma$ and $\sigma$ 
		\begin{itemize}
			\item lands at $p\in \partial_{Euc}\Omega$
			\item are not equivalent. In other words,
			$$\sup_{t\in[0, \infty)}k_{\Omega}(\gamma(t),\sigma(t))=\infty.$$
		\end{itemize} Therefore, there exists a sequence $t_k\to \infty$ such that
		$$\lim_{k\to \infty}k_{\Omega}(\gamma(t_k),\sigma(t_k))=\infty$$
		Now let $\eta_k:[0,R_k]\to \Omega$ be a sequence of qh-geodesics joining $\gamma(t_k)$ and $\sigma(t_k)$. Consider the geodesic triangle with sides $\gamma|_{[0,t_k]}$, $\eta_k$ and $\sigma|_{[0,t_k]}$.  Since $(\Omega,k_{\Omega})$ is Gromov hyperbolic, this geodesic triangle is $\delta$-thin, for some $\delta<\infty$, {\it i.e.,}
		\begin{equation}\label{0}
			\forall k\in \mathbb{N},\,\, \forall m\le k,\, \min\{k_{\Omega}(\gamma(t_m),\sigma([0,t_k])), k_{\Omega}(\gamma(t_m),\eta_k([0,R_k]))\}\le \delta
		\end{equation}
		It is easy to prove that
		\begin{equation}\label{0a}
			\lim\limits_{k\to \infty}k_{\Omega}(\gamma(t_k),\mbox{range}(\sigma))=\infty.
		\end{equation}
		By \eqref{0a}, choose $k_0$ such that
		\begin{equation}\label{0b}
			\forall k\ge k_0,\,\, k_{\Omega}(\gamma(t_k),\mbox{range}(\sigma))>\delta.
		\end{equation}	
		Using \eqref{0} and \eqref{0b}, we obtain
		$$\forall k\ge k_0,\,\, k_{\Omega}(\gamma(t_{k_0}),\eta_k([0,R_k]))\le\delta$$
		Consider the following compact set $K$ of $\Omega$
		$$K:=\{z\in \Omega:k_{\Omega}(z,\gamma(t_{k_0}))\le \delta\}.$$
			Then for every $k$, $\eta_k([0,R_k])\cap K\ne \phi$. We can parametrize $\eta_k$ in such a way that $\eta_k:[-s_k,t_k]\to \Omega$ such that $\eta_k(0)=z_0$. Then, by the visibility property of Gromov hyperbolic spaces (see \cite[Proposition 4.4 (2)]{O'Neill-1973}), we get that upto subsequence $\eta_k$ converges to a geodesic line $\eta:(-\infty,\infty)\to \Omega$ joining $\xi$ and $\theta$. Since $\Phi(\xi)=\Phi(\theta)$, therefore $\gamma$ and $\sigma$ lands at a same point point $p\in \partial_{Euc}\Omega$ {\it i.e.,} for any sequences $t_k\to \infty$ and $s_k\to\infty$, we have
		$$\lim_{k\to \infty}\gamma(t_k)=p \mbox{ and } \lim_{k\to \infty}\sigma(s_k)=p.$$
		Therefore, we have
		$$\lim_{t\to \infty}\eta(t)=p \mbox{ and } \lim_{t\to-\infty}\eta(t)=p.$$
		Therefore, $\eta$ is a geodesic line in $\Omega$ which is a geodesic loop in $\overline{\Omega}^{Euc}$. This completes the proof.
	\end{pf}

We will now provide the proof of Proposition \ref{Proposition-1} and Proposition \ref{Proposition-2}.

	\begin{pf}[\textbf{Proof of Proposition \ref{Proposition-1}}]
	By contradiction, suppose $\Omega$ has geodesic loop in $\overline{\Omega}^{Euc}$, {\it i.e}, there exists a geodesic line $\gamma:(-\infty,\infty)\to \Omega$ such that
	$$\lim_{t\to \infty}\gamma(t)=\lim_{t\to-\infty}\gamma(t)=p\in\partial_{Euc}\Omega.$$
	Let $x_k=\gamma(t_k)$ and $y_k=\gamma(s_k)$, where $s_k\to -\infty, t_k\to \infty$, then $$\lim_{k\to \infty}x_k=\lim_{k\to \infty}y_k=p\in\partial_{Euc}\Omega,$$
	and we can parametrize $\gamma$ in such a way that $\gamma(0)=x_0$. Now the sequence $\gamma_n$ converges uniformly on every compact subset to a geodesic line $\sigma$. In this case 
	$$\lim_{k\to \infty}k_{\Omega}(x_0,\gamma_k)\le k_{\Omega}(x_0,\sigma)\le M,$$
	which is a contradiction to the fact that $\Omega$ is quasihyperbolically well behaved. 
\end{pf}

\begin{pf}[\textbf{Proof of Proposition \ref{Proposition-2}}]
		Let $\Omega$ be a visibility domain. Suppose $\Omega$ has no geodesic loop in $\overline{\Omega}^{Euc}$. Let $\{x_k\}$, $\{y_k\}$ be two sequences in $\Omega$ with $$\lim_{k\to \infty}x_k=\lim_{k\to \infty}y_k=p\in\partial_{Euc}\Omega,$$
	and $\gamma_k:[a_n,b_n]\to \Omega$ be a sequence of qh-geodesic joining $x_k$ and $y_k$. We can parametrize $\gamma_k:[-s_k,t_k]\to \Omega$ such that $\gamma_k(-s_k)=x_k$ and $\gamma_k(t_k)=y_k$. Then $\gamma_k$ converges to a geodesic line $\gamma:(-\infty,\infty)\to \Omega$ such that 
	$$\lim_{t\to \infty}\gamma(t)=\lim_{t\to-\infty}\gamma(t)=p\in\partial_{Euc}\Omega$$
	which is a contradiction to the fact that $\Omega$ has no geodesic loop in $\overline{\Omega}^{Euc}$. The converse follows from Proposition \ref{Proposition-1}. This completes the proof. 
\end{pf}

\section{QH-visibility of some classical domains}\label{Visibility of domains}
In this section we study the visibility of some classical domains introduced in subsection \ref{QHG and GH}. More precisely, we provide the proof of Theorem \ref{cone arc-visibility} and Theorem \ref{visibility criteria}. 
	
\subsection{Cone-arc condition implies visibility} 
\addtocontents{toc}{\protect\setcounter{tocdepth}{1}}
In this subsection, we provide the proof of Theorem \ref{cone arc-visibility}.

\begin{pf}[\textbf{Proof of Theorem \ref{cone arc-visibility}}]
		Suppose $\Omega$ is not a QH-visibility domain i.e. there does not exist any compact set with the desired property. Therefore there exist distinct points $p,q\in \partial_{Euc} \Omega$, sequences $\{x_{k}\}, \{y_{k}\} \subset \Omega$ with $x_{k} \rightarrow p$ and $y_{k} \rightarrow q$, and a sequence of quasihyperbolic geodesics $\gamma_{k} :[a_{k},b_{k}] \rightarrow \Omega$ with $\gamma_{k}(a_{k}) = x_{k}$ and  $\gamma_{k}(b_{k}) = y_{k}$, such that
	\begin{equation}\label{1}
		\max_{{}t\in[ a_{k},{b_{k}]}} \delta_{\Omega} (\gamma_{k}(t)) \rightarrow 0   , \mbox{ as } k \rightarrow \infty
	\end{equation}
	Then by suitable parametrization, we can assume $a_k\le 0\le b_k$ for all $k\in \mathbb{N}$ and 
	$$\delta_{\Omega}(\gamma_{k}(0))=\max_{t\in[ a_{k},{b_{k}]}} \delta_{\Omega} (\gamma_{k}(t)) \rightarrow 0   , \mbox{ as } k \rightarrow \infty $$
	Next we will prove the following claim which will give the contradiction\\
	
	\noindent\textbf{Claim:} 
	$\delta_{\Omega}(\gamma_{k}(0))\not\rightarrow 0 \mbox{ as } k\to \infty$\\
	\noindent\textbf{Proof:}
	Since $p,q \in \partial_{Euc} \Omega, p\neq q$, therefore there exists $R>0$ and $k_0\in \mathbb{N}$ such that the following three holds
	\begin{itemize}
		\item[(i)] $|p-q|>R$,
		\item[(ii)] $|x_k-p|\le R/3$, for all $k\ge k_0$ and,
		\item[(iii)] $|y_k-p|\ge R$, for  for all $k\ge k_0$.
	\end{itemize}
	Let $\theta_{k}$ be the first point on $\gamma_{k}$ with $|\theta_{k}-p|=R$. Consider $\sigma_k=\gamma_{k}[x_k,\theta_{k}]$ and let $z_k\in \sigma_k$ be a point with $|z_k-p|=2R/3$. Since $\sigma_k$ is also quasihyperbolic geodesic in $\Omega$ joining $x_k$ and $\theta_{k}$ and $\Omega$ is a $C$- John domain in which every quasihyperbolic geodesic is a double $B=B(C)$-cone arc, therefore we have 
	\begin{eqnarray}\label{3.2}
		B\delta_{\Omega}(z_k)&\ge& \min\{l(\gamma_{k}[x_k,z_k]),l(\gamma_{k}[z_k,\theta_{k}])\}\\ \nonumber
		&\ge&\min\{|z_k-x_k|,|z_k-\theta_{k}|\}
	\end{eqnarray}
	Now,
	\begin{eqnarray*}
		|z_k-\theta_{k}|&=&|z_k-p+p-\theta_{k}|\\ \nonumber
		&\ge& |\theta_{k}-p|-|z_k-p|=R-\frac{2R}{3}=\frac{R}{3}\\ \nonumber
		&\ge&\frac{R}{3} 
	\end{eqnarray*}
	and 
	\begin{eqnarray*}
		|x_k-z_k|&=&|x_k-p+p-z_k|\\ \nonumber
		&\ge& |x_k-p|-|z_k-p|=\frac{2R}{3}-\frac{R}{3}=\frac{R}{3} \\ \nonumber
		&\ge&\frac{R}{3} 
	\end{eqnarray*}
	Therefore, in view of \eqref{3.2} we get
	\begin{equation}\label{J-A}
		\delta_{\Omega}(z_k)\ge \frac{R}{3B}
	\end{equation}
	Since $\delta_{\Omega}(\gamma_{k}(0))\ge \delta_{\Omega}(z_k)$, therefore by \eqref{J-A} it follows that
	$$\delta_{\Omega}(\gamma_{k}(0))\not\rightarrow0\,\,\, \mbox{ as } k\to \infty.$$
	Hence we have our claim. This completes the proof.
\end{pf}

\subsection{General visibility criteria} 
	\addtocontents{toc}{\protect\setcounter{tocdepth}{1}}
	This subsection is devoted to the proof of Theorem \ref{visibility criteria}. To do so we need the following lemma and it easily follows from \eqref{int}. 
	\begin{lem}\label{****}
		Let $\phi$ as in Theorem \ref{visibility criteria}. Then for every $\epsilon>0$ there exists constants $-\infty<a^*<b^*<+\infty$ such that
		$$\int_{-\infty}^{a^*}\frac{1}{\phi^{-1}\left(\frac{|t|}{2}\right)}dt<\epsilon,$$
		$$\int_{b^*}^{+\infty}\frac{1}{\phi^{-1}\left(\frac{t}{2}\right)}dt<\epsilon.$$
	\end{lem}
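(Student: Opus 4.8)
The plan is to reduce both integrals to tails of a single convergent integral produced from \eqref{int} by a linear change of variables. First I would record that the substitution $s=t/2$ gives
$$\int_{0}^{\infty}\frac{dt}{\phi^{-1}(t/2)}=2\int_{0}^{\infty}\frac{ds}{\phi^{-1}(s)},$$
and the right-hand side is finite by \eqref{int}. Strict monotonicity of $\phi$ guarantees that $\phi^{-1}$ is well defined and positive on $(0,\infty)$, so the integrand $t\mapsto 1/\phi^{-1}(t/2)$ is positive and integrable on $(0,\infty)$; consequently its tails tend to $0$.

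For the second estimate this is immediate: since $\int_{0}^{\infty}\frac{dt}{\phi^{-1}(t/2)}<\infty$, we have $\int_{b}^{\infty}\frac{dt}{\phi^{-1}(t/2)}\to 0$ as $b\to\infty$, so I would simply pick $b^{*}>0$ large enough that this tail is smaller than $\epsilon$. For the first estimate I would fold the negative axis onto the positive one using $u=-t$. Because $|t|=-t$ for $t<0$, for any $c>0$ one has
$$\int_{-\infty}^{-c}\frac{dt}{\phi^{-1}(|t|/2)}=\int_{c}^{\infty}\frac{du}{\phi^{-1}(u/2)},$$
which is again a tail of the same convergent integral. Taking $c$ sufficiently large and setting $a^{*}=-c$ makes this smaller than $\epsilon$. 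Since $a^{*}=-c<0<b^{*}$, the required ordering $-\infty<a^{*}<b^{*}<+\infty$ holds automatically.

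There is no genuine obstacle here: the whole statement is just a repackaging of the convergence in \eqref{int}, and the only two points deserving a line of care are the harmless factor of $2$ inside $\phi^{-1}$, which the linear substitution absorbs, and the reflection $t\mapsto -t$ that converts the left tail at $-\infty$ into a right tail at $+\infty$. Positivity of the integrands, coming from the strict monotonicity of $\phi$, is what legitimizes the tail estimates throughout.
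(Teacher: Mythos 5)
Your proof is correct and is exactly the routine argument the paper has in mind: the paper offers no separate proof of Lemma \ref{****}, stating only that it ``easily follows from \eqref{int}'', and your substitutions $s=t/2$ and $u=-t$, together with the vanishing of tails of a convergent integral of a positive function, constitute precisely that verification. (The only imprecision—treating $\phi^{-1}$ as defined on all of $(0,\infty)$ rather than on $\mathrm{range}(\phi)$—is inherited from the paper's own formulation of \eqref{int} and is handled the same way there, so it is not a gap in your argument.)
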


The following remark will be crucial for the proof
\begin{rem}\label{qh parametrization}
	It is often convenient to parametrize an arc by quasihyperbolic length. We say that $g:[0,r]\to \Omega$ is a quasihyperbolic parametrization if $l_k(g[0,t])=t$ for all $t\in [0,r]$. Then $r=l_k(g)$ and
	$$|g'(t)|=\delta_{\Omega}(g(t))$$ almost everywhere. Every rectifiable arc $\gamma\subset\Omega$ has a quasihyperbolic parametrization $g:[0,r]\to \gamma$, and $g$ satisfies the  Lipschitz condition 
	$$|g(s)-g(t)|\le M|s-t|$$
	where $M=\max\{\delta(x):x\in \gamma\}$. 
\par If $\gamma$ is a quasihyperbolic geodesic, then $g:[0,r]\to (\Omega,k_{\Omega})$ is an isometry and we say that $g$ is a quasihyperbolic geodesic path from $g(0)$ to $g(r)$. Then in view of Martin\cite[4.8]{Martin-1985}, $g$ is $C^1$.
\end{rem}

\begin{pf}[\textbf{Proof of Theorem \ref{visibility criteria}}]
Suppose $\Omega$ is not a QH-visibility domain i.e. there does not exist any compact set with the desired property. Then there are $p,q\in \partial_{Euc} \Omega$, $p\neq q$, sequences $\{x_{n}\}, \{y_{n}\} \subset \Omega$ with $x_{n} \rightarrow p$ and $y_{n} \rightarrow q$ as $n\to \infty$, and a sequence of quasihyperbolic geodesics $\gamma_{n}:[a_n,b_n]\to \Omega$ joining $x_k$ and $y_k$ such that
\begin{equation}\label{**}
	\max_{t\in[a_{n},b_{n}]}\delta_{\Omega} (\gamma_{n}(t)) \rightarrow 0   , \mbox{ as } n \rightarrow \infty
\end{equation}
By a suitable {\it quasihyperbolic parameterization} $g_n:[a_{n},b_{n}] \rightarrow \Omega$ of $\gamma_{n}$ with $g_{n}(a_{n}) = x_{n}$, $g_{n}(b_{n}) = y_{n}$ and $|b_n-a_n|=l_{k}(g_n[a_n,b_n])$, we can assume that, for all $n, a_n\le 0\le b_n$ and
	\begin{equation}\label{***}
		\max_{t\in[a_{n},b_{n}]} \delta_{\Omega} (g_{n}(t))=\delta_{\Omega}(g_{n}(0))
	\end{equation}
Furthermore, by Remark \ref{qh parametrization} we have that each $g_n$ satisfies the following Lipschitz condition 
$$|g_n(s)-g_n(t)|\le M_n|s-t|,\,\,\, s,t\in [a_n,b_n],$$
where, $M_n=\max\{\delta_{\Omega}(x):x\in \gamma_n\}$. In view of \eqref{**}, we have that $M_n\to 0$. This gives us that each $g_n$ is $M$-Lipschitz with respect to the Euclidean distance, where $M=\sup_{n\ge 1}\{M_n\}$ {\it i.e.,} for all $n$, we have
$$|g_n(s)-g_n(t)|\le M|s-t|,\,\,\, s,t\in [a_n,b_n].$$
 Therefore, by Arzel\'a-Ascoli's theorem, up to subsequence we can assume:
	\begin{itemize}
		\item[(i)] $a_{k}\rightarrow a \in [-\infty,0]$ and $b_{k} \rightarrow b \in [0,\infty]$
		\item[(ii)] $g_{k}$ converges uniformly on every compact subset of $(a,b)$ to a continuous map  $g:(a,b)\rightarrow \overline{\Omega}^{Euc}$.
		\item[(iii)] $g_{k}(a_{k}) \rightarrow p$ and $g_{k}(b_{k}) \rightarrow q$
	\end{itemize}
We made the following observation
\begin{observation}\label{O}
We conclude from the fact that
$$|g_n(a_n)-g_n(b_n)|\le M|a_n-b_n|,\,\,\, \mbox{ for all }n$$
that $a<b$.	By Remark \ref{qh parametrization}, we have that $g_n:[a_n,b_n]\to (\Omega,k_{\Omega})$ is an isometry. This gives us that 
	$$|b-a|=\lim_{k\to \infty}|b_k-a_k|=\lim_{k\to \infty}k_{\Omega}(g_{k}(b_k),g_{k}(a_k))=\infty\,\,\, (\mbox{ by Observation \ref{observation}}).$$
	Therefore, it is clear that both $a$ and $b$ together can't be finite.
\end{observation}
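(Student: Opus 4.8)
The plan is to prove the two assertions of the Observation in turn: the nondegeneracy $a<b$, and the divergence $|b-a|=\infty$ (equivalently, that $a$ and $b$ cannot both be finite).

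First I would extract $a<b$ from the Euclidean chord estimate displayed in the statement. Since $g_n(a_n)=x_n$, $g_n(b_n)=y_n$, and the parametrization was chosen with $a_n\le 0\le b_n$ so that $|a_n-b_n|=b_n-a_n$, the inequality $|g_n(a_n)-g_n(b_n)|\le M|a_n-b_n|$ becomes $|x_n-y_n|\le M(b_n-a_n)$, i.e. $b_n-a_n\ge |x_n-y_n|/M$. As $x_n\to p$ and $y_n\to q$ with $p\ne q$, we have $|x_n-y_n|\to|p-q|>0$, and $M=\sup_n M_n$ is a finite constant by the preceding paragraph (where $M_n\to 0$); hence $\liminf_n(b_n-a_n)\ge |p-q|/M>0$. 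Passing to the limit of the Arzel\'a--Ascoli subsequence gives $b-a\ge|p-q|/M>0$, so $a<b$ and the limit curve $g$ is defined on a genuine open interval.

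For the divergence I would use that each $g_n$ is a quasihyperbolic geodesic in its quasihyperbolic parametrization, so by Remark \ref{qh parametrization} it is an isometry onto its image and $k_\Omega(g_n(a_n),g_n(b_n))=|b_n-a_n|$ for every $n$. Since $x_n=g_n(a_n)\to p$ and $y_n=g_n(b_n)\to q$ converge to two distinct points of $\partial_{Euc}\Omega$, Observation \ref{observation} gives $k_\Omega(x_n,y_n)\to\infty$. Combining these, $b_n-a_n=k_\Omega(x_n,y_n)\to\infty$, whence $|b-a|=\lim_n|b_n-a_n|=\infty$. Finally, because $a\le 0\le b$, an infinite value of $b-a$ is possible only if at least one of $a,b$ is infinite, which is precisely the statement that $a$ and $b$ cannot both be finite.

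I do not anticipate a genuine obstacle: the argument is a clean juxtaposition of the two metrics, where the Euclidean Lipschitz bound keeps the limiting interval from collapsing while the quasihyperbolic isometry, fed into the completeness-type divergence of Observation \ref{observation}, forces its length to be infinite. The only points meriting care are that $M$ is indeed finite (guaranteed by $M_n\to 0$, so that dividing by $M$ is legitimate) and that the limit $b_n-a_n\to b-a$ is interpreted in the extended reals, so that the $\infty$ conclusion is correctly read off.
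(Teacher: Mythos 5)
Your proposal is correct and follows essentially the same route as the paper: the uniform Euclidean Lipschitz bound $|g_n(a_n)-g_n(b_n)|\le M|a_n-b_n|$ together with $|x_n-y_n|\to|p-q|>0$ forces $b-a>0$, while the fact that the quasihyperbolic parametrization makes each $g_n$ an isometry into $(\Omega,k_{\Omega})$, combined with Observation \ref{observation}, forces $|b_n-a_n|=k_{\Omega}(x_n,y_n)\to\infty$ and hence $|b-a|=\infty$. The extra care you take (finiteness of $M$, reading the limit $b_n-a_n\to b-a$ in the extended reals using $a_n\le 0\le b_n$) only makes explicit what the paper leaves implicit.
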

\begin{claim}
$g:(a,b) \rightarrow \overline{\Omega}^{Euc}$ is a constant map.
\end{claim}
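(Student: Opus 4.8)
The plan is to show that the limiting curve $g$ has vanishing Euclidean speed, so it cannot move at all. The decisive inputs are the normalization \eqref{***} together with the degeneration \eqref{**}: in the quasihyperbolic-length parametrization the boundary distance $\delta_\Omega(g_k(t))$ is maximized at $t=0$, and this maximal value tends to $0$. Writing $\delta_k := \delta_\Omega(g_k(0))$, this yields the uniform bound $\delta_\Omega(g_k(\tau)) \le \delta_k$ for every $\tau \in [a_k,b_k]$, with $\delta_k \to 0$.

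First I would recall from Remark \ref{qh parametrization} that, because each $g_k$ is a quasihyperbolic parametrization, $g_k$ is $C^1$ and $|g_k'(\tau)| = \delta_\Omega(g_k(\tau))$ for almost every $\tau$. Consequently the Euclidean length of $g_k$ over any subinterval $[s,t]\subset[a_k,b_k]$ equals $\int_s^t \delta_\Omega(g_k(\tau))\,d\tau$, and the Euclidean distance between the endpoints is at most this length.

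Next, fix arbitrary parameters $s<t$ in the open interval $(a,b)$. Since $a_k\to a$ and $b_k\to b$ with $a\le 0\le b$, for all sufficiently large $k$ we have $a_k < s < t < b_k$, so that $[s,t]\subset[a_k,b_k]$ and the previous estimate applies:
\[
|g_k(t)-g_k(s)| \le \int_s^t \delta_\Omega(g_k(\tau))\,d\tau \le (t-s)\,\delta_k .
\]
Letting $k\to\infty$, the left-hand side converges to $|g(t)-g(s)|$ by the uniform-on-compacta convergence in item (ii), while the right-hand side tends to $0$ because $\delta_k\to 0$ by \eqref{**}. Hence $g(s)=g(t)$, and as $s,t$ were arbitrary, $g$ is constant on $(a,b)$, which proves the claim.

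I do not expect a real obstacle in this particular step: it is a soft compactness-and-length argument, and the only points needing care are the almost-everywhere identity $|g_k'|=\delta_\Omega\circ g_k$ (supplied by the quasihyperbolic parametrization in Remark \ref{qh parametrization}) and the inclusion $[s,t]\subset[a_k,b_k]$ for large $k$ (supplied by $a_k\to a$ and $b_k\to b$). Notably, the integrability hypothesis \eqref{int} and Lemma \ref{****} are \emph{not} needed here; they will instead enter the subsequent step, where one uses the growth condition \eqref{growth} to control the tails $\int_{a_k}^{0}$ and $\int_{0}^{b_k}$ of $\delta_\Omega(g_k(\tau))\,d\tau$ near the possibly infinite endpoints, forcing $p$ and $q$ to agree with the constant value of $g$ and thereby contradicting $p\ne q$.
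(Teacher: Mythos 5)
Your proposal is correct and is essentially the paper's own argument: the paper likewise combines the normalization \eqref{***} with \eqref{**} to get the Lipschitz bound $|g_k(s)-g_k(t)|\le M_k|s-t|$ with $M_k=\max_{x\in\gamma_k}\delta_\Omega(x)\to 0$ (which is exactly your $(t-s)\,\delta_k$ estimate, obtained by integrating $|g_k'|=\delta_\Omega\circ g_k$ as in Remark \ref{qh parametrization}), and passes to the limit using uniform convergence on compacta. Your observation that \eqref{int} and Lemma \ref{****} enter only in the subsequent non-constancy claim also matches the structure of the paper's proof.
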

\begin{pf}[Proof of Claim 1]
	In view of \eqref{**}, we have that $\delta_{\Omega}(g_k(t))$ converges uniformly to $0$. Then for $s < t \in (a,b)$, we have
	$$|g(s) - g(t)|=\lim_{k \rightarrow \infty} |g_{k}(s) - g_{k}(t)|\le\lim\limits_{n\to \infty}M_k|s-t|=0.$$
This yields that $g$ is a constant map.
\end{pf}

Further using the property of the domain $\Omega$, we will establish a contradiction by proving the following
	\begin{claim}
		$g :(a,b) \rightarrow \overline{\Omega}^{Euc}$ is not a constant map.
	\end{claim}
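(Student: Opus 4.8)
The plan is to contradict Claim 1 by exhibiting two interior parameters at which the limit curve $g$ takes the \emph{distinct} values $p$ and $q$, so that $g$ cannot be constant. The point is to dominate $\delta_\Omega(g_n(t))$ along the tails of the geodesics by an $n$-independent, integrable function of $t$, using the growth condition \eqref{growth}; then the Euclidean displacement carried by the tails is negligible and the endpoints $p,q$ get ``pinned'' to fixed interior parameters of the limit curve.

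First I would set up the key estimate. Write $M_n:=\delta_\Omega(g_n(0))=\max_{t}\delta_\Omega(g_n(t))$, which tends to $0$ by \eqref{**}, and $m_n:=k_\Omega(x_0,g_n(0))$. Since $g_n$ is parametrised by quasihyperbolic arclength from its maximal point, $k_\Omega(g_n(0),g_n(t))=|t|$, so the triangle inequality gives $k_\Omega(x_0,g_n(t))\ge |t|-m_n$. Rewriting \eqref{growth} as $\delta_\Omega(x)\le \delta_\Omega(x_0)/\phi^{-1}\!\big(k_\Omega(x_0,x)\big)$ and using that $\phi^{-1}$ is increasing, I obtain
\[ \delta_\Omega(g_n(t))\le \frac{\delta_\Omega(x_0)}{\phi^{-1}(|t|/2)}\qquad\text{whenever } |t|\ge 2m_n, \]
together with the trivial bound $\delta_\Omega(g_n(t))\le M_n$ valid for all $t$. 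This is exactly the profile for which Lemma \ref{****} was prepared.

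Two facts then drive the estimate home. First, $M_n\to 0$ forces $g_n(0)\to\partial_{Euc}\Omega$, hence $m_n\to\infty$, so the tail integral $\int_{2m_n}^{\infty}\delta_\Omega(x_0)\,dt/\phi^{-1}(t/2)\to 0$ by \eqref{int}. Second, and this is the crux, \eqref{int} forces $\phi(u)/u\to 0$ as $u\to\infty$ (otherwise $1/\phi^{-1}$ would dominate a multiple of $1/t$ along a sequence and the integral would diverge); combined with $m_n\le \phi(\delta_\Omega(x_0)/M_n)$ from \eqref{growth} applied at $g_n(0)$, this yields $m_nM_n\to 0$. Now, since $|g_n'|=\delta_\Omega(g_n)$ a.e., I fix $a^*<b^*$ as in Lemma \ref{****} and split $\int_{b^*}^{b_n}$ at $2m_n$, getting $|g_n(b^*)-g_n(b_n)|\le 2m_nM_n+\delta_\Omega(x_0)\int_{2m_n}^{\infty}dt/\phi^{-1}(t/2)\to 0$. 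As $g_n(b^*)\to g(b^*)$ and $g_n(b_n)\to q$, this gives $g(b^*)=q$; the symmetric splitting on the left gives $g(a^*)=p$. Since $p\ne q$, the map $g$ takes two distinct values and is therefore not constant, contradicting Claim 1. (When one of $a,b$ is finite the corresponding side is even easier, because $M_n\to 0$ makes the displacement over any bounded parameter interval vanish; and by Observation \ref{O} at least one side is infinite.)

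The main obstacle, and the only genuinely delicate point, is that the fixed base point $x_0$ lies at quasihyperbolic distance $m_n\to\infty$ from the receding geodesics, so the raw bound from \eqref{growth} degenerates to the useless $\delta_\Omega(x_0)/\phi^{-1}(|t|-m_n)$ on the growing region $|t|<2m_n$. The resolution is the quantitative gain $m_nM_n\to 0$, which converts the crude bound $\delta_\Omega\le M_n$ on that region into a vanishing contribution; this is precisely where the full strength of the integrability hypothesis \eqref{int} (through $\phi(u)/u\to 0$) is consumed.
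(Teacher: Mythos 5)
Your proof is correct, but it takes a genuinely different route from the paper's at the key estimate. The paper exploits the normalization \eqref{***} (that $\delta_{\Omega}(g_k(t))$ is maximal at $t=0$) directly: writing $|t|=k_{\Omega}(g_k(0),g_k(t))\le k_{\Omega}(g_k(0),x_0)+k_{\Omega}(x_0,g_k(t))$ and bounding \emph{both} terms by \eqref{growth}, the first term is dominated by the second precisely because $\delta_{\Omega}(g_k(0))\ge\delta_{\Omega}(g_k(t))$ and $\phi$ is increasing; this yields $\delta_{\Omega}(g_k(t))\le \delta_{\Omega}(x_0)/\phi^{-1}(|t|/2)$ for \emph{all} $t$ beyond a fixed constant, uniformly in $k$, so the "degeneration" you describe as the main obstacle never appears, and there is no need to track $m_n=k_{\Omega}(x_0,g_n(0))$ at all. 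The paper then concludes quantitatively via Lemma \ref{****}: $\|g(b^*)-g(a^*)\|\ge \|p-q\|-(\text{tail integrals})>0$. Your route — lower-bounding $k_{\Omega}(x_0,g_n(t))\ge |t|-m_n$ by the triangle inequality, getting the integrable bound only on $|t|\ge 2m_n$, and absorbing the middle region via $m_nM_n\to 0$ — costs you two extra ingredients the paper never needs: the fact that \eqref{int} forces $\phi(u)/u\to 0$, and the product estimate $m_nM_n\le \delta_{\Omega}(x_0)\,\phi(u_n)/u_n$. In exchange it buys a cleaner limiting conclusion: the tails carry vanishing Euclidean length as $n\to\infty$ (not merely uniformly small length), so $g$ actually attains the values $p$ and $q$ at fixed interior parameters, rather than merely being separated by a positive amount. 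Two small points to tighten: your parenthetical derivation of $\phi(u)/u\to 0$ needs the monotonicity of $\phi^{-1}$ to upgrade a bound along a sequence $t_j$ into a lower bound $\int_{t_j/2}^{t_j}dt/\phi^{-1}(t)\ge \epsilon/2$ on disjoint intervals (pointwise domination along a sequence alone proves nothing about the integral); and, as in the paper, the inversion $\delta_{\Omega}(x)\le\delta_{\Omega}(x_0)/\phi^{-1}(k_{\Omega}(x_0,x))$ is only legitimate when the relevant quantities lie in the range of $\phi$, which in your setup is guaranteed for $|t|\ge 2m_n$ once $n$ is large because $m_n\to\infty$ — worth saying explicitly.
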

\begin{pf}[Proof of Claim 2]
Since $\Omega$ satisfies
\begin{equation*}
	k_{\Omega}(x_{0},x) \leq \phi\left(\frac{\delta_{\Omega}(x_0)}{\delta_{\Omega}(x)}\right) \mbox{ for all } x \in \Omega,
\end{equation*}
and $g_k$ is an isometry, we have
\begin{eqnarray*}
	|t|=k_\Omega (g_{k}(0),g_{k}(t)) &\leq& k_\Omega (g_{k}(0),x_{0})) + k_\Omega (x_{0},g_{k}(t))\\ \nonumber
	&\leq& \phi\left(\frac{\delta_{\Omega}(x_0)}{\delta_{\Omega}(g_k(0))}\right)+\phi\left(\frac{\delta_{\Omega}(x_0)}{\delta_{\Omega}(g_k(t))}\right)\\ \nonumber
	&\leq& 2\phi\left(\frac{\delta_{\Omega}(x_0)}{\delta_{\Omega}(g_k(t))}\right)\,\,\, (\mbox{by }\eqref{***})
\end{eqnarray*}
In view of Observation \ref{O}, we have only following two cases to consider:
\begin{itemize}
	\item[Case 1:] Both $a$ and $b$ are infinite.
	\item[Case 2:] Either only $a$ or only $b$ is finite. In other words, Either only $a$ or only $b$ is infinite.
\end{itemize}	
Let us first consider the Case 1 and assume that $a=-\infty$ and $b=\infty$. The fact that $b_n\to \infty$ and $a_n\to -\infty$ and the function $\phi$ is strictly increasing with $\lim\limits_{t\to\infty}\phi(t)=\infty$ imply that there is a positive integer $n_0$ and positive constants $A$ and $B$ such that we have the following
\begin{itemize}
	\item[(i)] whenever $t\in (B,b_n]$ and $n\ge n_0$, we have 
	$$\frac{|t|}{2}\in \mbox{\textbf{range}}(\phi), \mbox{ and hence, } \delta_{\Omega}(g_k(t))\le \frac{\delta_{\Omega}(x_0)}{\phi^{-1}\left(\frac{|t|}{2}\right)}$$
	\item[(ii)] whenever $t\in (a_n, -A]$ and $n\ge n_0$, we have 
$$\frac{|t|}{2}\in \mbox{\textbf{range}}(\phi), \mbox{ and hence, } \delta_{\Omega}(g_k(t))\le \frac{\delta_{\Omega}(x_0)}{\phi^{-1}\left(\frac{|t|}{2}\right)}$$
\end{itemize}In view of Lemma \ref{****}, let $\epsilon=||p-q||/2$, then we can choose $a^*\in (-\infty,-A)$ and $b^*\in (B,\infty)$ such that
\begin{equation}\label{2}
	||p-q|| >\int_{b^*}^{+\infty} \frac{\delta_{\Omega}(x_0)}{\phi^{-1}\left(\frac{t}{2}\right)}\,dt+\int_{-\infty}^{a^*} \frac{\delta_{\Omega}(x_0)}{\phi^{-1}\left(\frac{|t|}{2}\right)}\,dt
\end{equation}
Therefore, we have
\begin{eqnarray*}
	||g(b^*) - g(a^*)||&=&\lim_{k\to \infty}||g_{k}(b^*) - g_k(a^*)||\\ \nonumber
	&\geq& \limsup_{k \rightarrow \infty} (||g_{k}(b_{k}) - g_{k}(a_{k})|| - || g_{k}(b_{k}) - g_{k}(b^*)|| - || g_{k}(a_{k})-\gamma_{k}(a^*)||)\\ \nonumber
	&\geq& ||p-q|| - \limsup_{k\rightarrow \infty}\int_{b^*}^{b_{k}}|| g_{k}'(t)||\,dt - \limsup_{k \rightarrow \infty}\int_{a_k}^{a^*} ||g_{k}'(t)||\,dt\\ \nonumber
	&\ge&||p-q||-\limsup_{k\to \infty}\int_{b^*}^{b_k}\delta_{\Omega}(g_k(t))\,dt-\limsup_{k\to \infty}\int_{a_k}^{a^*}\delta_{\Omega}(g_k(t))\,dt\\ \nonumber
	&\ge&||p-q||-\limsup_{k \rightarrow \infty}\int_{b^*}^{b_k} \frac{\delta_{\Omega}(x_0)}{\phi^{-1}\left(\frac{t}{2}\right)}\,dt-\limsup_{k \rightarrow \infty}\int_{a_k}^{a^*} \frac{\delta_{\Omega}(x_0)}{\phi^{-1}\left(\frac{|t|}{2}\right)}\,dt\\ \nonumber
	&\ge&	||p-q|| -\int_{b^*}^{+\infty} \frac{\delta_{\Omega}(x_0)}{\phi^{-1}\left(\frac{t}{2}\right)}\,dt-\int_{-\infty}^{a^*} \frac{\delta_{\Omega}(x_0)}{\phi^{-1}\left(\frac{|t|}{2}\right)}\,dt>0\,\,\, (\mbox{by }\eqref{2}).
\end{eqnarray*}
Thus, $g$ is non-constant.
Now, let us consider the Case 2 and assume that $a$ is finite, and hence then $b$ is infinite. Proceed as in the proof of the Case 1, we get that $g$ is non-constant.
\end{pf}

Combining both the cases the final conclusion is that $g$ is non-constant, which contradicts Claim 1. This gives the existence of compact set $K$. Hence $\Omega$ is a QH-visibility domain. This completes the proof of Theorem \ref{visibility criteria}.
	\end{pf}

\begin{rem}\label{List}
	Our general visibility criteria gives rise to a very rich collection of QH-visibility domains, which we will mention below.
	\begin{itemize}
		\item[(1)] From general visibility criteria it follows that every $\beta$-QHBC is a QH-visibility domain by taking $\phi(t)=1/\beta\log t+C_0$.
		\item[(2)] In view of Remark \ref{John-QHBC}, by taking $\phi(t)=C\log t +A$ in the general visibility criteria, it follows that every bounded $C$-John domain is a QH-visibility domain.
	\end{itemize}
	
\end{rem}

	\addtocontents{toc}{\protect\setcounter{tocdepth}{1}}

\section{Visibility of hyperbolic and quasihyperbolic metric}
In this section, we provide the proof of Theorem \ref{GH-H-QH} and Theorem \ref{growth-H-visibility}. To prove Theorem \ref{GH-H-QH}, we first need to recall the following very interesting result of Buckley and Herron (see \cite[Theorem A]{Buckley-2020}).

\begin{Thm}\cite[Theorem A]{Buckley-2020}\label{BH}
	For each $A\ge 1$ there are explicit constants $H$ and $K$ that depends only on $A$ such that for any hyperbolic $A$-quasigeodesic $\gamma^{h}$ and any quasihyperbolic $A$-quasigeodesic $\gamma^k$ both with ends point $a$ and $b$,
	$$l_k(\gamma^h)\le K\,k(a,b)\,\,\, \mbox{ and }\,\,\,l_h(\gamma^k)\le H\,h(a,b).$$
\end{Thm} 
Using Theorem D, it is easy to establish Theorem \ref{GH-H-QH}.

	\begin{pf}[\textbf{Proof of Theorem \ref{GH-H-QH}}]
		Let $\Omega$ be a bounded hyperbolic domain which is Gromov hyperbolic. Suppose $\Omega$ is H-visibility domain. Let $p,q\in \partial_{Euc}\Omega$, $p\ne q$ and $x_k,y_k$ be any sequence in $\Omega$ such that $x_k\to p$ and $y_k\to q$. Let $\gamma_{k}$ be a sequence of quasihyperbolic geodesics joining $x_k$ and $y_k$. Then by Theorem D, we have 
		$$l_h(\gamma_{k})\le H\,h(x_k,y_k).$$
		That is $\gamma_{k}$ is hyperbolic $H$-quasigeodesic. Let $\sigma_k$ be any sequence of hyperbolic geodesics joining $x_k$ and $y_k$. Since $\Omega$ is a  H-visibility domain, for sufficiently large $k$, $\sigma_k$ intersects with a common compact set $K$. Since $\Omega$ is Gromov hyperbolic, there exists some $R$ depending on $H$ and Gromov hyperbolic constant such that the Hausdoreff distance between $\sigma_i$ for any fixed $i$ and $\gamma_{k}$ for all $k$, is less than $R$. Therefore, $\gamma_{k}$ also intersects with the same compact set $K$ for sufficiently large $k$, and hence $\Omega$ is QH-visibility domain. Conversely, suppose $\Omega$ is QH-visibility domain, following the same argument as above we conclude that $\Omega$ is H-visibility domain.
	\end{pf}
	
\begin{pf}[\textbf{Proof of Theorem \ref{growth-H-visibility}}]	
To prove Theorem \ref{growth-H-visibility}, we will consider quasihyperbolic parametrization of hyperbolic geodesics. Suppose $\Omega$ is not a H-visibility domain i.e. there exist $p,q\in \partial_{Euc} \Omega$, $p\neq q$, sequences $\{x_{n}\}, \{y_{n}\} \subset \Omega$ with $x_{n} \rightarrow p$ and $y_{n} \rightarrow q$ as $n\to \infty$, and a sequence of hyperbolic geodesics $\gamma_{n}:[a_n,b_n]\to \Omega$ joining $x_k$ and $y_k$ such that
\begin{equation}\label{**}
	\max_{t\in[a_{n},b_{n}]}\delta_{\Omega} (\gamma_{n}(t)) \rightarrow 0   , \mbox{ as } n \rightarrow \infty
\end{equation}

By a suitable {\it quasihyperbolic parameterization} $g_n:[a_{n},b_{n}] \rightarrow \Omega$ of $\gamma_{n}$ with $g_{n}(a_{n}) = x_{n}$, $g_{n}(b_{n}) = y_{n}$ and $|b_n-a_n|=l_{k}(g_n[a_n,b_n])$, we can assume that, for all $n, a_n\le 0\le b_n$ and
\begin{equation}\label{***}
	\max_{t\in[a_{n},b_{n}]} \delta_{\Omega} (g_{n}(t))=\delta_{\Omega}(g_{n}(0))
\end{equation}
Furthermore, by Remark \ref{qh parametrization} we have that each $g_n$ satisfies the following Lipschitz condition 
$$|g_n(s)-g_n(t)|\le M_n|s-t|,\,\,\, s,t\in [a_n,b_n],$$
where, $M_n=\max\{\delta_{\Omega}(x):x\in \gamma_n\}$. In view of \eqref{**}, we have that $M_n\to 0$. This gives us that each $g_n$ is $M$-Lipschitz with respect to the Euclidean distance, where $M=\sup_{n\ge 1}\{M_n\}$ {\it i.e.,} for all $n$, we have
$$|g_n(s)-g_n(t)|\le M|s-t|,\,\,\, s,t\in [a_n,b_n].$$
Therefore, by Arzela-Ascoli theorem, up to subsequence we can assume:
\begin{itemize}
	\item[(i)] $a_{k}\rightarrow a \in [-\infty,0]$ and $b_{k} \rightarrow b \in [0,\infty]$
	\item[(ii)] $g_{k}$ converges uniformly on every compact subset of $(a,b)$ to a continuous map  $g:(a,b)\rightarrow \overline{\Omega}^{Euc}$.
	\item[(iii)] $g_{k}(a_{k}) \rightarrow p$ and $g_{k}(b_{k}) \rightarrow q$
\end{itemize}
We made the following observation
\begin{observation}\label{O}
	We conclude from the fact that
	$$|g_n(a_n)-g_n(b_n)|\le M|a_n-b_n|,\,\,\, \mbox{ for all }n$$
	that $a<b$.	By Remark \ref{qh parametrization} and Theorem D, we have that $g_n:[a_n,b_n]\to (\Omega,k_{\Omega})$ is a $(K,0)$-quasi-isometry. This gives us that 
	$$|b-a|=\lim_{k\to \infty}|b_k-a_k|\ge \frac{1}{K}\lim_{k\to \infty}k_{\Omega}(g_{k}(b_k),g_{k}(a_k))=\infty\,\,\, (\mbox{ by Observation \ref{observation}}).$$
	Therefore, it is clear that both $a$ and $b$ together can't be finite.
\end{observation}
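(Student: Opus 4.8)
The plan is to prove the two assertions packaged into Observation \ref{O} in turn: first that $a<b$, and then that $|b-a|=\infty$, from which it follows at once that $a$ and $b$ cannot both be finite. The first assertion needs only the Euclidean Lipschitz bound already recorded for the reparametrized curves $g_n$, whereas the second requires upgrading each $g_n$ to a $(K,0)$-quasi-isometric embedding into $(\Omega,k_\Omega)$, which I would obtain by combining Theorem D with the elementary fact that quasihyperbolic length dominates quasihyperbolic distance.

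For $a<b$, I would use that $g_n(a_n)=x_n\to p$ and $g_n(b_n)=y_n\to q$ with $p\ne q$, so that $|g_n(a_n)-g_n(b_n)|\to|p-q|>0$. Feeding this into $|g_n(a_n)-g_n(b_n)|\le M|a_n-b_n|$ gives $|a_n-b_n|\ge|p-q|/(2M)>0$ for all large $n$. Since $a_n\to a\le 0$ and $b_n\to b\ge 0$, we have $b_n-a_n=|b_n-a_n|$, and passing to the limit yields $b-a\ge|p-q|/(2M)>0$, hence $a<b$.

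For $|b-a|=\infty$, the key step is to show that each $g_n$ is a $(K,0)$-quasi-isometric embedding, where $K$ is the Buckley--Herron constant of Theorem D for $A=1$. The upper estimate $k_\Omega(g_n(s),g_n(t))\le l_k(g_n|_{[s,t]})=|s-t|$ holds because $g_n$ carries the quasihyperbolic arc-length parametrization (Remark \ref{qh parametrization}) and length always dominates distance. For the matching lower estimate I would observe that the subarc $\gamma_n|_{[s,t]}$ of the hyperbolic geodesic $\gamma_n$ is again a hyperbolic geodesic, hence a hyperbolic $1$-quasigeodesic; Theorem D then gives $|s-t|=l_k(\gamma_n|_{[s,t]})\le K\,k_\Omega(g_n(s),g_n(t))$. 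Combining the two bounds confirms the quasi-isometry property, so in particular $|b_n-a_n|\ge K^{-1}k_\Omega(x_n,y_n)$. Since $x_n\to p$ and $y_n\to q$ are two distinct boundary points, Observation \ref{observation} gives $k_\Omega(x_n,y_n)\to\infty$, whence $|b-a|=\lim_{n\to\infty}|b_n-a_n|=\infty$.

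Finally, because $a\le 0\le b$ and $b-a=\infty$, at least one of $a,b$ is infinite, so the two cannot both be finite, which is exactly the claim. The only delicate point is the quasi-isometry lower bound: one must apply Theorem D to every subarc rather than to $\gamma_n$ alone, and verify that the resulting constant $K$ depends solely on $A=1$ and is therefore uniform in $n$. Once this uniformity is secured, the remaining steps are routine limit passages.
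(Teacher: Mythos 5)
Your proposal is correct and follows essentially the same route as the paper: $a<b$ from the uniform Euclidean Lipschitz bound, the $(K,0)$-quasi-isometry property of the quasihyperbolic arc-length parametrization obtained from Remark \ref{qh parametrization} together with Theorem D, and $|b-a|=\infty$ from Observation \ref{observation}. Your additional care in the quasi-isometry step --- noting that the upper bound $k_{\Omega}(g_n(s),g_n(t))\le |s-t|$ is just the trivial length-dominates-distance inequality, that Theorem D must be applied to every subarc (each being a hyperbolic geodesic, hence a $1$-quasigeodesic) to obtain the lower bound, and that $K$ is uniform in $n$ because it depends only on $A=1$ --- simply makes explicit what the paper leaves implicit.
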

\begin{claim}
	$g:(a,b) \rightarrow \overline{\Omega}^{Euc}$ is a constant map.
\end{claim}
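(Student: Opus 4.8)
The plan is to reproduce verbatim the argument used for Claim 1 in the proof of Theorem \ref{visibility criteria}, because the present claim concerns only the quasihyperbolically-parametrized limit curve $g$ and makes no reference to which metric the original geodesics $\gamma_n$ actually minimize. The whole point is that this portion of the reasoning is \emph{metric-independent}: it uses nothing about hyperbolic versus quasihyperbolic geodesics, only the quasihyperbolic parametrization together with the degeneracy hypothesis \eqref{**}. First I would record that the standing hypothesis \eqref{**}, namely $\max_{t\in[a_n,b_n]}\delta_\Omega(\gamma_n(t))\to 0$, is precisely the statement that $M_n=\max\{\delta_\Omega(x):x\in\gamma_n\}\to 0$.

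Next, fix arbitrary $s<t$ in $(a,b)$. Since $a_k\to a$ and $b_k\to b$ with $a<b$ by Observation \ref{O}, for all sufficiently large $k$ we have $s,t\in[a_k,b_k]$, so the Euclidean Lipschitz estimate furnished by Remark \ref{qh parametrization}, namely $|g_k(s)-g_k(t)|\le M_k|s-t|$, is available. Passing to the limit and invoking the uniform convergence of $g_k$ to $g$ on compact subsets of $(a,b)$ yields
$$|g(s)-g(t)|=\lim_{k\to\infty}|g_k(s)-g_k(t)|\le\Big(\lim_{k\to\infty}M_k\Big)|s-t|=0.$$
As $s,t\in(a,b)$ were arbitrary, $g$ must be constant on $(a,b)$, which is the assertion of the claim.

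I do not expect any genuine obstacle here, since the estimate is immediate; the only points requiring mild care are that the constant $M_k$ denotes the supremum of $\delta_\Omega$ along $\gamma_k$ (not a bi-Lipschitz constant of the parametrization), and that one should wait until $k$ is large enough that $[s,t]\subset[a_k,b_k]$ before applying the Lipschitz bound. The substantive content of Theorem \ref{growth-H-visibility} — where the passage from hyperbolic to quasihyperbolic length via Buckley and Herron's Theorem D, together with the logarithmic growth condition, truly enters — is deferred to the complementary claim (Claim 2), which establishes that $g$ is \emph{non}-constant and thereby delivers the desired contradiction.
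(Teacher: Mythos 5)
Your proof is correct and follows essentially the same route as the paper: both arguments rest on the fact that the quasihyperbolic parametrization satisfies $|g_k(s)-g_k(t)|\le M_k|s-t|$ with $M_k=\max\{\delta_\Omega(x):x\in\gamma_k\}\to 0$ by \eqref{**}, combined with locally uniform convergence of $g_k$ to $g$. The only cosmetic difference is that in the proof of Theorem \ref{growth-H-visibility} the paper writes this bound as $\lim_k\int_s^t\|g_k'\|\,dt=\lim_k\int_s^t\delta_\Omega(g_k(t))\,dt=0$ rather than quoting the Lipschitz estimate of Remark \ref{qh parametrization}, which is the same computation; your observation that the claim is metric-independent (so the argument is identical whether the $\gamma_n$ are hyperbolic or quasihyperbolic geodesics) is exactly right.
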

\begin{pf}[Proof of Claim 1]
	In view of \eqref{**}, we have that $\delta_{\Omega}(g_k(t))$ converges uniformly to $0$. Then for $s < t \in (a,b)$, we have
	\begin{eqnarray*}
		||g(s) - g(t)|| &=& \lim_{k \rightarrow \infty} || g_{k}(s) - g_{k}(t)||\\ \nonumber
		&\leq& \lim_{k \rightarrow \infty} \int_{s}^{t} || g_{k}'(t)||dt = \lim_{k \rightarrow \infty}\int_{s}^{t}\delta_{\Omega}(g_k(t))=0
	\end{eqnarray*}
	This yields that $g$ is a constant map.
\end{pf}

Further using the property of the domain $\Omega$, we will establish a contradiction by proving the following
\begin{claim}
	$g :(a,b) \rightarrow \overline{\Omega}^{Euc}$ is not a constant map.
\end{claim}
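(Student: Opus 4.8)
The plan is to transcribe the proof of Claim~2 from Theorem~\ref{visibility criteria} almost verbatim, the one structural change being that here $g_n$ is the quasihyperbolic arclength parametrization of a \emph{hyperbolic} geodesic, so it is no longer an isometry into $(\Omega,k_\Omega)$ but only a $(K,0)$-quasi-isometry onto its image, as recorded in Observation~\ref{O} via Theorem~D. Concretely, arclength parametrization gives $k_\Omega(g_k(s),g_k(t))\le|s-t|$ for free, while Theorem~D supplies the reverse bound $\tfrac{1}{K}|s-t|\le k_\Omega(g_k(s),g_k(t))$; it is only this lower bound that the argument needs. The essential point that makes the transcription possible is that the identity $|g_k'(t)|=\delta_\Omega(g_k(t))$ a.e.\ from Remark~\ref{qh parametrization} is a property of the quasihyperbolic parametrization itself and is insensitive to whether the underlying curve is a hyperbolic or a quasihyperbolic geodesic, so all Euclidean-length estimates survive unchanged.

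First I would convert the growth hypothesis into a pointwise decay bound for $\delta_\Omega(g_k(t))$. Combining the quasi-isometry lower bound, the triangle inequality, the growth condition, and the fact that $\delta_\Omega(g_k(0))$ is maximal along $g_k$ by \eqref{***} (so the first $\phi$-term is dominated by the second, $\phi$ being increasing), I get
$$\frac{|t|}{K}\le k_\Omega(g_k(0),g_k(t))\le k_\Omega(g_k(0),x_0)+k_\Omega(x_0,g_k(t))\le \phi\!\left(\frac{\delta_\Omega(x_0)}{\delta_\Omega(g_k(0))}\right)+\phi\!\left(\frac{\delta_\Omega(x_0)}{\delta_\Omega(g_k(t))}\right)\le 2\,\phi\!\left(\frac{\delta_\Omega(x_0)}{\delta_\Omega(g_k(t))}\right).$$
Inverting the strictly increasing $\phi$ then yields, for all $t$ with $|t|/(2K)$ in the range of $\phi$, the estimate
$$\delta_\Omega(g_k(t))\le \frac{\delta_\Omega(x_0)}{\phi^{-1}\!\left(\tfrac{|t|}{2K}\right)},$$
which is exactly the bound appearing in Theorem~\ref{visibility criteria} with $2$ replaced by $2K$.

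Next I would run the same lower estimate for the Euclidean displacement of $g$. Fixing cut-offs $a^*<b^*$ and using uniform convergence on compacta together with $||g_k'(t)||=\delta_\Omega(g_k(t))$,
$$||g(b^*)-g(a^*)||=\lim_{k\to\infty}||g_k(b^*)-g_k(a^*)||\ge ||p-q||-\limsup_{k\to\infty}\int_{b^*}^{b_k}\frac{\delta_\Omega(x_0)}{\phi^{-1}(t/(2K))}\,dt-\limsup_{k\to\infty}\int_{a_k}^{a^*}\frac{\delta_\Omega(x_0)}{\phi^{-1}(|t|/(2K))}\,dt.$$
To make the two tails small I need the analogue of Lemma~\ref{****} with the argument $t/2$ replaced by $t/(2K)$; this follows from \eqref{int} by the substitution $u=t/(2K)$, which merely introduces the harmless factor $2K$ and preserves convergence of the improper integral. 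Taking $\epsilon=||p-q||/2$ and choosing $a^*,b^*$ accordingly in Case~1 (both $a,b$ infinite) gives $||g(b^*)-g(a^*)||\ge ||p-q||/2>0$, so $g$ is non-constant.

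Finally, by Observation~\ref{O} at most one of $a,b$ is finite, leaving Case~2, say $a$ finite and $b=+\infty$. Here I would fix $a^*$ near $a$ and note $g_k(a)\to p$ since $g_k$ is uniformly $M$-Lipschitz and $a_k\to a$; the left integral $\int_{a_k}^{a^*}\delta_\Omega(g_k)$ is then over a bounded interval with integrand tending to $0$ uniformly by \eqref{**}, hence negligible, while the right tail is controlled exactly as in Case~1, again forcing $||g(b^*)-g(a^*)||>0$. I expect the proof to be essentially a routine transcription of the quasihyperbolic argument; the only genuine thing to verify is precisely that the Buckley--Herron constant $K$ does not destroy the integrability condition, and this is settled once and for all by the scaling substitution above.
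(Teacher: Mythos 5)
Your proposal is correct and follows essentially the same route as the paper: the paper likewise uses the Buckley--Herron estimate (Theorem D) to treat $g_k$ as a $(1/K)$-lower-bounded quasi-isometry, derives the pointwise decay bound $\delta_{\Omega}(g_k(t))\le \delta_{\Omega}(x_0)/\phi^{-1}\bigl(|t|/(2K)\bigr)$ via the triangle inequality, the growth condition, and \eqref{***}, and then concludes by running the tail-integral argument of Theorem \ref{visibility criteria}. The only difference is that the paper leaves the final step as ``following the same line as in the proof of Theorem \ref{visibility criteria},'' whereas you explicitly verify the one point that needs checking, namely that the constant $K$ does not affect the integrability in Lemma \ref{****}, which is exactly right.
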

\begin{pf}[Proof of Claim 2]
	Since $\Omega$ satisfies
	\begin{equation*}
		k_{\Omega}(x_{0},x) \leq \phi\left(\frac{\delta_{\Omega}(x_0)}{\delta_{\Omega}(x)}\right) \mbox{ for all } x \in \Omega,
	\end{equation*}
	and each $g_k$ is a $(K,0)$-quasi-isometry, therefore we have
	\begin{eqnarray*}
		\frac{|t|}{K}\le k_\Omega (g_{k}(0),g_{k}(t)) &\leq& k_\Omega (g_{k}(0),x_{0})) + k_\Omega (x_{0},g_{k}(t))\\ \nonumber
		&\leq& \phi\left(\frac{\delta_{\Omega}(x_0)}{\delta_{\Omega}(g_k(0))}\right)+\phi\left(\frac{\delta_{\Omega}(x_0)}{\delta_{\Omega}(g_k(t))}\right)\\ \nonumber
		&\leq& 2\phi\left(\frac{\delta_{\Omega}(x_0)}{\delta_{\Omega}(g_k(t))}\right)\,\,\, (\mbox{by }\eqref{***})
	\end{eqnarray*}
	Thus,
	$$\delta_{\Omega}(g_{k}(t))\le \frac{\delta_{\Omega}(x_0)}{\phi^{-1}\left(\frac{|t|}{2K}\right)}$$
	From here following the same line as in the proof of Theorem \ref{visibility criteria}, we conclude that $g$ is non-constant.
\end{pf}

This contradicts Claim 1, and hence $\Omega$ is a H-visibility domain. This completes the proof of Theorem \ref{growth-H-visibility}.
\end{pf}

	\section{Visibility of domains quasiconformally equivalent to the unit ball.}\label{QC to visibility}
	In this section, as an application of Theorem \ref{Main-thm-GB-EB} and Theorem \ref{no loop}, we prove Theorem \ref{LC along boundry} and Corollary \ref{LC boundary}. To proceed further we recall some results related to the boundary extension of conformal and quasiconformal mappings which requires the following definitions. 
	\begin{defn}
		A bounded domain $\Omega\in \mathbb{R}^n$, $n\ge 2$, is said to be
		\begin{itemize}
			\item[(i)] finitely connected along the boundary if every point $p\in \partial_{Euc}\Omega$ has sufficiently small neighborhoods $U$ such that $\Omega\cap U$ has finite number of components,
			\item[(ii)] locally connected along the boundary if every point $p\in \partial_{Euc}\Omega$ has sufficiently small neighborhoods $U$ such that $\Omega\cap U$ is connected,
			\item[(iii)] Jordan domain if $\partial_{Euc}\Omega$ is homeomorphic to $S^{n-1}$.
		\end{itemize}
	\end{defn}
	
We first recall some classical results related to the extension of conformal mappings of the unit disk. We will state these results for bounded domains only. For details we refer to  \cite[Chapter IX.4]{Palka-Book} and \cite{Vaisala-Book}.
Let us recall the following well-known classical result. 

	\begin{Thm}[Carath\'eodory-Osgood-Taylor theorem]\label{COT}
	Let $\Omega$ be a bounded domain in $\mathbb{C}$, and let $f$ be a conformal mapping from $\mathbb{D}$ to $\Omega$. Then $f$ extends to a homeomorphism $\overline{f}$ from $\overline{\mathbb{D}}^{Euc}$ to $\overline{\Omega}^{Euc}$ if, and only if, $\Omega$ is a Jordan domain.
\end{Thm}
	In 1913, Osgood and Taylor proved Theorem E. Carath\'eodory proved Theorem E independently by introducing his prime end theory. We now mention a result which is due to V\"ais\"al\"a and N\"akki (see \cite[Theorem 4.7, Chapter IX.4]{Palka-Book}).
	
	\begin{Thm}
		Let $\Omega$ be a bounded domain in $\mathbb{C}$, and let $f$ be a conformal mapping from $\mathbb{D}\to \Omega$. Then $f$ extends to a continuous mapping $\overline{f}$ of $\overline{\mathbb{D}}^{Euc}$ onto $\overline{\Omega}^{Euc}$ if, and only if, $\Omega$ is finitely connected along the boundary.
	\end{Thm}
	
	Carath\'eodory has proved the following variant of Theorem B (see \cite[Chapter IX.4]{Palka-Book}).
	
	\begin{Thm}
		Let $\Omega$ be a bounded domain in $\mathbb{C}$, and let $f$ be a conformal mapping from $\mathbb{D}\to \Omega$. Then $f$ extends to a continuous mapping $\overline{f}$ of $\overline{\mathbb{D}}^{Euc}$ onto $\overline{\Omega}^{Euc}$ if, and only if, $\partial_{Euc}\Omega$ is locally connected.
	\end{Thm}
	
\begin{rem}\label{Rem 1}
	Thus, in view of Theorem C and Theorem D we have that in the case of planar simply connected domains, finitely connected along the boundary is equivalent to boundary being locally connected.
\end{rem} 

Further, if we merely replace the requirement that the extension is homeomorphism, then we have the following.
	
	\begin{Thm}
		Let $\Omega$ be a bounded domain in $\mathbb{C}$, and let $f$ be a conformal mapping from $\mathbb{D}\to \Omega$. Then $f$ extends to a homeomorphism $\overline{f}$ from $\overline{\mathbb{D}}^{Euc}$ to $\overline{\Omega}^{Euc}$ if, and only if, $\Omega$ is locally connected along the boundary.
	\end{Thm}

\begin{rem}\label{Rem 2}
In view of Jordan curve theorem we know that every Jordan domain in $\mathbb{C}$ is simply connected and a proper simply connected domain is Jordan domain if, and only if, it is locally connected along the boundary. 
\end{rem}

In the case of quasiconformal mappings, we have the following result (see \cite{Vaisala-Book} or \cite[Theorem 6.5.8 and Theorem 6.5.11]{Gehring-Palka-Martin-Book}).
	
	\begin{Thm}
		Let $\Omega$ be a bounded domain such that $\Omega$ is quasiconformally equivalent to the unit ball $\mathbb{B}^n$. Then every quasiconformal mapping $f:\mathbb{B}^n\to \Omega$ extends to a continuous map from $\overline{\mathbb{B}^n}^{Euc}$ onto $\overline{\Omega}^{Euc}$ if, and, only if, $\Omega$ is finitely connected along the boundary. Moreover, $f$ extends to a homeomorphism if, and, only if, $\Omega$ is locally connected along the boundary.
	\end{Thm}
	
We are now ready to prove Theorem \ref{LC along boundry} and Corollary \ref{LC boundary}.
	
	\begin{pf}[\textbf{Proof of Theorem \ref{LC along boundry}}]
		Let $\Omega$ be a bounded domain which is quasiconformally equivalent to the unit ball {\it i.e.},	there exists a quasiconformal mapping $f:\mathbb{B}^n\to \Omega$. Since quasiconformal maps are rough quasi-isometries in the quasihyperbolic metric, each quasiconformal image of Gromov hyperbolic domain is Gromov hyperbolic. Therefore, $(\Omega,k_{\Omega})$ is Gromov hyperbolic.
		\par \textbf{Proof of $(i)\implies (ii)$:} Suppose $\Omega$ is a QH-visibility domain. In view of Theorem \ref{Main-thm-GB-EB}, we have that the identity map 
		$$id_{\Omega}:(\Omega,k_{\Omega}) \to (\Omega,d_{Euc})$$
		extends to a continuous surjective map 
		$$\widehat{id_{\Omega}}:\overline{\Omega}^G\to \overline{\Omega}^{Euc}.$$ 
		Also, since $\mathbb{B}^n$ is a QH-visibility domain with no geodesic loop, the identity map
		$$id_{\mathbb{B}^n}:(\mathbb{B}^n,k_{\mathbb{B}^n}) \to (\mathbb{B}^n,d_{Euc}).$$
		extends to a homeomorphism
		$$\widehat{id_{\mathbb{B}^n}}:\overline{\mathbb{B}^n}^G\to \overline{\mathbb{B}^n}^{Euc}.$$ 
		Further, since $(\mathbb{B}^n,k_{\mathbb{B}^n})$ and $(\Omega,k_{\Omega})$ are Gromov hyperbolic and $f$ is rough quasiisometry with respect to quasihyperbolic metric, we have that $f$ extends to a homeomorphism
		$\widetilde{f}:\overline{\mathbb{B}^n}^G\to \overline{\Omega}^G$. 	Therefore, $f:\mathbb{B}^n\to \Omega$ extends to a continuous surjective map 
		$$\widehat{f}=\widehat{id_{\Omega}}\circ\widetilde{f}\circ\widehat{id_{\mathbb{D}}}^{-1}:\overline{\mathbb{B}^n}^{Euc}\to \overline{\Omega}^{Euc}.$$ 
		Hence by Theorem F, $\Omega$ is finitely connected along the boundary.
		\par  \textbf{Proof of $(ii)\implies(i)$:} Suppose $\Omega$ is finitely connected along the boundary, then by Theorem H, $f:\mathbb{B}^n\to \Omega$ extends to a continuous surjective map
		$$\widetilde{f}:\overline{\mathbb{B}^n}^{Euc}\to \overline{\Omega}^{Euc}.$$ 
		From the theory of Gromov hyperbolic spaces $f^{-1}$ extends to a homeomorphism 
		$$\widehat{f^{-1}}:\overline{\Omega}^G\to \overline{\mathbb{B}^n}^{G}.$$
		Also since $\mathbb{B}^n$ is a QH-visibility domain with no geodesic loop, the identity map 
		$$id_{\mathbb{B}^n}:(\mathbb{B}^n,k_{\mathbb{B}^n}) \to (\mathbb{B}^n,d_{Euc})$$
		extends to a homeomorphism
		$$\widehat{id_{\mathbb{B}^n}}:\overline{\mathbb{B}^n}^G\to \overline{\mathbb{B}^n}^{Euc}.$$
		Therefore, the identity map 
		$$id_{\Omega}:(\Omega,k_{\Omega}) \to (\Omega,d_{Euc})$$
		extends to a continuous surjective map
		$$\widehat{id_{\Omega}}=\widetilde{f}\circ\widehat{id_{\mathbb{B}^n}}\circ\widehat{f^{-1}}:\overline{\Omega}^G\to \overline{\Omega}^{Euc}.$$
		In view of Theorem \ref{Main-thm-GB-EB}, $\Omega$ is a QH-visibility domain. 
		\par Moreoever, suppose $\Omega$ is a QH-visibility domain with no geodesic loop in $\overline{\Omega}^{Euc}$, we have that $id_{\Omega}$ extends to a homeomorphism from $\overline{\Omega}^G$ onto $\overline{\Omega}^{Euc}$. Therefore following the proof of $(i)\implies (ii)$, we have that $f:\mathbb{B}^n\to \Omega$ extends to a homeomorphism 
		$$\widehat{f}=\widehat{id_{\Omega}}\circ\widetilde{f}\circ\widehat{id_{\mathbb{D}}}^{-1}:\overline{\mathbb{B}^n}^{Euc}\to \overline{\Omega}^{Euc}.$$ 
			Hence by Theorem H, $\Omega$ is locally connected along the boundary. Conversly, suppose $\Omega$ is locally connected along the boundary. By following the proof of $(ii)\implies (i)$, we have that, the identity map 
			$$id_{\Omega}:(\Omega,k_{\Omega}) \to (\Omega,d_{Euc})$$
			extends to a homeomorphism
			$$\widehat{id_{\Omega}}=\widetilde{f}\circ\widehat{id_{\mathbb{B}^n}}\circ\widehat{f^{-1}}:\overline{\Omega}^G\to \overline{\Omega}^{Euc}.$$
			Hence by Theorem \ref{no loop}, $\Omega$ is a QH-visibility domain with no geodesic loop in $\overline{\Omega}^{Euc}$. This completes the proof.
	\end{pf}

	\begin{pf}[\textbf{Proof of Corollary \ref{LC boundary}}]
	Let $\Omega$ be a bounded simply connected domain. Then by Theorem \ref{LC along boundry}, $\Omega$, is QH-visibility domain if, and only if, $\Omega$ is finitely connected along the boundary, which by Remark \ref{Rem 1}, is equivalent to $\partial \Omega$ is locally connected. Moreover, by Theorem \ref{LC along boundry} $\Omega$ is QH-visibility domain with no geodesic loop in $\overline{\Omega}^{Euc}$ if, and only if, $\Omega$ is locally connected along the boundary, which by Remark \ref{Rem 2}, is equivalent to $\Omega$ is a Jordan domain. This completes the proof.
	\end{pf}

	\section{Continuous extension of quasihyperbolic isometries and quasi-isometries}\label{Isometry and quasiisometry}
	This section is devoted to the proof of Theorem \ref{Isometry}, Theorem \ref{quasiisometry} and Theorem \ref{quasiconformal}.
	
	\begin{pf}[\textbf{Proof of Theorem \ref{Isometry}}]
			Fix some point $p\in \partial_{Euc}\Omega$. We claim that 
		$$\lim_{x\to p} f(x) \mbox{ exists in } \partial_{Euc}\Omega'.$$
		If $\lim_{x\to p} f(x)$ does not exist, then there exist sequences $x_k,y_k\in\Omega$ with 
		$$\lim_{k\to \infty}x_k=p=\lim_{k\to \infty}y_k \mbox{ but } \lim_{k\to \infty}f(x_k)=\xi\ne \eta=\lim_{k\to \infty}f(y_k).$$
		Since $f$ is quasihyperbolic isometry, $\eta,\xi\in \partial_{Euc}\Omega'$. For each $k$, let $\gamma_k$ be a sequence of geodesic in $\Omega$ joining $x_k$ and $y_k$. Then $f\circ\gamma_k$ is also a quasihyperbolic geodesic in $\Omega'$. Fix $x_0\in \Omega$. Since $\Omega'$ is a QH-visibility domain, In view of Lemma \ref{iff}, we have
		$$\sup_{k\ge 1}k_{\Omega'}(f(x_0),f\circ\gamma_k)<\infty.$$
		Hence
		$$\sup_{k\ge 1}k_{\Omega}(x_0,\gamma_k)<\infty,$$
		which is a contradiction to the fact that $\Omega$ is quasihyperbolically well behaved. Thus, $\lim_{x\to p} f(x) \mbox{ exists in } \partial_{Euc}\Omega'.$
		\par Next we define the map $F:\overline{\Omega}^{Euc}\to \overline{\Omega'}^{Euc}$ by
		\[F(z)=\begin{cases}
			f(z)&\mbox{ if } z\in \Omega \\[5mm]
			\widetilde{f}(z)=\lim_{x\to z}f(z)&\mbox{ if } z\in\partial_{Euc}\Omega
		\end{cases}
		\]
		In view of the definition of $F$, to show $F$ is continuous, it is sufficient to show that $\widetilde{f}:\partial_{Euc}\Omega\to \partial_{Euc}\Omega'$ is continuous. Let $p_k$ be a sequence in $\partial_{Euc}\Omega$ which converges to $p\in \partial_{Euc}\Omega$, since $p\in \partial_{Euc}\Omega$, we can find a sequence $z_k\in \Omega$ which is sufficiently close to $p_k$ such that $z_k\to p$. Then write $\lim_{k\to \infty}f(z_k)=\widetilde{f}(p)=q\in \partial_{Euc}\Omega'$. is a sequence in $\partial_{Euc}\Omega'$. Since $\partial_{Euc}\Omega'$ is compact and $z_k$ is sufficiently close to $p_k$, we have
		$$\lim_{k\to\infty}\widetilde{f}(p_k)=\lim_{k\to \infty}f(z_k)=q=\widetilde{f}(p).$$
		This completes the proof.
	\end{pf}

\begin{pf}[\textbf{Proof of Theorem \ref{quasiisometry}}]
	Fix some point $p\in \partial_{Euc}\Omega$. We claim that 
$$\lim_{x\to p} f(x) \mbox{ exists in } \partial_{Euc}\Omega'.$$
If $\lim_{x\to p} f(x)$ does not exist, then there exists sequence $x_k,y_k\in\Omega$ with 
$$\lim_{k\to \infty}x_k=p=\lim_{k\to \infty}y_k \mbox{ but } \lim_{k\to \infty}f(x_k)=\xi\ne \eta=\lim_{k\to \infty}f(y_k)$$
Since $f$ is  isometry, $\eta,\xi\in \partial_{Euc}\Omega'$. For each $k$, let $\gamma_k$ be a sequence of geodesic in $\Omega$ joining $x_k$ and $y_k$. Then $f\circ\gamma_k=\sigma_k:[a_k,b_k]\to \Omega'$ is a sequence of $(1,\mu)$-quasi-isometric path in $\Omega'$ joining $f(x_k)=u_k$ and $f(y_k)=v_k$.
\subsection*{Claim 1:} There exists a compact set $K\subset \Omega$ such that for sufficiently large $k$, $\sigma_k$ intersects with $K$. 
\subsection*{Proof of the Claim 1:}
If $\sigma_k$ there does not exist such compact set, then
$$\max_{t\in[ a_{k},b_{k}]}\delta_{\Omega'}(\sigma_k(t)) \to 0\,\, \mbox{ as }\,\, n \to \infty$$ 
Let $w_k=\sigma_k(t_k)$ such that
$$\max_{t\in[ a_{k},b_{k}]}\delta_{\Omega'}(\sigma_k(t))=\delta_{\Omega'}(\sigma_k(t_k)).$$
Let $f(x_0)=w_0$. Clearly, $z_k\to \xi\in\partial_{Euc}\Omega'$, therefore, $k_{\Omega'}(w_0,w_k)\to \infty$ as $k\to \infty$. Now
\begin{eqnarray*}
	k_{\Omega'}(u_k,w_0)+k_{\Omega'}(v_k,w_0)&\ge&k_{\Omega'}(f(x_k),f(y_k))\\ \nonumber
	&=&k_{\Omega'}(\sigma_k(a_k),\sigma_k(b_k))\\ \nonumber
	&\ge& (b_k-a_k)-\mu\,(\mbox{since } \sigma_k \mbox{ is a } (1,\mu)- \mbox{quasi-isometric path}\\ \nonumber
	&=&(b_k-t_k+\mu)+(t_k-a_k+\mu)-3\mu\\ \nonumber
	&\ge& k_{\Omega'}(v_k,w_k)+k_{\Omega'}(w_k,u_k)-3\mu
\end{eqnarray*}
Therefore, we have
\begin{eqnarray*}
	k_{\Omega'}(u_k,w_0)+k_{\Omega'}(v_k,w_0)&\ge& -2(v_k|w_k)_{w_0}-2(w_k|u_k)_{w_0}+k_{\Omega'}(u_k,w_0)+k_{\Omega'}(w_0,v_k)\\
	&&+2k_{\Omega'}(w_0,w_k)-3\mu
\end{eqnarray*}
Since $\Omega'$ is a visibility domain, we have
$$k_{\Omega'}(u_k,w_0)+k_{\Omega'}(v_k,w_0)\ge-2C+k_{\Omega'}(u_k,w_0)+k_{\Omega'}(w_0,v_k)+2k_{\Omega}(w_0,w_k)-3\mu,$$
which gives that 
$$2k_{\Omega'}(w_0,w_k)\le 2C+3\mu.$$
This is a contradiction to the fact that $k_{\Omega'}(w_0,w_k)\to \infty$.

\subsection*{Claim 2:} $$\sup_{k\ge 1}k_{\Omega'}(f(x_0),\sigma_k)<\infty.$$
\subsection*{Proof of Claim 2:}
Since there exists a compact set $K\subset\Omega$ such that $\sigma_k$ intersects with $K$ for sufficiently large $k$, let $z_k$ be one of the intersection points. Then for sufficiently large $k$, we have the following
$$k_{\Omega'}(w_0,z_k)\le k_{\Omega'}(w_0,z),\,\,\,\,\, \mbox{ for all } z\in \sigma_k.$$
That is, $k_{\Omega'}(w_0,\sigma_k)=k_{\Omega'}(w_0,z_k)$ for suffieiently large $k$. Since $z_k$ lies in a compact set, we have 
$$\sup_{k\ge 1}{k_{\Omega'}}(w_0,\sigma_k)\le M,$$
and hence
$$\sup_{k\ge 1}k_{\Omega}(x_0,\gamma_k)<\infty.$$	

This completes the proof of Claim 2. Now we note that Claim 2 gives a contradiction to the fact that $\Omega$ is quasihyperbolically well behaved. Thus, $\lim_{x\to p} f(x)$ exists in $\partial_{Euc}\Omega'.$
\par Next define the map $F:\overline{\Omega}^{Euc}\to \overline{\Omega'}^{Euc}$ by
\[F(z)=\begin{cases}
	f(z)&\mbox{ if } z\in \Omega \\[5mm]
	\widetilde{f}(z)=\lim_{x\to z}f(z)&\mbox{ if } z\in\partial_{Euc}\Omega.
\end{cases}
\]
In view of the definition of $F$, to show $F$ is continuous, it is sufficient to show that $\widetilde{f}:\partial_{Euc}\Omega\to \partial_{Euc}\Omega'$ is continuous. Let $p_k$ be a sequence in $\partial_{Euc}\Omega$ which converges to $p\in \partial_{Euc}\Omega$. Since $p\in \partial_{Euc}\Omega$, we can find a sequence $z_k\in \Omega$ which is sufficiently close to $p_k$ such that $z_k\to p$. Then write $\lim_{k\to \infty}f(z_k)=\widetilde{f}(p)=q\in \partial_{Euc}\Omega'$. is a sequence in $\partial_{Euc}\Omega'$. Since $\partial_{Euc}\Omega'$ is compact and $z_k$ is sufficiently close to $p_k$, we have
$$\lim_{k\to\infty}\widetilde{f}(p_k)=\lim_{k\to \infty}f(z_k)=q=\widetilde{f}(p).$$
This completes the proof of the theorem.
\end{pf}

	\begin{pf}[\textbf{Proof of Theorem \ref{quasiconformal}}]
	Since $\Omega$ and $D$ are Gromov hyperbolic, therefore in view of Theorem \ref{QI extension}, we have that $f$ extends to a homeomorphism $\widetilde{f}:\overline{\Omega}^G\to \overline{D}^G$. Since $\Omega$ and $D$ are Gromov hyperbolic and QH-visibility domains with no geodesic loops, therefore we have that the identity map
	$$id:(\Omega,k_D)\to(\Omega,d_{Euc})$$ 
	extends as a homeomorphism $$\Phi_{\Omega}:\overline{\Omega}^G\to \overline{\Omega}^{Euc}$$
	and the identity map
	$$id_D:(D,k_D)\to(D,d_{Euc})$$ 
	extends as a homeomorphism 
	$$\Phi_{D}:\overline{D}^G\to \overline{D}^{Euc}.$$
	The map
	$$F=\Phi_{D}\circ\widetilde{f}\circ\Phi_{\Omega}^{-1}:\overline{\Omega}^{Euc}\to \overline{D}^{Euc}$$
	gives the desired homeomorphic extension. This completes the proof.
	\end{pf}
	
		\section{Unbounded QH-visibility domains and Gromov hyperbolicity}\label{7}
	In this section we prove Theorem \ref{unbounded QH-Gromov}. We will divide the proof of Theorem \ref{unbounded QH-Gromov} into several Lemmas. 
		\begin{lem}\label{6.2}
		Let $\Omega\subsetneq\mathbb{R}^n$ be a QH-visibility domain. If $\gamma$ is a qh-geodesic ray, then $\lim_{t\to \infty}\gamma(t)$ exists in $\partial^{\infty}\Omega$.
	\end{lem}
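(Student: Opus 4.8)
The plan is to analyze the cluster set of $\gamma$ in the one-point compactification $\overline{\Omega}^{\infty}=\overline{\Omega}^{Euc}\cup\{\infty\}$ and to show that it is a single point. Working in the compact Hausdorff space $\overline{\Omega}^{\infty}$, I set
$$C:=\bigcap_{n\ge 1}\overline{\gamma([n,\infty))},$$
the closure being taken in $\overline{\Omega}^{\infty}$. Since in a compact space $\lim_{t\to\infty}\gamma(t)$ exists exactly when the cluster set is a singleton, the entire argument is organized around pinning down $C$.

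First I would record three structural facts about $C$. It is nonempty and compact, being a nested intersection of nonempty compact sets. It is connected, because each $\overline{\gamma([n,\infty))}$ is the closure of the connected set $\gamma([n,\infty))$, hence a compact connected subset of $\overline{\Omega}^{\infty}$, and a nested intersection of compact connected sets in a compact Hausdorff space is connected. Finally $C\subset\partial^{\infty}\Omega=\partial_{Euc}\Omega\cup\{\infty\}$: if some $x^{*}\in\Omega$ were a cluster point of $\gamma(t)$ as $t\to\infty$, then along a sequence $t_{k}\to\infty$ we would have $\gamma(t_{k})\to x^{*}$ in the Euclidean, hence quasihyperbolic, topology, forcing $k_{\Omega}(\gamma(0),\gamma(t_{k}))\to k_{\Omega}(\gamma(0),x^{*})<\infty$, which contradicts $k_{\Omega}(\gamma(0),\gamma(t_{k}))=t_{k}\to\infty$.

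Next I would show that $C$ contains at most one finite point, using the visibility hypothesis exactly as in Lemma \ref{lands}. If $p,q\in\partial_{Euc}\Omega$ were two distinct finite points of $C$, I would choose $s_{k}\to\infty$, $t_{k}\to\infty$ with $\gamma(s_{k})\to p$, $\gamma(t_{k})\to q$, and pass to an interlacing subsequence $s_{k}<t_{k}<s_{k+1}$. For large $k$ the endpoints of the subgeodesic $\gamma|_{[s_{k},t_{k}]}$ lie in the visibility neighborhoods $U_{p},U_{q}$ of the pair $\{p,q\}$, so by Definition \ref{QH-visible} there is a compact set $K\subset\Omega$ and times $u_{k}\in[s_{k},t_{k}]$ with $\gamma(u_{k})\in K$; since $u_{k}\ge s_{k}\to\infty$, a subsequence of $\gamma(u_{k})$ converges to a point of $K\subset\Omega$, which would be a cluster point of $\gamma$ lying in $\Omega$, contradicting the previous paragraph.

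Finally I would combine connectedness with this last fact. Since $C$ is connected and contains at most one finite point, if it contained both a finite point $\eta$ and the point $\infty$ it would equal the two-point set $\{\eta,\infty\}$; but $\eta$ and $\infty$ are separated in $\overline{\Omega}^{\infty}$ (a bounded Euclidean neighborhood of $\eta$ and a complementary neighborhood of $\infty$ are disjoint open sets), so $\{\eta,\infty\}$ is disconnected, a contradiction. Hence $C$ is either $\{\infty\}$ or a single finite boundary point, and in either case $\lim_{t\to\infty}\gamma(t)$ exists in $\partial^{\infty}\Omega$. The only genuinely delicate step is the connectedness of the cluster set; the rest is a direct transcription of the bounded-case visibility argument together with the fact that $(\Omega,k_{\Omega})$ is complete and proper and induces the Euclidean topology.
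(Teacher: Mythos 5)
Your proof is correct and follows essentially the same route as the paper: both work with the cluster set of $\gamma$ in $\overline{\Omega}^{\infty}$, show it lies in $\partial^{\infty}\Omega$ via completeness of $k_{\Omega}$, use its connectedness to reduce to the case of two distinct finite boundary points, and rule that out by the interlacing-subgeodesic visibility argument of Lemma \ref{lands}. The only difference is organizational (you prove ``at most one finite cluster point'' first and then invoke connectedness, while the paper argues in the contrapositive order), and you usefully spell out the connectedness of the nested intersection, which the paper invokes without proof.
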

	\begin{proof}
		Since $\partial^{\infty}\Omega$ is compact in $\overline{\Omega}^{\infty}$, for any $t_k\to \infty$ the sequence $\{\gamma(t_k)\}$ has a convergent subsequence. Moreover, as $\gamma$ is a qh-geodesic ray for any sequence $t_k\to \infty$, we have $|t_k|=k_{\Omega}(\gamma(0),\gamma(t_k)) \to \infty$ as $k\to \infty$. Then by Result 1, it is clear that all the limit points of $\{\gamma(t_k)\}$ belongs to $\partial^{\infty}\Omega$. In view of this consider following set 
		$$L=\{x\in \partial^{\infty}{\Omega}:\mbox{ there exists } t_k\to \infty \mbox{ such that } \gamma(t_k)\to x\}.$$ 
		Suppose for contradiction that $L$ is not sigleton in this case $L$ contains at least one point in $\partial_{Euc}\Omega$. Then, by connectedness, $L$ must contain at least two
		points in $\partial_{Euc}\Omega$, say, $p,q$. Then following the proof of Lemma \ref{lands}, we get that $L\cap K\ne \phi$, where $K$ is a compact set coming from the visibility property. This completes the proof.
	\end{proof}

	Next we show that in visibility domain, equivalent qh-geodesic has same limits. This will gives us that the map is well defined.	
	
	\begin{lem}\label{6.3}
		Let $\Omega$ be QH-visibility domain in $\mathbb{R}^n$. Fix $x_0\in \Omega$ and let $\gamma$ and $\sigma$ be any two qh-geodesic rays with $\gamma(0)=x_0=\sigma(0)$ such that $\gamma \sim \sigma$. Then 
		$$\lim_{t\to \infty}\gamma(t)=\lim_{t\to \infty} \sigma(t).$$
	\end{lem}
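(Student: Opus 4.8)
The plan is to follow the contradiction scheme of Lemma \ref{well-defined}, adapted to the one-point compactification $\overline{\Omega}^{\infty}$. By Lemma \ref{6.2} both rays land at well-defined limits $p := \lim_{t\to\infty}\gamma(t)$ and $q := \lim_{t\to\infty}\sigma(t)$ lying in $\partial^{\infty}\Omega = \partial_{Euc}\Omega \cup \{\infty\}$. Assuming toward a contradiction that $p \ne q$, I would fix any sequence $s_k \to \infty$ and set $x_k = \gamma(s_k)$, $y_k = \sigma(s_k)$, so that $x_k \to p$ and $y_k \to q$ in $\overline{\Omega}^{\infty}$. The goal is then to show $k_\Omega(x_k, y_k) \to \infty$, which contradicts $\gamma \sim \sigma$: recall that $\gamma \sim \sigma$ means $\sup_{t\ge 0} k_\Omega(\gamma(t),\sigma(t)) < \infty$, so in particular $k_\Omega(x_k, y_k)$ must stay bounded.

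The essential new feature compared with the bounded setting is the presence of the ideal point $\infty$, and the case analysis around it is where I expect the only real obstacle. First I would observe that if $p = q = \infty$ there is nothing to prove; hence, since $p \ne q$, at least one of the two limits — say $p$ — is a genuine Euclidean boundary point. For the corresponding sequence, continuity of $\delta_\Omega$ together with $\delta_\Omega(p) = 0$ yields $\delta_\Omega(x_k) \to 0$. It then remains only to produce a uniform positive lower bound on the Euclidean separation $|x_k - y_k|$ for large $k$. If $q$ is also finite, then $|x_k - y_k| \to |p-q| > 0$; if instead $q = \infty$, then $|y_k| \to \infty$ while $|x_k| \to |p| < \infty$, so $|x_k - y_k| \ge |y_k| - |x_k| \to \infty$. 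In either case there is a constant $c > 0$ with $|x_k - y_k| \ge c$ for all large $k$.

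With these two facts in hand, the lower estimate \eqref{qh-eq-1} closes the argument uniformly across the remaining cases: since $\min\{\delta_\Omega(x_k), \delta_\Omega(y_k)\} \le \delta_\Omega(x_k)$, we obtain
\[
k_\Omega(x_k, y_k) \ge \log\!\left(1 + \frac{|x_k - y_k|}{\min\{\delta_\Omega(x_k),\delta_\Omega(y_k)\}}\right) \ge \log\!\left(1 + \frac{c}{\delta_\Omega(x_k)}\right) \longrightarrow \infty,
\]
because $\delta_\Omega(x_k)\to 0$. This is the desired contradiction, so $p = q$. Thus the whole difficulty is the bookkeeping around the ideal point: one must check that the divergence argument behind Observation \ref{observation} — originally phrased for two distinct \emph{finite} boundary points — survives when one of the limits is $\infty$, which is exactly why isolating the finite limit $p$ (guaranteed once the trivial both-$\infty$ case is excluded) and bounding $|x_k - y_k|$ away from zero is the crucial move.
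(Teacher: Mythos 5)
Your proposal is correct and follows essentially the same route as the paper: assume $p\ne q$, note that both limits cannot be $\infty$, evaluate both rays at a common parameter sequence $s_k\to\infty$, and derive $k_\Omega(\gamma(s_k),\sigma(s_k))\to\infty$ from the lower bound \eqref{qh-eq-1}, contradicting $\gamma\sim\sigma$. If anything, your case analysis is more careful than the paper's, which simply cites Observation \ref{observation} (stated for two distinct \emph{finite} Euclidean boundary points) and leaves the case $q=\infty$ --- where one needs precisely your argument that $|x_k-y_k|\to\infty$ while $\delta_\Omega(x_k)\to 0$ --- implicit.
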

	
	\begin{pf}
		Let $\Omega$ be a QH-visibility domain. In view of Lemma \ref{6.2}, we have that $\lim_{t\to \infty}\gamma(t)\in \partial^{\infty}\Omega$. By contradiction suppose 
		$$\lim_{t\to \infty}\gamma(t)=p\neq q=\lim_{t\to \infty} \sigma(t).$$
		There exists a sequence $\{s_k\}$ with $s_k\to \infty$ such that $\lim\limits_{k\to \infty}\gamma(s_k)=p$ and $\lim\limits_{k\to \infty}\sigma(s_k)=q$. Write $z_k=\gamma(s_k)$ and $w_k=\sigma(s_k)$, then $z_k\to p$ and $w_k\to q$. As both $p$ and $q$ can't be infinity so assume atleast one of $p$ and $q$ belongs to $\partial\Omega$, then by Observation \ref{observation} and by the fact that $\gamma \sim \sigma$, we get a contradiction.
	\end{pf}

	\begin{pf}[\textbf{Proof of Theorem \ref{unbounded QH-Gromov}}] 
		\textbf{(ii) $\implies$ (i):} Suppose the identity map extends as a continuous surjective map $\widehat{id}:\overline{\Omega}^G \to \overline{\Omega}^{\infty}$. It is clear that $\widehat{id}(\partial_{G}\Omega)=\partial\Omega\cup \{\infty\}$. To prove $\Omega$ is a QH-visibility domain we will use Theorem A {\it i.e.}, we will show that $\partial\Omega$ is visibile. For this let $p,q\in \partial\Omega$, $p\ne q$. Consider the two subsets $L_p=\widehat{id}^{-1}(p)$, $L_q=\widehat{id}^{-1}(q)$ of $\partial_{G}\Omega$. Clearly both are compact and $L_p\cap L_q=\emptyset$. Since $(\Omega,k_{\Omega})$ is Gromov hyperbolic, it is easy to prove that there exist $\overline{\Omega}^G$-open subset $V_p$ and $V_q$ of $p$ and $q$ respectively and  compact set $K_{p,q}\subset \Omega$ such that $L_p\subset V_p$, $L_q\subset V_q$ and $\overline{V_p}\cap \overline{V_q}=\emptyset$ with the property that if $\gamma:[0,T]\to \Omega$ is any quasihyperbolic geodesic with 
		$\gamma(0)\in V_p\cap \Omega$ and $\gamma(T)\in V_q\cap \Omega$, then $\gamma([0,T])\cap K_{p,q}\ne \phi$.
		\par Now suppose $x_k$ and $y_k$ are sequences in $\Omega$ converging in the Euclidean topology to $p$ and $q$ respectively. Then there exists $k_0\in \mathbb{N}$ such that $x_k\in V_p\cap \Omega$ and $y_k\in V_q\cap \Omega$, for all $k\ge k_0$. Thus, for $k\ge k_0$ any qh-geodesic $\gamma_k$ joining $x_k$ and $y_k$ intersects with $K_{p,q}$, and hence $\Omega$ is a QH-visibility domain.\\
		
		\textbf{(i) $\implies$ (ii):}	Let $\Omega\subsetneq\mathbb{R}^n$, be a QH-visibility domain. In view of Lemma \ref{6.2}, define a map 
		$\Phi:\overline{\Omega}^G\to \overline{\Omega}^{\infty}$ by
		\[\Phi(z)=\begin{cases}
			z&\mbox{ if } z\in \Omega \\[3mm]
			p=\lim\limits_{t\to \infty}\gamma(t)&\mbox{ if } z=\xi=[\gamma]\in\partial_{G}\Omega
		\end{cases}
		\]
		In view of Lemma \ref{6.3}, $\Phi$ is well defined. Since $\overline{\Omega}^G$ and $\overline{\Omega}^{\infty}$ is compact and $\Phi$ is continuous with $\Phi(\Omega)=\Omega$ which is dense in $\overline{\Omega}^{\infty}$, theorefore $\Phi$ is onto. To show that $\Phi$ is continuous we need to consider the following two cases:
		\begin{pf}[Case I: If $\{z_k\}\subset \Omega$ is a sequence such that $z_k \stackrel{Gromov}\longrightarrow \xi$, where $\xi \in \partial_{G}\Omega$]
			Since $z_k \stackrel{Gromov}\longrightarrow \xi$, which means that for every qh-geodesic $\gamma_k:[0,T_k]\to \Omega$ with $\gamma_k(0)=z_0$ and $\gamma_k(T_k)=z_k$, every subsequence has a subsequence which converges uniformly on compact subset to a geodesic ray $\gamma\in \xi$. In view of Lemma \ref{6.2}, we have $\lim\limits_{t\to \infty}\gamma(t)\in \partial^{\infty}\Omega$. Our aim is to show that
			$$\lim\limits_{k\to \infty}z_k=\lim\limits_{t\to \infty}\gamma(t).$$
			If $\lim\limits_{t\to \infty}\gamma(t)=p\in \partial_{Euc}\Omega$, then $z_k\not\to\infty$ since $\gamma_{k}$ converges uniformly on compacta to $\gamma$. Then an argument using visibility property (see Lemma \ref{continuous}) shows that $p=q$.
			If $\lim\limits_{t\to \infty}\gamma(t)=\infty$, then $z_k\to \infty$ as $\gamma_{k}$ converges uniformly on compacta to $\gamma$.
				\end{pf}
		\begin{pf}[Case II: ] If $\{\xi_k\}\subset\partial_{G}\Omega$ is a sequence such that $\xi_k \stackrel{Gromov}\longrightarrow \xi$, where $\xi \in \partial_{G}\Omega$
		Let $\gamma_k$ be the representative from $\xi_k$. Then $\xi_k \stackrel{Gromov}\longrightarrow \xi$ gives that every subsequence of $\gamma_k$ has a subsequence which converges uniformly on compact subsets to a geodesic ray $\gamma\in \xi$. Then using the same argument as in Case I, we get the desired.
	\end{pf}

Thus, in view of Case I and Case II, $\Phi$ is continuous. This completes the proof.
\end{pf}

	\section{Some Examples}\label{Examples}
	The aim of this section is two fold. We first want to construct examples of a visibility domain which is neither John domain nor QHBC domain. Second we will mention some examples of domains which are visibility domain but not Gromov hyperbolic.
	\begin{example}[\textbf{Visibility domain which is neither John domain nor QHBC domain}] Fix $\alpha,\beta$ such that $1\le \beta<\alpha$. Let $S_0$ be a an open circular region with center at $x_0$ on imaginary axis and radius $r_0$, which hits the real axis at two points, where $r_0=1$. Let
		$$\Omega=S_0\,\cup\,R_{01}\,\cup\,S_{01}\,\cup\,R_{02}\,\cup\,S_{02}\,\cup\,R_{03}\,\cup\,S_{03}\,\cup\,\ldots\subset\mathbb{R}^2,$$
		where for each $m=0,1,2,\ldots,\,\, S_{m}$ is an open circle (red arcs in Fig 1) with center $x_m$ and radius $r_m$ and for each $m=1,2,\ldots,\,\, R_{0m}$ is an open rectangular region (blue lines in Fig 1) with height $r_m^{\beta}$ and width $r_m^{\alpha}$. We arrange these pieces so that the top face of $R_{0m}$ is contained in the boundary of $S_{0m}$ (see Figure 1). One can ensure that $S_{0m}, m=1,2,\ldots,$ are pairwise disjoint by forcing sequence $r_m$ to decrease to zero faster.

			\vspace{2 mm}
		\begin{figure}[!htb]\label{Fig1}
			\begin{center}
			\includegraphics{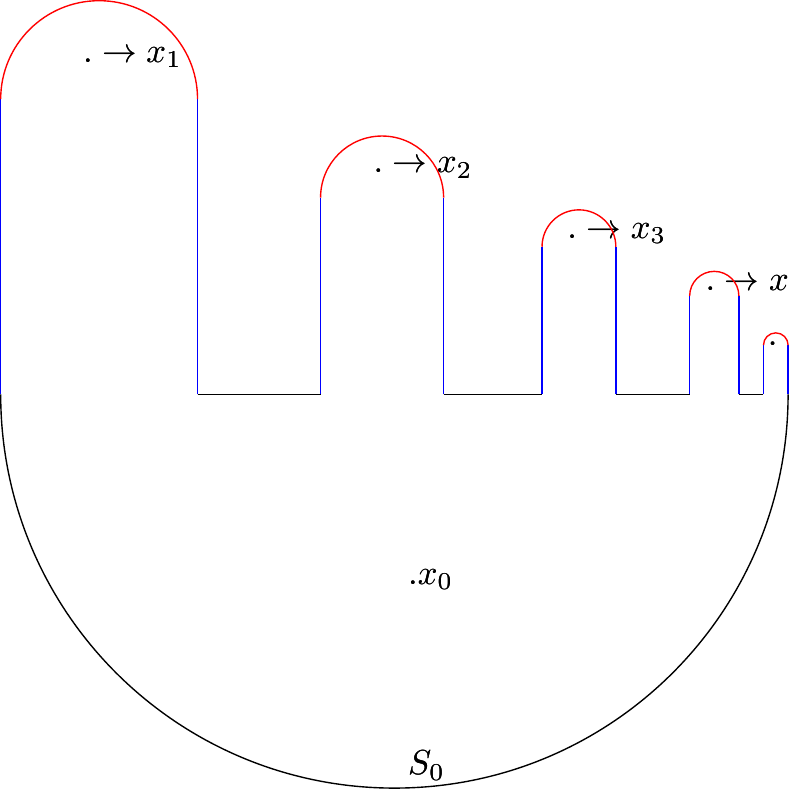}
			\end{center}
			\caption{Foot with infinite finger type domain}
		\end{figure}
		
	\end{example}

	\subsection*{Claim 1} $\Omega$ is not a John domain with center $x_0$ for any $C\ge 1$.
	\subsection*{Proof of Claim 1}
	By way of contradiction, suppose $\Omega$ is a $C$- John domain for some $C\ge 1$. Let $\gamma_m$ be a curve joining $x_0$ to $x_m$ then
	
	$$	l(\gamma_m[x_m,y])\le C\delta_{\Omega}(y)\,\,\, \mbox{ for every } y\in \gamma_m$$
	
	Let $y\in R_{0m}$ such that $y$ lies outside the circle $S_{0m}$, then 
	\begin{eqnarray}\label{fingers-1}
		l(\gamma_m[x_m,y])\le C\delta_{\Omega}(y)\le Cr_m^{\alpha}
	\end{eqnarray}
	Also in this case we have,
	\begin{equation}\label{fingers-2}
		l(\gamma_m[x_m,y])\ge \mbox{ dist}(x_m, C_{0m})=r_m.
	\end{equation}
	By \eqref{fingers-1} and \eqref{fingers-2}, we have that
	$$r_m\le Cr_m^{\alpha} \,\,\,\,\,\, \mbox{ for all } m,$$
	which is a contradiction to the fact that $\alpha>1$. Thus, $\Omega$ is not a John domain.
	
	\subsection*{Claim 2} If $\alpha-\beta>1$, then $\Omega$ is not a $B$-QHBC domain for any $B<1$.
	\subsection*{Proof of Claim 2}
	Let $\alpha-\beta>1$. By contradiction suppose $\Omega$ is $B$-QHBC domain that is
	\begin{equation}\label{EQHBC}
		k_{\Omega}(x_0,x)\le \frac{1}{B}\log\left(\frac{\delta_{\Omega}(x_0)}{\delta_{\Omega}(x)}\right)+C_0.
	\end{equation}
	Let $\gamma_m$ be any quasihyperbolic geodesic joining $x_0$ to $x_m$. Then
	\begin{equation}
		k_{\Omega}(x_0,x_m)=\int_{\gamma_m}\frac{|dz|}{\delta_{\Omega}(z)},\,\,\, z\in \gamma_m
	\end{equation}	
	Devide $\gamma_m$ into three arcs $\gamma_m^1$, $\gamma_m^2$ and $\gamma_m^3$, where $\gamma_m^1$ is the part of $\gamma_m$ which lies in $S_{0m}$ but not in $R_{0m}$; $\gamma_m^2$ is the part of $\gamma_m$ which lies only in $R_{0m}$ and $\gamma_m^3$ is the part of $\gamma_m$ which neither lies in $S_{0m}$ nor in $R_{0m}$ (see Fig 2)
	
		\begin{figure}[!htb]\label{Fig 2}
		\begin{center}
		\includegraphics{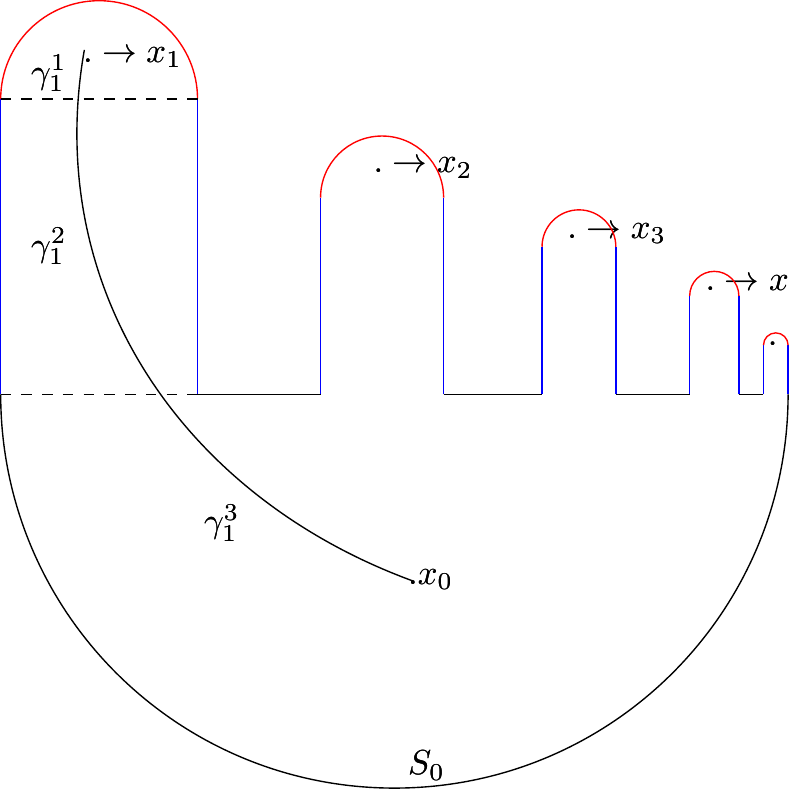}
		\end{center}
		\caption{Three parts of $\gamma_m$}
	\end{figure}

	Therefore,
	\begin{equation}\label{A}
		\int_{\gamma_m}\frac{|dz|}{\delta_{\Omega}(z)}=\int_{\gamma_m^1}\frac{|dz|}{\delta_{\Omega}(z)}+\int_{\gamma_m^2}\frac{|dz|}{\delta_{\Omega}(z)}+\int_{\gamma_m^3}\frac{|dz|}{\delta_{\Omega}(z)}
	\end{equation}
	If $z\in\gamma_m^3$, then $\delta_{\Omega}(z)\le r_0$ and $\int_{\gamma}|dz|\ge r_0$, therefore 
	\begin{equation}\label{a}
		\int_{\gamma_m^3}\frac{|dz|}{\delta_{\Omega}(z)}\ge 1.
	\end{equation}
	If $z\in\gamma_m^2$, then $\delta_{\Omega}(z)\le r_m^{\alpha}$ and $\int_{\gamma}|dz|\ge r_m^{\beta}$, and hence
	\begin{equation}\label{b}
		\int_{\gamma_m^2}\frac{|dz|}{\delta_{\Omega}(z)}\ge r_m^{\beta-\alpha}.
	\end{equation}
	If $z\in\gamma_m^1$, then $\delta_{\Omega}(z)\le r_m$ and $\int_{\gamma}|dz|\ge r_m$, and hence
	\begin{equation}\label{c}
		\int_{\gamma_m^1}\frac{|dz|}{\delta_{\Omega}(z)}\ge 1.
	\end{equation}
	By using \eqref{a}, \eqref{b} and \eqref{c}, in \eqref{A}, we obtain that the quasihyperbolic diatance from $x_0$ to $x_m$ is
	\begin{equation}\label{key}
		k_{\Omega}(x_0,x_m)\ge \left(\frac{1}{\delta_{\Omega}(x_m)}\right)^{\alpha-\beta}+2.
	\end{equation}
	In view of \eqref{EQHBC} and \eqref{key}, we have
	$$\left(\frac{1}{\delta_{\Omega}(x_m)}\right)^{\alpha-\beta}\le \frac{1}{B} \frac{1}{\delta_{\Omega}(x_m)}+C,$$
	for some $C$. This is a contradiction to the fact that $\alpha-\beta>1.$ Hence $\Omega$ is not an QHBC domain. 
	
	In view of Theorem \ref{LC boundary}, we conclude that $\Omega$ is a visibility domain. However, in view of Claim 1 and claim 2 $\Omega$ is neither a John nor QHBC domain.	
	
	\begin{example}[\textbf{Visibility domains which are not Gromov hyperbolic}]
	Since we know that every John domain is a visibility domain, therefore to construct domains of this type the idea is to construct John domains in which there exists a quasihyperbolic geodesic which is not a cone arc, and hence such domains can not be Gromov hyperbolic (see \cite{Rasila-2022}). For examples of such domains we refer to  \cite[Example 5.12 and Remark 5.17]{Gehring-1989}.
	\end{example}

	\begin{example}[\textbf{Gromov hyperbolic domains which are not visibility domains}]
		To construct domains of this class one can use Theorem \ref{Main-thm-GB-EB}. More precisely, the idea is to construct Gromov hyperbolic domains such that the identity map does not extends as a continuous surjective map from Gromov closure to Euclidean closure. Moreover, as we know that every planar simply connected domain is Gromov hyperbolic but it is a visibility domain if, and only if, its boundary is locally connected.
	\end{example}

\section{Appendix}\label{Appendix}
	In this section, we first prove Theorem \ref{GV} and then we see the comparison of visibility between Hadamard manifolds and CAT($0$) spaces for the sake of completeness.
	
		\begin{pf}[Proof of Theorem \ref{GV}]
	To prove Theorem \ref{GV} it is enough to prove Remark \ref{GV}. Suppose $X$ is a Gromov hyperbolic space with some $\delta$. Let $\xi,\eta\in \partial_{G}X$, $\xi\ne \eta$ and $\{z_n\}, \{w_n\} \subset X$ be sequences such that $z_n \stackrel{Gromov}\longrightarrow \xi$ and $w_n \stackrel{Gromov}\longrightarrow \eta$. Consider any sequence of geodesics $\Gamma_n$ joining $z_n$ and $w_n$. Fix a point $p\in X$, 
	and let $\gamma_n$ and $\sigma_n$ be sequences of geodesics joining $p$ and $z_n$, $p$ and $w_n$ respectively. As $z_n \stackrel{Gromov}\longrightarrow \xi$, every subsequence of $\gamma_n$ has a subsequence converges on compacta to a geodesic ray $\gamma\in \xi$. Similarly, we get a geodesic ray $\sigma\in \eta$. Since both $\gamma$ and $\sigma$ are not equivalent, therefore we can choose a $T$ such that 
	$$d(\gamma(T), \mbox{range}(\sigma))>\delta.$$
	We can also choose $T$ such that
\begin{equation}\label{App1}
	\forall R> T,\,\, d(\gamma(R),\mbox{range}(\sigma))>\delta
\end{equation}
	\par For sufficiently large $n$, we can consider $\Gamma_n$ as a sequence of geodesics joining points of geodesic rays $\gamma$ and $\sigma$. For $R>T$ consider the geodesic triangle with sides $\gamma|_{[0,R]}$, $\Gamma_n|_{[\gamma(R),\sigma(R)]}$ and $\sigma|_{[0,R]}$. Since $(X,d)$ is Gromov hyperbolic, we have that  this triangle is $\delta$-thin for some $\delta<\infty$ {\it i.e.,}
	\begin{equation}\label{App2}
	 \forall t\le R,\, \min\{d(\gamma(t),\sigma([0,R])), d(\gamma(t),\Gamma_n([\gamma(R),\sigma(R)]))\}\le \delta
	\end{equation}
	
		Using \eqref{App1} and \eqref{App2}, we obtain
	$$\forall R>T,\,\, d(\gamma(T),\Gamma_n([\gamma(R),\sigma(R)]))\le\delta$$
	Consider the following compact set $K$ of $\Omega$
	$$K:=\{z\in \Omega:d(z,\gamma(T))\le \delta\}.$$
	Therefore, geodesics $[\Gamma_n(R), \Gamma_n(R)]$ must intersect the compact set $K$. This completes the proof.
\end{pf}

We now recall the visibility in Hadamard manifolds and see its equivalent formulation. Further, we see its generalization to CAT($0$) spaces. 

\subsection{Visibility in Hadamard manifolds}
A Hadamard manifold $H$ is a complete, simply-connected Riemannian manifold with non-positive sectional curvature $K\le 0$, and a complete Riemannian manifold $M$ of dimension $n\ge 2$ with non-positive sectional curvature are precisely the quotient manifolds $M=H/D$, where $D$ is group of isometries of $H$ which acts properly discontinuously on $H$. Eberlein and O'Neill \cite{O'Neill-1973} have introduced a general construction of compactification of Hadamrd manifolds by first attaching a boundary at infinity, which is they called the ideal boundary, and then defining the natural topology, namely the cone topology. For Hadamard manifolds this compactification is homeomorphic to closed $n$-ball. The notion of ideal boundary is a central concept in the theory of negative or non-positively curved metric spaces and this notion comes after defining a suitable notion of negative or non-positive curvature. This notion also has a generalization in CAT($0$) spaces and Gromov hyperbolic spaces respectively.
\par Eberlein and O' Neill have given the following definitions (see \cite[Definition 4.1]{O'Neill-1973}) 
\begin{defn}[Axiom 1]
	We say that a Hadamard manifold $H$ satisfies Axiom 1, if for any points $x\ne y$ in $H(\infty)$ there exists at least one geodesic line joining $x$ and $y$.
\end{defn}
\begin{defn}[Visibility manifold]\label{Neil-def}
	Let $M=H/D$ be a complete Riemannian manifold with non-positive sectional curvature. If $H$ satisfies Axiom 1, we call $M$ a {\it visibility manifold}.
\end{defn}
It is well-known (see \cite[Lemma 9.10]{O'Neill-1969} or \cite{O'Neill-1973}) that if $H$ is complete, simply-connected Riemannian manifold with sectional curvature $K\le c<0$, then $M=H/D$ is a visibility manifold. In particular, every complete, simply-connected Riemannian manifold with constant negative curvature is a visibility manifold. In fact, visibility is the natural replacement for the strict negative curvature. Moreover, Eberlein and O'Neill \cite{O'Neill-1973} also defined a property called {\it visibility axiom} which is equivalent to Axiom 1 (see \cite[Definition 4.2]{O'Neill-1973}). For a Hadamard manifold $H$, following theorem gives the equivalence of different visibility criteria.
\begin{thm}\cite[Proposition 4.4]{O'Neill-1973}\label{Visibility in HM}
	Let $H$ be a Hadamard manifold then following are equivalent
	\begin{itemize}
		\item[(1)] $H$ satisfies Axiom 1
		\item[(2)] For each pair of distinct points $\xi, \eta\in H(\infty)$ and for any sequence $z_k\to \xi$ and $w_k\to \eta$ there exist a compact set $K\subset H$ such that:
		geodesics $\gamma_k$ in $H$ joining $z_k$ and $w_k$ intersects with $K$.
	\end{itemize}
\end{thm}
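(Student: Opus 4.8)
The plan is to exploit the cone topology on the compactification $\overline{H}=H\cup H(\infty)$, which is homeomorphic to a closed $n$-ball and is therefore compact, together with the convexity of the distance function that holds in every Hadamard manifold (the $\mathrm{CAT}(0)$ inequality). Recall that for a fixed base point $o\in H$ each ideal point is represented by a unique unit-speed geodesic ray issuing from $o$, and that $z_k\to\xi\in H(\infty)$ in the cone topology precisely when the segments $[o,z_k]$ converge, uniformly on compacta, to the ray $[o,\xi)$. A basic lemma I would record first is the continuity of geodesics in their endpoints at the level of one finite and one ideal endpoint: if $q_k\to q\in H$ and $w_k\to\eta\in H(\infty)$, then the segments $[q_k,w_k]$ converge uniformly on compacta to the ray $[q,\eta)$; this follows from convexity of the distance function and the Arzel\`a--Ascoli theorem applied to unit-speed geodesics.

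For the implication $(2)\Rightarrow(1)$ I would argue by a limiting construction. Fix $o\in H$, pick sequences $z_k\to\xi$ and $w_k\to\eta$ (for instance points at distance $k$ along the representative rays), and let $\gamma_k$ be the geodesic joining $z_k$ and $w_k$. By hypothesis (2) there is a compact set $K$ and points $c_k\in\gamma_k\cap K$; after passing to a subsequence $c_k\to c\in K$. Reparametrising each $\gamma_k$ by arclength with $\gamma_k(0)=c_k$, completeness forces the two parameter endpoints to tend to $-\infty$ and $+\infty$, because $z_k,w_k$ leave every compact set. Arzel\`a--Ascoli then yields a subsequence converging uniformly on compacta to a geodesic line $\gamma\colon\mathbb{R}\to H$ with $\gamma(0)=c$. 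Applying the continuity lemma to the forward and backward halves identifies $\gamma(+\infty)=\eta$ and $\gamma(-\infty)=\xi$; since $\xi\neq\eta$ this is a genuine line joining the two ideal points, which is Axiom 1.

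For $(1)\Rightarrow(2)$ I would argue by contraposition. If (2) fails for some distinct $\xi,\eta$, there are sequences $z_k\to\xi$, $w_k\to\eta$ and joining geodesics $\gamma_k$ that escape every compact set, so that the foot $c_k$ of the perpendicular from $o$ to $\gamma_k$ satisfies $d(o,c_k)\to\infty$. The first variation formula gives $\angle_{c_k}(o,z_k)\ge\pi/2$ and $\angle_{c_k}(o,w_k)\ge\pi/2$, and the goal is to deduce that the angle $\angle_o(z_k,w_k)$ subtended at $o$ tends to $0$; this would force the rays $[o,\xi)$ and $[o,\eta)$ to share a direction, whence $\xi=\eta$, the desired contradiction. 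The decay of this angle is exactly the \emph{uniform visibility axiom} of Eberlein and O'Neill, and the heart of the matter is that Axiom 1 is equivalent to it. \textbf{This equivalence is the main obstacle}: in a general Hadamard manifold the angle need not decay (in $\mathbb{R}^n$ parallel rays escape while subtending a fixed angle, and indeed $\mathbb{R}^n$ satisfies neither condition), so the argument must use the Hadamard structure in an essential way, through the convexity of the distance function and a flat-strip/comparison analysis showing that if the angle stayed bounded below then one could build a pair of distinct ideal endpoints admitting no connecting line, contradicting Axiom 1. I would carry out this comparison step following \cite[\S 4]{O'Neill-1973}, and then combine the resulting angle estimate with the compactness of $\overline{H}$ to recover the compact set $K$ required in (2).
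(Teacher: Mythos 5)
The paper never proves Theorem \ref{Visibility in HM}: it is recorded in the Appendix purely as a quotation of \cite[Proposition 4.4]{O'Neill-1973}, with no argument supplied (the Appendix only proves the Gromov-hyperbolic analogue, Theorem \ref{GV}). So your proposal has to stand on its own, and its first half does. The implication $(2)\Rightarrow(1)$ as you set it up is correct and complete: choose $z_k,w_k$ along the rays $[o,\xi)$ and $[o,\eta)$, extract $c_k\to c$ inside the compact set furnished by (2), use properness (Hopf--Rinow) and Arzel\`a--Ascoli to get a subsequential limit of the reparametrized geodesics, note the limit is a genuine geodesic line because each $\gamma_k$ is globally minimizing (so $d(\gamma(s),\gamma(t))=\lim d(\gamma_k(s),\gamma_k(t))=|s-t|$), and identify its two ideal endpoints with $\xi$ and $\eta$ via your endpoint-continuity lemma, which is indeed valid in Hadamard manifolds by CAT(0) convexity of the distance function.

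The genuine gap is in $(1)\Rightarrow(2)$. You reduce it to the angle-decay statement (``Axiom 1 implies the uniform visibility axiom''), correctly flag this as the heart of the matter, and then write that you would carry out the comparison step ``following \cite[\S 4]{O'Neill-1973}.'' Since the statement being proved \emph{is} Proposition 4.4 of that paper, this direction of your proposal is an appeal to the source rather than a proof. And the missing step is not a routine detail that convexity can supply: the implication is not even true ``pairwise.'' In the Hadamard manifold $\mathbb{R}^2$, the ideal points $\xi=$ east and $\eta=$ west are joined by geodesic lines, yet $z_k=(k,\sqrt{k})\to\xi$, $w_k=(-k,\sqrt{k})\to\eta$, and the joining segments (horizontal, at height $\sqrt{k}$) escape every compact set. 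Thus knowing that $\xi$ and $\eta$ themselves are joined by a line, together with convexity of $t\mapsto d(\sigma_k(t),\gamma)$, gives nothing (the endpoint values $d(z_k,\gamma)$, $d(w_k,\gamma)$ need not be bounded even in $\mathbb{H}^2$); a correct proof must invoke Axiom 1 for \emph{auxiliary} pairs of ideal points manufactured by a limiting process from the escaping segments, which is exactly the comparison analysis of Eberlein--O'Neill that your text gestures at but does not carry out. As it stands, the hard half of the equivalence is missing.
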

The notion of visibility has also its generalization in CAT($0$) spaces. In CAT(0) spaces the condition of Axiom 1 is known as visibility space and the condition of visibility axiom is known as locally visibile space. As we have seen that in the case of Hadamard manifolds they are equivalent, but this is not the case in CAT($0$) spaces (see \cite[Remarks 9.31]{Bridson-book}). Nevertheless, for proper CAT($0$) spaces they are equivalent. Moreover, for a Hadamard manifolds there are different equiavlent ways in which visibility can be expressed (see \cite[p. 54]{Gromov-book}), and most of these different equivalent ways of visibility can be generalized to CAT($0$) spaces (see \cite[Chapter II.9, Proposition 9.35]{Bridson-book}).

\begin{rem}\label{Visibility in quasihyperbolic geometry}
At this stage, one may be curious about: what if we consider Definition \ref{Neil-def} to study the visibility property in the context of quasihyperbolic geometry? But for this we need the notion of ideal boundary of the space $(\Omega,k_{\Omega})$. For the notion of ideal boundary we need a suitable notion of negative or nonpositive curvature. For this we have two choices one is Gromov hyperbolicity and the other is CAT(0). 
\par If we consider Gromov hyperbolicity, then we have the complete answer since every Gromov hyperbolic space satisfies the visibility property in the sense of Eberlein and O'Neill ({\it i.e. connecting ideal boundary points by geodesic line}) by considering ideal boundary as Gromov boundary.
\par If we consider the notion of CAT($0$) space, then we can study visibility in the sense of Eberlein and O'Neill. For this we need to know when $(\Omega,k_{\Omega})$ is CAT($0$). But this is going to be restrictive because of the following two facts:
\begin{itemize}
	\item[(i)] By Cartan-Hadamard theorem we know that a complete metric space is CAT($0$) if and only if it is locally CAT($0$) and simply connected and,
	\item[(ii)] the quasihyperbolic metric induces the same topology and $(\Omega,k_{\Omega})$ is a complete metric space.
\end{itemize}	
Thus, in view of the above two facts, {\it we can only consider simply connected domains in $\mathbb{R}^n$ to study visibility in the sense of Eberlein and O'Neill.}
\end{rem}

	\section{Concluding remarks and topics for further exploration}
	We conclude this paper with the following remark 
		\begin{rem}
		One can show that if $p,q\in \partial_{Euc}\Omega$, $p\ne q$, has visible qh-geodesics then 
		$$\limsup_{k\to \infty}(z_k|w_k)_o< \infty,$$ 
		where, $z_k\to p$, $w_k\to q$. At present, we do not know whether the converse holds or not? Moreover, in the context of visibility with respect to Kobayashi distance, Bracci {\it et. al.}\cite[Proposition 4.3]{Bracci-2022} have constructed a counter example for bounded convex domains in $\mathbb{C}^n$. But every bounded convex domain $\mathbb{R}^n$ is a QH-visibility domain. Thus, in the context of visibility with respect to quasihyperbolic metric it would be interesting to check whether the converse holds or not.
	\end{rem}	
	At present, we can think of following topics for further exploration on visibility in quasihyperbolic geometry.
	\subsection*{Topic 1: H-visibility and QH- visibility} At present, we know of no hyperbolic domain, which is a visibility domain with respect to one metric but not the other. The following problem seems challenging.
	
\begin{prob}\label{Main-Pb}
Does there exist a hyperbolic planar domain that is a visibility domain with respect to one metric but not the other? If such domain exists, the next problem is to describe this class of domains.
\end{prob}

\subsection*{Topic 2: Visibility and CAT($0$) geometry of quasihyperbolic metric} In view of Remark \ref{Visibility in quasihyperbolic geometry}, we can study visibility in the sense of Eberlein and O'Neill (see Definition \ref{Neil-def}) only for simply connected domains of $\Omega\subset\mathbb{R}^n$. Moreover, in the case of planar domains recently Herron \cite[Theorem A]{Herron-2021} has proved that every simply connected domain with quasihyperbolic metric is CAT($0$). Thus, if we consider planar simply connected domains, in view of the following two facts:
\begin{itemize}
	\item[(i)] If $\Omega$ is simply connected planar domain, then $(\Omega,k_{\Omega})$ is Gromov hyperbolic and hence its ideal boundary is Gromov boundary.
	\item[(ii)] Every Gromov hyperbolic space has visibility property when considering the Gromov boundary in the Gromov topology (see \cite[Part III, H, Lemma 3.2]{Bridson-book}),
\end{itemize}
we have that $(\Omega,k_{\Omega})$ always has the visibility property in the sense of Eberlein and O'Neill (see Definition \ref{Neil-def}), but $\Omega$  is a visibility domain in the sense of Definition \ref{main-defn} if, and only if, $\partial_{Euc}\Omega$ is locally connected. Since in dimension greater than and equal to $3$ not every simply connected domain is Gromov hyperbolic, therefore studying the visibility of bounded simply connected domains in $\mathbb{R}^n$, $n\ge 3$ in the sense of Eberlein and O'Neill could be an interesting task. For this we first need to consider the following problem:
\begin{prob}\label{CAT(0)}
	Let $\Omega\subset \mathbb{R}^n$, $n>2$ be a bounded simply connected domain. When is $(\Omega,k_{\Omega})$ CAT($0$) space?
\end{prob}
Then one can pose the following problem
\begin{prob}
	When a bounded simply connected domain $\Omega\subset \mathbb{R}^n$ has the visibility property in the sense of Eberlein and O'Neill?
\end{prob}


	\subsection*{Topic 3: QH-visibility of Poincar\'e domains} Following \cite{Koskela-2002}, a finite volume ($n$-dimensional Lebesgue measure is finite) domain $\Omega\subset \mathbb{R}^n$, $n\ge 2$ is said to be $(q,p)$-Poincar\'e domain, where $1\le p\le q <\infty$, if there exists a constant $M_{p,q}=M_{p,q}(\Omega)$ such that
	\begin{equation}\label{Poincare}
		\left(\int_{\Omega}|u(x)-u_{\Omega}|^q\,dx\right)^{1/q}\le M_{p,q}\left(\int_{\Omega}|\nabla u(x)|^{p}\,dx\right)^{1/p}
	\end{equation}
	for all $u\in C^{\infty}(\Omega)$, where 
	$$u_{\Omega}=|\Omega|^{-1}\int_{\Omega}u(x)\,dx.$$
	When $q=p$, we say that $\Omega$ is a $p$-Poincar\'e domain. Determining a geometric condition on a domain $\Omega$ (which possibly have quite irregular boundary) which is sufficient for a domain to be Poincar\'e domain is an interesting problem. In \cite{Koskela-2002}, Koskela {\it et. al.} considered one such condition as a quasihyperbolic boundary condition and proved a condition on $p$ so that $\Omega$ is a $p$-Poincar\'e domain. By the general visibility criteria we know that the domain with quasihyperbolic boundary condition is a visibility domain. Thus, it seems plausible if we can use the visibility property to study Poincar\'e domains.
	
	\subsection*{Topic 5: Dynamics of Quasiregular maps} The concept of visibility in the context of Kobayashi distance has been succesfully used to prove some Wolf-Denjoy type results for holomorphic as well as some non-holomorphic functions (see \cite[Theorem 1.10 and Theorem 1.11]{Bharali-2017}, \cite[Theorem 1.8 and Theorem 1.9]{Bharali-2021}, \cite[Theorem 1.15]{Sarkar-2021} and \cite[Theorem 1.13]{Bharali-2022}). Recently, Fletcher and Macclure \cite[Section 7]{Fletcher-2020} have established Wolf-Denjoy type results for quasiregular mappings. Thus, the question is that can we somehow use visibility in context of the iteration theory of quasiregular maps?
	
	\section{Acknowledgements}
	We are greatful to Prof. Pekka Koskela for introducing us the paper \cite[Theorem 1.1]{Lammi-2011} in the context of Problem \ref{GB-EB} in the present paper during the research visit of the second named author at the department of mathematics and statistics, Jyv\"askyl\"a university. This helped us to significantly modify the introduction of the manuscript and establishing the Corollary \ref{Quasiconvex-no loop}. Authors would like to thank Prof. Gautam Bharali for series of interesting talks on visibility in the context of Kobayashi metric at the SCV conference at KSOM, Kozhikode, India. We also thank Dr. Amar Deep Sarkar for fruitful discussions on visibility. The first named author thanks SERB-CRG, and the second named author thanks PMRF-MHRD (Id: 1200297), Govt. of India for their support.

\end{document}